\documentclass[11pt]{amsart}

\usepackage{amsmath, amssymb, amscd, mathrsfs, url, pinlabel,setspace,hyperref,verbatim}
\usepackage[margin=1.25in,marginparwidth=1in,centering,letterpaper,dvips]{geometry}
\usepackage{color,dcpic,latexsym,graphicx,epstopdf,comment}
\usepackage[all]{xy}
\usepackage{tikz,pgfplots}
\usepackage{tikz-cd,circuitikz}
\usepackage{enumitem,upquote,tabularx,textcomp,setspace,color}
\usepackage{makecell}
\usepackage{caption} 
\xyoption{rotate}

\newtheorem*{rep@theorem}{\rep@title}
\newcommand{\newreptheorem}[2]{%
\newenvironment{rep#1}[1]{%
 \def\rep@title{#2 \ref{##1}}%
 \begin{rep@theorem}}%
 {\end{rep@theorem}}}
\makeatother

\newtheorem {theorem}{Theorem}
\newreptheorem{theorem}{Theorem}
\newtheorem {lemma}[theorem]{Lemma}
\newtheorem {proposition}[theorem]{Proposition}
\newtheorem {corollary}[theorem]{Corollary}

\newtheorem {question}[theorem]{Question}

\numberwithin{equation}{section}
\numberwithin{theorem}{section}

\theoremstyle{definition}
\newtheorem{definition}[theorem]{Definition}
\newtheorem{construction}[theorem]{Construction}
\newtheorem{data}[theorem]{Data}
\newtheorem{notation}[theorem]{Notation}

\newtheorem{remark}[theorem]{Remark}

\newtheorem{example}[theorem]{Example}

\newlist{pcases}{enumerate}{1}
\setlist[pcases]{
  label=\bf{Case~\arabic*:}\protect\thiscase.~,
  ref=\arabic*,
  align=left,
  labelsep=0pt,
  leftmargin=0pt,
  labelwidth=0pt,
  parsep=0pt
}
\newcommand{\case}[1][]{%
  \if\relax\detokenize{#1}\relax
    \def\thiscase{}%
  \else
    \def\thiscase{~#1}%
  \fi
  \item
}

\DeclareFontFamily{U}{mathx}{\hyphenchar\font45}
\DeclareFontShape{U}{mathx}{m}{n}{
      <5> <6> <7> <8> <9> <10>
      <10.95> <12> <14.4> <17.28> <20.74> <24.88>
      mathx10
      }{}
\DeclareSymbolFont{mathx}{U}{mathx}{m}{n}
\DeclareFontSubstitution{U}{mathx}{m}{n}
\DeclareMathAccent{\widecheck}{0}{mathx}{"71}

\usetikzlibrary{calc,intersections}
\tikzset{every picture/.style=thick}
\tikzset{link/.style = { white, double = black, line width = 1.75pt, double distance = 1.25pt, looseness=1.75 }}
\tikzset{crossing/.style = {draw, circle, dotted, minimum size=0.5cm, inner sep=0, outer sep=0}}
\pgfplotsset{compat=1.12}

\usepackage{extarrows}
\usepackage{mathabx}
\usepackage{amsbsy}
\usepackage{appendix}
\usepackage{float}
\usepackage{subfigure}
\usepackage[numbers,sort&compress]{natbib}
\usepackage{amsthm}
\usepackage{geometry}
\usepackage{fancyhdr}%Heads
\usepackage{overpic}%pictures with text
\usepackage{mathrsfs}
\usepackage{colonequals}
\usepackage{microtype}
\usepackage{diagbox}
\usepackage{tabularx}
\usepackage{multirow}
\usepackage{tocvsec2}
\usepackage{mathtools}
\usepackage[all]{xy}
\usepackage{tikz-cd}
\usepackage{subcaption}

\newcommand{\bpf}{\begin{proof}}
\newcommand{\epf}{\end{proof}}
\newcommand{\bthm}{\begin{theorem}}
\newcommand{\ethm}{\end{theorem}}
\newcommand{\bprop}{\begin{proposition}}
\newcommand{\eprop}{\end{proposition}}
\newcommand{\bcor}{\begin{corollary}}
\newcommand{\ecor}{\end{corollary}}
\newcommand{\blem}{\begin{lemma}}
\newcommand{\elem}{\end{lemma}}
\newcommand{\bdefn}{\begin{definition}}
\newcommand{\edefn}{\end{definition}}
\newcommand{\bcons}{\begin{construction}}
\newcommand{\econs}{\end{construction}}
\newcommand{\bdata}{\begin{data}}
\newcommand{\edata}{\end{data}}
\newcommand{\bexmp}{\begin{example}}
\newcommand{\eexmp}{\end{example}}
\newcommand{\brem}{\begin{remark}}
\newcommand{\erem}{\end{remark}}
\newcommand{\bnot}{\begin{notation}}
\newcommand{\enot}{\end{notation}}
\newcommand{\benu}{\begin{enumerate}}
\newcommand{\benum}{\begin{enumerate}[leftmargin=*]}
\newcommand{\eenu}{\end{enumerate}}
\newcommand{\beq}{\begin{equation}}
\newcommand{\eeq}{\end{equation}}

\definecolor{lygreen}{HTML}{016646}
\usepackage[export]{adjustbox}
\title{Knot Floer complexes of generalized L-space knots}

\author{Shengyu Zou}
\address{Academy of Mathematics and Systems Science\\Chinese Academy of Sciences}
\email{syzou@amss.ac.cn}

\makeatletter

\begin{document}

\begin{abstract}
In this paper, we study height 4 knots, which generalize L-space knots and almost L-space knots studied by Ozsváth--Szabó, Baldwin--Sivek, and Binns.
A knot $K\subset S^3$ is called a height $m$ knot if $\operatorname{rank} \widehat{HF}(S^3_n(K))-n=m$ for all integers $n\geq 2g(K)-1$, where $\widehat{HF}$ is the hat version of Heegaard Floer homology introduced by Ozsváth--Szabó. In particular, height 0 and height 2 knots are exactly L-space
knots and almost L-space knots, respectively. We prove that the knot Floer complex $CFK^\infty(S^3,K)$ of a height 4 knot $K$ admits a basis that is horizontally and vertically simplified--an hv-basis for short. Moreover, we show that under such an hv-basis, the horizontal and vertical differentials fall into six cases, generalizing the results for L-space knots and almost L-space knots. The hv-basis of the knot Floer complex of $K$ can determine, via an explicit algorithm, the immersed curve of Hanselman-Rasmussen-Watson for the knot complement of $K$. Consequently, our results facilitate further exploration of computations of $\widehat{HF}$ for the gluing of two height 4 knot complements.
\end{abstract}

\maketitle
\section{Introduction}\label{section:introduction}

Knot Floer homology is a package of invariants for a knot $K$ in $S^3$, defined independently by Ozsv\'{a}th--Szab\'{o} \cite{OSknotinvar} and Rasmussen \cite{RasFloerhomoknotcomplement}. In this paper, we follow Ozsv\'{a}th--Szab\'{o}'s notation and consider $CFK^\infty(K) = CFK^\infty(S^3,K)$, which is a $\mathbb{Z}\oplus\mathbb{Z}$-filtered chain complex over $\mathbb{F}_2[U,U^{-1}]$. Equivalently, it is generated over $\mathbb{F}_2$ by elements of the form $[x,i,j]$ with $A(x)-j+i=0$, where $A(x)$ is the Alexander filtration level of $x$. The filtered chain homotopy type of $CFK^\infty(K)$ is a topological invariant of the oriented knot $K$, independent of the choice of an admissible, marked Heegaard diagram of $K$.

Heegaard Floer homology is a package of invariants for 3-manifolds due to Ozsv\'{a}th--Szab\'{o} \cite{OSHolodisk}. We will be interested in the hat version, denoted by $\widehat{HF}(Y)$ for a (smooth, connected, oriented) closed 3-manifold $Y$, which is a finitely generated graded abelian group. For a rational homology 3-sphere $Y$, the Euler characteristic $\chi(\widehat{HF}(Y))$ equals $|H_1(Y;\mathbb{Z})|$, where $|\cdot|$ denotes the order of the group. Thus, we have the following rank inequality: 

\vspace{-1.5em}
\begin{equation}
\label{rank HF no less than |H_1|}
\operatorname{rank}
\widehat{HF}(Y) \geq |H_1(Y;\mathbb{Z})|
\end{equation}
\vspace{-1.8em}

An L-space is a rational homology 3-sphere for which inequality (1.1) is an equality. This notion was introduced by Ozsv\'{a}th--Szab\'{o} \cite{OSLenspacesurgeries} to study knots admitting lens space surgeries, which generalizes lens spaces. L-spaces are also related to taut foliations and left-orderability, a connection formulated as the famous L-space conjecture \cite{L-spaceconj}.

An L-space knot is a knot $K$ for which $S_n^3(K)$ is an L-space for some $n \in \mathbb{N}$ (compare to \cite[Definition 1.10]{BaldwinSivekCharacslopesfor5_2}). Indeed, from \cite[Proposition 9.5]{OSrationalsurgeries}, such $n$ must satisfy $n\geq 2g(K)-1$, where $g(K)$ is the Seifert genus of $K$. By \cite{OSLenspacesurgeries}, the knot Floer complex of an L-space knot is determined by the Alexander polynomial of the knot. We will extend the definitions of L-spaces and L-space knots.  

\begin{definition}\label{dfn: height m space and height m knots} Let $Y$ be a rational homology 3-sphere, and let $m\in \mathbb{N}$ be an even integer. We call $Y$ a \textit{height $m$ (generalized L-)space} if $\operatorname{rank} \widehat{HF}(Y) = |H_1(Y;\mathbb{Z})|+m$.

A knot $K$ in $S^3$ is called a \textit{height $m$ knot} if there exists an integer $n \geq 2 g(K)-1$ such that $\operatorname{rank}\widehat{HF}(S_n^3(K)) = n+m$, i.e., $S^3_n(K)$ is a height $m$ space.
\end{definition}

\begin{remark}
We note that $m$ cannot be an odd integer, since $\chi(\widehat{HF}(Y))=|H_1(Y;\mathbb{Z})|$. It follows from \cite[Proposition 9.5]{OSrationalsurgeries} and \cite[Proposition 10.1]{BaldwinSivekFramedinstantonhomologyandconcordanceI} that if $K$ is a height $m$ knot, then for any integer $n \geq 2g(K)-1$, the surgery manifold $S^3_n(K)$ is a height $m$ space.

Height 0 spaces are exactly L-spaces. Height 2 spaces are equivalent to almost L-spaces studied by Baldwin--Sivek \cite{BaldwinSivekCharacslopesfor5_2} and Binns \cite{Binns}. In particular, Binns \cite{Binns} classified the filtered chain homotopy type of $CFK^\infty(K)$ for an almost L-space knot $K$.
\end{remark}

In this paper, we prove that for a height 4 knot $K$, the complex $CFK^\infty(K)$ always admits a certain simplified basis, called a \textit{horizontally and vertically simplified basis}, or an \textit{hv-basis} for short. A detailed description of hv-basis is given in Section \ref{subsection: Simplified basis}. Roughly speaking, an hv-basis is a filtered basis $\{\xi_i\}$ where the non-diagonal differentials---i.e. the horizontal and vertical differentials---are relatively simplified. More precisely, for each basis element $\xi_i$, there is at most one horizontal differential relating $\xi_i$, either starting or ending at $\xi_i$; and similarly for the vertical differentials.

Figure \ref{figure:trefoil and figure-eight CFK} illustrates $CFK^\infty(K)$ for the right-handed trefoil and the figure-eight knot, where the generators in the $i=0$ column form an hv-basis. The coordinates $i$ and $j$ correspond to the $U$-filtration level and the Alexander filtration level, respectively.

\begingroup
\setlength{\intextsep}{3pt}
\begin{figure}[H]
\centering
\setlength{\abovecaptionskip}{3pt}
\begin{minipage}{0.45\textwidth}
\centering
\includegraphics[scale=0.8]{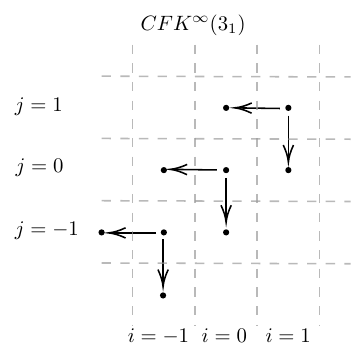}
\end{minipage}\hfill
\begin{minipage}{0.45\textwidth}
\centering
\includegraphics[scale=0.8]{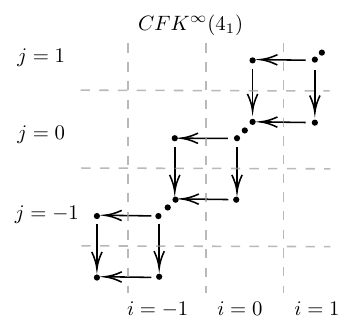}
\end{minipage}
\caption{Two examples of knot Floer complexes}
\label{figure:trefoil and figure-eight CFK}
\end{figure}
\endgroup

% \vspace{-0.8em}
% \begin{figure}[H]
%     \begin{minipage}{0.45\textwidth}
%         \centering
%         \includegraphics[scale=0.8]{fig/trefoil_CFK.pdf}
%     \end{minipage}
%     \begin{minipage}{0.45\textwidth}
%         \centering
%         \includegraphics[scale=0.8]{fig/figure_eight_CFK.pdf}
%     \end{minipage}
%     \caption{Two examples of knot Floer complexes}
%     \label{figure:trefoil and figure-eight CFK}
% \end{figure}
% \vspace{-1.8em}

\begin{remark}
These two examples are special because they have no diagonal differentials at all. For general examples, even under an hv-basis, there could exist some diagonal differentials, i.e., those that change two filtration levels simultaneously. The illustrations in the main theorem below only exhibit horizontal and vertical differentials; diagonal differentials are undetermined and omitted.
\end{remark}

We now state the main theorem of this paper. In Figure \ref{Figure: figure of main theorem}, the cases $(1), (2)$, and $(6)$ are classified into positive or negative variations depending on whether the differentials relating the generator at the corner of the twist end at it or begin at it.

\begin{theorem}\label{thm: classification of height 4}
If $K\subset S^3$ is a height 4 knot, then $CFK^{\infty}(K)$ admits an hv-basis (cf.~Section~\ref{subsection: Simplified basis}). Under this basis, the horizontal and vertical differentials take one of the following forms, as illustrated in Figure \ref{Figure: figure of main theorem}.

\begin{enumerate}[label=(\arabic*)]

\item A doubly-twisted staircase, either positive or negative.

\item A twisted staircase, either positive or negative, plus one box.

\item A staircase plus one twisted box.

\item A staircase plus two boxes, both located in the central position.

\item A staircase plus two separate boxes, located in symmetric positions.

\item A staircase with two separate twists, either positive or negative, located in symmetric positions. 

\end{enumerate}

\end{theorem}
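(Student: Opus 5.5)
We first turn the height condition into a statement about the vertical complex. By the large surgery formula, for $n\geq 2g(K)-1$ we have $\widehat{HF}(S^3_n(K))\cong\bigoplus_{s} H_*(\hat A_s)$, where $\hat A_s=C\{\max(i,j-s)=0\}$. We also have $\operatorname{rank} H_*(\hat A_s)\geq 1$, with equality for $|s|\geq g$. The height 4 condition is therefore equivalent to
\[
\sum_{|s|\leq g-1}\bigl(\operatorname{rank} H_*(\hat A_s)-1\bigr)=4,
\]
so the excess is $4$. We begin by taking a vertically simplified basis of $CFK^\infty(K)$; it exists by the standard Lipshitz--Ozsv\'ath--Thurston argument. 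In this basis $C\{i=0\}$ splits into one generator $x_0$ carrying $\widehat{HF}(S^3)$ together with vertical arrows of various lengths. Applying the symmetry $i\leftrightarrow j$, we get the same picture horizontally.

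Next we bound the complexity of $\widehat{HFK}$. We compute $\operatorname{rank}H_*(\hat A_s)$ from the horizontal and vertical data using the standard exact triangles relating $\hat A_s$ to $C\{i=0\}$, $C\{j=s\}$ and $C\{i<0,j=s\}$. This gives a lower bound for the excess in terms of local defects: an extra generator that is not part of a staircase, a vertical or horizontal arrow of length $\ell$ whose endpoints are paired off from the staircase, and so on. Each defect contributes a computable positive amount to the excess. Since the total excess is only $4$, the possible configurations are limited: at most two ``defect units'', where a box counts as one unit and a twist counts as one unit. The symmetry $CFK^\infty(K)\simeq CFK^\infty(K)$ under $i\leftrightarrow j$ forces these units to lie either symmetrically or at the center. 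This is the combinatorial enumeration that produces the six cases. The staircase part comes from the L-space-knot argument of Ozsv\'ath--Szab\'o applied to the generators not involved in defects. The twisted pieces and box pieces generalize Binns' height 2 classification, which should serve as the base case and template.

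The main obstacle is the existence of a simultaneous hv-basis, which fails for general complexes. The plan is to take a vertically simplified basis and then run Gaussian-elimination-style horizontal simplification. We must show that each change of basis needed horizontally can be chosen to be filtered and to preserve vertical simplicity. Concretely, a horizontal change of basis $\xi\mapsto\xi+\eta$ spoils vertical simplicity only when $\xi$ and $\eta$ sit at the same $i$-level with conflicting vertical arrows. Because the excess is $4$, the set of generators off the staircase is tiny, at most around eight. So we can list the finitely many local configurations in which such a conflict arises and resolve each by hand, using the $i\leftrightarrow j$ symmetry and the grading constraints: $\partial$ lowers the Maslov grading by one, and the staircase generators have determined gradings. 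I expect this case analysis, especially ruling out ``crossed'' configurations in which a box overlaps a staircase step, to be the hardest step. I would first try to show that any defect must sit at a position where the staircase is locally monotone, so that all conflicts disappear after a triangular change of basis.

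Finally, we read off the horizontal and vertical arrows in the resulting hv-basis. We match each surviving configuration with one of the cases (1)--(6): two twists at the same spot give the doubly-twisted staircase, a twist plus a box gives (2), and so on. The sign of each twist, positive or negative, is determined by whether the corner generator has grading $0$ relative to $x_0$, i.e. whether arrows point into it or out of it.
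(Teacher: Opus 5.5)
Your overall architecture matches the paper's: excess $4$, a vertically simplified basis first, then horizontal simplification by filtered changes that leave the vertical arrows intact, then a finite case check. Your concern about same-position conflicts is exactly what the paper handles in two ways. It has a lemma saying that adding to a generator of $C\{j=0\}$ a generator of strictly smaller $i$ leaves the vertical arrows unchanged. It also uses simultaneous basis changes that keep the vertical pairing one-to-one.

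\textbf{First gap: making the case check finite.} The step meant to do this is not an argument. There is no available formula writing $\sum_s(\operatorname{rank}H_*(A_s)-1)$ as a sum of positive contributions from ``defects relative to a staircase''. Such defects only make sense once horizontal and vertical arrows live in a common basis, which is what you are trying to prove. Likewise, ``at most two units, placed symmetrically or centrally'' is the output of the analysis, not an input. What the exact triangles actually give is the following.
\begin{itemize}
\item Since $UX_s\cup Y_s=A_{s-1}$, if one of $A_s,A_{s-1}$ has rank $1$ and the other rank $2k+1$, then $(\operatorname{rank}H_*(X_s),\operatorname{rank}H_*(Y_s))$ is $(k,k+1)$ or $(k+1,k)$.
\item Combine this with the oddness of each $\operatorname{rank}H_*(A_s)$ (which gives the $(1,5,1)$ or $(1,3,1,3,1)$ pattern), with conjugation, and with $H_*(X_m)\cong H_{*-2m}(Y_{-m+1})$. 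This determines $\widehat{HFK}(K,s)$ and its Maslov gradings level by level, and gives rank at most $1$ away from the special levels.
\item In a vertically simplified basis, $\operatorname{rank}H_*(Y_p)$ counts the vertical arrows crossing level $p-\tfrac12$, plus the surviving generator if it lies below. This is what pins down the vertical arrows, and $H_*(Z_p)$ does the same for horizontal ones.
\end{itemize}
You need to replace the defect heuristic by this computation.

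\textbf{Second gap: finishing the classification.} Gradings and the $i\leftrightarrow j$ symmetry are not enough.
\begin{itemize}
\item When gradings coincide (e.g.\ $a=m$ in the $(1,5,1)$ cases), the horizontal arrows between $C\{(0,0)\}$ and $C\{(-1,0)\}$ are determined only by imposing $H_*(A_0)\cong\mathbb{F}^5$ directly via the cancellation lemma. Deciding which $1\times 1$ square closes up requires the $1\times n$ lemma.
\item More seriously, many configurations satisfy all the rank data for $X_p,Y_p,Z_p$ and all grading constraints, yet must be excluded. Examples are every $(1,3,1,3,1)$ subcase with $\widehat{HFK}(K,h)=0$, and the subcases with $c\neq b-1$.
\item The paper kills these by showing that $\partial^2=0$ fails. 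In one instance, $\partial^2=0$ forces a diagonal arrow, which then makes $\partial^2$ nonzero elsewhere. In others, the paper recomputes $H_*(A_h)$ and finds it is not $\mathbb{F}^3$.
\item These are constraints on the $UV$-part of $\partial^2$. A composition of a vertical and a horizontal arrow must be cancelled by another path or by diagonal arrows. Such terms vanish modulo $UV$, so they are invisible to horizontal and vertical data plus gradings, which is all your plan uses.
\end{itemize}
Without these tools your enumeration would fail in both directions. It would not produce an hv-basis in the degenerate cases, and it would not exclude configurations outside (1)--(6).

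A smaller point: the sign of a twist records whether the arrows at the corner generator point into it or out of it. It is not a grading condition relative to $x_0$.
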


\begingroup
\setlength{\intextsep}{1pt}
\begin{figure}[H]
	\centering
    \setlength{\abovecaptionskip}{3pt}
	% 第一行
	\begin{minipage}{0.24\linewidth}
		\centering
		\includegraphics[scale=0.7]{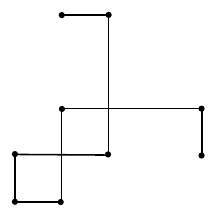}
		\caption*{$(1_{+})$}
	\end{minipage}
	\begin{minipage}{0.24\linewidth}
		\centering
		\includegraphics[scale=0.7]{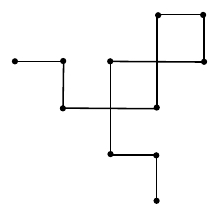}
		\caption*{$(1_{-})$}
	\end{minipage}
	\begin{minipage}{0.24\linewidth}
		\centering
		\includegraphics[scale=0.7]{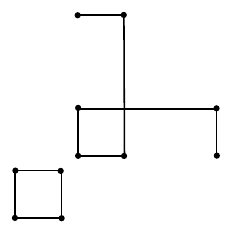}
		\caption*{$(2_{+})$}
	\end{minipage}
	\begin{minipage}{0.24\linewidth}
		\centering
		\includegraphics[scale=0.7]{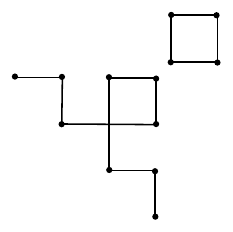}
		\caption*{$(2_{-})$}
	\end{minipage}
	
	% 第二行
	\begin{minipage}{0.48\linewidth}
		\centering
		\includegraphics[scale=0.7]{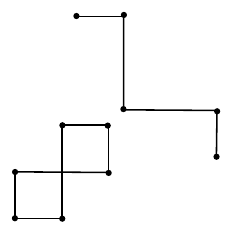}
		\caption*{(3)}
	\end{minipage}
	\begin{minipage}{0.48\linewidth}
		\centering
		\includegraphics[scale=0.7]{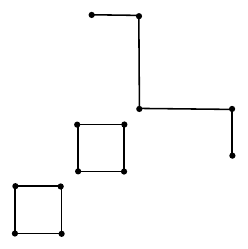}
		\caption*{(4)}
	\end{minipage}
	
	%第三行
	\begin{minipage}[b]{0.32\linewidth}
		\centering
		\includegraphics[scale=0.75]{fig/7-3}
		\caption*{(5)}
	\end{minipage}
	\begin{minipage}[b]{0.33\linewidth}
		\centering
		\includegraphics[scale=0.75]{fig/7-2}
		\caption*{$(6_{+})$}
	\end{minipage}
	\begin{minipage}[b]{0.33\linewidth}
		\centering
		\includegraphics[scale=0.75]{fig/7-8}
		\caption*{$(6_{-})$}
	\end{minipage}
    \caption{Six cases of the hv-basis for a height 4 knot}
    \label{Figure: figure of main theorem}
\end{figure}
\endgroup

The homological algebra lemmas and analytical framework employed in this paper build upon previous results in \cite{OSLenspacesurgeries} and \cite{Binns}; see the detailed discussion in Section \ref{subsection:proof methods}. I have generalized the key lemmas so that they apply to the height 4 case and, more generally, to arbitrary heights. Moreover, I have reorganized and reformulated the method of discussion used in \cite{Binns} into a systematic \textit{snake method}, establishing a foundational framework for treating knots of higher height.

The main innovations are twofold.  
First, I introduce the cancellation lemma to compute the homology of subquotient complexes, which significantly accelerates computations. This advancement makes it feasible to carry out the classification of differentials at height 4 and remains effective for even higher heights.  
Second, I revisit the origins of the homological algebra approach used in earlier work and directly impose constraints on possible differentials in the bent complexes via conditions on the dimensions of Heegaard Floer homology. This allows us to systematically rule out many pathological cases.

We compare our results with those for height 0 and 2 knots. Figure \ref{figure: hv-basis for knots of height 0 and 2} presents several examples.

\begingroup
\setlength{\intextsep}{2pt}
\begin{figure}[H]
    \centering
    \setlength{\abovecaptionskip}{3pt}
    \hspace*{0.02\textwidth}%
    \begin{minipage}[t]{0.28\textwidth}
        \centering
        \includegraphics[width=\textwidth, height=2.5cm, keepaspectratio]{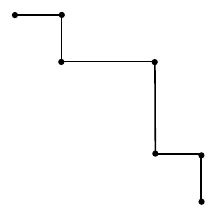}
        \par\vspace{0pt}
        \makebox[\linewidth][c]{\small a staircase}
    \end{minipage}%
    \hspace{0.04\textwidth}%
    \begin{minipage}[t]{0.28\textwidth}
        \centering
        \includegraphics[width=\textwidth, height=2.5cm, keepaspectratio]{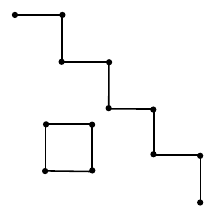}
        \par\vspace{0pt}
        \makebox[\linewidth][c]{\small a staircase plus one box}
    \end{minipage}%
    \hspace{0.04\textwidth}%
    \begin{minipage}[t]{0.28\textwidth}
        \centering
        \includegraphics[width=\textwidth, height=2.5cm, keepaspectratio]{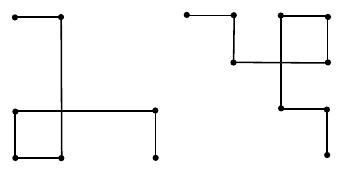}
       \par\vspace{0pt}
       \makebox[\linewidth][c]{\small twisted staircases}
    \end{minipage}%
    \hspace*{0.06\textwidth}
    \caption{The hv-basis for height 0 and height 2 knots}
    \label{figure: hv-basis for knots of height 0 and 2}
\end{figure}
\endgroup

For an L-space knot, there exists an hv-basis for its knot Floer complex, which must be a staircase \cite{OSLenspacesurgeries}. Given an almost L-space knot, one can always find an hv-basis, which is either a staircase plus one box, or a twisted staircase (which can be positive or negative) \cite{Binns}. 

% Need some applications plus remarks, 3~4 pages.

\begin{remark}
It is worth noting that the diagonal differentials can be completely determined for L-space knots and almost L-space knots; i.e., the hv-basis together with the horizontal and vertical differentials shown in Figure \ref{figure: hv-basis for knots of height 0 and 2} generates the whole $CFK^\infty(K)$ over $\mathbb{F}_2[U,U^{-1}]$ (two diagonal differentials shall be added in the twisted staircases cases, see \cite{Binns}). We expect that the diagonal differentials can also be determined for height 4 knots. The $1\times 1$ square component can be completely disconnected from the rest of the complex under specific conditions, see \cite[Lemma 4.12]{DavidPopKnotsoflowFloerwidth} for a discussion. A thorough solution may be somewhat more involved, and we leave this to future work. 

Neglecting the differences between positiveness and negativeness, the six cases differ according to the conditions on subquotient complexes $X_p$, except $(3)$ and $(4)$. See the table at the beginning of Section \ref{subsection: Dimension and Maslov grading}. 

In Sections \ref{subsection:further exploration} and \ref{subsection: Immersed curves}, we describe the connection between an hv-basis and the immersed curve of the knot complement introduced by Hanselman--Rasmussen--Watson \cite{HRWborderedHeegaardFloerimmersedcurves, HRWHeegaardFloerhomologyformanifoldswithtorusboundaryproperties}, which indicates that the hv-basis is sufficiently useful for many applications (e.g., computing $\widehat{HF}$ of closed 3-manifolds obtained by gluing two complement spaces of height 4 knots).
\end{remark}

\subsection{Proof methods}\label{subsection:proof methods}

The proof strategy of Theorem \ref{thm: classification of height 4} follows ideas similar to those for L-space knots and almost L-space knots in \cite{Binns,OSLenspacesurgeries}. First, we apply the large surgery formula and the height 4 condition to obtain constraints on the subquotient complexes $A_h$ of $CFK^{\infty}(K)$ for $h\in \mathbb{Z}$, where $A_h$ is usually called a \textit{bent complex} \cite[Definition 2.4]{RRL-spaceinterval}. More precisely, we obtain

\vspace{-1.5em}
\begin{equation}
    \sum_{h\in\mathbb{Z}}(\operatorname{rank}H_*(A_h)-1)=4.
\end{equation}
\vspace{-1.0em}

By Spin$^c$ conjugation, we have $H_*(A_h) \cong H_*(A_{-h})$. Hence $\operatorname{rank}H_*(A_h)$ must be of the form 

\vspace{-1.5em}
\begin{equation}
    1, \dots , 1, 5, 1, \dots , 1 \quad \text{ or } \quad 1 , \dots , 1 , 3 , 1 , \dots , 1 , 3 , 1 , \dots , 1. 
\end{equation}
\vspace{-1.2em}

We divide height 4 knots into the $(1,5,1)$ type and the $(1,3,1,3,1)$ type. For either type, we first establish some algebraic lemmas (cf.\ Lemmas \ref{lemma: 1 and 2k+1 then k or k+1} and \ref{lemma: 2k+1 and 1 then k or k+1}), which generalize those in \cite{Binns, OSLenspacesurgeries}. We then apply those lemmas to determine $H_*(X_p)$ and $H_*(Y_p)$, where $X_p$ is a certain subcomplex of $A_p$ and $Y_p$ is the associated quotient complex. Based on these preparations, we determine the dimension and Maslov grading at each Alexander filtration level, i.e., the entire $\widehat{HFK}(K)$.

Note that if $C=CFK^\infty(K)$ is \textit{reduced}---i.e., no differentials preserve the filtration level, then $C\{i=0\}$ is generated by $\widehat{HFK}(K)$. Precisely, the generators of $\widehat{HFK}(K)$ form a filtered basis, and we apply linear filtered basis changes to turn it into an hv-basis. We first determine the vertical differentials in $C\{i=0\}$ under the constraints of Maslov grading and the homologies of $Y_p$, proceeding from the lowest to highest Alexander filtration level. We call this method \textit{grid climbing} or the \textit{snake method}. As a result, we obtain a vertically simplified basis. Then we apply grid climbing again to determine the horizontal differentials and make the vertically simplified basis also horizontally simplified. 

Compared to \cite{OSLenspacesurgeries} and \cite{Binns}, there are more possible differentials connecting basis elements. We introduce new techniques for the $(1,5,1)$ type, which are also used extensively for the $(1,3,1,3,1)$ type. Precisely, we encounter cases where $\operatorname{rank} \widehat{HFK}(K,0)=5$ and $\operatorname{rank}$ $\widehat{HFK}(K,\pm 1)=3$, for which the homologies of the subquotient complexes are more difficult to compute. We apply the cancellation lemma (cf. Lemma \ref{lemma: cancellation lemma}) to make these computations more convenient. In the process of determining and simplifying horizontal differentials, there are cases where $H_*(X_p)$ and $H_*(Y_p)$ alone are insufficient to guarantee the existence of an hv-basis. We must refer again to the condition $H_*(A_0)=\mathbb{F}^5$ to constrain the horizontal differentials in $A_0$, which helps rule out many pathological cases.

\subsection{Further exploration}\label{subsection:further exploration}

The immersed curve package was developed by Hanselman--Rasmussen--Watson recently\cite{HRWborderedHeegaardFloerimmersedcurves, HRWHeegaardFloerhomologyformanifoldswithtorusboundaryproperties}. This package provides a geometric approach to encoding the structure
of the bordered Floer invariant $\widehat{CFD}(M)$ for a compact 3-manifold $M$ with torus boundary. Meanwhile, the information carried by the immersed curve of the knot complement is equivalent to that in the quotient complex $CFK_{\mathbb{F}[U,V]}(K)/(UV=0)$. We will describe the procedures for determining the immersed curve of the knot complement from the hv-basis of $CFK^\infty(K)$ in Section \ref{subsection: Immersed curves}. Potentially, the techniques in this paper could be translated into the language of immersed curves to obtain a more illustrative proof.

The pairing theorem of immersed curves \cite[Theorem 1.2]{HRWborderedHeegaardFloerimmersedcurves} provides a convenient way to compute $\operatorname{rank}\widehat{HF}(M_1\cup_{f} M_2)$, where $M_1\cup_{f}M_2$ is the 3-manifold obtained by gluing two 3-manifolds with torus boundary via an attaching map $f$. Based on Theorem \ref{thm: classification of height 4} and the above translation from an hv-basis to an immersed curve, we can compute the case where $M_1$ and $M_2$ are complements of height 4 knots.

With the help of immersed curves, it has been shown that if a closed 3-manifold $Y$ contains an essential torus, then $\operatorname{rank} \widehat{HF}(Y) \geq 5$, see \cite[Theorem 1.8, Theorem 7.20]{HRWborderedHeegaardFloerimmersedcurves}. The results of this paper may enable us to explore more examples of toroidal spaces of minimal or next-to-minimal rank (5 or 7). 

\begingroup
\setlength{\intextsep}{0pt}
\begin{figure}[H]
        \centering
        \setlength{\abovecaptionskip}{1pt}
        \includegraphics[scale=0.8]{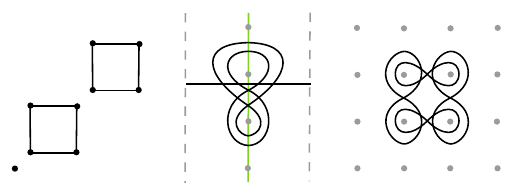}
    \caption{}
    \label{6_1 and 6_1}
\end{figure}
\endgroup

For example, the knot $6_1$ is a height 4 knot. Let $M_1$ and $M_2$ be the complements of $6_1$, with meridian-longitude framings $\mu_i,\lambda_i$. Let $h:\partial M_2\rightarrow \partial M_1$ be the attaching map such that $h(\mu_2)=\lambda_1$ and $h(\lambda_2)=\mu_1$. 
According to the pairing theorem of immersed curves, $\operatorname{rank}\widehat{HF}(M_1\cup_{h}M_2)=17$ (see Figure \ref{6_1 and 6_1} for an illustration). This is far larger than $7$; we may try a different gluing map to reduce it. 

The classification process can be further extended to higher heights. For instance, in the case of height 6, there are two candidate types, namely the $(1,7,1)$ type and the $(1,3,1,3,1,3,1)$ type. However, when seven generators are present, the number of possible differential configurations typically increases dramatically, significantly raising the computational complexity of a case-by-case classification. It remains unknown whether the current techniques and framework are sufficient to guarantee the existence of an $hv$-basis.

\subsection{Some questions}\label{subsection: some questions}

The classification of complexes in Theorem \ref{thm: classification of height 4} is purely algebraic. One may ask whether each complex among these types can be realized as $CFK^\infty(K)$ for some knot $K$ in $S^3$. As a partial result, the cases $(1_{+})$ and $(1_{-})$ can be realized as cables of L-space knots by computation techniques in \cite{Cablingintermsofimmersedcurves}; see also the discussion in \cite{Binns}. We therefore propose:

\begin{question}
    Can all cases of hv-basis in Theorem \ref{thm: classification of height 4} be realized by knots in $S^3$?
\end{question}

The $CFK^\infty(K)$ of $(3,2)$-cable of right-handed trefoil has an hv-basis of the case $(6_{+})$. The knot complexes of $6_1$ and $7_2$ fall into case $(4)$, while $6_2$ and $7_3$ fall into case $(5)$. To date, the author has not found knots in $S^3$ that realize cases $(1_{+}), (1_{-}), (2_{+}), (2_{-}), (3)$ and $(6_{-})$ up to now.

This question can be formulated more precisely if we take the length of the differentials into consideration. For example, consider a doubly-twisted staircase. Are there restrictions on the length of the steps? For L-space knots, the length of the bottommost step must be 1, and there exist a series of constraints among the lengths of other steps; see \cite{HeddenWatsonOnthegeographyandbotanyofknotFloerhomo, KrcatovichDavidArestrictiononAlexPolyofLspaceknots} for some concrete descriptions. 

% Main techniques

% Together with work before, these results reveal how the constraints from Heegaard Floer perspectives finally determine the possible knot Floer complex types, providing a framework for understanding the pattern of changes of knot Floer complexes as their related Floer-theoretic invariants become more complicated. Combined with results on topological properties of knots, it is promising to obtain further insights into the relationship between the topology of knots and their Floer-theoretic invariants.
 
Another direction worthy of attention and exploration is a topological and geometric characterization of height 4 spaces and height 4 knots. The L-space conjecture posits an equivalent characterization of L-spaces in terms of both an orderability condition on their fundamental groups and whether they admit taut foliations. Binns \cite{Binns} asked whether there is a version of the L-space conjecture for almost L-spaces. Similarly, we propose: 

\begin{question}
    Is there a version of the L-space conjecture for height 4 spaces?
\end{question}

To the best of the author's knowledge, there is currently no compelling guiding conjecture as to which knots are L-space knots. Yet efforts have been made to catalog which knots among $(1,1)$ knots are L-space knots. Greene--Lewallen--Vafaee \cite{11Lspaceknots} showed that a $(1,1)$ diagram represents an L-space knot if and only if it is coherent. Binns--Zhou \cite{11almostLspaceknots} showed that a standard $(1,1)$ diagram represents an almost L-space knot if and only if it is strongly almost coherent. Building on this work, it would be interesting to explore how to distinguish height 4 knots within $(1,1)$ knots.

\subsection{Organization}\label{subsection:organization}

This paper is organized as follows. 

% Section \ref{section:introduction} is an introduction to necessary concepts and our main theorem, including a brief discussion about the relationships between our results and previous work. After the statement of the main theorem, we explain the proof method, propose some questions and take a glance at potential future work. 

We provide a brief review of background knowledge of Heegaard Floer homology in Section
\ref{subsection: Heegaard Floer homology}. Then we give detailed definitions and illustrations of the primary algebraic concepts that will appear in later proofs, primarily concerning horizontally and vertically simplified bases(see Section \ref{subsection: Simplified basis}). The knot Floer complex over $\mathbb{F}[U,V]$ is discussed in Section \ref{subsection:Knot Floer complex over F[U,V]}, where we will describe the procedure for transforming $CFK^\infty(K)$ into $CFK_{\mathbb{F}[U,V]}(K)$. In Section \ref{subsection: Some algebraic lemmas}, we state two technical lemmas that will simplify subsequent computations. In Section \ref{subsection: Immersed curves} we describe the process of obtaining the immersed curve of the complement space of $K$ from an hv-basis of $CFK^\infty(K)$, through the complex $CFK_{\mathbb{F}[U,V]}(K)/(UV=0)$. For the hv-basis components appearing in our main theorem, we present their immersed curves as illustrations of this process.

Section \ref{section:Height 4 knots} serves as a common preparation for Sections \ref{section:(1,5,1) type} and \ref{section: (1,3,1,3,1) type}. We isolate the arguments shared by the following two sections, making them more self-contained and rendering the overall proof structure of the paper more streamlined.

Sections \ref{section:(1,5,1) type} and \ref{section: (1,3,1,3,1) type} contain detailed discussions about various cases. In these two sections, we explain in detail how to obtain an hv-basis in different cases. The results in different cases are collected at the beginning of each section. Readers interested in the details of the proof can look up the corresponding case. At the end of either section, we present results on those special cases that are not covered in our proof of the generic cases. We list and present the classification results in full detail to ensure completeness of our result and facilitate the reader's reference. The underlying ideas and methods for these special cases are essentially the same. To reduce the length of the paper, only the essential and different points are highlighted.

% In Appendix \ref{section:tables of classification results}, 

\subsection{Acknowledgment}

This paper is modified from the author's phd thesis. 

I am deeply grateful to Professor Fan Ye for introducing me to this project and for his continuous guidance throughout its development. At every stage, he patiently pointed out the direction forward while encouraging me to persist working. Through our numerous discussions, I have received invaluable instruction in fundamental research methods and conceptual thinking, which has profoundly shaped my approach to tackling academic challenges. This journey has been an exceptionally meaningful training experience, both intellectually and personally. Beyond mathematics, I have also benefited greatly from his thoughtful advice and unwavering support in life—guidance that has been of immense significance to me during this crucial stage of my academic career. His attitude toward research and life, as well as his approach to problem-solving, has inspired me to face the road ahead and its challenges with a more positive mind.

Thanks for Professor Jiajun Wang for guiding me into the field of Heegaard Floer homology. He has created many conveniences for my doctoral research career, making my journey in exploring Heegaard Floer homology all the more enjoyable. His support and encouragement are indispensable during my doctoral studies.

Thanks my classmates Cheng Chang and Xiliu Yang for their companion and generous help, it is my great fortune to have had the opportunity to study and conduct research alongside them during my doctoral studies.

Thanks for the love from my family members. Their diligence and positive attitude toward life have continually inspired me to keep moving forward.

I am grateful to my friends, including Honghuai Fang, Jia Li, Yongsheng Wu, Shengxuan Zhou, Linsheng Wang and Zekun Chen for their companionship. Fang is a wise and cheerful friend, the time with him has offered me profound insights in many aspects of life. Kun’s exceptional mathematical talent has left a deep impression on me, revealing the depth of mathematics and filling me with even greater awe for the subject. The journey with Zhou and Wang has been an inspiration to me. It is an invaluable opportunity to engage in discussions and exchanges of ideas with them, through which I have observed the working styles of outstanding researchers. My attitude towards research has been profoundly influenced by them. Much of my time was spent with Jia and Yongsheng, and many challenges were faced and handled together, their companion shall bring more energy to my futural life.

\section{Preliminaries}\label{section: preliminaries}

We present the background knowledge necessary for this paper in this section. We begin by introducing some concepts in Heegaard Floer homology and knot Floer homology, including $\widehat{HF}(Y)$, Spin$^c$ structures, and the complex $CFK^\infty(S^3,K)$. We then state large surgery formula and the Euler characteristic property. In Section \ref{subsection: Simplified basis}, we give detailed algebraic formulation of $CFK^{-}(K)$ and the definition of an hv-basis. In Section \ref{subsection:Knot Floer complex over F[U,V]}, we describe $CFK_{\mathbb{F}[U,V]}(K)$ and the transformation from $CFK^\infty(K)$ to $CFK_{\mathbb{F}[U,V]}(K)$. In Section \ref{subsection: Some algebraic lemmas}, we present some algebraic technical lemmas that will be applied in later proofs. Finally, in Section \ref{subsection: Immersed curves}, we illustrate how to draw the immersed curve of the complement of $K$ from an hv-basis for $CFK^\infty(K)$.

\subsection{Heegaard Floer homology}\label{subsection: Heegaard Floer homology}

Let $Y$ be a closed 3-manifold. The hat version of 
Heegaard Floer homology, denoted $\widehat{HF}(-)$, is an invariant of 3-manifolds due to Ozsv\'{a}th--Szab\'{o}. The homology group $\widehat{HF}(Y)$ is defined through Heegaard diagrams, involving symplectic topology and analysis; see \cite{OSpropertiesandexample, OSHolodisk}. This invariant can be defined over integer coefficients, but we will only consider the case over $\mathbb{F}=\mathbb{Z} / 2\mathbb{Z}$ throughout this paper. 

The homology group $H_1(Y;\mathbb{Z})$ encountered in this paper will always be taken with integer coefficients, which we abbreviate as $H_1(Y)$. $\widehat{HF}(Y)$ splits as a direct sum over equivalence classes of non-vanishing vector fields on $Y$, called Spin$^c$ structures. Let $Y$ be a rational homology 3-sphere. There is a non-canonical bijection between the set of Spin$^c$ structures on $Y$ and $H_1(Y)$. Suppose $Y$ is obtained by performing $n$-surgery on a knot $K$ in $S^3$. Then $K$ determines a canonical bijection between $\operatorname{Spin}^c(Y)$ and $\mathbb{Z}/n\mathbb{Z} \cong H_1(Y)$; see \cite{OSIntesurgery} for detailed descriptions and \cite{Binns} for a brief introduction. 

% Dehn-surgery is a well studied operation on 3-manifolds. Let $r \in \mathbb{Q}$ be a rational number and $K$ be a knot in $S^3$. The $r$-surgery on $K$, denoted $S_r^3(K)$, is the manifold obtained by removing a tubular neighborhood of $K$ from $S^3$, then regluing it with a framing determined by $r$. The framing consists of a meridian and preferred longitude, following the right-hand rule, see detailed description in \cite{rolfsen2003knots}.

There is a version of Heegaard Floer homology for knots, defined independently in \cite{OSknotinvar} and \cite{RasFloerhomoknotcomplement}. A convenient introduction to knot Floer homology can be found in \cite{HomLectureHeegaardFloer}. The invariant used in this paper is denoted by $CFK^{\infty}(S^3, K)$, following the notation of \cite{OSknotinvar}. We call it the knot Floer complex and write it as $CFK^\infty(K)$. The complex $CFK^\infty(K)$ is a finitely generated chain complex of $\mathbb{F}[U,U^{-1}]$-modules. The two filtrations are the Alexander filtration and the $U$-filtration. The $U$ action lowers the Maslov grading by two and the Alexander filtration level by one. 

The subquotient complex generated by those generators of $U$-filtration level 0 is denoted by $\widehat{CFK}(K)$. The homology of $\widehat{CFK}(K)$, as the associated graded object with respect to the Alexander filtration, is denoted by $\widehat{HFK}(K)$.

For a homogeneous element in $CFK^\infty(K)$, we set its $U$-filtration level and Alexander filtration level as its horizontal coordinate and vertical coordinate, respectively. This enables us to present $CFK^\infty(K)$ in the plane. Below is an illustration of $CFK^\infty(3_1)$. The Maslov grading for a generator is marked as the number near it. The arrows denote the differentials between generators.

\begingroup
\setlength{\intextsep}{1pt}
\begin{figure}[H]
    \setlength{\abovecaptionskip}{1pt}
    \begin{minipage}[c]{0.48\textwidth}
        \centering
        \includegraphics[scale=0.9]{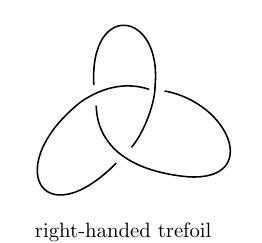}
    \end{minipage}
    \begin{minipage}[c]{0.48\textwidth}
        \centering
        \includegraphics[scale=0.75]{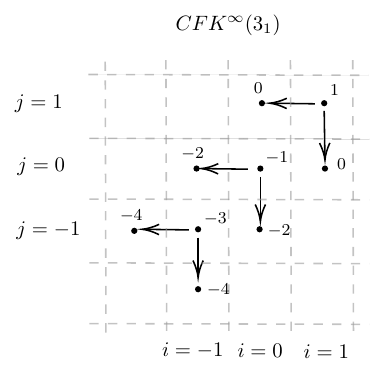}
    \end{minipage}	
    \caption{}
    \label{figure:CFK^infty of right-handed trefoil}
\end{figure}
\endgroup

We now state the large surgery formula, an isomorphism relating the Heegaard Floer homology of a surgery manifold to the bent complexes $A_m$.

% We now state an isomorphism, called large surgery formula, which connects Heegaard Floer homology of the surgery manifold to bent complexes $A_m$.

\begin{theorem}[{\cite[Theorem 4.4]{OSknotinvar}}] 
Given any knot $K \subset S^3$, there exists an integer $N$ such that for all $n \geqslant N$, there is an isomorphism of chain complexes

\vspace{-1.5em}
\begin{equation}
\label{theorem: large surgery formula}
\widehat{CF}(S_n^3(K),[m]) \cong CFK^\infty\{\max (i, j-m) = 0\} = A_m.
\end{equation}
\vspace{-1.8em}

\end{theorem}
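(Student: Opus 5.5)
The plan is to follow the original argument of Ozsv\'{a}th--Szab\'{o}: build a Heegaard diagram for $S^3_n(K)$ from a doubly pointed diagram for $K$ by a winding construction, and then match generators and holomorphic disks on the two sides for $n$ large. Start with a doubly pointed Heegaard diagram $(\Sigma,\boldsymbol\alpha,\boldsymbol\beta,w,z)$ for $K$, where $\beta_g=\mu$ is a meridian of $K$ and $w,z$ lie on either side of $\mu$. Replace $\mu$ by a curve $\gamma_n$ representing the $n$-framed longitude, obtained by winding a longitude $n$ times around $\mu$ in a small annular neighborhood. Then $(\Sigma,\boldsymbol\alpha,\beta_1,\dots,\beta_{g-1},\gamma_n,w)$ is a pointed Heegaard diagram for $S^3_n(K)$. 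Inside the winding region, $\gamma_n$ meets the $\alpha$-curve that crossed $\mu$ in $n$ new points $x_1,\dots,x_n$ lying near $\mu$.

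\textbf{Step 1 (generators).} For $n\gg 0$, every generator of $\widehat{CF}(S^3_n(K),[m])$ that is relevant to the homology is of the form $(\mathbf x,x_k)$ with $\mathbf x$ a generator of the knot diagram and $x_k$ one of the winding points. Each $\mathbf x$ together with its ``$U$-power'' $i$ has a unique partner $x_k$ once the spin$^c$ structure $[m]$ is fixed. The bookkeeping is this: the position $k$ of the point in the winding region is determined by $A(\mathbf x)$, $i$ and $m$. Computing the first Chern class of the associated spin$^c$ structure, via a triangle map or a direct $H_1$ computation, should identify the triple $[\mathbf x,i,j]$ with $A(\mathbf x)-j+i=0$ and $\max(i,j-m)=0$. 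Generators using intersection points of $\gamma_n$ away from the winding region are either absent or can be excluded by admissibility considerations.

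\textbf{Step 2 (differentials).} For a Whitney disk $\phi$ between such generators, compare the multiplicities $n_w(\phi)$ and $n_z(\phi)$ in the knot diagram with the multiplicity of $\phi$ at the single basepoint $w$ of the surgery diagram. Inside the winding region, the domain of a disk connecting $(\mathbf x,x_k)$ to $(\mathbf y,x_{k'})$ crosses the spiral from one side or the other. Its multiplicity at $w$ in the surgery diagram is zero exactly when the corresponding differential in $CFK^\infty$ stays inside the region $\max(i,j-m)=0$, that is, when it decreases neither filtration out of the bent region. Moreover, for large $n$ the holomorphic representatives correspond bijectively, by a stretching/degeneration argument in which the winding region is pinched. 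This yields an isomorphism of chain complexes, not merely of homology.

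\textbf{Main obstacle.} The hardest part is the analytic one in Step 2: showing that the moduli spaces of disks agree, and not just that the domains agree. I would first try a neck-stretching along a circle parallel to $\mu$ inside the winding region, so that a disk splits into a disk in the knot diagram together with a thin piece in the annulus whose count is $1$. Alternatively, I would try to arrange the diagram (for instance, nice diagrams in the sense of Sarkar--Wang) so that all relevant domains are bigons and rectangles and counts are combinatorial. Finally, $N$ is chosen larger than the span of Alexander gradings plus a bound on the multiplicities of the finitely many relevant domains, which makes the correspondence uniform for all $n\ge N$.
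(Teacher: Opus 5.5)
\textbf{There is a genuine gap in Step~2.} The paper does not prove this theorem; it quotes Ozsv\'ath--Szab\'o. Your setup is sound: the winding diagram, the spin$^c$ labelling that pairs each $\mathbf x$ with exactly one winding point, and the observation that $n_w$ in the surgery diagram vanishes exactly for disks staying in $\max(i,j-m)=0$. What is missing is the step that turns the bijection of generators into an isomorphism of chain complexes.

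Step~2 needs $\#\widehat{\mathcal M}(\phi')=\#\widehat{\mathcal M}(\phi)$ for disks in two \emph{different} Heegaard diagrams, one using $\mu$ and one using the spiral $\gamma_n$. Neither mechanism you suggest gives this.
\begin{enumerate}
\item A circle parallel to $\mu$ in the winding annulus meets both $\alpha_g$ and $\gamma_n$. So stretching along it is not a neck in the complement of the attaching curves. The limiting objects are curves on both sides subject to matching conditions along the neck (essentially bordered Floer theory), not a knot-diagram disk times an annular piece of count $1$. Even granting such a splitting, you would still have to compare annular pieces bounded by the spiral with those bounded by $\mu$.
\item The nice-diagram alternative also fails. $A_m$ counts disks with $n_w=0,\ n_z>0$ as well as disks with $n_z=0,\ n_w>0$. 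So the knot diagram would have to be nice with respect to both basepoints, which is impossible once $g\ge 2$ by an Euler-measure count. In addition, the winding construction does not in general produce a nice surgery diagram.
\end{enumerate}

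\textbf{The missing idea is to avoid comparing disk moduli spaces across diagrams altogether.} Use the Heegaard triple $(\Sigma,\alpha,\gamma,\beta,w,z)$, where $\gamma_i$ ($i<g$) are small translates of $\beta_i$ and $\gamma_g=\gamma_n$. Define $\Psi$ on $\widehat{CF}(S^3_n(K),[m])$ by counting Maslov index zero holomorphic triangles $\psi\in\pi_2(\mathbf x',\Theta,\mathbf y)$, with $\Theta$ the top generator of $(\Sigma,\gamma,\beta)$. Use $n_w(\psi)$ and $n_z(\psi)$ to place $[\mathbf y,i,j]$ in $A_m$.
\begin{itemize}
\item That $\Psi$ is a chain map is formal: it comes from the ends of one-dimensional triangle moduli spaces, with $\Theta$ a cycle.
\item For $n$ large, each winding generator $\mathbf x'$ is joined to its partner $\mathbf x$ by a unique small triangle, a union of tiny triangles near the points of $\mathbf x$.
\item With a suitable area form, every other triangle leaving $\mathbf x'$ has strictly larger area.
\end{itemize}
So $\Psi$ is your bijection plus terms strictly lower in an area filtration. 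Hence it is an isomorphism of chain complexes, though not literally a generator-to-generator map.

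\textbf{A smaller point in Step~1.} ``Relevant to the homology'' is not enough for an isomorphism of complexes; you need \emph{all} generators in $[m]$ to lie in the winding region. For $|m|$ small compared with $n$, this follows from the same $H_1$ computation. Adjacent winding points differ by one in label, so generators whose $\gamma_n$-point lies off the winding region have labels roughly $n/2$ away from the central ones. Admissibility cannot remove generators, so it plays no role here.
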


The Euler characteristic of $\widehat{HF}(Y)$ satisfies the following equality.

\begin{theorem}[{\cite[Proposition 5.1]{OSpropertiesandexample}}] Suppose $\mathfrak{s}$ is an element of $\operatorname{Spin}^c(Y)$. The Euler characteristic of $\widehat{HF}(Y, \mathfrak{s})$ is given by:

\vspace{-1.5em}
\[
\chi(\widehat{HF}(Y,\mathfrak{s}))=
\begin{cases}
	1 & \text{if}\ b_1(Y)=0, \\
	0 & \text{if}\ b_1(Y)\neq 0\\
\end{cases}
\]
\vspace{-1.0em}

\label{theorem: Euler characteristic of Heegaard Floer homology}
\end{theorem}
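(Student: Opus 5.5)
The statement is the Ozsváth--Szabó Euler characteristic formula, which we only cite. If we had to prove it, we would avoid counting intersection points directly. Instead we would compute $\chi(\widehat{HF}(Y,\mathfrak{s}))$ from the structure of $HF^+(Y,\mathfrak{s})$ via the exact triangle
\[
\cdots\to \widehat{HF}(Y,\mathfrak{s}) \xrightarrow{\ \iota\ } HF^+(Y,\mathfrak{s}) \xrightarrow{\ U\ } HF^+(Y,\mathfrak{s}) \xrightarrow{\ \delta\ } \widehat{HF}(Y,\mathfrak{s})\to\cdots,
\]
which comes from the short exact sequence $0\to \widehat{CF}\to CF^+\xrightarrow{U} CF^+\to 0$. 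Here $U$ preserves the relative $\mathbb{Z}/2$ grading, $\iota$ preserves it, and $\delta$ reverses it. Exactness gives $\widehat{HF}\cong \ker U\oplus \operatorname{coker} U$ as vector spaces, with $\operatorname{coker}U$ entering with the opposite parity. Hence
\[
\chi(\widehat{HF}(Y,\mathfrak{s}))=\chi(\ker U)-\chi(\operatorname{coker}U),
\]
up to the global sign convention that fixes the absolute $\mathbb{Z}/2$ grading.

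\textbf{Case $b_1(Y)=0$.} We would use the standard description $HF^\infty(Y,\mathfrak{s})\cong\mathbb{F}[U,U^{-1}]$. The image of $HF^+$ in high degrees is a tower $\mathcal{T}^+=\mathbb{F}[U^{-1}]$, so $HF^+\cong \mathcal{T}^+\oplus HF_{\mathrm{red}}$ with $HF_{\mathrm{red}}$ finite-dimensional and $U$ nilpotent on it. Since $HF^+$ is infinite-dimensional, we would first truncate to gradings $\le N$ for $N\gg0$. The tower $\mathcal{T}^+$ contributes $\ker U=\mathbb{F}$ (its bottom element, in even degree) and $\operatorname{coker}U=0$. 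On the finite-dimensional $HF_{\mathrm{red}}$, rank--nullity gives $\chi(\ker U)=\chi(\operatorname{coker}U)$. The two pieces are not a priori $U$-split, but this is handled by the filtration $HF_{\mathrm{red}}\subset HF^+$ and the additivity of $\chi$. Altogether $\chi(\widehat{HF}(Y,\mathfrak{s}))=1$.

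\textbf{Case $b_1(Y)>0$, $\mathfrak{s}$ non-torsion.} Here $HF^+(Y,\mathfrak{s})$ is finite-dimensional, because $HF^\infty$ vanishes for non-torsion structures by admissibility and the adjunction/energy argument. Rank--nullity for $U$ then gives $\chi(\ker U)=\chi(\operatorname{coker} U)$, so $\chi(\widehat{HF})=0$.

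\textbf{Case $b_1(Y)>0$, $\mathfrak{s}$ torsion.} We would use that, in high degrees, $HF^\infty(Y,\mathfrak{s})$ (or its twisted-coefficient version) is modeled on $\Lambda^*H^1(Y)\otimes\mathbb{F}[U,U^{-1}]$. Since $b_1\ge1$, $\Lambda^*H^1$ has equal even and odd dimension. So the tower parts of $HF^+$ contribute equally to $\ker U$ in both parities, and $\operatorname{coker}U$ vanishes on them. The finite reduced part again cancels by rank--nullity, giving $0$.

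The main obstacle is this last case: for untwisted coefficients and large $b_1$ the full structure of $HF^\infty$ is delicate. The alternative we would try first is a direct chain-level argument. Pick a Heegaard diagram and compute $\chi(\widehat{CF}(Y,\mathfrak{s}))$ as the signed count of intersection points in $\mathbb{T}_\alpha\cap\mathbb{T}_\beta$ lying in $\mathfrak{s}$. Identify the relative $\mathbb{Z}/2$ grading with the local intersection sign, and match the signed count per $\mathfrak{s}$ with a Turaev torsion coefficient via Fox calculus on the presentation of $\pi_1$ given by the diagram. For $b_1=0$ each coefficient is $\pm1$, consistent with the total being $\det(\alpha_i\cdot\beta_j)=\pm|H_1|$. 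For $b_1>0$ the torsion of the maximal abelian cover augmented to the $\mathbb{Z}/2$-count vanishes, giving $0$.
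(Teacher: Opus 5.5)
The paper gives no proof of this statement; it quotes Ozsv\'ath--Szab\'o and uses only the case $b_1(Y)=0$, to see that each $\widehat{HF}(S^3_n(K),[i])$ has odd rank.

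\textbf{What works.} Your argument for $b_1(Y)=0$ is sound. $\ker U$ and $\operatorname{coker}U$ are finite-dimensional, being a quotient and a subspace of $\widehat{HF}$. The tower even splits off $U$-equivariantly, since $\mathcal{T}^+$ is an injective $\mathbb{F}[U]$-module. Rank--nullity then kills the reduced part. The non-torsion case is also fine, given the standard finiteness of $HF^+(Y,\mathfrak{s})$ for non-torsion $\mathfrak{s}$.

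\textbf{The gap.} As you suspected, the gap is the torsion case with $b_1(Y)>0$.
\begin{itemize}
\item Carried out carefully (truncate at a large degree $N$ and apply rank--nullity), your exact-triangle argument shows that $\chi(\widehat{HF}(Y,\mathfrak{s}))$ equals the Euler characteristic of $HF^\infty(Y,\mathfrak{s})$ in two consecutive large degrees. That is just the signed count of intersection points in $\mathfrak{s}$ again. So this route proves nothing unless you know the parity balance of $HF^\infty$ independently.
\item The input you cite does not supply that balance. The isomorphism $HF^\infty\cong\Lambda^*H^1(Y)\otimes\mathbb{F}[U,U^{-1}]$ is false in general with untwisted coefficients: for $T^3$ the rank is $6$, not $8$. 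The totally twisted computation gives a single $\mathbb{F}[U,U^{-1}]$, not an exterior algebra.
\item Your fallback has the same hole. The signed count per $\operatorname{Spin}^c$ structure is a coefficient of the full $g\times g$ Fox determinant, not of Turaev torsion. Turaev torsion comes from $(g-1)\times(g-1)$ minors and is generally nonzero for $b_1\ge 1$.
\item The total $\det(\alpha_i\cdot\beta_j)$ says nothing about how the count is distributed over $\operatorname{Spin}^c$ structures. So ``each coefficient is $\pm1$'' and ``it vanishes for $b_1>0$'' are asserted, not derived.
\end{itemize}

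\textbf{The missing idea} is a short invariance argument.
\begin{itemize}
\item Let $\pi_1(Y)=\langle x_1,\dots,x_g\mid r_1,\dots,r_g\rangle$ be read off the diagram, with $x_i$ dual to $\alpha_i$ and $r_j$ read along $\beta_j$. Let $H=H_1(Y;\mathbb{Z})$ and let $M_{ji}$ be the image of $\partial r_j/\partial x_i$ in $\mathbb{Z}[H]$.
\item Expanding over permutations and intersection points gives $\det M=\pm h_0\sum_{\mathbf{x}}\epsilon(\mathbf{x})\,[\mathfrak{s}_z(\mathbf{x})]$. Here $\epsilon$ is the $\mathbb{Z}/2$-grading and $[\cdot]$ is an affine identification of $\operatorname{Spin}^c(Y)$ with $H$; this is the bookkeeping you correctly anticipated.
\item Fox's fundamental formula $r_j-1=\sum_i(\partial r_j/\partial x_i)(x_i-1)$ gives $Mv=0$ for $v_i=x_i-1$. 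Multiplying by $\operatorname{adj}M$ yields $\det M\cdot(x_i-1)=0$ for all $i$.
\item Since the $x_i$ generate $H$, $\det M$ is invariant under translation by $H$, so all its coefficients are equal.
\item If $b_1(Y)>0$, then $H$ is infinite, so $\det M=0$ and $\chi(\widehat{HF}(Y,\mathfrak{s}))=0$ for every $\mathfrak{s}$, torsion or not.
\item If $b_1(Y)=0$, then $\det M=c\sum_{h\in H}h$. The augmentation gives $c\,|H|=\det(\alpha_i\cdot\beta_j)=\pm|H|$, so every $\operatorname{Spin}^c$ structure has the same $\chi=\pm1$, with the sign fixed by the grading convention.
\end{itemize}
This handles all cases with no input about $HF^+$ or $HF^\infty$.

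Alternatively, for torsion $\mathfrak{s}$ you can use that the totally twisted $\underline{HF}^\infty(Y,\mathfrak{s})\cong\mathbb{F}[U,U^{-1}]$ is torsion over the domain $\mathbb{F}[H^1(Y)][U,U^{-1}]$. Then the rank Euler characteristic of the free twisted complex, which is the same signed count, vanishes.
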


\subsection{Simplified basis}\label{subsection: Simplified basis}

The main theorem of this paper states the existence of an hv-basis for $CFK^\infty(K)$ and classifies the possible cases. The term hv-basis is an abbreviation for horizontally and vertically simplified basis. We give a detailed algebraic formulation of it in this section. The material in this section is selected from \cite[Subsection 11.5]{LOTborderedHeeFloer}. 

\begin{definition}
    Let $\cdots\subset V_i\subset V_{i+1}\subset \cdots$ be an (ascending) filtration of a vector space $V$, so that $V={\bigcup}_{i=-\infty}^{\infty}V_i$. Assume that ${\bigcap}_{i=-\infty}^{\infty}V_i=0$. For $v\in V$, define $A(v)\coloneqq \operatorname{inf}\{i\in\mathbb{Z}\mid v\in V_i\}$; we call $A(v)$ the \textit{filtration level} of $v$. Let $\operatorname{gr}(V)$ denote the associated graded vector space, i.e.,
    \[
        \operatorname{gr}(V)\coloneqq \mathop{\oplus}\limits_{i=-\infty}^{\infty}\operatorname{gr}_i(V)
     \qquad   \operatorname{gr}_i(V)=V_i/V_{i-1}.
    \]

    Composing the projection $V_i\rightarrow V_i/V_{i-1}$ with the inclusion $V_i/V_{i-1}\rightarrow \operatorname{gr}(V)$ gives a map $\iota_i: V_i\rightarrow \operatorname{gr}(V)$. For $v\in V$ define $[v] \coloneqq \iota_{A(v)}(v)$.
\end{definition}

\begin{definition}
    A \textit{filtered basis} of a filtered vector space $V$ is a basis $\{v_1,\cdots, v_n\}$ for $V$ such that $\{[v_1],\cdots, [v_n]\}$ is a basis for $\operatorname{gr}(V)$. Similarly, we say that $\{v_1,\cdots,v_n\}$ is filtered linearly independent if $\{[v_1],\cdots,[v_n]\}$ is linearly independent in $\operatorname{gr}(V)$.
\end{definition}

\begin{remark}
    There is a filtered analogue of the basis extension theorem: if $\{v_1,\cdots, v_m\}$ is a filtered linearly independent set, then there exists a filtered basis for $V$ containing $\{v_1,\cdots,v_m\}$.
\end{remark}

Vertically simplified basis is originally defined on $CFK^{-}(K)$; see \cite{LOTborderedHeeFloer}. The definition can be given in a more general algebraic setting. Let $C$ be a finitely generated free $\mathbb{F}_2[U]$-module equipped with 

\begin{enumerate}[label=\textbullet]
    \item an integer-valued grading, which we refer to as the Maslov grading $M$

    \item an integer-valued ascending filtration, the Alexander filtration $A$, and

    \item a differential, denoted $\partial$
\end{enumerate}

Assume that these data satisfy the following compatibility conditions:

\begin{enumerate}[label=\textbullet]
    \item The differential $\partial$ drops the Maslov grading by one: for homogeneous $\xi\in C$, $M(\partial \xi) = M(\xi)-1$. 

    \item The differential $\partial$ respects the Alexander filtration: $A(\partial\xi) \leq A(\xi)$.

    \item Multiplication by $U$ is compatible with the Maslov grading in the sense that $M(U\cdot \xi) = M(\xi)-2$.

    \item Multiplication by $U$ drops the Alexander filtration by at least one in the sense that $A(U\cdot \xi) \leq A(\xi)-1$.
\end{enumerate}

Given such a $C$, we define $C^\infty \coloneqq C \otimes_{\mathbb{F}_2[U]} \mathbb{F}_2[U,U^{-1}]$. The complex $C^\infty$ has two filtrations: the Alexander filtration and the filtration by powers of $U^{-1}$. Precisely, the $s$-th filtered part of the second filtration consists of generators $U^{t}x$, where $t \ge -s$ and $x$ is a generator of $C$. 

We can illustrate $C^\infty$ graphically by picking an element $\xi_i$ in a filtered basis of $C$ over $\mathbb{F}_2[U]$ and plotting a generator of $C\otimes_{\mathbb{F}_2[U]} \mathbb{F}_2[U,U^{-1}]$ over $\mathbb{F}_2$ of the form $U^{-x}\cdot \xi_i$ with Alexander filtration level $y$ at the position $(x,y)$ in the plane. Then the differential of a generator at $(x,y)$ can be represented graphically by arrows connecting the point $(x,y)$ to the coordinates of other generators. These arrows necessarily point (non-strictly) to the left and downward; see Figure \ref{figure:CFK^infty of right-handed trefoil}.

\begin{definition}
    We call $C$ \textit{reduced} if $\partial$ strictly drops either the Alexander filtration or the $U$-filtration; i.e., for all generators $x$, we have generators $y_1,y_2$(may be zero) such that $\partial x = y_1 + U \cdot y_2$ where $A(y_1) < A(x)$.
\end{definition}

\begin{remark}
    Every complex $C$ described above is homotopy equivalent to a reduced one (cf.\ Lemma \ref{lemma: h or v basis exists}). Without loss of generality, we will assume throughout that the knot Floer complexes we deal with are reduced. The careful reader can further check that the basis change operations in the proofs in Sections \ref{section:(1,5,1) type} and \ref{section: (1,3,1,3,1) type} do not affect the property of being reduced.
\end{remark}

\begin{definition}
    We call a filtered basis $\{\xi_i\}$ over $\mathbb{F}_2[U]$ for $C$ \textit{vertically simplified} if for each basis vector $\xi_i$, either $\partial \xi_i \in U \cdot C$ or $\partial \xi_i \equiv \xi_{i+1} \pmod{U \cdot C}$. In the latter case, we say that there is a vertical arrow from $\xi_i$ to $\xi_{i+1}$. The length of the vertical arrow is the difference $A(\xi_i) - A(\xi_{i+1})$.
\end{definition}

\begin{remark}
    In a vertically simplified basis, there is at most one vertical arrow starting or ending at each basis element. The condition $\partial^2 = 0$ implies that no generator can serve as both the starting point and the endpoint of vertical differentials.
\end{remark}

\begin{definition}
    We call a filtered basis $\{\eta_i\}$ over $\mathbb{F}_2[U]$ for $C$ \textit{horizontally simplified} if for each basis vector $\eta_i$, either $A(\partial \eta_i) < A(\eta_i)$, or there exists an integer $m$ such that $\partial \eta_i = U^{m} \cdot \eta_{i+1} + \epsilon$, where $A(\eta_i) = A(U^{m} \cdot \eta_{i+1})$ and $A(\epsilon) < A(\eta_i)$. In the latter case, we say that there is a horizontal arrow from $\eta_i$ to $\eta_{i+1}$. The length of the horizontal arrow is the integer $m$.
\end{definition}

\begin{definition}
    A filtered basis $\{x_i\}$ for $C$ is called an \textit{hv-basis} if it is both horizontally and vertically simplified (up to reordering of the subscripts of the generators). 
\end{definition}

\begin{definition}
    Suppose $C^\infty = C \otimes_{\mathbb{F}_2[U]} \mathbb{F}_2[U,U^{-1}]$ as defined above. We say that \emph{$C^\infty$ has an hv-basis} if $C$ has an hv-basis. We adopt the convention that an hv-basis of $C^\infty$ is an hv-basis of the underlying $C$. 
\end{definition}

\begin{remark}
In this paper, $CFK^\infty(K) = CFK^{-}(K) \otimes_{\mathbb{F}_2[U]} \mathbb{F}_2[U, U^{-1}]$. An hv-basis of $CFK^\infty(K)$ is obtained from an hv-basis of $CFK^{-}(K)$.
\end{remark} 

%Examples_of_hv-basis, box, staircase, twisted staircase.

We now give definitions and illustrations of some examples of hv-bases.

\begin{definition}
    Suppose $C$ is a chain complex generated by $\{x_i \mid 1\leq i\leq 4\}$, with $A(x_1)-1 = A(x_2) = A(x_3) = A(x_4)+1$. Assume that there are horizontal differentials from $x_2$ to $x_1$ and from $x_4$ to $x_3$, and vertical differentials from $x_1$ to $x_3$ and from $x_2$ to $x_4$. Then we call $C^\infty = C \otimes \mathbb{F}[U, U^{-1}]$ a \textit{box}.
\end{definition}
    
\begin{definition}
    Suppose $C$ is a chain complex generated by $\{x_i \mid 1\leq i\leq 6\}$, with $A(x_1)-1 = A(x_2)-1 = A(x_3) = A(x_4) = A(x_5)+1 = A(x_6)+1$. Assume that there are 
    horizontal differentials from $x_3$ to $x_1$, from $x_5$ to $x_2$, and from $x_6$ to $x_4$, and vertical differentials from $x_3$ to $x_5$, from $x_1$ to $x_6$, and from $x_2$ to $x_4$. Then we call $C^\infty = C \otimes \mathbb{F}[U, U^{-1}]$ a \textit{twisted box}.
\end{definition}

\begin{definition}
    Suppose $C$ is a chain complex generated by $\{x_i \mid 0 \leq i \leq 2n\}$. Assume that for each $0\leq k\leq n-1$, there is a horizontal differential from $x_{2k+1}$ to $x_{2k}$ and a vertical differential from $x_{2k+1}$ to $x_{2k+2}$. Then we call $C^\infty = C \otimes \mathbb{F}[U, U^{-1}]$ a \textit{staircase}.
\end{definition}

\begin{definition}
    Suppose $C$ is a chain complex generated by $\{x_i \mid -2n\leq i\leq 2n\} \cup \{y_{-1},y_{1}\}$, where $n \geq 1$. Assume that for each $-n\leq k\leq n-1$ with $k\neq 0$, there is a horizontal differential from $x_{2k+1}$ to $x_{2k}$, together with horizontal differentials from $x_{1}$ to $y_{-1}$ and from $y_{1}$ to $x_{0}$ (the differential from $y_{1}$ to $x_{0}$ has length one). Also assume that, for each $-n\leq k\leq n-1$ with $k\neq -1$, there is a vertical differential from $x_{2k+1}$ to $x_{2k+2}$, together with vertical differentials from $x_{-1}$ to $y_{1}$ and from $y_{-1}$ to $x_{0}$ (the differential from $y_{-1}$ to $x_{0}$ has length one). Then we call $C^\infty = C \otimes \mathbb{F}[U, U^{-1}]$ a \textit{twisted staircase}.
\end{definition}

% \begingroup
% \setlength{\intextsep}{1pt}
% \begin{figure}[H]
%     \begin{minipage}{0.48\textwidth}
%         \centering
%         \includegraphics[scale=0.8]{fig/Examples_of_hv-basis-1.pdf}
%         \vspace{2.2em}
%         \caption*{A box and a twisted box}
%     \end{minipage}
%     \begin{minipage}{0.48\textwidth}
%         \centering
%         \setlength{\abovecaptionskip}{3pt}
%         \includegraphics[scale=0.8]{fig/Examples_of_hv-basis-2.pdf}
%         \caption*{A staircase}
%     \end{minipage}
% \end{figure}

% \begin{figure}[H]
%     \begin{minipage}{0.96\textwidth}
%         \centering
%         \setlength{\abovecaptionskip}{3pt}
%         \includegraphics[scale=0.7]{fig/Examples_of_hv-basis-3.pdf}
%         \caption*{A twisted staircase}
%     \end{minipage}
%     \vspace{-0.5em}
%     \caption{Examples of hv-bases}
%     \label{figure:examples_of_hv-bases}
% \end{figure}
% \endgroup

\begingroup
\setlength{\intextsep}{0pt}
\begin{figure}[H]
    \centering
    \begin{minipage}[t]{0.48\textwidth}
        \centering
        \parbox[c][3.2cm][c]{\linewidth}{\centering
            \includegraphics[width=\linewidth,height=3.2cm,keepaspectratio]{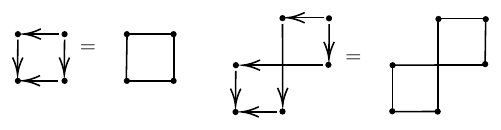}}
        \par\smallskip
        \makebox[\linewidth][c]{A box and a twisted box}
    \end{minipage}\hfill
    \begin{minipage}[t]{0.48\textwidth}
        \centering
        \parbox[c][3.2cm][c]{\linewidth}{\centering
            \includegraphics[width=\linewidth,height=3.2cm,keepaspectratio]{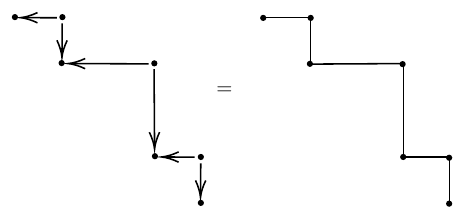}}
        \par\smallskip
        \makebox[\linewidth][c]{A staircase}
    \end{minipage}

    \medskip
    \includegraphics[scale=0.7]{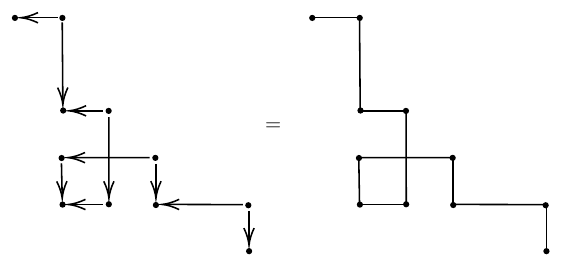}
    \par\smallskip
    A twisted staircase

    \setlength{\abovecaptionskip}{3pt}
    \caption{Examples of hv-bases}
    \label{figure:examples_of_hv-bases}
\end{figure}
\endgroup

\begin{remark}
Since differentials do not increase the $U$-filtration or Alexander filtration levels, the directions of arrows are clear, and we abbreviate them to lines. We neglect diagonal differentials throughout.
\end{remark}

The following lemma ensures the existence of a vertically simplified basis in all cases, which indicates that the essential difficulty in obtaining an hv-basis lies in making a vertically simplified basis also horizontally simplified.

\begin{lemma}[{\cite[Proposition 11.57]{LOTborderedHeeFloer}}]
\label{lemma: h or v basis exists}
Let $C$ be a $\mathbb{Z}$-filtered, $\mathbb{Z}$-graded, finitely generated chain complex over $\mathbb{F}[U]$ that is free as an $\mathbb{F}[U]$-module. Then $C$ is $\mathbb{Z}$-filtered, $\mathbb{Z}$-graded homotopy equivalent to a chain complex $C'$ that is reduced. Further, one can choose a basis for $C'$ over $\mathbb{F}[U]$ that is vertically simplified or, if one prefers, a basis that is horizontally simplified instead.
\end{lemma}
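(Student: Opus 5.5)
The plan is the standard reduction algorithm from linear algebra over a filtered ring, applied in two passes. First, I would pass to a reduced model. Choose a filtered basis of $C$ over $\mathbb{F}[U]$, homogeneous in Maslov grading. Look at the associated graded differential, i.e.\ the part of $\partial$ that preserves both the Alexander filtration and the $U$-power. If it is nonzero, there are basis elements $x,y$ with $\langle \partial x, y\rangle$ having a component of the same Alexander level and $U$-power $0$. Apply the cancellation (Gaussian elimination) lemma to this pair. Replace $\partial$ on the complement by $\partial' = \partial + \partial_{\ast y}\partial_{x\ast}$, where the correction term is the zigzag through the cancelled arrow. This yields a filtered, graded chain homotopy equivalent complex with two fewer generators. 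Since the cancelled arrow is filtration-preserving, every zigzag term is still non-increasing in both filtrations. The correction is therefore filtered, the Maslov grading is respected, and the rank drops. Finite generation makes the process terminate in a reduced complex $C'$.

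For a vertically simplified basis, consider $C'/U C'$ with the induced Alexander filtration. This is a finite-dimensional filtered complex over $\mathbb{F}$. I would run the filtered Gaussian elimination, i.e.\ a Smith-type normal form adapted to filtrations. Order the basis by Alexander level. Repeatedly pick a nonzero matrix entry $\langle \partial \xi_i, \xi_j\rangle$ mod $U$ for which $\xi_j$ has the largest possible Alexander level among the targets of $\partial\xi_i$, and among such choose $\xi_i$ of lowest level. Then perform the change of basis $\xi_i' = \xi_i + \xi_k$ for other sources $\xi_k$ hitting $\xi_j$ with $A(\xi_k)\le A(\xi_i)$. Also replace $\xi_j$ by $\partial \xi_i$ mod $U$; this only adds lower-level terms, so it is a filtered change. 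These operations are upper triangular with respect to the filtration, so the basis stays filtered. They isolate the pair $\xi_i \to \xi_j$ mod $U$. Then recurse on the remaining generators.

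The lifted $\mathbb{F}[U]$-basis is obtained by applying the same elementary operations over $\mathbb{F}[U]$. It satisfies $\partial \xi_i \equiv \xi_{i+1}$ or $\partial\xi_i\in UC'$. Grading homogeneity is maintained because we only add elements of equal Maslov grading.

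The horizontally simplified case is symmetric. Pass to the associated graded for the Alexander filtration, equivalently localize and take $C'\{j=\text{const}\}$ with the $U$-filtration. Run the same elimination with the roles of the two filtrations exchanged, using the top Alexander-level part of $\partial$ in place of the reduction mod $U$.

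The main obstacle is the bookkeeping in the elimination step. Each basis change must be a filtered change of basis, i.e.\ triangular with respect to the relevant filtration, so that $\{[\xi_i]\}$ remains a basis of the associated graded. The change must also not destroy the previously isolated pairs. I would handle this by always choosing the pivot extremal in the filtration, as described, so that all clearing operations add lower-level elements to higher-level ones. This is exactly the argument of \cite[Proposition 11.57]{LOTborderedHeeFloer}, which I would follow.
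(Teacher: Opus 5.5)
Your plan is correct and is essentially the argument of \cite[Proposition 11.57]{LOTborderedHeeFloer}, which the paper cites without reproducing. You cancel the arrows that preserve both filtrations to reach a reduced $C'$; the coefficient is a unit because homogeneity forces it to be the monomial $1$. You then put $C'/UC'$, respectively $C'\otimes\mathbb{F}[U,U^{-1}]$ in Alexander level $0$ with the $U$-filtration, into simplified form by filtered Gaussian elimination, and lift the elementary moves to $\mathbb{F}[U]$.

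One slip needs fixing: the clearing move must be $\xi_k\mapsto\xi_k+\xi_i$ for each other source $\xi_k$ hitting $\xi_j$. This move is filtered precisely because you chose $\xi_i$ of lowest level, so $A(\xi_i)\le A(\xi_k)$. As written, $\xi_i'=\xi_i+\xi_k$ alters the pivot instead of clearing $\xi_j$ from the other columns.
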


 % We then adjust this basis to be horizontally simplified simultaneously. Below we will show how to obtain a vertically simplified basis, starting with generators in different heights, with their Maslov grading being provided. The process is about determining and simplifying the differentials in this vertically simplified basis.

\subsection{Knot Floer complex over \texorpdfstring{$\mathbb{F}[U,V]$}{F[U,V]}} \label{subsection:Knot Floer complex over F[U,V]}

We briefly introduce the knot Floer complex over $\mathbb{F}[U, V]$, as a preparation for transforming an hv-basis to an immersed curve. This part is selected from \cite{HomLectureHeegaardFloer}.

Given a doubly-pointed Heegaard diagram $\mathcal{H}$ for a knot $K$ in $S^3$, denote the two base points by $w$ and $z$. We use variables $U$ and $V$ to record information about $w$ and $z$, and work with the polynomial ring $\mathbb{F}[U,V]$. We endow this ring with a bigrading $\operatorname{gr} = (\operatorname{gr}_U, \operatorname{gr}_V)$. We call $\operatorname{gr}_U$ the $U$-grading and $\operatorname{gr}_V$ the $V$-grading. The variables $U$ and $V$ have gradings $\operatorname{gr}(U) = (-2,0)$ and $\operatorname{gr}(V) = (0,-2)$, respectively.

The combination $A = \frac{1}{2}(\operatorname{gr}_U-\operatorname{gr}_V)$ is called the Alexander grading. Note that $A(U) = -1$ and $A(V) = 1$.

Let $CFK_{\mathbb{F}[U,V]}(\mathcal{H})$ be the free $\mathbb{F}[U,V]$-module generated by $\mathbb{T}_{\alpha} \cap \mathbb{T}_{\beta}$. This module is relatively bigraded as follows. For $\phi \in \pi_2(x,y)$, define
\begin{align}
  \operatorname{gr}_U(x) - \operatorname{gr}_U(y) & = \mu(\phi)-2n_w(\phi), \\
  \operatorname{gr}_V(x)-\operatorname{gr}_V(y) & = \mu(\phi)-2n_z(\phi). 
\end{align}

The relative gradings $\operatorname{gr}_U$ and $\operatorname{gr}_V$ are well defined and can be lifted to absolute $\mathbb{Z}$-gradings; see the detailed description in \cite{HomLectureHeegaardFloer}. The chain homotopy type of $CFK_{\mathbb{F}[U,V]}(\mathcal{H})$ is an invariant of $K$; we therefore denote it by $CFK_{\mathbb{F}[U,V]}(K)$.

The complex $CFK^{-}(K)$ is defined over the ring $\mathbb{F}[U]$, see \cite{LOTborderedHeeFloer}. The knot Floer complex $CFK^\infty(K)$ is related to $CFK^{-}(K)$ by $CFK^\infty(K) = CFK^{-}(K) \otimes_{\mathbb{F}[U]} \mathbb{F}[U, U^{-1}]$.

Suppose $\{\xi_i\}$ is a filtered basis for $CFK^\infty(K)$ such that each $\xi_i$ is homogeneous with $U$-grading $0$. The complex $CFK_{\mathbb{F}[U,V]}(K)$ is completely determined by $CFK^\infty(K)$ via the following algorithm.

The complex $CFK_{\mathbb{F}[U,V]}(K)$ is generated by $\{\xi_i\}$ over $\mathbb{F}[U,V]$. The differential $\partial \xi_i = \sum_{j} U^{\lambda_{ij}} \xi_j$ in $CFK^\infty(K)$ corresponds to the differential $\partial \xi_i = \sum_{j} U^{\lambda_{ij}} V^{\lambda_{ij} + A(\xi_i) - A(\xi_j)} \xi_j$ in $CFK_{\mathbb{F}[U,V]}(K)$. Let $\delta_{ij} = \lambda_{ij} + A(\xi_i) - A(\xi_j)$.

Zemke \cite[Section 1.5]{ZemkeLinkcobordismabsolutegrading} described the equivalence between $CFK^\infty(K)$ and the complex $CFL^\infty(S^3,K,\mathfrak{s}_0)_0$. Using Oszv\'{a}th--Szab\'{o}'s notation, $[x,i,j]$ in $CFK^\infty(K)$ corresponds to $U^{-i}V^{-j}x$ in $CFL^\infty(S^3,K,\mathfrak{s}_0)_0$. The $U$-action in $CFK^\infty(K)$ corresponds to the $UV$-action in $CFL^\infty(S^3,K,\mathfrak{s}_0)_0$.

If there is a diagonal differential from $\xi_i$ to $U^{\lambda_{ij}} \xi_j$, then $A(\xi_j) - A(\xi_i) < \lambda_{ij}$ and $\delta_{ij} > 0$. This diagonal differential is eliminated in the quotient $CFK_{\mathbb{F}[U,V]}(K)/(UV = 0)$. If $\partial\xi_i = U^{\lambda_{ij}} \xi_j$ is a horizontal differential, then $A(\xi_i) - A(\xi_j) + \lambda_{ij} = 0$. This horizontal differential corresponds to $\partial \xi_i = U^{\lambda_{ij}} \xi_j$ in $CFK_{\mathbb{F}[U, V]}(K)$. If $\partial \xi_i = \xi_j$ is a vertical differential, then $A(\xi_i)-A(\xi_j)$ is positive, and it corresponds to $\partial \xi_i = V^{A(\xi_i) - A(\xi_j)} \xi_j$ in $CFK_{\mathbb{F}[U, V]}(K)$. Thus, when passing from $CFK^\infty(K)$ to $CFK_{\mathbb{F}[U, V]}(K)$, the horizontal and vertical differentials are preserved while the diagonal differentials are eliminated.

% Work of Binns discusses the situation about a complex over $\mathbb{F}[U,U^{-1}]$, this is the true knot Floer complex we deal with. We state the difference between two kinds of formulations of knot Floer homology. Our target is expressing correctly what we have proved in our main theorem. The first part is the transformation between $\mathbb{F}[U,U^{-1}]$ and $\mathbb{F}[U,V]$. What we operate is the chain complex $CFK_{\mathbb{F}[U,V]}$, we talk on $\mathbb{F}[U,U^{-1}]$, $CFK_{\mathbb{F}[U,V]}$ is contained in $CFL^{\infty}()$, related by tensor with $\mathbb{F}[U, U^{-1}, V, V^{-1}]$. $CFK_{\mathbb{F}[U,V]}$ is generated by $[x,i,j]$ such that $A(x)+i-j=0$. Map $[x,i,j]$ to $U^{-i}V^{-j}x$ gives the isomorphism.

% 1. Which complex is hv-basis define on?

% 2. 

% Add the material on $CFK_{\mathbb{F}[U,V]}/(UV=0)$,  curve invariant and the transformation between $CFK^{-}(K)$ and $CFK_{\mathbb{F}[U,V]}(K)$ to this section.

\subsection{Some algebraic lemmas}\label{subsection: Some algebraic lemmas}

The cancellation lemma below is an efficient tool in our proof. An intuitive description is as follows: given a complex consisting of generators and differentials among these generators, we can eliminate one differential and the two generators connected by it and add some new differentials among the remaining generators according to certain rules. The resulting complex is homotopy equivalent to the previous one but has two fewer generators. 

\begin{lemma}[{\cite[Lemma 5.1]{RasFloerhomoknotcomplement}}]
Suppose $(C,d)$ is a chain complex freely generated by elements $y_i$, and write $d(y_i,y_j)$ for the $y_j$-coordinate of $d(y_i)$. If $d(y_k,y_l) = 1$, we can define a new complex $(\overline{C},\overline{d})$ with generators $\{y_i \mid i\neq k,l\}$ and differential

\vspace{-1.0em}
\[ \overline{d}(y_i) = d(y_i) + d(y_i,y_l) d(y_k) \]
\vspace{-1.2em}

which is chain homotopy equivalent to the first one.
\label{lemma: cancellation lemma}
\end{lemma}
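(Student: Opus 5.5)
The statement to prove is the cancellation lemma (Lemma \ref{lemma: cancellation lemma}). I would prove it directly by writing down explicit chain maps and a homotopy, all over $\mathbb{F}=\mathbb{F}_2$.

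First I would decompose $C = \langle y_k, y_l\rangle \oplus \overline{C}$ as vector spaces, where $\overline{C}$ is spanned by $\{y_i\mid i\neq k,l\}$. The first step is to check that $\overline{d}$ really lands in $\overline{C}$ and squares to zero. The $y_l$-coordinate of $\overline d(y_i)$ is $d(y_i,y_l)+d(y_i,y_l)d(y_k,y_l)=0$. For the $y_k$-coordinate, I would use the standard zig-zag form $\overline d = \pi d\iota + \pi d\, y_k\, (d(y_k,y_l))^{-1}\, y_l^* d\,\iota$: the projection $\pi$ onto $\overline{C}$ discards the $y_k$ component, and the formula written in the statement is exactly this composite once it is projected to $\overline{C}$. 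To get $\overline d^2=0$, I would expand it and use $d^2=0$, looking in particular at the $y_l$-coordinate of $d^2(y_i)$ and of $d^2(y_k)$.

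Next I would define the maps:
\[
f(x)=\pi\big(x + d(x,y_l)\,d(y_k)\big),\qquad g(z)= z + d(z,y_l)\,y_k,
\]
\[
H(x)=d(x,y_l)\,y_k .
\]
Here $f\colon C\to\overline{C}$ (with $d(y_k)$ projected away from $y_l$), $g\colon\overline{C}\to C$, and $H\colon C\to C$. These are the usual maps from the homological perturbation or Gaussian elimination argument. The verifications are:
\begin{itemize}
\item $f$ and $g$ are chain maps, i.e.\ $f d=\overline d f$ and $d g = g\overline d$;
\item $f g=\mathrm{id}_{\overline C}$;
\item $g f + \mathrm{id} = dH+Hd$.
\end{itemize}
Each is a coordinate computation on basis elements: I treat $x=y_k$, $x=y_l$ and $x=y_i$ separately and repeatedly use $d(y_k,y_l)=1$ together with $d^2=0$.

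An alternative, more conceptual route is to change basis. Replace $y_l$ by $y_l'=d(y_k)$, and replace each $y_i$ ($i\neq k,l$) by $y_i'=y_i+d(y_i,y_l)\,y_k$. In this basis $\langle y_k,y_l'\rangle$ is an acyclic subcomplex $y_k\mapsto y_l'$, and the span of the $y_i'$ is a complementary subcomplex. Quotienting by the acyclic piece then gives the result. I would likely present this version, since it makes $\overline d^2=0$ automatic.

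The main obstacle is the bookkeeping that shows the span of the $y_i'$ is closed under $d$, and that the induced differential is exactly the formula in the statement rather than a version with the roles of $y_k$ and $y_l$ swapped. I would resolve this by computing $d(y_i')$ and subtracting the multiple of $y_l'$ needed to kill its $y_k$-free $y_l$-part. If the index conventions do not match the statement cleanly, I would fall back on the explicit homotopy $H$ above.
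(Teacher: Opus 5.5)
Your preferred basis-change argument is correct, and it takes a more self-contained route than the paper. The paper gives no proof of its own: it cites Rasmussen and only names the equivalence as ``the projection'' $\pi$ together with $\iota(y_i)=y_i-d(y_i,y_l)y_k$. Your $y_i'$ are exactly $\iota(y_i)$, so showing their span is closed under $d$ is the same as showing $\iota$ is a chain map.

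To pin down the bookkeeping you flagged:
\begin{itemize}
\item No $y_l'$-component ever appears, because the $y_l$-coordinate of $d(y_i')$ is $d(y_i,y_l)+d(y_i,y_l)=0$.
\item What must vanish is the $y_k$-coefficient of $d(y_i')$ in the new basis, namely $d(y_i,y_k)+d(y_i,y_l)d(y_k,y_k)+\sum_{j\neq k,l}\overline{d}(y_i,y_j)\,d(y_j,y_l)$.
\item It does vanish, by adding the $y_l$-coordinates of $d^2(y_i)=0$ and $d^2(y_k)=0$.
\end{itemize}
The splitting of complexes $C=\langle y_k,d(y_k)\rangle\oplus\iota(\overline{C})$ then gives $\overline{d}^{\,2}=0$ and the homotopy equivalence at once, with no homotopy to check. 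It also shows how the paper's $\pi$ must be read: as the projection along $\langle y_k,d(y_k)\rangle$, which sends $y_k\mapsto 0$ and $y_l\mapsto \pi(d(y_k))$. It cannot be naive deletion of the $y_k,y_l$ coordinates, which fails to be a chain map as soon as $\pi(d(y_k))\neq 0$, since it kills $y_k$ but not $d(y_k)$.

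Do not fall back on your explicit maps as written. In both $f$ and $H$ you used $d(x,y_l)$, the $y_l$-coordinate of $d(x)$, where you need the $y_l$-coordinate $x_l$ of $x$ itself. The consequences are:
\begin{itemize}
\item Your $f$ gives $f(y_k)=\pi(d(y_k))$, which lies in the wrong degree.
\item Concretely, $f(d(y_k))=\pi(d(y_k))$ while $\overline{d}f(y_k)=\overline{d}\,\pi(d(y_k))$, so $f$ is not a chain map.
\item Your $dH+Hd$ already disagrees with $gf+\mathrm{id}$ on $y_k$: the former is $d(y_k)$, whose $y_l$-coordinate is $1$, while the latter has $y_l$-coordinate $0$.
\end{itemize}
The correct data are as follows.
\begin{itemize}
\item $f(x)=\pi(x)+x_l\,\pi(d(y_k))$, which is the projection along your splitting.
\item $g$ as you wrote it, which is the paper's $\iota$.
\item $H(x)=x_l\,y_k$.
\end{itemize}
With these, $fg=\mathrm{id}$ holds, and $gf+\mathrm{id}=dH+Hd$ is immediate on $y_i$ and $y_k$; the $y_l$ case uses the $y_l$-coordinate of $d^2(y_k)=0$. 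Once corrected, this version is worth having, because it gives explicit formulas for transporting cycles between $C$ and $\overline{C}$.
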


\begin{remark}\label{rem: cancell lemma proof}
A proof of this lemma can be found in \cite{AnFloerWittencompandinfidimMorsetheory}. The chain homotopy equivalence is given by $\pi$ and $\iota$, where $\pi :C \to \overline{C}$ is the projection map and $\iota: \overline{C} \rightarrow C$ is given by $\iota(y_i) = y_i - d(y_i,y_l) y_k$. One may also find a proof of this lemma in \cite[Lemma 2.4]{BaldwinHeddenLipschitzCancellationlemma}.

We will sometimes call two generators connected by a differential a cancellation pair.
\end{remark}

\begin{definition}
    A \textit{knot-like complex} $C$ over $\mathbb{F}[U, V]$ is a free, finitely generated, bigraded chain complex over $\mathbb{F}[U, V]$ such that:
    \begin{enumerate}[label=(\arabic*)]
        \item $H_*(C/U)/V\text{-torsion}$\ $\cong$\ $\mathbb{F}[V]$, where $\operatorname{gr}_U(1) = 0$.

        \item $H_*(C/V)/U\text{-torsion}$\ $\cong$\ $\mathbb{F}[U]$, where $\operatorname{gr}_V(1) = 0$.
    \end{enumerate}
\end{definition}

\begin{definition}
    A basis $\{y_i\}_{i=0}^{2n}$ for a knot-like complex $C$ over $\mathbb{F}[U,V]$ is called \textit{vertically simplified} if $\partial y_i \equiv 0 \pmod{U}$ for $i$ even and $\partial y_i \equiv V^{m_i}y_{i+1} \pmod{U}$ for $i$ odd.
\end{definition}

The following lemma is proved in \cite[Proposition 2.8]{hom2022unknottingnumbercabling}. We call it the $1\times n$ lemma. It says that under a vertically simplified basis $\{y_i\}$, if there exists a vertical differential of length $n$ from $y_i$ to $y_j$, and $y_k$ is not the end of a vertical differential, then a horizontal differential of length one from $y_j$ to $y_k$ cannot exist.

\begin{lemma}\label{lemma: 1xn lemma}
    Let $\{y_i\}$ be a vertically simplified basis for a knot-like complex. Then for all even $i>0$ and odd $j$, we have $ \langle \partial y_i,y_j \rangle = 0  \pmod{(U^2,V)}$.     
\end{lemma}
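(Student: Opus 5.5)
The plan is to argue directly from $\partial^2=0$, using that $C$ is a free $\mathbb{F}[U,V]$-module and that, modulo $U$, the vertically simplified basis makes $\partial$ essentially diagonal. Fix an even $i>0$ and an odd $j$. Since $i$ is even, the definition of a vertically simplified basis gives $\partial y_i \equiv 0 \pmod U$. So every coefficient of $\partial y_i$ is divisible by $U$, and we can write $\partial y_i = U w$ for a unique $w\in C$, using freeness over $\mathbb{F}[U,V]$.

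\textbf{Step 1 ($w$ is a cycle).} We have $U\,\partial w = \partial(Uw) = \partial^2 y_i = 0$. Multiplication by $U$ is injective on a free $\mathbb{F}[U,V]$-module, so $\partial w = 0$.

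\textbf{Step 2 (reduce to a statement about $w$).} Suppose, for contradiction, that $\langle \partial y_i, y_j\rangle \not\equiv 0 \pmod{(U^2,V)}$. Because this coefficient is already divisible by $U$, it has the form $U(1+Vp+Uq)$ for some polynomials $p,q$. Hence the $y_j$-coefficient $c_j$ of $w$ satisfies $c_j \equiv 1+Vp \pmod U$, and in particular $c_j\not\equiv 0\pmod U$.

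\textbf{Step 3 (contradiction mod $U$).} Write $w=\sum_k c_k y_k$ and reduce $\partial w$ modulo $U$. By vertical simplification, $\partial y_k \equiv 0 \pmod U$ for $k$ even, and $\partial y_k \equiv V^{m_k}y_{k+1}\pmod U$ for $k$ odd. Therefore
\[ \partial w \equiv \sum_{k \text{ odd}} \bar c_k\, V^{m_k}\, y_{k+1} \pmod U, \]
where $\bar c_k\in\mathbb{F}[V]$ denotes $c_k$ modulo $U$. The elements $y_{k+1}$ for distinct odd $k$ are distinct basis vectors, and $C/U$ is free over $\mathbb{F}[V]$. So this sum vanishes only if each $\bar c_k V^{m_k}=0$, i.e.\ each $\bar c_k=0$. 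But $\bar c_j = 1+Vp\neq 0$, so $\partial w\not\equiv 0 \pmod U$, contradicting Step 1. This proves $\langle \partial y_i,y_j\rangle\equiv 0\pmod{(U^2,V)}$.

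The only delicate point is the bookkeeping in Steps 1 and 3. One has to use freeness twice: once to divide $\partial y_i$ by $U$, and once to conclude that distinct $y_{k+1}$ cannot cancel each other in $C/U$. One should also check that $\bar c_j$, and hence $\bar c_j V^{m_j}$, is genuinely nonzero in $\mathbb{F}[V]$. The hypothesis that $i>0$ is even is used only to guarantee $\partial y_i\equiv 0\pmod U$.

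Geometrically, the argument says the following. A length-one horizontal arrow from $y_i$ to $y_j$, followed by the vertical arrow out of $y_j$, produces a term $UV^{m_j}y_{j+1}$ in $\partial^2 y_i$. No other term can cancel it, because the vertical arrows end at pairwise distinct generators.
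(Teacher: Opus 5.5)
Your proof is correct. Since $\partial y_i=Uw$ with $\partial w=0$, the reduction $\bar w$ is a cycle of $(C/U,\bar\partial)$. Because $\bar\partial y_k=V^{m_k}y_{k+1}$ for odd $k$ with pairwise distinct targets, the cycles of $C/U$ are exactly the $\mathbb{F}[V]$-span of the even $y_k$, so every odd coefficient of $w$ vanishes mod $U$.

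The paper gives no proof of its own. It cites Hom--Lidman--Park and glosses the lemma as follows: a vertical arrow of length $n$ into $y_i$ cannot be followed by a length-one horizontal arrow from $y_i$ to a generator that is not the end of a vertical arrow. The natural argument behind that ``$1\times n$'' picture applies $\partial^2=0$ at the top $y_{i-1}$ of the vertical arrow. There, the term $UV^{n}y_j$ produced through $y_i$ could only be cancelled by a path whose last step is a vertical arrow ending at $y_j$.

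You instead apply $\partial^2=0$ at $y_i$ itself and use the vertical arrow leaving $y_j$. So the rectangle you close up lies below the horizontal arrow $y_i\to Uy_j$ rather than above it. The two routes need complementary hypotheses:
\begin{itemize}
\item The rectangle above needs $i>0$, but only requires the target not to be a vertical endpoint. So it also covers the target $y_0$, as in the paper's gloss.
\item Yours requires the target to start a vertical arrow (odd $j$), but never uses $i>0$. So it also covers $i=0$.
\end{itemize}
Your closing remark therefore misplaces the role of $i>0$: evenness alone gives $\partial y_i\equiv 0\pmod U$.

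Two further observations about your argument:
\begin{itemize}
\item Step 2 only uses $\langle\partial y_i,y_j\rangle\notin(U^2)$. So you have actually proved the sharper congruence $\langle\partial y_i,y_j\rangle\equiv 0\pmod{U^2}$, which also rules out diagonal arrows of $U$-length one from $y_i$ to $y_j$.
\item Nothing in it uses the knot-like conditions, only freeness and the vertically simplified form.
\end{itemize}
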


\begin{remark}
    Once we transform $CFK^\infty(K)$ to a knot Floer complex over $\mathbb{F}[U, V]$ as described in Section \ref{subsection:Knot Floer complex over F[U,V]}, we always obtain a knot-like complex.
\end{remark}
    
\subsection{Immersed curves}\label{subsection: Immersed curves}

In this section, we describe the procedure for drawing the immersed curve of the complement space of $K$ from an hv-basis of $CFK^\infty(K)$. This part is selected from \cite{hom2022unknottingnumbercabling}.

Our immersed curve sits in $(\mathbb{R}/\mathbb{Z}) \times \mathbb{R}$ with punctures at each lattice point $(0, n + \frac{1}{2})$. The punctures may be thought of as pegs around which the immersed curves wind. For convenience, we identify $(\mathbb{R}/\mathbb{Z}) \times \mathbb{R}$ with $\big([-\frac{1}{2}, \frac{1}{2}]\big/ - \frac{1}{2} \sim \frac{1}{2}\big) \times \mathbb{R}$ and work with the latter.

Suppose $\{\xi_i \mid 0 \leq i \leq 2n\}$ is an hv-basis for $CFK^\infty(K)$. Each basis element $\xi_i$ corresponds to a point on the vertical line $\{0\}\times \mathbb{R}$. We abuse notation and denote this point also by $\xi_i$. The vertical coordinate of $\xi_i$ is given by the Alexander grading of $\xi_i$.

A horizontal differential from $\xi_i$ to $\xi_j$ of length $m_{ij}$ gives rise to an arc of length $m_{ij}$ from $\xi_i$ to $\xi_j$ to the right of the vertical line $\{0\}\times \mathbb{R}$, where $\xi_i$ is $m_{ij}$ units below $\xi_j$. We call such an arc a right arc of length $m_{ij}$. We also include an arc from the isolated $\xi_k$ (the homology generator under horizontal differentials) to $(\frac{1}{2}, 0)$, which we call the essential right arc.

Similarly, a vertical differential from $\xi_i$ to $\xi_j$ of length $n_{ij}$ gives rise to an arc of length $n_{ij}$ from $\xi_i$ to $\xi_j$ to the left of the vertical line $\{0\} \times \mathbb{R}$, where $\xi_i$ is now $n_{ij}$ units above $\xi_j$. We call such an arc a left arc of length $n_{ij}$. We also include an arc from the isolated $\xi_{l}$ (the homology generator under vertical differentials) to $(-\frac{1}{2},0)$, which we call the essential left arc.

In Figure \ref{Figure:Immersed_curve-1}, we present some examples of hv-bases and the corresponding immersed curves. The first line consists of all possible components of the hv-bases for height 4 knots. The second line illustrates the immersed curve for the first line. 

In the illustrations of hv-bases, the black dots at the center of each square represent generators, and the line segments connecting the black dots represent differentials. The length of each segment indicates the difference in filtration level. In the figures of immersed curves, the gray dots indicate the pegs $(0,n + \frac{1}{2})$ and the intersection points of the arcs with the green line $\{0\}\times \mathbb{R}$ are the generators of the hv-basis.

\begingroup
\setlength{\intextsep}{0pt}
\begin{figure}[H]
    \begin{minipage}[t]{0.18\textwidth}
        \centering
        \includegraphics[scale=0.85]{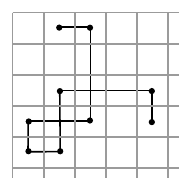}
    \end{minipage}
    \begin{minipage}[t]{0.17\textwidth}
        \centering
        \includegraphics[scale=0.95]{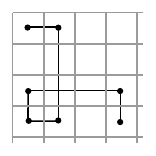}
    \end{minipage}
    \begin{minipage}[t]{0.17\textwidth}
        \centering
        \includegraphics[scale=0.95]{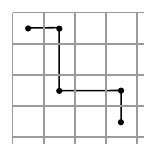}
    \end{minipage}
    \begin{minipage}[t]{0.18\textwidth}
        \centering
        \includegraphics[scale=0.85]{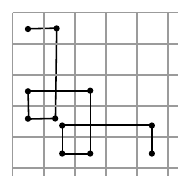}
    \end{minipage}
    \begin{minipage}[t]{0.14\textwidth}
        \centering
        \includegraphics[scale=1.0]{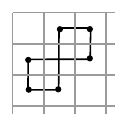}
    \end{minipage}
    \begin{minipage}[t]{0.12\textwidth}
        \centering
        \includegraphics[scale=0.9]{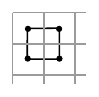}
    \end{minipage}
\end{figure}

\begin{figure}[H]
    \begin{minipage}[t]{0.18\textwidth}
        \centering
        \includegraphics[scale=0.9]{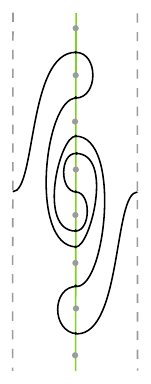}
    \end{minipage}
    \begin{minipage}[t]{0.17\textwidth}
        \centering
        \includegraphics[scale=1.0]{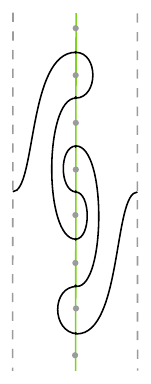}
    \end{minipage}
    \begin{minipage}[t]{0.17\textwidth}
        \centering
        \includegraphics[scale=1.0]{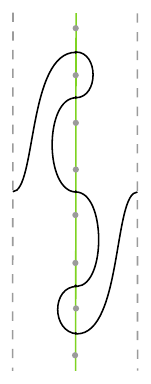}
    \end{minipage}
    \begin{minipage}[t]{0.18\textwidth}
        \centering
        \includegraphics[scale=0.9]{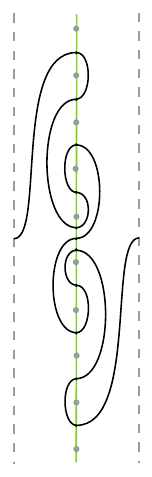}
    \end{minipage}
    \begin{minipage}[t]{0.14\textwidth}
        \centering
        \includegraphics[scale=0.85]{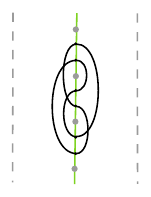}
    \end{minipage}
    \begin{minipage}[t]{0.12\textwidth}
        \centering
        \includegraphics[scale=0.70]{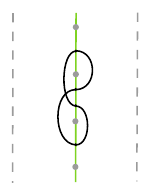}
    \end{minipage}
    \caption{Examples of hv-basis and their immersed curves}
    \label{Figure:Immersed_curve-1}
\end{figure}
\endgroup

\section{Height 4 knots}\label{section:Height 4 knots}

This Section serves as a common preparation for Sections \ref{section:(1,5,1) type} and \ref{section: (1,3,1,3,1) type}. We first divide height 4 knots into two types and describe some notations for later use. We then state our proof strategy, which is summarized into three steps. The information about $\widehat{HFK}(K)$ at general Alexander filtration levels is described in Corollaries \ref{cor: 151 outside 1 are 0 or 1-dim} and \ref{cor: 13131 outside h are 0 or 1-dim}.

Let $K\subseteq S^3$ be a knot. Recall that $K$ determines a canonical bijection between the set $\operatorname{Spin}^c(S^3_n(K))$ and the group $H_1(S^3_n(K))$, which is cyclic of order $n$. Thus, we identify $\mathbb{Z}/n\mathbb{Z}$ with the set of $\operatorname{Spin}^c$ structures on $S^3_n(K)$. For every $\mathfrak{s}\in \mathbb{Z}/n\mathbb{Z}$, we have $\operatorname{rank} \widehat{HF}(S^3_n(K),\mathfrak{s}) \geq \chi(\widehat{HF}(S^3_n(K),\mathfrak{s})) = 1$. The conjugation symmetry on $\widehat{HF}(S^3_n(K))$ imposes further restrictions. We describe this in Lemma \ref{lemma: 151 or 13131}.

\begin{lemma}\label{lemma: 151 or 13131}
Suppose $K$ is a height 4 knot and $n\geq 2g(K)-1$ is an integer. Then either
\begin{enumerate}[label=(\arabic*)]
\item $\operatorname{rank} \widehat{HF}(S^3_n(K),[i])=1$ for $[i]\neq 0$ and $\operatorname{rank}\widehat{HF}(S^3_n(K),[0])=5$, or 

\item there exists an $h\neq 0,$ such that $\operatorname{rank} \widehat{HF}(S^3_n(K),[i])=$
$\begin{cases}
\begin{array}{cl}
1, & [i]\neq [\pm h]\\
3, & [i]=[\pm h]
\end{array}
\end{cases}
$
\end{enumerate}
\end{lemma}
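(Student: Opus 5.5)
Since $K$ is a height 4 knot and $n\geq 2g(K)-1$, the Remark following Definition~\ref{dfn: height m space and height m knots} gives that $S^3_n(K)$ is a height 4 space. Thus
\[
\sum_{[i]\in\mathbb{Z}/n\mathbb{Z}} \operatorname{rank}\widehat{HF}(S^3_n(K),[i]) = n+4 .
\]
Since $S^3_n(K)$ is a rational homology sphere ($n\geq 1$ when $g(K)\geq 1$), Theorem~\ref{theorem: Euler characteristic of Heegaard Floer homology} gives $\chi(\widehat{HF}(S^3_n(K),[i]))=1$ for each $[i]$. Hence each summand is odd, of the form $1+2e_i$ with $e_i\geq 0$, and $\sum_i e_i = 2$. (If $g(K)=0$ then $K$ is the unknot, $\widehat{HF}$ of every surgery has rank $n$, and $K$ cannot have height 4.) So either a single $e_{i_0}=2$ with all others zero, or exactly two distinct classes $[a]\neq[b]$ have $e=1$.

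Next I use conjugation symmetry. Under the canonical identification $\operatorname{Spin}^c(S^3_n(K))\cong\mathbb{Z}/n\mathbb{Z}$ determined by $K$, conjugation of $\operatorname{Spin}^c$ structures corresponds to $[i]\mapsto[-i]$. Since $\widehat{HF}(Y,\mathfrak{s})\cong\widehat{HF}(Y,\bar{\mathfrak{s}})$, the function $[i]\mapsto e_i$ is invariant under $[i]\mapsto[-i]$. For large $n$ one can see this concretely through the large surgery formula~(\ref{theorem: large surgery formula}) and the symmetry $A_m\simeq A_{-m}$ coming from the symmetry of $CFK^\infty(K)$ under swapping $i$ and $j$. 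For general $n\geq 2g(K)-1$ I would instead quote the standard fact that the canonical labeling is conjugation-equivariant, as in \cite{OSIntesurgery}.

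With this symmetry the cases separate as follows.
\begin{enumerate}[label=(\alph*)]
\item If there is a single class $[i_0]$ with $e=2$, then $[-i_0]=[i_0]$, so $2i_0\equiv 0 \pmod n$. This means $[i_0]=[0]$, or $n$ is even and $[i_0]=[n/2]$.
\item If there are two classes with $e=1$, the set $\{[a],[b]\}$ is conjugation-invariant. Either $[b]=[-a]$ with $[a]\neq[-a]$, which gives case (2) with $h=a\neq 0$. Or both $[a]$ and $[b]$ are self-conjugate, which means $\{[a],[b]\}=\{[0],[n/2]\}$.
\end{enumerate}

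The main obstacle is ruling out the anomalous self-conjugate class $[n/2]$. Here I would use the fact that the height condition holds for \emph{all} $n\geq 2g(K)-1$, together with the stability of the labeling. By~(\ref{theorem: large surgery formula}), for all large $n$ the group $\widehat{HF}(S^3_n(K),[m])$ is $H_*(A_m)$ for $|m|\leq n/2$. Moreover $A_m$ has homology of rank 1 for $|m|\geq g(K)$, since in that range $A_m$ is a quotient or subcomplex quasi-isomorphic to $\widehat{CF}(S^3)$. Therefore the classes with rank greater than 1 are exactly those $[m]$ with $|m|<g(K)$ and $\operatorname{rank} H_*(A_m)>1$.

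Taking $n$ odd and large, $[n/2]$ does not exist. This gives the dichotomy for the sequence $(\operatorname{rank}H_*(A_m))_m$, which does not depend on $n$. For $n\geq 2g(K)-1$, the rank of $\widehat{HF}(S^3_n(K),[m])$ with $|m|\leq g-1$ still agrees with $\operatorname{rank}H_*(A_m)$, by \cite[Proposition 9.5]{OSrationalsurgeries} and the mapping cone description. All other classes then have rank 1. So the pattern, $[0]$ carrying 5 or $[\pm h]$ carrying 3 with $h\neq 0$, transfers to every $n\geq 2g(K)-1$. Note that $h\not\equiv -h$ is automatic since $|h|\leq g-1<n/2$.
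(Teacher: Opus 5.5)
Your proof is correct and follows the same skeleton as the paper's: each rank is odd because $\chi=1$, the ranks sum to $n+4$, conjugation symmetry forces a rank-5 class to be $[0]$ or $[n/2]$, and $[n/2]$ is then excluded by the integer surgery formula's statement that classes $[i]$ with $|i|\geq g(K)$ have rank one. The detour through large odd $n$ is unnecessary, because the transfer fact you invoke at the end already gives rank one at $[n/2]$ directly ($n$ even and $n\geq 2g(K)-1$ force $n/2\geq g(K)$), which is exactly the paper's one-line argument; on the other hand, your explicit treatment of the possibility $\{[0],[n/2]\}$ in the case of two rank-3 classes fills in a point that the paper's proof passes over silently.
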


\begin{proof}
The Euler characteristic preserves the parity of the rank of the group. Thus, the condition $\chi(\widehat{HF}(S^3_n(K), [i])) = 1$ for each $[i] \in \mathbb{Z}/ n\mathbb{Z}$ implies that $\operatorname{rank} \widehat{HF}(S^3_n(K), [i])$ is odd for every $[i]$. Combined with the condition that  $\sum_{[i] \in \mathbb{Z}/ n\mathbb{Z}}\operatorname{rank} \widehat{HF}(S^3_n(K),[i]) = n+4,$ we conclude that there are two possibilities: either there is a unique $[i] \in \mathbb{Z}/n\mathbb{Z}$ such that $\operatorname{rank}\widehat{HF}(S^3_n(K), [i]) = 5$, or there are two distinct $[i] \in \mathbb{Z}/n\mathbb{Z}$ where $\operatorname{rank}\widehat{HF}(S^3_n(K),\  [i]) = 3$.

The conjugation symmetry gives that $\widehat{HF}(S^3_n(K),[i])$ and $\widehat{HF}(S^3_n(K),[-i])$ have equal rank. In the former case, the $\operatorname{Spin}^c$ structures $[i]$ in which $\operatorname{rank}\widehat{HF}(S^3_n(K),[i])=5$ must be self-conjugate, so $[i]=0$ or $[i]=[n/2]$, provided $n$ is even. The surgery formula in \cite{OSIntesurgery} states that $\widehat{HF}(S^3_n(K),[i])$ has rank one for $i\geq g(K)$, therefore $[i]$ must be 0 since $n/2\geq g(K)-1/2$. In the latter case, the two Spin$^c$ structures in which $\operatorname{rank}\widehat{HF}(S^3_n(K),[i])=3$ are conjugate, which corresponds to the second possibility. 
\end{proof}

\begin{definition}\label{dfn: 151 type and 13131 type}
For a height 4 knot $K$, if its Heegaard Floer homology satisfies case (1) in Lemma \ref{lemma: 151 or 13131}, we say that $K$ is of \emph{$(1,5,1)$ type}; if it satisfies case (2), we say $K$ is of \emph{$(1,3,1,3,1)$ type with parameter $h$}.
\end{definition}

We now set up some algebraic background necessary for later discussion.  

Assume $C$ is a finitely generated, $\mathbb{Z} \oplus \mathbb{Z}$-graded chain complex of $\mathbb{F}$-modules. We use $(i,j)$ to denote the two gradings. Let $\{\mathbb{A}\}$ denote the set of generators satisfying the conditions $\mathbb{A}$. For an appropriate set $S\subset \mathbb{Z} \oplus \mathbb{Z}$, we denote by $C(S)$ the (appropriate quotient of the) subcomplex consisting of generators with gradings in $S$. Let $H(S)$ denote the homology of $C(S)$. We assume that $C$ admits an invertible action, denoted by $U$, such that for $x\in C(\{(i,j)\})$, we have $Ux \in C(\{(i-1,j-1)\})$. For convenience, we will write $C\{(i,j)\}$ instead of $C(\{(i,j)\})$. We further assume that $U$ commutes with the differentials in $C$. This makes $C$ a complex of $\mathbb{F}[U, U^{-1}]$-module.  

Set $X_m = C\{i \leq 0,j = m\}, Y_m = C\{i = 0,j \leq m-1\}, UX_m = C\{i < 0,j = m-1\}$, and $A_m = X_m \cup Y_m = C\{max(i, j-m) = 0\}$. Here $A_m$ is called a bent complex. These are illustrated in Figure \ref{figure:X_m_and_Y_m}. 

%X_m,Y_m示意图
\begingroup
\setlength{\intextsep}{0pt}
\begin{figure}[H]
    \centering
    \setlength{\abovecaptionskip}{3pt}
	\includegraphics[scale=0.8]{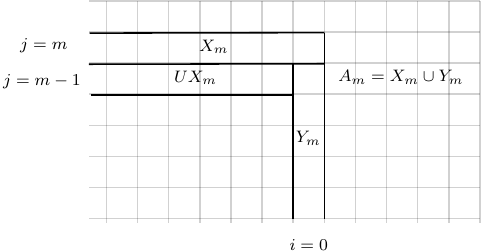}
    \caption{}
    \label{figure:X_m_and_Y_m}
\end{figure}
\endgroup

We define $Z_m$ as $C\{i \leq m-1,j = 0\}$.

Throughout, we conflate $C$ with the underlying knot Floer complex $CFK^\infty(K)$, where $i$ and $j$ correspond to the $U$-grading and the Alexander grading, respectively. 

Combined with Lemma \ref{theorem: Euler characteristic of Heegaard Floer homology}, we obtain:

\begin{enumerate}[label=(\arabic*)]
\item If $K$ is of $(1,5,1)$ type, then  $H_*(A_m) = $ 
$\begin{cases}
\begin{array}{cc}
\mathbb{F}^5, & m = 0, \\
\mathbb{F}, & m \neq 0. 
\end{array}
\end{cases}$

\item If $K$ is of $(1,3,1,3,1)$ type, then  $H_*(A_m) = $ 
$\begin{cases}
\begin{array}{cc}
	\mathbb{F}^3, & m= \pm h, \\
	\mathbb{F}, & m \neq \pm h. 
\end{array}
\end{cases}$
\end{enumerate}
\vspace{1em}

We now state our strategy for determining $CFK^\infty(K)$, which is divided into three steps.

\begin{enumerate}[label=(\arabic*)]

\item Determine $\widehat{HFK}(K)$, including the dimension and Maslov grading information at each Alexander filtration level.

\item Determine $\widehat{CFK}(K)$, i.e., the subquotient complex $C\{i = 0\}$. We begin with the filtered basis consisting of generators in $\widehat{HFK}(K)$ and turn it into a vertically simplified basis.

\item Determine $C\{j = 0\}$; we attempt to make the basis obtained in the previous step horizontally simplified while preserving its vertical simplicity.

\end{enumerate}

At the end of the third step, we obtain an hv-basis. This induces a disjoint decomposition of $CFK^\infty(K)$ as a vector space over $\mathbb{F}$, written as $CFK^\infty(K) = \bigcup_{i} U^i C_{hv}$. The subset $C_{hv}$ consists of this hv-basis together with all horizontal and vertical differentials, up to the action of powers of $U$.
 
%\item Try to eliminate differentials between different $U^i-$copies of $C_{hv},$ mainly through carrying filtered chain isotopy.

%\item Determine diagonal differentials in $C_{hv}.$ This will give us a tensor decomposition of chain complex: $CFK^\infty(S^3,K)=C_{hv}\otimes_{\mathbb{F}}\mathbb{F}[U,U^{-1}].$

%In this paper we will achieve step one to three. 

Following lemmas are preparations for step one. 

\begin{lemma}\label{lemma: 1 and 2k+1 then k or k+1}
Suppose $H_{*}(X_m\cup Y_m) = \mathbb{F}$ and $H_{*}(UX_{m}\cup Y_m) = \mathbb{F}^{2k+1}$. Then we have \(k \leq \operatorname{rank} H_{*}(X_{m}) \leq k+1\).
\end{lemma}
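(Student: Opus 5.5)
The approach is to use short exact sequences relating $A_m = X_m\cup Y_m$, $UX_m\cup Y_m$, $X_m$, $UX_m$ and $Y_m$, and to extract rank inequalities from the long exact sequences. Two structural facts are used throughout. First, $X_m$ is a subcomplex of $A_m$ with quotient $Y_m$. Likewise $UX_m$ is a subcomplex of $UX_m\cup Y_m$ with quotient $Y_m$. Second, multiplication by $U$ gives an isomorphism of chain complexes $X_m\cong UX_m$ (it carries $C\{i\le 0, j=m\}$ onto $C\{i\le -1, j=m-1\}$), so $\operatorname{rank} H_*(UX_m)=\operatorname{rank} H_*(X_m)$. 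Write $x=\operatorname{rank} H_*(X_m)$ and $y=\operatorname{rank} H_*(Y_m)$.

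From the long exact sequence of $0\to X_m\to A_m\to Y_m\to 0$ and $H_*(A_m)=\mathbb{F}$, we get $|x-y|\le 1$. Indeed, the connecting map $\delta\colon H_*(Y_m)\to H_*(X_m)$ satisfies $1=(x-\operatorname{rank}\delta)+(y-\operatorname{rank}\delta)$, so $x+y=2\operatorname{rank}\delta+1$. Since $\operatorname{rank}\delta\le\min(x,y)$, we get $x+y\le 2\min(x,y)+1$, i.e.\ $|x-y|\le 1$. From the sequence $0\to UX_m\to UX_m\cup Y_m\to Y_m\to 0$ and $H_*(UX_m\cup Y_m)=\mathbb{F}^{2k+1}$, the same rank count gives $2k+1 = x+y-2\operatorname{rank}\delta'$, hence $2k+1\le x+y$.

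Combining the two, $x+y\ge 2k+1$ and $y\le x+1$ give $2x+1\ge 2k+1$, so $x\ge k$. For the upper bound I also need $x\le k+1$. The inequality $x+y\ge 2k+1$ alone does not give this, so a second relation is required. The natural candidate is the other filtration direction: $A_m$ also contains $Y_m$-type data via $Z$, or better, the quotient map $A_m \to UX_m\cup Y_m$ obtained by collapsing. Concretely, $UX_m\cup Y_m = C\{\max(i,j-m+1)=0\}$ up to the line $i=0$, $j=m-1$ ... I would compare the two complexes via the short exact sequence $0\to UX_m\to UX_m\cup Y_m\to Y_m\to 0$ together with $0\to X_m \to A_m\to Y_m\to0$. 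Both have the same quotient $Y_m$, and the connecting maps are related by $U$. I expect this to give $\operatorname{rank}\delta'\ge \operatorname{rank}\delta - $ (something small). The cleaner route is to observe that $UX_m\cup Y_m$ and $A_m$ differ only by the subquotient $X_m$ versus $UX_m$ attached along $Y_m$; from the two exact sequences, $2k+1 = x+y-2\operatorname{rank}\delta'\le x+y$ and $1 = x+y-2\operatorname{rank}\delta$.

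To bound $x$ above, I use that $y\ge x-1$ and $\operatorname{rank}\delta'\le y$, so $2k+1\ge x+y-2y = x-y\ge -1$, which is useless. The genuinely needed input must instead be $\operatorname{rank}\delta'\le$ something like $\operatorname{rank}\delta$ or $y-x+\dots$. I would try the following. The complex $UX_m\cup Y_m$ contains $Y_m$ as a subcomplex too when viewed in the other order, since its top row $UX_m$ lies at $j=m-1$, which is a quotient-level row. This gives the sequence $0\to Y'\to UX_m\cup Y_m \to \ldots$. Then, exactly as in Ozsváth--Szabó's and Binns' version, the exact sequence yields $2k+1\le x+ y$ and symmetrically $2k+1\ge |x-y|$-type bounds. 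With $y\in\{x-1,x,x+1\}$ and the parity constraint $x+y$ odd (from $x+y=2\operatorname{rank}\delta+1$), we get $y=x\pm1$. In the case $y=x-1$, the inequality $2k+1\le 2x-1$ gives $x\ge k+1$. In the case $y=x+1$, it gives $x\ge k$. The main obstacle is the upper bound $x\le k+1$. I would try first to show $\operatorname{rank}\delta'=0$, or at least $\operatorname{rank}\delta'\le 1$, by using the explicit description of $H_*(Y_m)$ over the column $i=0$, where the connecting map lands in $UX_m$ at filtration $j\le m-1$. This would give $x+y\le 2k+3$, hence $x\le k+1$ in both parity cases.
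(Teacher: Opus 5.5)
Your lower bound $x\ge k$ is correct, but the upper bound $x\le k+1$ is not proved. Your fallback of showing $\operatorname{rank}\delta'\le 1$ would not suffice either: if $y=x-1$, it only gives $2x-1\le 2k+3$, i.e.\ $x\le k+2$. What is needed is $\delta'=0$, so that $x+y=2k+1$ exactly. The missing idea is that consecutive connecting maps compose to zero.

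To see this, first fix an orientation slip. Inside $A_m$ the column $Y_m=C\{i=0,j\le m-1\}$ is closed under $\partial$, since a differential leaving $(0,j)$ with $j<m$ can only survive at $i=0$. So the sequence is $0\to Y_m\to A_m\to X_m\to 0$, and the connecting map is $\delta\colon H_*(X_m)\to H_*(Y_m)$, not the reverse. Your rank count is unaffected. In $UX_m\cup Y_m$ the roles are reversed, as you had, so $\delta'\colon H_*(Y_m)\to H_*(UX_m)$, and the composite $\delta'\circ\delta$ makes sense.

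Now use the subquotient complex $C\{X_m\cup Y_m\cup UX_m\}$, filtered by $UX_m\subset UX_m\cup Y_m\subset C\{X_m\cup Y_m\cup UX_m\}$. The $X_m\to UX_m$ block of $\partial^2=0$ says the chain-level composite $X_m\to Y_m\to UX_m$ is null-homotopic, with the $X_m\to UX_m$ component of $\partial$ as the homotopy. Hence $\delta'\circ\delta=0$. The same argument on $C\{U^{-1}Y_m\cup X_m\cup Y_m\}$ gives $\delta\circ U^{-1}\delta'=0$. Here $U^{-1}\delta'\colon H_*(U^{-1}Y_m)\to H_*(X_m)$ is $\delta'$ transported by the isomorphism $U$.

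With these relations, your parity dichotomy finishes the proof.
If $y=x-1$, then $\operatorname{rank}\delta=y$, so $\delta$ is surjective and $\delta'\circ\delta=0$ forces $\delta'=0$. Then $2k+1=x+y=2x-1$, so $x=k+1$.
If $y=x+1$, then $\operatorname{rank}\delta=x$, so $\delta$ is injective and $\delta\circ U^{-1}\delta'=0$ forces $U^{-1}\delta'=0$, hence $\delta'=0$. Then $2k+1=2x+1$, so $x=k$.

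This is exactly the paper's argument, following Ozsv\'ath--Szab\'o. It yields the sharper conclusion $(x,y)\in\{(k+1,k),(k,k+1)\}$. Without the vanishing of these composites, your proposal establishes only $x\ge k$.
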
\vspace{-1em}

\begin{proof} Natural inclusion and quotient relationships between subquotient complexes yield two short exact sequences, as shown on the left in Equation \eqref{eq:exact-sequencesof1,2k+1thenkork+1}.

\begin{equation}
\resizebox{0.90\linewidth}{!}{$
\xymatrix@C=10pt@R=11pt{
& & & 0 \ar[d] & \\
0 \ar[r] & C\{UX_m\} \ar[r] & C\{UX_m\cup Y_m\} \ar[r] & C\{Y_m\} \ar[r] \ar[d] & 0 \\
& & & C\{X_m\cup Y_m\} \ar[d] & \\
& & & C\{X_m\} \ar[d] & \\
& & & 0 &
}
\hspace{0.5em}
\raisebox{-10pt}{$
\xymatrix@C=8pt@R=20pt{
H_*(UX_m) \ar[dr]_{i_h} & & H_*(Y_m) \ar[ll]_{\delta_h} \ar[dr]_{i_v} & & H_*(X_m) \ar[ll]_{\delta_v} \\
& H_*(UX_m\cup Y_m) \ar[ur]_{p_h} & & H_*(X_m\cup Y_m) \ar[ur]_{p_v} &
}
$}
$}
\label{eq:exact-sequencesof1,2k+1thenkork+1}
\end{equation}

% \begin{figure}[H]
% \vspace{-0.5em}
% \centering
%     \begin{minipage}[c]{0.48\textwidth}
%     \centering
%         \includegraphics[scale=0.8]{fig/algebraic_lemma_figure-1}
%     \end{minipage}
%     \begin{minipage}[c]{0.48\textwidth}
%     \centering
%         \includegraphics[scale=0.8]{fig/algebraic_lemma_figure-2}
%     \end{minipage}	
%     \caption{}
%     \label{figure:1,2k+1thenkork+1}
% \end{figure}\vspace{-1.5em}

This induces short exact sequences on their homologies, as shown on the right in Equation \eqref{eq:exact-sequencesof1,2k+1thenkork+1}.

As noted in \cite[Section 3]{OSLenspacesurgeries}, the composition map $\delta_h\circ \delta_v=0.$ 

We assume $H_*(X_m) \cong H_*(UX_m) = \mathbb{F}^n$ and $H_*(Y_m) = \mathbb{F}^m$. The image of $i_v$ is either 0 or the whole $H_*(X_m\cup Y_m)$. If $i_v$ is the zero map, then the exactness on the right side tells that $n = m+1,m = n-1$. The map $\delta_v$ is surjective, which forces $\delta_h$ to be zero. The exactness on the left side then gives $2k+1 = n+n-1, n = k+1.$

If $i_v$ is surjective, then $p_v$ is the zero map and $m=n+1$. The map $\delta_v$ is injective. At this point, we resort to the relationship $\delta_v\circ U^{-1}\delta_h=0$, see the exact diagram in Equation \eqref{eq:exact-diagram}.

\begingroup
\setlength{\abovedisplayskip}{1pt}
\setlength{\abovedisplayshortskip}{1pt}
\setlength{\belowdisplayskip}{1pt}
\setlength{\belowdisplayshortskip}{1pt}
\begin{equation}
\label{eq:exact-diagram}
\scalebox{0.9}{
\xymatrix@C=12pt@R=22pt{
H_*(Y_m) \ar[dr] & & H_*(X_m) \ar[ll]_{\delta_v} \ar[dr] & & H_*(U^{-1}Y_m) \ar[ll]_{U^{-1}\delta_h} \\
& H_*(X_m\cup Y_m) \ar[ur] & & H_*(X_m\cup U^{-1}Y_m) \ar[ur] &
}
}
\end{equation}
\endgroup

% \begin{figure}[H]
% \vspace{-0.5em}
% \centering
% 	\includegraphics[scale=0.8]{fig/algebraic_lemma_figure-4}
%     \caption{}
%     \label{Figure: U translation homology}
% \end{figure}\vspace{-1.5em}

This yields $U^{-1}\delta_h=0$, then $\delta_h=0$ since $U^{-1}$ is an isomorphism. We get $2k+1=n+n+1$, $n=k$.

\begingroup
\setlength{\abovedisplayskip}{1pt}
\setlength{\abovedisplayshortskip}{1pt}
\setlength{\belowdisplayskip}{1pt}
\setlength{\belowdisplayshortskip}{1pt}
\begin{equation}
\label{eq:two-cases}
\resizebox{0.85\linewidth}{!}{$
\xymatrix@C=12pt@R=22pt{
\mathbb{F}^{k+1} \ar[dr] & & \mathbb{F}^{k} \ar[ll] \ar[dr]_{0} & & \mathbb{F}^{k+1} \ar[ll] \\
& \mathbb{F}^{2k+1} \ar[ur] & & \mathbb{F}^{1} \ar[ur] &
}
\hspace{1em}\&\hspace{1em}
\xymatrix@C=12pt@R=22pt{
\mathbb{F}^{k} \ar[dr] & & \mathbb{F}^{k+1} \ar[ll] \ar[dr] & & \mathbb{F}^{k} \ar[ll] \\
& \mathbb{F}^{2k+1} \ar[ur] & & \mathbb{F}^{1} \ar[ur]_{0} &
}
$}
\end{equation}
\endgroup

% \begin{figure}[H]
%     \vspace{-0.5em}
%     \centering
% 	\includegraphics[scale=1.0]{fig/algebraic_lemma_figure-3}
%     \caption{}
%     \label{fig/algebraic_lemma_figure-3}
% \end{figure}
% \vspace{-1.5em}

Equation \eqref{eq:two-cases} presents results in the two cases. 
\end{proof}

\begin{lemma}\label{lemma: 2k+1 and 1 then k or k+1}
Suppose $H_{*}(X_m \cup  Y_m) = \mathbb{F}^{2k+1} $ and $H_{*}(U X_{m} \cup Y_m) = \mathbb{F}.$ Then $k \leq \operatorname{rank} H_{*}(X_{m}) \leq k+1.$
\end{lemma}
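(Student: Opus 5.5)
I will mirror the proof of Lemma~\ref{lemma: 1 and 2k+1 then k or k+1}, with the roles of the two exact triangles interchanged. As there, the short exact sequences $0\to C\{UX_m\}\to C\{UX_m\cup Y_m\}\to C\{Y_m\}\to 0$ and $0\to C\{Y_m\}\to C\{X_m\cup Y_m\}\to C\{X_m\}\to 0$ give two exact triangles with connecting maps $\delta_h\colon H_*(Y_m)\to H_*(UX_m)$ and $\delta_v\colon H_*(X_m)\to H_*(Y_m)$. The relation $\delta_h\circ\delta_v=0$ holds, as does its $U$-translated version $\delta_v\circ U^{-1}\delta_h=0$ from the diagram \eqref{eq:exact-diagram}. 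Write $H_*(X_m)\cong H_*(UX_m)\cong\mathbb{F}^n$ (using that $U$ is an isomorphism) and $H_*(Y_m)\cong\mathbb{F}^{m'}$.

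Now $H_*(UX_m\cup Y_m)=\mathbb{F}$ is one-dimensional. In the horizontal triangle, exactly one of $i_h$ and $p_h$ is nonzero, so $m'=n\pm1$.

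\textbf{Case 1: $i_h=0$.} Then $\delta_h$ is surjective and $p_h$ is injective, so $m'=n+1$. From $\delta_v\circ U^{-1}\delta_h=0$, or equivalently by translating $\delta_h\circ\delta_v=0$, the surjectivity of $U^{-1}\delta_h$ onto $H_*(X_m)$ forces $\delta_v=0$. The vertical triangle is then short exact, giving $2k+1=m'+n=2n+1$. Hence $n=k$.

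\textbf{Case 2: $i_h\neq 0$.} Then $p_h=0$, so $\delta_h$ is injective and $m'=n-1$. From $\delta_h\circ\delta_v=0$ and the injectivity of $\delta_h$ we get $\delta_v=0$. Again $2k+1=m'+n=2n-1$, so $n=k+1$.

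In both cases $k\le\operatorname{rank}H_*(X_m)\le k+1$, as claimed.

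The only delicate point is using the correct one of the two composition relations in each case so that the vanishing of $\delta_v$ is forced. This amounts to keeping track of which complex $U^{\pm1}$ translates and checking that the diagram \eqref{eq:exact-diagram} still applies verbatim. The dimension count itself is routine.
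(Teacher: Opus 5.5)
Your proof is correct and is exactly the argument the paper sketches: split on whether $i_h$ is zero or surjective, then kill $\delta_v$ using $\delta_v\circ U^{-1}\delta_h=0$ in the first case and $\delta_h\circ\delta_v=0$ in the second, so that the dimension count gives $n=k$ or $n=k+1$. One small correction: $\delta_v\circ U^{-1}\delta_h=0$ is not a $U$-translate of $\delta_h\circ\delta_v=0$, since translating the latter only gives $(U^{-1}\delta_h)\circ(U^{-1}\delta_v)=0$; drop the ``or equivalently'' and rely, as you mainly do, on the diagram \eqref{eq:exact-diagram}.
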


\begin{proof} 
The proof of this lemma differs only slightly from that of Lemma \ref{lemma: 1 and 2k+1 then k or k+1}. We use the same homology diagram as in Equation \eqref{eq:exact-sequencesof1,2k+1thenkork+1} with the conditions on $H_*(UX_m\cup Y_m)$ and $H_*(X_m\cup Y_m)$ exchanged. We then discuss separately depending on whether $\operatorname{rank} \operatorname{im} i_h$ is $0$ or $1$.
\end{proof}

\begin{remark}\label{remark: (1,0)or(0,1) remark} 
The two lemmas above actually tell us that  $(\operatorname{rank} H_*(X_m)$, $\operatorname{rank}H_*(Y_m))$ = $(k, k+1)$ or $(k+1, k)$.
\end{remark}

\begin{lemma}\label{lemma: 1 and 1 then 1 or 0} Suppose $H_*(A_m) = \mathbb{F}, m=r-1, r, r+1$, then $H_*(\{(0,r)\}) = \mathbb{F}\ \text{or}\ 0$.
\end{lemma}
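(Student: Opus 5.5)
Since $H_*(A_{m})=\mathbb{F}$ for $m=r-1,r,r+1$, the plan is to apply Lemma \ref{lemma: 1 and 2k+1 then k or k+1} (equivalently Lemma \ref{lemma: 2k+1 and 1 then k or k+1}) with $k=0$ to pin down $H_*(X_m)$ and $H_*(Y_m)$, and then compare the vertical strip $C\{(0,r)\}$ with these pieces. First observe that $UX_{m}\cup Y_m = C\{\max(i,j-m+1)=0\}$ shifted by $U$, i.e.\ $UX_m\cup Y_m\cong U(X_{m-1}\cup U^{-1}Y_m)$, which is (up to the $U$-translation isomorphism) the bent complex $A_{m-1}$. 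Hence for $m=r$ and $m=r+1$ both $H_*(X_m\cup Y_m)$ and $H_*(UX_m\cup Y_m)$ are $\mathbb{F}$, and Lemma \ref{lemma: 1 and 2k+1 then k or k+1} with $k=0$ together with Remark \ref{remark: (1,0)or(0,1) remark} gives $(\operatorname{rank}H_*(X_m),\operatorname{rank}H_*(Y_m))\in\{(0,1),(1,0)\}$ for $m=r,r+1$.

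Next, relate $C\{(0,r)\}$ to these. The generators at $(0,r)$ are exactly $Y_{r+1}\setminus Y_r$, so there is a short exact sequence $0\to Y_r\to Y_{r+1}\to C\{(0,r)\}\to 0$, giving $|\operatorname{rank}H_*(Y_{r+1})-\operatorname{rank}H_*(Y_r)|\le \operatorname{rank}H_*(C\{(0,r)\})\le \operatorname{rank}H_*(Y_{r+1})+\operatorname{rank}H_*(Y_r)\le 2$. Symmetrically, $(0,r)$ is the subcomplex of $X_r$ obtained as the quotient by $UX_{r+1}=C\{i<0,j=r\}$: $0\to UX_{r+1}\to X_r\to C\{(0,r)\}\to 0$, so $H_*(C\{(0,r)\})$ is also controlled by $H_*(X_r)$ and $H_*(X_{r+1})\cong H_*(UX_{r+1})$, both of rank $\le1$.

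The key step is to rule out rank $2$. Parity: the Euler characteristic of $C\{(0,r)\}$ equals $\chi(Y_{r+1})-\chi(Y_r)$, and by the $(0,1)/(1,0)$ dichotomy each $\chi(Y_m)$ and $\chi(X_m)$ is $0$ or $\pm1$; rank $2$ would require both $Y_r,Y_{r+1}$ (and both $X_r,X_{r+1}$) to have rank $1$ with the connecting maps zero. But $Y_r,Y_{r+1}$ of rank $1$ forces $X_r,X_{r+1}$ of rank $0$, and then the second sequence gives $H_*(C\{(0,r)\})=0$, a contradiction. Hence the rank is at most $1$; combining the two sequences in the remaining cases (one of $Y_r,Y_{r+1}$ has rank $0$, or both $Y$'s vanish so both $X$'s are $\mathbb{F}$) yields rank $0$ or $1$. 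The main obstacle is verifying the identification $UX_m\cup Y_m\simeq A_{m-1}$ carefully (filtration bookkeeping) so that the hypotheses on $r-1,r,r+1$ feed exactly into the two needed applications of Lemma \ref{lemma: 1 and 2k+1 then k or k+1}.
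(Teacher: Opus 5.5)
Your proposal is correct and is essentially the paper's proof. Lemma~\ref{lemma: 1 and 2k+1 then k or k+1} with $k=0$ at $m=r,r+1$ gives the $(\mathbb{F},0)$ or $(0,\mathbb{F})$ dichotomy for $(H_*(X_m),H_*(Y_m))$, and the two short exact sequences $0\to UX_{r+1}\to X_r\to C\{(0,r)\}\to 0$ and $0\to Y_r\to Y_{r+1}\to C\{(0,r)\}\to 0$ then give two rank bounds summing to $2$, which is exactly the detail the paper leaves implicit.

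One small correction: $UX_m\cup Y_m$ is literally equal to $A_{m-1}=C\{\max(i,j-m+1)=0\}$, with no $U$-translation involved. Your expression $U(X_{m-1}\cup U^{-1}Y_m)$ is not the right one, and the step you flag as the main obstacle is in fact immediate.
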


\begin{proof} 
Lemmas \ref{lemma: 1 and 2k+1 then k or k+1} tell us that $(H_*(X_{m}), H_*(Y_{m}))=(\mathbb{F},0)\ \text{or}\ (0,\mathbb{F}),m=r+1,r$. We then consider the following two short exact sequences: 

\vspace{-1.8em}
\begin{equation} 
0 \rightarrow UX_{r+1} \rightarrow X_r \rightarrow
C\{(0,r)\} \rightarrow 0,\quad 0 \rightarrow Y_{r} \rightarrow Y_{r+1} \rightarrow C\{(0,r)\} \rightarrow 0
\end{equation}
\vspace{-1.5em}

The induced short exact sequences of homologies give the result.
\end{proof}

\begin{corollary}\label{cor: 151 outside 1 are 0 or 1-dim} 
Assume $K$ is a height 4 knot of $(1,5,1)$ type then we have 
$\operatorname{rank} H_*(\{(0, A)\})$ $=\operatorname{rank} \widehat{HFK}(K, A) \leq 1$ whenever $|A| > 1$.
\end{corollary}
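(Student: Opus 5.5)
The plan is to apply Lemma \ref{lemma: 1 and 1 then 1 or 0} directly with $r=A$. For a $(1,5,1)$ type knot we know $H_*(A_m)=\mathbb{F}$ for every $m\neq 0$. The lemma needs $H_*(A_m)=\mathbb{F}$ for the three consecutive values $m=r-1,r,r+1$. If $|A|>1$, then none of $A-1, A, A+1$ equals $0$: for $A\geq 2$ the smallest is $A-1\geq 1$, and for $A\leq -2$ the largest is $A+1\leq -1$. So the hypothesis holds, and the lemma gives $H_*(\{(0,A)\})\in\{\mathbb{F},0\}$, i.e.\ rank at most one.

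Next I identify $H_*(\{(0,A)\})$ with $\widehat{HFK}(K,A)$. The subquotient $C\{(0,A)\}=C\{i=0,j=A\}$ is the associated graded piece of $\widehat{CFK}(K)=C\{i=0\}$ in Alexander filtration level $A$. Its homology is therefore $\widehat{HFK}(K,A)$ by the definition recalled in Section \ref{subsection: Heegaard Floer homology}. Under the standing assumption that $C$ is reduced, the differential on this piece vanishes, so it equals its homology. Either way the equality of ranks holds.

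I should also check one point inside Lemma \ref{lemma: 1 and 1 then 1 or 0}. Its proof applies Lemma \ref{lemma: 1 and 2k+1 then k or k+1} with $k=0$ at $m=r$ and $m=r+1$. That requires $H_*(A_m)=\mathbb{F}$ and $H_*(UX_m\cup Y_m)=\mathbb{F}$. Since $UX_m\cup Y_m=C\{\max(i,j-(m-1))=0\}$ shifted by $U$, i.e.\ it is a $U$-translate of $A_{m-1}$, these are exactly the conditions on $A_{r-1},A_r,A_{r+1}$. No genuine obstacle is expected; the only care needed is confirming this index bookkeeping, which the three-value hypothesis covers.
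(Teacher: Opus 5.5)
Your proposal is correct and is exactly the paper's argument, which the paper leaves implicit by stating the corollary directly after Lemma~\ref{lemma: 1 and 1 then 1 or 0}: you apply that lemma with $r=A$, use that $H_*(A_m)=\mathbb{F}$ for $m\in\{A-1,A,A+1\}$ when $|A|>1$, and identify $C\{(0,A)\}$ with $\widehat{HFK}(K,A)$ for a reduced complex. One small slip in your bookkeeping remark: $UX_m\cup Y_m$ is literally $A_{m-1}=C\{\max(i,j-m+1)=0\}$ rather than a $U$-translate of it, since the corner $(0,m-1)$ already lies in $Y_m$; this does not affect the rank conditions you check.
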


\begin{corollary}\label{cor: 13131 outside h are 0 or 1-dim} 
Assume $K$ is a height 4 knot of $(1,3,1,3,1)$ type with parameter $h$ then we have 
$\operatorname{rank} H_*(\{(0, A)\}) = \operatorname{rank} \widehat{HFK}(K, A) \leq 1$ whenever $|A| > h+1$ or $|A| < h-1$.
\end{corollary}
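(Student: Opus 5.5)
The statement to prove is Corollary \ref{cor: 13131 outside h are 0 or 1-dim}. The plan is to reduce it directly to Lemma \ref{lemma: 1 and 1 then 1 or 0}, exactly as Corollary \ref{cor: 151 outside 1 are 0 or 1-dim} follows for the $(1,5,1)$ type.

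First, the identification $H_*(\{(0,A)\}) = \widehat{HFK}(K,A)$ is immediate. The complex $C\{(0,A)\}$ is the associated graded piece of $\widehat{CFK}(K)=C\{i=0\}$ at Alexander level $A$, so its homology is $\widehat{HFK}(K,A)$ by definition.

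Next, fix $A$ with $|A|>h+1$ or $|A|<h-1$. In the $(1,3,1,3,1)$ type with parameter $h$ (take $h>0$ by symmetry), we have $H_*(A_m)=\mathbb{F}$ for all $m\neq \pm h$. Lemma \ref{lemma: 1 and 1 then 1 or 0} with $r=A$ needs $H_*(A_m)=\mathbb{F}$ for $m=A-1,A,A+1$, that is, none of $A-1,A,A+1$ may equal $\pm h$. Equivalently $|A-h|\geq 2$ and $|A+h|\geq 2$. I will check this in two cases.

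\emph{Case $|A|>h+1$.} Then $|A\mp h|\geq |A|-h\geq 2$.

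\emph{Case $|A|<h-1$.} Then $|A\mp h|\geq h-|A|\geq 2$.

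Applying the lemma then gives $\operatorname{rank} H_*(\{(0,A)\})\leq 1$.

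The only real content is Lemma \ref{lemma: 1 and 1 then 1 or 0}, which is already available, so I expect no step here to be a real obstacle. The point needing care is the window condition $m\in\{A-1,A,A+1\}$: this is why the excluded band around $\pm h$ has width one on each side. I would also handle the edge case where $h\leq 1$ makes the second range empty or lets the two bands overlap; the argument is unchanged there.
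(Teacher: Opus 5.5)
Your proposal is correct and follows the paper's route: the corollary is an immediate application of Lemma \ref{lemma: 1 and 1 then 1 or 0} with $r=A$, using that $H_*(A_m)=\mathbb{F}$ for all $m\neq\pm h$, just as Corollary \ref{cor: 151 outside 1 are 0 or 1-dim} follows in the $(1,5,1)$ case. The only thing to check is that the window $\{A-1,A,A+1\}$ avoids $\pm h$ in both ranges, and you verified this correctly via $|A\mp h|\geq 2$.
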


Recall that we assume $CFK^\infty(K)$ is reduced, i.e., there are no differentials from $x$ to $y$ if $x$ and $y$ have identical $(i, j)$ coordinates. We can then identify $C\{(0, j)\}$ with $\widehat{HFK}(K, j)$, as $\mathbb{F}$-vector spaces. The lemmas above allow us to determine the rank and Maslov grading of each $\widehat{HFK}(K,j)$. Together with homology information for each $H_*(Y_m)$, we can apply grid climbing to determine $C\{i=0\}$, as will be discussed in the following Sections.

\section{Height 4 knots of the (1,5,1) type}\label{section:(1,5,1) type}

In this section, we prove Theorem \ref{thm: classification of height 4} for height 4 knots of the $(1,5,1)$ type. We first state the main theorem of this section, then present the basic setup, which includes a detailed proof of Lemma \ref{lemma: 151 X_2 and Y_2} that presents the techniques used to handle all cases. In Section \ref{subsection: Dimension and Maslov grading}, we determine the dimensions and Maslov gradings in all cases. In Section \ref{subsection: Construction of hv-basis}, we first construct a vertically simplified basis and then modify it to obtain an hv-basis, thereby completing the proof of Theorem \ref{thm: 151 has hv-basis}. Note that in Sections \ref{subsection: Dimension and Maslov grading} and \ref{subsection: Construction of hv-basis} we work under the assumption that $g(K) > 1$. In Section \ref{subsection:3.4 Special case of genus one}, we present the classification results of the hv-basis for the special $g(K) = 1$ case.

\begin{theorem}\label{thm: 151 has hv-basis} 
Let $K$ be a height 4 knot of the $(1,5,1)$ type. Then $C=CFK^{\infty}(K)$ admits an hv-basis of one of the the following types:
\begin{enumerate}[label=(\arabic*)]
    \item a doubly-twisted staircase, either positive or negative;

    \item a twisted staircase (either positive or negative) plus one box;

    \item a staircase plus one twisted box;

    \item a staircase plus two boxes, both located in the central position. 
\end{enumerate}
These four possibilities are illustrated in Figure \ref{Figure: figure of main theorem for (1,5,1) type}.
\end{theorem}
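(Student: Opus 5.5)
I will follow the three-step strategy set out in Section~\ref{section:Height 4 knots}, specialized to $H_*(A_0)=\mathbb{F}^5$ and $H_*(A_m)=\mathbb{F}$ for $m\neq 0$. Throughout I assume $g(K)>1$ and treat $g(K)=1$ separately at the end.

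\textbf{Step 1: determine $\widehat{HFK}(K)$ with Maslov gradings.} For $|m|\ge 2$, Lemma~\ref{lemma: 1 and 2k+1 then k or k+1} with $k=0$ gives $(\operatorname{rank}H_*(X_m),\operatorname{rank}H_*(Y_m))\in\{(1,0),(0,1)\}$. By Corollary~\ref{cor: 151 outside 1 are 0 or 1-dim}, $\widehat{HFK}(K,A)$ has rank at most one for $|A|>1$. Exactly as for L-space knots, the nonzero groups in this range alternate and their Maslov gradings are forced.

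The interesting levels are $m=0,\pm1$. For $m=1$ we have $H_*(A_1)=\mathbb{F}$ and $H_*(UX_1\cup Y_1)\cong H_*(A_0)=\mathbb{F}^5$. Applying Lemma~\ref{lemma: 1 and 2k+1 then k or k+1} with $k=2$ gives $(\operatorname{rank}H_*(X_1),\operatorname{rank}H_*(Y_1))=(2,3)$ or $(3,2)$. The analogous pair at $m=0$ comes from Lemma~\ref{lemma: 2k+1 and 1 then k or k+1}. Running the two short exact sequences from the proof of Lemma~\ref{lemma: 1 and 1 then 1 or 0} at levels $0$ and $\pm1$, together with the symmetry $\widehat{HFK}(K,A)\cong\widehat{HFK}(K,-A)$ and the Euler characteristic, yields a short table of possible ranks at $(\pm1,0)$. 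I expect entries such as $(1,3,1)$, $(2,1,2)$, $(3,5,3)$, and so on, each with determined Maslov gradings. The cancellation lemma (Lemma~\ref{lemma: cancellation lemma}) is how I will compute $H_*(X_m)$ and $H_*(Y_m)$ when a level carries several generators.

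\textbf{Step 2: build a vertically simplified basis.} I start from a basis of $\widehat{HFK}(K)$ and climb the grid from the lowest Alexander level upward. At each level the known value of $H_*(Y_{m})$ forces whether each new generator is killed by a vertical arrow or survives. Maslov grading restricts which lower generators are possible targets. Filtered basis changes, such as adding a generator to others at the same or higher level, reduce $\partial$ modulo $U$ to a vertically simplified form. Lemma~\ref{lemma: h or v basis exists} guarantees this is possible; the point is to record exactly where the vertical arrows of length $>1$ sit near level $0$.

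\textbf{Step 3: make the basis horizontally simplified.} I repeat the climb for $C\{j=0\}$, using $H_*(X_m)$. Here horizontal arrows must be simplified by basis changes that do not destroy vertical simplicity. This step is the main obstacle. When levels $0$ and $\pm1$ carry $5$ and $3$ generators, the data from $X_p$ and $Y_p$ alone do not pin down the horizontal arrows, and there are pathological configurations. In those cases I would first use the $1\times n$ lemma (Lemma~\ref{lemma: 1xn lemma}) to exclude length-one horizontal arrows into vertical targets. I would then compute $H_*(A_0)$ directly by cancellation and require it to equal $\mathbb{F}^5$, which rules out the remaining bad configurations. Matching the surviving configurations against the pictures yields the four types: the doubly twisted staircase, twisted staircase plus box, staircase plus twisted box, and staircase plus two central boxes. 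Finally, the genus-one case is handled by the same computations with fewer levels.
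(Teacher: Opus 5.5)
Your outline is the paper's strategy, but the steps you defer are where the proof actually lives, and two of your specific claims are off.

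\textbf{Step 1.} The symmetry \eqref{equation:(i,j) symmetry} ties $\operatorname{rank}H_*(X_1)$ to $\operatorname{rank}H_*(Y_0)$ and $\operatorname{rank}H_*(X_2)$ to $\operatorname{rank}H_*(Y_{-1})$. Combined with Lemmas \ref{lemma: 1 and 2k+1 then k or k+1} and \ref{lemma: 2k+1 and 1 then k or k+1}, only the patterns $(0,2,3,1)$, $(0,3,2,1)$, $(1,2,3,0)$, $(1,3,2,0)$ for $(\operatorname{rank}H_*(X_p))_{p=2,1,0,-1}$ survive. Consequently $\operatorname{rank}\widehat{HFK}(K,\pm1)\in\{2,3\}$ and $\operatorname{rank}\widehat{HFK}(K,0)\in\{1,3,5\}$, so your expected entry $(1,3,1)$ cannot occur. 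What really feeds Steps 2--3 is the Maslov grading data. It comes from chasing \eqref{equation: exact sequence of X_p} from the generator at height $A$ down to height $-A$, and yields sixteen configurations with free parameters. Coincidences among these parameters (e.g.\ $a=\alpha$ in Proposition \ref{prop: 151 table 14th row}) make the vertically simplified basis non-unique. Each alternative must then be carried through Step 3 separately, and different alternatives can end in different types.

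\textbf{Step 3.} You name the obstacle but supply no mechanism for it. The paper's answer is Lemma \ref{lemma: hori filtered basis change does not change vertical simplicity}. A change $y_j\mapsto y_j+y_{j-l}$ with $l>0$ in $C\{j=0\}$ only adds a $U^l$-multiple to the corresponding element of $C\{i=0\}$, so $\partial \bmod U$ is untouched. Changes within a single column are different: they require either the simultaneous change \eqref{equation: simultaneous basis change}, applying the same matrix to a pair and to its vertical partners, or a compensating change that removes the vertical arrow they create.

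Your claim that the $1\times n$ lemma plus $H_*(A_0)\cong\mathbb{F}^5$ disposes of every remaining bad configuration is also not justified. $A_0$ contains only the $i\le 0$ part of the row $j=0$. In practice it pins down the arrows from $C\{(0,0)\}$ to $C\{(-1,0)\}$, and it sees nothing leaving $C\{(1,0)\}$. The paper needs an extra argument in at least two places:
\begin{itemize}
\item In Proposition \ref{prop: 151 table 6th row} (case $a=m-1$), a length-two arrow from $C\{(1,0)\}$ to $C\{(-1,0)\}$ must be excluded.
\item In Proposition \ref{prop: 151 table 10th row} (case $a=\alpha+1$), a length-one arrow out of $C\{(1,0)\}$ must be excluded, and there the $1\times n$ lemma only forces a diagonal differential.
\end{itemize}
Both are ruled out only by a $\partial^2=0$ chase: the arrow forces a diagonal differential, and then no second path cancels the resulting composite.

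Without these ingredients and the case-by-case verification, including the genus-one truncation, ``matching the surviving configurations against the pictures'' is the entire theorem rather than its last step.
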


\begingroup
\setlength{\intextsep}{1pt}
\begin{figure}[H]
	\centering
    \setlength{\abovecaptionskip}{1pt}
	% 第一行
	\begin{minipage}{0.24\linewidth}
		\centering
		\includegraphics[scale=0.7]{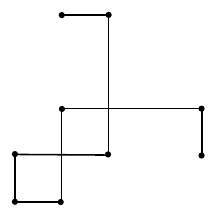}
		\caption*{$(1_{+})$}
	\end{minipage}
	\begin{minipage}{0.24\linewidth}
		\centering
		\includegraphics[scale=0.7]{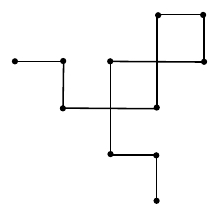}
		\caption*{$(1_{-})$}
	\end{minipage}
	\begin{minipage}{0.24\linewidth}
		\centering
		\includegraphics[scale=0.7]{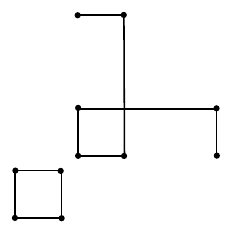}
		\caption*{$(2_{+})$}
	\end{minipage}
	\begin{minipage}{0.24\linewidth}
		\centering
		\includegraphics[scale=0.7]{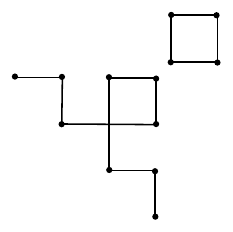}
		\caption*{$(2_{-})$}
	\end{minipage}
	
	% 第二行
	\begin{minipage}{0.48\linewidth}
		\centering
		\includegraphics[scale=0.7]{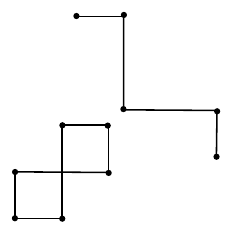}
		\caption*{(3)}
	\end{minipage}
	\begin{minipage}{0.48\linewidth}
		\centering
		\includegraphics[scale=0.7]{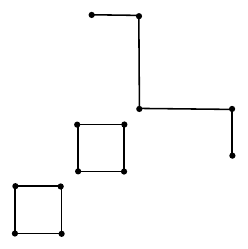}
		\caption*{(4)}
	\end{minipage}
    \caption{hv-basis for knots of the (1,5,1) type}
    \label{Figure: figure of main theorem for (1,5,1) type}
\end{figure}
\endgroup

\subsection{Basic setup}\label{subsection:3.1 Basic setup}

In this section, we first introduce the symmetry property \eqref{equation:(i,j) symmetry} of $CFK^\infty(K)$, and then present a detailed proof of Lemma \ref{lemma: 151 X_2 and Y_2} as an illustration to the proof methods of all cases.

The symmetry property on $CFK^\infty(K)$ with respect to commuting $(i,j)$ \cite[Section 3.5]{OSknotinvar} will be used repeatedly throughout this paper, which gives:

\begingroup
\setlength{\abovedisplayskip}{1pt}
\setlength{\abovedisplayshortskip}{1pt}
\setlength{\belowdisplayskip}{1pt}
\setlength{\belowdisplayshortskip}{1pt}
\begin{align*}
H_*(X_m) & = H_*(C\{i \leq 0, j = m\}) \\
         & \cong H_*(C\{i = m, j \leq 0\}) \\ 
         & \cong H_{*-2m}(C\{i = 0, j \leq -m\}) \\
         & = H_{*-2m}(Y_{-m+1}).
\end{align*}
\endgroup

The first and fourth equalities follow from the definitions, the second equality is a consequence of the symmetry property, and the third equality follows from the $U$-action. We summarize this as:
\begingroup
\setlength{\abovedisplayskip}{1pt}
\setlength{\abovedisplayshortskip}{1pt}
\setlength{\belowdisplayskip}{1pt}
\setlength{\belowdisplayshortskip}{1pt}
\begin{equation}\label{equation:(i,j) symmetry}
    H_*(X_m) \cong H_{*-2m}(Y_{-m+1})
\end{equation}
\endgroup

Applying \eqref{equation:(i,j) symmetry} to the cases $m = 2$ and $m = 1$, we obtain $H_*(X_2) \cong H_{*-4}(Y_{-1})$ and $H_*(X_1) \cong H_{*-2}(Y_0)$.

Now we are ready to determine the dimension information of 
$\widehat{HFK}(K, A)$ for $A = 1, 0, -1$. We will determine the Maslov grading simultaneously. As a starting point, we consider the generator of minimal Alexander grading at least $2$.

The following lemma has already been proved in \cite[Lemma 4.6]{Binns}; we prove it here as an illustration of the chasing process, which will be applied in every case we discuss later. At the same time, the proof gives us the Maslov grading information of $\widehat{HFK}(K, A)$ for $|A| \geq 2$.

\begin{lemma}   \label{lemma: 151 X_2 and Y_2}
Suppose $K$ is a height 4 knot of the $(1,5,1)$ type with genus greater than one. Then there exists a $d \in 2\mathbb{Z}$ such that either

\begin{enumerate}[label=(\arabic*),topsep=1pt,partopsep=0pt,itemsep=1pt,parsep=0pt]
\item $H_*(X_2) = 0$, $H_*(Y_2) = \mathbb{F}_{(d)}$, or

\item $H_*(X_2) = \mathbb{F}_{(d)}$, $H_*(Y_2) = 0$.
\end{enumerate}

Case (1) holds if the Maslov grading of the generator of minimal Alexander grading at least 2 is odd; case (2) holds if it is even.
\end{lemma}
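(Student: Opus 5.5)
Since $K$ is of $(1,5,1)$ type, $H_*(A_m)=\mathbb{F}$ for every $m\neq 0$, and in particular for $m=1,2,3,\dots$. We apply Lemma~\ref{lemma: 1 and 2k+1 then k or k+1} with $k=0$ to $m=2$: its hypotheses $H_*(X_2\cup Y_2)=\mathbb{F}$ and $H_*(UX_2\cup Y_2)=H_*(A_1)$ (up to the $U$-translation identification) $=\mathbb{F}$ both hold. By Remark~\ref{remark: (1,0)or(0,1) remark}, $(\operatorname{rank}H_*(X_2),\operatorname{rank}H_*(Y_2))$ is $(1,0)$ or $(0,1)$. So the dichotomy itself is immediate, and the real content is identifying which case occurs and the grading $d$.

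To locate the surviving generator, we climb from the top. For $m$ large ($m\ge g$) the complex $Y_m$ is all of $C\{i=0\}=\widehat{CFK}(K)$, so $H_*(Y_m)=\widehat{HF}(S^3)=\mathbb{F}_{(0)}$ and $H_*(X_m)=0$. We then descend $m$ one step at a time, using the short exact sequences
\[0\to UX_{r+1}\to X_r\to C\{(0,r)\}\to 0,\qquad 0\to Y_r\to Y_{r+1}\to C\{(0,r)\}\to 0\]
from Lemma~\ref{lemma: 1 and 1 then 1 or 0}. For each $r\ge 2$, Corollary~\ref{cor: 151 outside 1 are 0 or 1-dim} gives $\operatorname{rank}C\{(0,r)\}\le 1$ (the complex is reduced). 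The induction hypothesis is that exactly one of $H_*(X_{r+1})$, $H_*(Y_{r+1})$ is $\mathbb{F}$ and the other is $0$. When $C\{(0,r)\}=0$ the pair is unchanged. When $C\{(0,r)\}=\mathbb{F}_{(a)}$, the long exact sequences together with the $(1,0)/(0,1)$ constraint at level $r$ force one of two moves. If the single class sits in $Y_{r+1}$ with grading $a$, then $Y_r$ becomes $0$ and $X_r$ becomes $\mathbb{F}_{(a)}$. If the class sits in $X_{r+1}$, so that $UX_{r+1}$ has grading $a-1$ after the $U$-shift, then $X_r$ becomes $0$ (the connecting map kills it) and $Y_r$ becomes $\mathbb{F}_{(a)}$. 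Each nonzero level $r\ge 2$ therefore toggles the class between $X$ and $Y$.

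Bookkeeping the Maslov gradings is what forces the parity statement. A class in $Y$ has even grading: it starts at $0$, and a $Y$-class is always killed by a generator of grading one higher. A class in $X$ has grading that, after accounting for the $-2$ shift of $U$, differs from the preceding $Y$-class by an odd amount. The key step is the grading-compatibility check in the connecting homomorphisms, which shows that the generators must alternate in parity of Maslov grading, exactly as in a staircase. Running the induction down to $r=2$ yields the statement: the last generator encountered is the one of minimal Alexander grading $\ge 2$. If its Maslov grading is even, the class has just moved into $X_2$ and we are in case (2); if it is odd, it has just cancelled the $X$-class and the homology sits in $Y_2$, giving case (1). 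In both cases the surviving grading is even, which is our $d\in 2\mathbb{Z}$.

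The main obstacle is verifying that each toggle is forced, i.e.\ that the connecting maps are isomorphisms rather than zero. The plan is to rule out the zero map by rank counting: a zero connecting map would produce rank $2$ in $H_*(X_r)$ or $H_*(Y_r)$, contradicting Lemma~\ref{lemma: 1 and 2k+1 then k or k+1}. The Maslov grading requirement (the differential lowers grading by $1$) then pins down the parity of $a$ in each case.
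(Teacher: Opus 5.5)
Your route is the paper's own: start above the top Alexander grading, descend with the two short exact sequences, and use the $(1,0)/(0,1)$ dichotomy to force a toggle at each nonzero level. The dichotomy comes from Lemma \ref{lemma: 1 and 2k+1 then k or k+1} with $k=0$, which applies at every level $r\ge 2$ because $H_*(A_r)=H_*(A_{r-1})=\mathbb{F}$. The dichotomy and the toggling are fine.

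The Maslov-grading bookkeeping, however, is wrong as written, and you rightly say that this is the real content. The errors contradict your own conclusion.
\begin{enumerate}
\item In the $X\to Y$ move you have $H_*(Y_{r+1})=0$. So the connecting map $H_a(C\{(0,r)\})\to H_{a-1}(Y_r)$ of $0\to Y_r\to Y_{r+1}\to C\{(0,r)\}\to 0$ is an isomorphism, and $H_*(Y_r)=\mathbb{F}_{(a-1)}$, the same degree as $H_*(UX_{r+1})$, not $\mathbb{F}_{(a)}$. These generators have odd $a$, so your formula would put $H_*(Y_2)$ in odd degree in case (1), contradicting $d\in 2\mathbb{Z}$ and your own claim that $Y$-classes are even.
\item It is false that an $X$-class differs from the preceding $Y$-class by an odd amount. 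At a $Y\to X$ move, $H_*(X_r)$ has exactly the degree of $H_*(Y_{r+1})$, and afterwards it only drops by $2$ per level.
\item The starting level is off by one. $Y_m=C\{i=0,j\le m-1\}$ is all of $C\{i=0\}$ only for $m\ge g+1$. At $m=g$ one has $H_*(Y_g)=0$ and $H_*(X_g)\cong\widehat{HFK}(K,g)=\mathbb{F}_{(0)}$. The induction must start at $m=g+1$, with the top generator as the first $Y\to X$ move; otherwise the alternation is shifted by one.
\end{enumerate}

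The invariant that actually works, and that the paper's figures track, is this: for every $r\ge 2$, whichever of $H_*(X_r)$, $H_*(Y_r)$ is nonzero sits in even degree.
\begin{itemize}
\item It holds at $m=g+1$, where $H_*(Y_{g+1})=\mathbb{F}_{(0)}$.
\item It is preserved at empty levels: the $X$-degree drops by $2$ and the $Y$-degree is unchanged.
\item It is preserved at a $Y\to X$ move: the generator's degree equals the even $Y$-degree, and $H_*(X_r)$ inherits it.
\item At an $X\to Y$ move with $H_*(X_{r+1})=\mathbb{F}_{(c)}$, $c$ even, the isomorphism $H_a(C\{(0,r)\})\to H_{a-1}(UX_{r+1})$ gives generator degree $a=c-1$, which is odd. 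Then $H_*(Y_r)=\mathbb{F}_{(c-2)}$, which is even.
\end{itemize}
Hence a generator at level $r\ge 2$ has even Maslov grading exactly when it performs a $Y\to X$ move. This gives both the parity criterion separating cases (1) and (2) and $d\in 2\mathbb{Z}$; your final paragraph then goes through unchanged. With these corrections your argument coincides with the paper's proof.
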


\begin{proof}
% We will repeatedly apply above lemmas to chase the dimension and grading information, from top to below.

By \cite[Theorem 1.2]{OShologenusbound}, for $p > g(K)$, we have $H_*(X_p) = 0$ and $H_*(Y_p) = \widehat{HF}(S^3) = \mathbb{F}_{(0)}$. Thus, the short exact sequence $0 \rightarrow UX_{p+1} \rightarrow X_{p} \rightarrow C\{(0, p)\} \rightarrow 0$ yields $H_*(\{(0, p)\})$ $=$ $\widehat{HFK}(K, p) = 0$. We write $g=g(K)$ for brevity.

Consider the short exact sequence: $0 \rightarrow UX_{g+1} \rightarrow X_{g} \rightarrow C\{(0,g)\} \rightarrow 0$. The homology of $UX_{g+1}$ is 0 and $\operatorname{rank} H_*(X_{g})$ $=$ $\operatorname{rank} H_*(\{(0, g)\}) = 1$. By Lemma \ref{lemma: 1 and 2k+1 then k or k+1}, we obtain $H_*(Y_{g}) = 0$.

In the other short exact sequence, $0 \rightarrow Y_{g} \rightarrow Y_{g+1} \rightarrow C\{(0, g)\} \rightarrow 0$, the homology of $Y_{g}$ is 0. Therefore, $C\{(0,g)\}$ has the same homology as $Y_{g+1}$, which is one-dimensional with Maslov grading $0$. This is illustrated in Figure \ref{Figure:algebraic_lemma_figure-5}.

\begingroup
\setlength{\intextsep}{1pt}
\begin{figure}[H]
    \centering
    \setlength{\abovecaptionskip}{2pt}
	\includegraphics[scale=0.75]{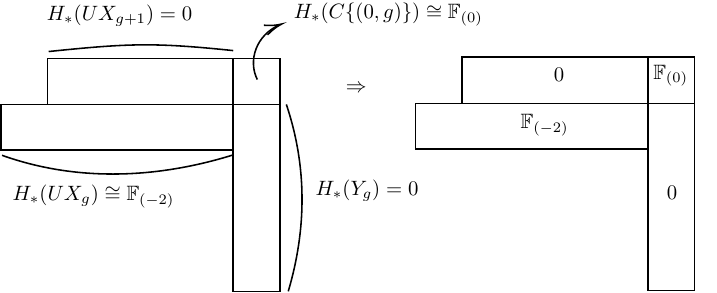}
    \caption{}
    \label{Figure:algebraic_lemma_figure-5}
\end{figure}
\endgroup

We continue downward to find the next non-zero homology term in $C\{i = 0\}$. The complex $X_p$ is transformed by $U$ repeatedly, with Maslov grading decreasing by 2 each time. Together with the homology $\widehat{HFK}(K, j)$ (zero- or one-dimensional) at the corresponding height level, we can determine $H_*(X_j)$. For example, if $H_*(\{(0,g-1)\}) = 0$, then $H_*(X_{g-1})$ is $\mathbb{F}_{(-2)}$ and $H_*(Y_{g-1})$ is $0$, respectively.

\begingroup
\setlength{\intextsep}{1pt}
\begin{figure}[H]
    \centering
    \setlength{\abovecaptionskip}{1pt}
	\includegraphics[scale=0.75]{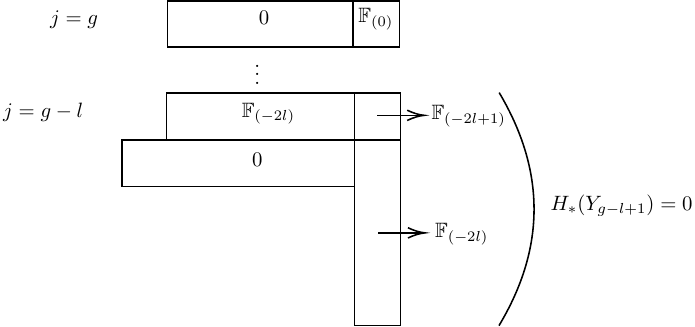}
    \caption{}
    \label{Figure:algebraic_lemma_figure-6}
\end{figure}
\endgroup
Denote by $g-l$ the height at which we encounter the next non-zero homology term (assume $g-l$ is far greater than $1$).
Then $H_*(Y_{g-l+1}) = 0$, $H_*(UX_{g-l+1}) = \mathbb{F}_{(-2l)}$, and $H_*(X_{g-l}) = 0$. The sequence $0 \rightarrow UX_{g-l+1} \rightarrow X_{g-l} \rightarrow C\{(0, g-l)\} \rightarrow 0$ gives $H_*(\{(0, g-l)\}) = \mathbb{F}_{(-2l+1)}$.
The sequence $0 \rightarrow Y_{g-l} \rightarrow Y_{g-l+1} \rightarrow C\{(0, g-l)\} \rightarrow 0$ then gives $H_*(Y_{g-l}) = \mathbb{F}_{(-2l)}$; see Figure \ref{Figure:algebraic_lemma_figure-6}.

Suppose we are still far above height 0, we continue to trace downward. Assume that we meet next nonzero homology term at height $g-l-t$ (assume $g-l-t$ is far greater than 1).

\begingroup
\setlength{\intextsep}{2pt}
\begin{figure}[H]
    \centering
    \setlength{\abovecaptionskip}{1pt}
	\includegraphics[scale=0.8]{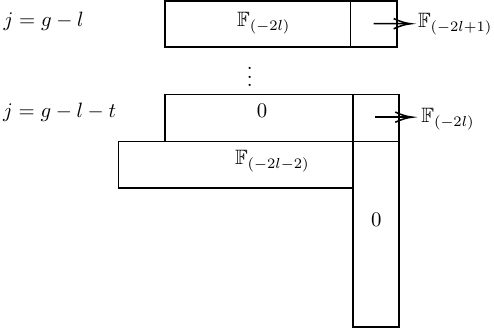}
    \caption{}
    \label{Figure:algebraic_lemma_figure-7}
\end{figure}
\endgroup
By analyzing the corresponding exact sequences we obtain $H_*(\{(0,g-l-t)\}) = \mathbb{F}_{(-2l)}$, $H_*(X_{g-l-t}) = \mathbb{F}_{(-2l)}$, and $H_*(Y_{g-l-t}) = 0$, see Figure \ref{Figure:algebraic_lemma_figure-7}.

\begingroup
\setlength{\intextsep}{2pt}
\begin{figure}[H]
    \centering
    \setlength{\abovecaptionskip}{1pt}
	\includegraphics[scale=0.8]{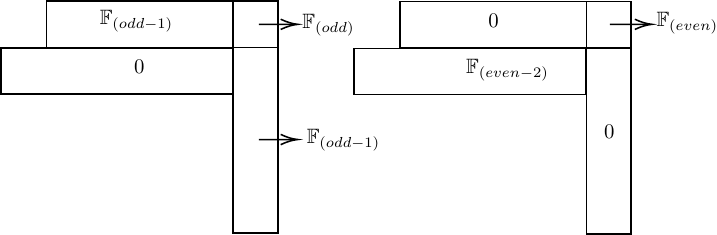}
    \caption{}
    \label{Figure:algebraic_lemma_figure-8}
\end{figure}
\endgroup

As long as we are far above height 0, the situation is identical each time. We summarize the results in Figure \ref{Figure:algebraic_lemma_figure-8}. A concrete description via formulas could be found in Ozsv\'{a}th--Szab\'{o} \cite[Theorem 1.2]{OSLenspacesurgeries}.
In short, the parity of the Maslov grading alternates as we trace downward, and the Maslov grading decreases by one when we go from an odd grading to an even grading.

The statement of the lemma then follows naturally.
\end{proof}

\begin{remark}
The above method for determining the vertical differentials is called \textit{grid climbing} or the \textit{snake method}.
\end{remark}

\begin{corollary}\label{cor: 151 genus>1 are fibered} Given a height 4 knot $K$ of the $(1,5,1)$ type, assume its genus is greater than 1, then $K$ must be fibered.
\end{corollary}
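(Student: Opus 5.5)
The fiberedness criterion I will use is Ghiggini and Ni: $K$ is fibered if and only if $\operatorname{rank}\widehat{HFK}(K,g(K))=1$. So the whole task is to show that the top Alexander grading group $\widehat{HFK}(K,g)$ is one-dimensional whenever $g=g(K)>1$ and $K$ is of $(1,5,1)$ type.

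The top group is non-zero, since knot Floer homology detects genus by \cite[Theorem 1.2]{OShologenusbound}. Corollary~\ref{cor: 151 outside 1 are 0 or 1-dim} gives $\operatorname{rank}\widehat{HFK}(K,A)\le 1$ for all $|A|>1$. The hypothesis $g>1$ means $g\ge 2$, so $|g|>1$ and the corollary applies at $A=g$. Together these give $\operatorname{rank}\widehat{HFK}(K,g)=1$.

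Equivalently, in the language of the proof of Lemma~\ref{lemma: 151 X_2 and Y_2}, I can read this off the exact sequences directly:
\begin{itemize}
\item For $p>g$ we have $H_*(X_{g+1})=0$ and $H_*(Y_{g+1})=\mathbb{F}_{(0)}$.
\item From $0\to UX_{g+1}\to X_g\to C\{(0,g)\}\to 0$ we get $H_*(X_g)\cong \widehat{HFK}(K,g)$.
\item Since $H_*(A_g)=H_*(A_{g-1})=H_*(A_{g+1})=\mathbb{F}$ (all of $g-1,g,g+1$ are nonzero when $g\ge 2$), Lemma~\ref{lemma: 1 and 1 then 1 or 0} applies with $r=g$ and forces $H_*(\{(0,g)\})$ to be $\mathbb{F}$ or $0$.
\end{itemize}
Non-vanishing at the genus then pins it to $\mathbb{F}$.

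The main point to check is the hypothesis of Lemma~\ref{lemma: 1 and 1 then 1 or 0}: it needs $H_*(A_{g-1})$ to have rank one, which is exactly where $g>1$ is used. When $g=1$ we would have $A_{g-1}=A_0$ with rank $5$, and the argument would break down. That case is precisely the excluded one, treated separately in Section~\ref{subsection:3.4 Special case of genus one}.

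The final step is to invoke Ghiggini's theorem (genus one) and Ni's theorem (general genus), which say that $\widehat{HFK}(K,g)\cong\mathbb{F}$ implies $K$ is fibered. This concludes the corollary.
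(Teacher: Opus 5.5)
Your proof is correct and matches the paper's argument. The corollary is read off from the proof of Lemma~\ref{lemma: 151 X_2 and Y_2}, where the sequence $0\to UX_{g+1}\to X_g\to C\{(0,g)\}\to 0$, the rank bound of Corollary~\ref{cor: 151 outside 1 are 0 or 1-dim} (valid because $g\ge 2$), and genus detection give $\widehat{HFK}(K,g)\cong\mathbb{F}$; fiberedness then follows from Ghiggini--Ni, which the paper leaves implicit and you state explicitly.
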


Let $x$ be the generator of lowest Alexander grading strictly greater than 1. Let $m$ denote the Maslov grading of $x$ and $A$ its Alexander grading (i.e., $\widehat{HFK}(K, A) = \mathbb{F}_{(m)}$). Recall the assumption that the genus of $K$ is greater than one.

Lemma \ref{lemma: 151 X_2 and Y_2} actually shows how to chase the dimension and Maslov grading. We now determine $H_*(\{(0,i)\}) = \widehat{HFK}(K, i)$ for $i = 0, \pm 1$. 

\subsection{Dimension and Maslov grading}\label{subsection: Dimension and Maslov grading}

In this section, we determine the dimension and Maslov grading of the generators at each height of $C\{i = 0\}$.

We discuss separately the different possibilities for $\operatorname{rank} H_*(X_m)$ with $m= 2, 1, 0, -1$. By Lemmas \ref{lemma: 1 and 2k+1 then k or k+1}, \ref{lemma: 2k+1 and 1 then k or k+1}, \ref{lemma: 151 X_2 and Y_2}, and Equation \eqref{equation:(i,j) symmetry}, there are four possible cases: 
\begin{equation}
(0,2,3,1),\  (0,3,2,1),\ (1,2,3,0),\ (1,3,2,0),
\end{equation}

where $(0,2,3,1)$ abbreviates $(\operatorname{rank} H_*(X_2), \dots, \operatorname{rank} H_*(X_{-1})) =(0,2,3,1)$.

The proofs of the cases $(0,2,3,1)$ and $(1,2,3,0)$ will be presented in detail. For the remaining two cases 
$(0,3,2,1)$ and $(1,3,2,0)$, we only sketch the arguments and highlight the main differences.

For clarity, we summarize the classification results for all cases in the following two tables before presenting the detailed proofs. The parameters $m$ and $A$ are defined after Corollary \ref{cor: 151 genus>1 are fibered}, $a$ and $b$ are independent parameters, $\alpha=m-2A+2$.

% To facilitate the reader's reference to the classification results for the corresponding cases, we list the computation results for all different cases in the tables on the following two pages before the detailed proofs begin.

\newpage

\[ (1,5,1), g(K)\geq 2 \]

\begin{table}[H]
\begin{center}
\begin{tabular}{|c|c|c|}
	\hline
	$rk H_*(X_p)$, $p = 2,1,0,-1$ & $\widehat{HFK}(K,i),i=1,0$ & hv-basis  \\
	\hline
	
	\multirow{4}{*}{(0,2,3,1)} & \makecell{ $\mathbb{F}_{(m+1)} \oplus \mathbb{F}_{(m+2)}$ \\ $\mathbb{F}_{(m+1)}$ } & 
	{\includegraphics[scale=0.58, valign=c]{fig/2-1}}
	\\
	
	\cline{2-3}
	
	& \makecell{ $\mathbb{F}_{(a)}\oplus \mathbb{F}_{(m+1)}$ \\ $\mathbb{F}_{(a-1)} \oplus \mathbb{F}_{(a-1)} \oplus \mathbb{F}_{(m)}$ }  &  {\includegraphics[scale=0.58, valign=c]{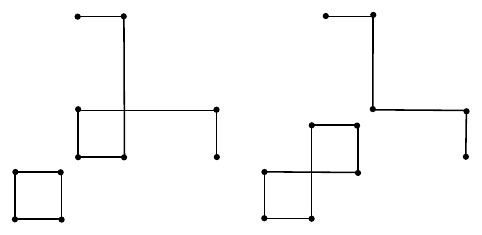}}
	\\
	
	\cline{2-3}
	
	& \makecell{ $\mathbb{F}_{(a)}\oplus \mathbb{F}_{(a+1)}$ \\ $\mathbb{F}_{(a-1)} \oplus \mathbb{F}_{(a)} \oplus \mathbb{F}_{(m-1)}$ }  & {\includegraphics[scale=0.58, valign=c]{fig/2-2}} \\
	
	\cline{2-3}
	
	& \makecell{ $\mathbb{F}_{(a)} \oplus \mathbb{F}_{(b)}$ \\ $\mathbb{F}^2_{(a-1)} \oplus \mathbb{F}^2_{(b-1)} \oplus \mathbb{F}_{(m-1)}$ } & {\includegraphics[scale=0.58, valign=c]{fig/2-4}} \\
	
	\hline
	
	\multirow{4}{*}{(0,3,2,1)} & \makecell{ $\mathbb{F}_{(m-3)} \oplus \mathbb{F}_{(m-2)}\oplus \mathbb{F}_{(m-1)}$ \\ $\mathbb{F}_{(m-4)}$ } & \includegraphics[scale=0.58, valign=c]{fig/2-5}
	\\
	
	\cline{2-3}
	
	& \makecell{ $\mathbb{F}_{(a)} \oplus \mathbb{F}_{(a+1)}\oplus \mathbb{F}_{(m-1)}$ \\ $\mathbb{F}_{(a-1)} \oplus \mathbb{F}_{(a)} \oplus \mathbb{F}_{(m-2)}$ }  & {\includegraphics[scale=0.58, valign=c]{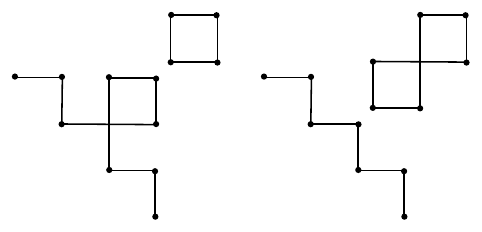}}
	\\
	
	\cline{2-3}
	
	& \makecell{$\mathbb{F}_{(a)}\oplus \mathbb{F}_{(m-2)}\oplus \mathbb{F}_{(m-1)}$ \\ $\mathbb{F}_{(a-1)} \oplus \mathbb{F}_{(a-1)} \oplus \mathbb{F}_{(m-3)}$ }  &{\includegraphics[scale=0.58, valign=c]{fig/2-6}} \\
	
	\cline{2-3}
	
	& \makecell{ $\mathbb{F}_{(a)} \oplus \mathbb{F}_{(b)}\oplus \mathbb{F}_{(m-1)}$ \\ $\mathbb{F}^2_{(a-1)} \oplus \mathbb{F}^2_{(b-1)} \oplus \mathbb{F}_{(m-2)}$ } & {\includegraphics[scale=0.58, valign=c]{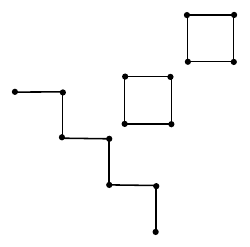}}\\
	\hline
\end{tabular}
\end{center}
\caption{}
\label{table:hv-basis of 151 type, g(K)>1-1}
\end{table}

\newpage

\begin{table}
\begin{center}
\begin{tabular}{|c|c|c|}
	\hline
	$rk H_*(X_p)$, $p = 2,1,0,-1$ & $\widehat{HFK}(K,i),i=1,0$ & hv-basis  \\
	\hline
	
	\multirow{4}{*}{(1,2,3,0)} & \makecell{ $\mathbb{F}_{(\alpha+1)} \oplus \mathbb{F}_{(\alpha+2)}\oplus \mathbb{F}_{(\alpha+3)}$ \\ $\mathbb{F}_{(\alpha+2)}$ } & {\includegraphics[scale=0.65, margin=0.05em, valign=c]{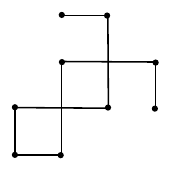}}
	\\
	
	\cline{2-3}
	
	& \makecell{ $\mathbb{F}_{(a-1)}\oplus \mathbb{F}_{(a)}\oplus \mathbb{F}_{(\alpha+1)}$ \\ $\mathbb{F}_{(a-2)} \oplus \mathbb{F}_{(a-1)} \oplus \mathbb{F}_{(\alpha)}$ }  &{\includegraphics[scale=0.65, margin=0.05em, valign=c]{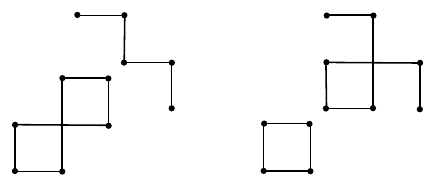}}
	\\
	
	\cline{2-3}
	
	& \makecell{ $\mathbb{F}_{(a)}\oplus \mathbb{F}_{(\alpha+1)}\oplus \mathbb{F}_{(\alpha+2)}$ \\ $\mathbb{F}_{(a-1)} \oplus \mathbb{F}_{(a-1)} \oplus \mathbb{F}_{(\alpha+1)}$ }  & {\includegraphics[scale=0.65, margin=0.05em, valign=c]{fig/3-2}} \\
	
	\cline{2-3}
	
	& \makecell{ $\mathbb{F}_{(a)} \oplus \mathbb{F}_{(b)} \oplus \mathbb{F}_{(\alpha+1)}$ \\ $\mathbb{F}^2_{(a-1)} \oplus \mathbb{F}^2_{(b-1)} \oplus \mathbb{F}_{(\alpha)}$ } &  {\includegraphics[scale=0.65, margin=0.05em, valign=c]{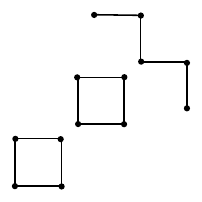}} \\
	
	\hline
	
	\multirow{4}{*}{(1,3,2,0)} & \makecell{ $\mathbb{F}_{(\alpha-2)} \oplus \mathbb{F}_{(\alpha-1)}$ \\ $\mathbb{F}_{(\alpha-3)}$ } &{\includegraphics[scale=0.65, margin=0.05em, valign=c]{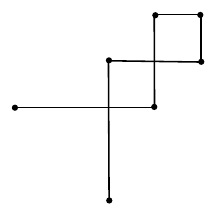}}
	\\
	
	\cline{2-3}
	
	& \makecell{ $\mathbb{F}_{(a)} \oplus \mathbb{F}_{(\alpha-1)}$ \\ $\mathbb{F}_{(a-1)} \oplus \mathbb{F}_{(a-1)} \oplus \mathbb{F}_{(\alpha-2)}$ }  & {\includegraphics[scale=0.65, margin=0.05em, valign=c]{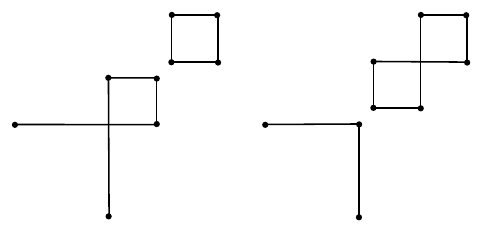}}
	\\
	
	\cline{2-3}
	
	& \makecell{ $\mathbb{F}_{(a)}\oplus \mathbb{F}_{(a+1)}$ \\ $\mathbb{F}_{(a-1)} \oplus \mathbb{F}_{(a)} \oplus \mathbb{F}_{(\alpha-1)}$ }  & {\includegraphics[scale=0.65, margin=0.05em, valign=c]{fig/3-6}} \\
	
	\cline{2-3}
	
	& \makecell{ $\mathbb{F}_{(a)} \oplus \mathbb{F}_{(b)}$ \\ $\mathbb{F}^2_{(a-1)} \oplus \mathbb{F}^2_{(b-1)} \oplus \mathbb{F}_{(\alpha-1)}$ } & {\includegraphics[scale=0.65, margin=0.05em, valign=c]{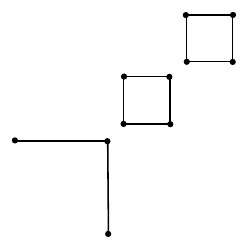}} \\
	
	\hline
\end{tabular}
\end{center}
\caption{}
\label{table:hv-basis of 151 type, g(K)>1-2}
\end{table}

\newpage

\settocdepth{section} % 临时只显示到 section
\subsubsection{(0,2,3,1) case}
\resettocdepth      % 恢复原 tocdepth

The following two short exact sequences \eqref{equation: exact sequence of X_p} and \eqref{equation: exact sequence of Y_p} will be used frequently later:
\begingroup
\setlength{\abovedisplayskip}{2pt}
\setlength{\abovedisplayshortskip}{2pt}
\setlength{\belowdisplayskip}{2pt}
\setlength{\belowdisplayshortskip}{2pt}
\begin{equation}\label{equation: exact sequence of X_p}
    0 \rightarrow UX_{p+1}\stackrel{i}{ \rightarrow} X_{p}\stackrel{j}{ \rightarrow} C\{(0,p)\} \rightarrow 0
\end{equation}
\begin{equation}\label{equation: exact sequence of Y_p}
    0 \rightarrow Y_p \stackrel{i}{\rightarrow} Y_{p+1}\stackrel{j}{\rightarrow} C\{(0, p)\} \rightarrow 0
\end{equation}
\endgroup

Taking $p = 1$ in Equation \eqref{equation: exact sequence of X_p}, we find that $H_*(\{(0, 1)\})$ has rank two. We assume it to be $\mathbb{F}_{(a)} \oplus \mathbb{F}_{(b)}$, which is also equal to $H_*(X_1)$.

Now consider Equation \eqref{equation: exact sequence of X_p} for $p = 0$. The complexes $UX_1$ and $X_0$ have homologies of rank two and three, respectively. The kernel of the map $i_*: H_*(UX_1) \rightarrow H_*(X_0)$ can have rank zero, one, or two, which forces $H_*(\{(0, 0)\})$ to have rank one, three, or five. We therefore divide our discussion into three subcases.

\textbf{Subcase (\romannumeral1)}: The group $H_*(\{(0, 0)\})$ has rank one. We assume it to be $\mathbb{F}_{(c)}$. 

In this subcase, the map $i_*$ is injective and $H_*(X_0) = \mathbb{F}_{(a-2)} \oplus \mathbb{F}_{(b-2)} \oplus \mathbb{F}_{(c)}$. By $\operatorname{Spin}^c$ conjugation \cite[Proposition 3.10]{OSknotinvar}, we obtain $H_*(\{(0, -1)\}) = \mathbb{F}_{(a-2)} \oplus \mathbb{F}_{(b-2)}$. Now consider Equation \eqref{equation: exact sequence of X_p} with $p = -1$, the boundary map $\delta: H_*(\{(0, -1)\}) \rightarrow H_*(UX_0)$ must be injective, and we have $\{a-3, b-3\} \subseteq \{a-4, b-4, c-2\}$. Note that $\{a-3, b-3\} = \{a-4, b-4\}$ is impossible. 

\begingroup
\setlength{\intextsep}{1.5pt}
\begin{figure}[H]
    \centering
    \setlength{\abovecaptionskip}{2pt}
	\includegraphics[scale=0.8]{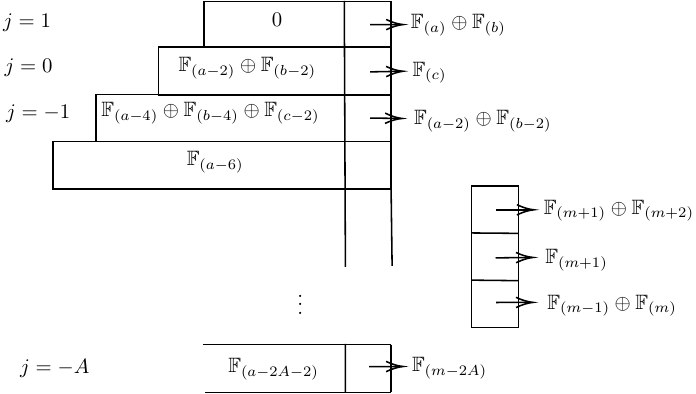}
    \caption{}
    \label{Figure:dim_gra_figure-1}
\end{figure}
\endgroup

The places of $a$ and $b$ are symmetric. Thus, we may assume $\{a-3, b-3\} = \{b-4, c-2\},$ which gives $b = a+1$ and $c = b-1 = a$. Then $H_*(X_{-1}) = \mathbb{F}_{(a-4)}.$ 

Observe that $H_*(X_{-A+1}) = \mathbb{F}_{(a-2A)}$. Consider Equation \eqref{equation: exact sequence of X_p} with $p = -A$, we obtain $H_*(X_{-A}) = 0$ and $H_*(\{(0, -A)\}) = \mathbb{F}_{(m-2A)}$. We then have $m-2A-1 = a-2A-2, a = m+1$; see Figure \ref{Figure:dim_gra_figure-1}.

\textbf{Subcase (\romannumeral2)}: Assume that $H_*(\{(0, 0)\}) = \mathbb{F}_{(c)} \oplus \mathbb{F}_{(d)} \oplus \mathbb{F}_{(e)}$.

Setting $p = 0$ in Equation \eqref{equation: exact sequence of X_p}, the image of $j_*$ must have rank two. We may assume $H_*(X_0) = \mathbb{F}_{(b-2)} \oplus \mathbb{F}_{(c)} \oplus \mathbb{F}_{(d)}$; then $e-1=a-2$ and $e=a-1$.

Setting $p = -1$ in Equation \eqref{equation: exact sequence of X_p}, we obtain that $H_*(X_{-1})$ has rank one. Since $c$ and $d$ are symmetric so far, $H_*(X_{-1})$ is either $\mathbb{F}_{(b-4)}$ or $\mathbb{F}_{(d-2)}$. We have either

\begingroup
\setlength{\abovedisplayskip}{2pt}
\setlength{\abovedisplayshortskip}{2pt}
\setlength{\belowdisplayskip}{2pt}
\setlength{\belowdisplayshortskip}{2pt}
\begin{equation}
\begin{cases}
\begin{array}{l}
	e=a-1  \\
	c-2=a-3\\
	d-2=b-3\\
	b-2A-2=m-2A-1
\end{array}
\end{cases} or\quad
\begin{cases}
\begin{array}{l}
	e=a-1  \\
	b-4=a-3\\
	c-2=b-3\\
	d-2A=m-2A-1
\end{array}
\end{cases}
\end{equation}
\endgroup
Results for the two possibilities are illustrated in Figures \ref{Figure:dim_gra_figure-2} and \ref{Figure:dim_gra_figure-3}.

\begingroup
\setlength{\intextsep}{2pt}
\begin{figure}[H]
    \centering
    \setlength{\abovecaptionskip}{2pt}
	\includegraphics[scale=0.8]{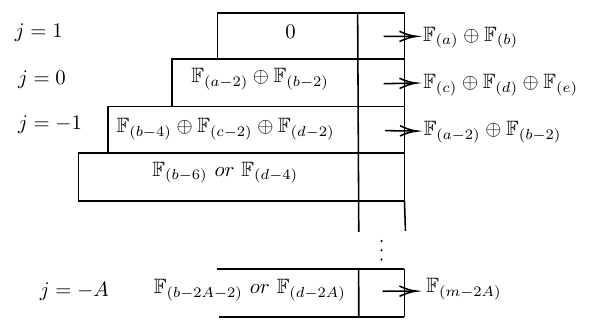}
    \caption{}
    \label{Figure:dim_gra_figure-2}
\end{figure}
\endgroup
\begingroup
\setlength{\intextsep}{2pt}
\begin{figure}[H]
    \centering
    \setlength{\abovecaptionskip}{2pt}
	\includegraphics[scale=0.8]{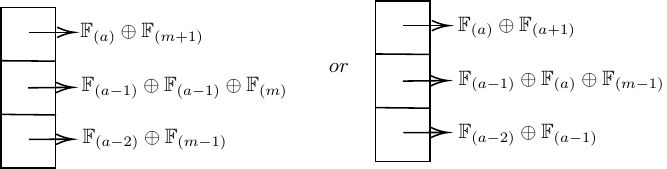}
    \caption{}
    \label{Figure:dim_gra_figure-3}
\end{figure}
\endgroup

\textbf{Subcase (\romannumeral3)}: Assume $H_*(\{(0, 0)\}) = \mathbb{F}_{(c)} \oplus \mathbb{F}_{(d)} \oplus \mathbb{F}_{(e)} \oplus \mathbb{F}_{(f)} \oplus \mathbb{F}_{(g)}$.

In this case $j_*$ is injective. We may assume $H_*(X_0) = \mathbb{F}_{(c)} \oplus \mathbb{F}_{(d)} \oplus \mathbb{F}_{(e)}$ and $f-1 = a-2, g-1 = b-2$.

The map $\delta: H_*(\{(0, -1)\}) \rightarrow H_*(UX_0)$ is injective. We assume that $a-3 = c-2$, $b-3 = d-2$, and $H_*(X_{-1}) = \mathbb{F}_{(e-2)}$. Tracing the $U$-action down to height $-A$, we obtain $e-2A=m-2A-1$, hence $e=m-1$.
\begingroup
\setlength{\intextsep}{2pt}
\begin{figure}[H]
    \centering
    \setlength{\abovecaptionskip}{2pt}
	\includegraphics[scale=0.75]{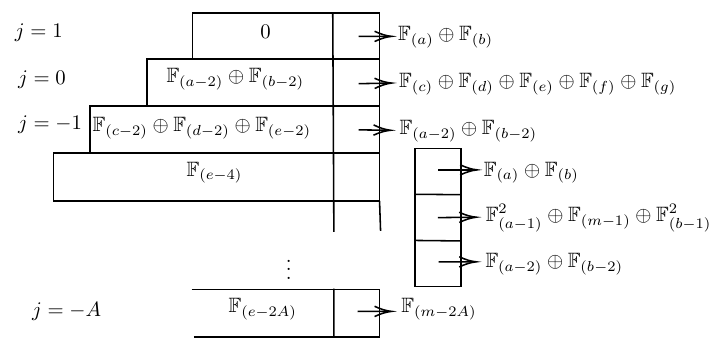}
    \caption{}
    \label{figure: case (iii) of (1,5,1), (0,2,3,1)}
\end{figure}
\endgroup
The results are illustrated in Figure \ref{figure: case (iii) of (1,5,1), (0,2,3,1)}.

\settocdepth{section} % 临时只显示到 section
\subsubsection{(0,3,2,1) case}
\resettocdepth      % 恢复原 tocdepth

Setting $p = 1$ in Equation \eqref{equation: exact sequence of X_p}, we obtain $\operatorname{rank} H_*(UX_2) = 0$ and $\operatorname{rank} H_*(X_1) = 3$. It follows that $\operatorname{rank} H_*(\{(0,1)\}) = 3$. We assume that $H_*(\{(0,1)\}) = \mathbb{F}_{(a)} \oplus \mathbb{F}_{(b)} \oplus \mathbb{F}_{(c)}$.

Setting $p = 0$ in Equation \eqref{equation: exact sequence of X_p}, we obtain $\operatorname{rank} H_*(UX_1) = 3$ and $\operatorname{rank} H_*(X_0) = 2$. The rank of $\operatorname{im}i_*$ determines $\operatorname{rank} H_*(\{(0, 0)\})$, which must be $1, 3, \text{or}\ 5$.  

\textbf{Subcase (i)}: Assume $H_*(\{(0, 0)\}) = \mathbb{F}_{(d)}$. 

In this case, the map $i_*$ is surjective, and the map $\delta:$ $H_*(\{(0, 0)\})$ $\rightarrow$ $H_*(UX_1)$ is injective. Without loss of generality, we may assume $H_*(X_0) = \mathbb{F}_{(a-2)} \oplus \mathbb{F}_{(b-2)}$ and $d-1 = c-2$.

\begingroup
\setlength{\intextsep}{2pt}
\begin{figure}[H]
    \centering
    \setlength{\abovecaptionskip}{2pt}
    \includegraphics[scale=0.8]{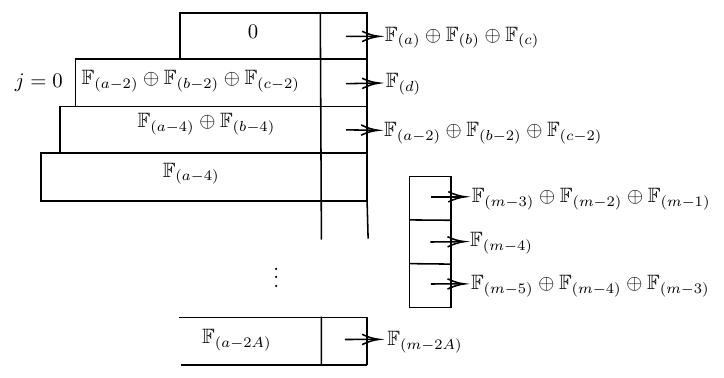}
    \caption{}
    \label{figure: case (i) of (1,5,1), (0,3,2,1)}
\end{figure}
\endgroup

We now focus on height $-1$ and take $p = -1$ in Equation \eqref{equation: exact sequence of X_p}. Since $a$ and $b$ are symmetric so far, we may assume either $\{b-3,c-3\} = \{a-4,b-4\}$ or $\{a-3,b-3\} = \{a-4,b-4\}$. The latter case is impossible by direct verification.

We obtain $b = a-1$, $c = b-1 = a-2$, and $H_*(X_{-1}) = \mathbb{F}_{(a-4)}$. At height $-A$, Equation \eqref{equation: exact sequence of X_p} tells us that $a-2A = m-2A-1$, so $a=m-1$. See Figure \ref{figure: case (i) of (1,5,1), (0,3,2,1)}.

\textbf{Subcase (ii)}: $H_*(\{(0,0)\}) = \mathbb{F}_{(d)} \oplus \mathbb{F}_{(e)} \oplus \mathbb{F}_{(f)}$.
\begingroup
\setlength{\intextsep}{2pt}
\begin{figure}[H]
    \centering
    \setlength{\abovecaptionskip}{2pt}
	\includegraphics[scale=0.8]{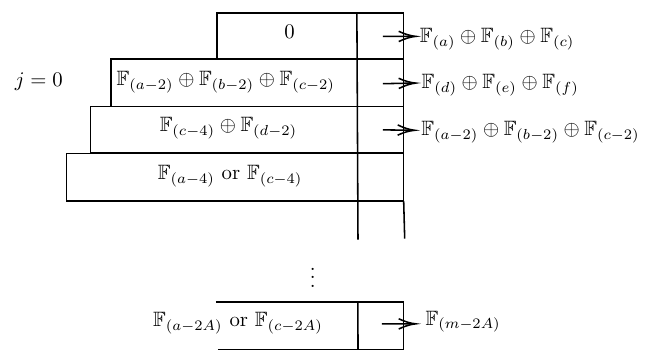}
    \caption{}
    \label{Figure:dim_gra_figure-11}
\end{figure}
\endgroup
Setting $p = 0$ in Equation \eqref{equation: exact sequence of X_p}, the image of $i_*$ has rank one, and the image of the boundary map $\delta$ has rank two. Without loss of generality, we assume $H_*(X_0) = \mathbb{F}_{(c-2)} \oplus \mathbb{F}_{(d)}$, $e-1 = a-2$ and $f-1 = b-2$. See Figure \ref{Figure:dim_gra_figure-11}.

At height -1, the symmetry between $a$ and $b$ gives two possibilities: either $\{b-3,c-3\} = \{c-4,d-2\}$ or $\{a-3,b-3\} = \{c-4,d-2\}$. In the first case, $c=d+1, b=c-1=d$ and $a=m-1$. In the latter case, $c=a+1$, $d=b-1$, and $c=m-1$. These are shown in Figure \ref{Figure:dim_gra_figure-13}.

\begingroup
\setlength{\intextsep}{2pt}
\begin{figure}[H]
    \centering
    \setlength{\abovecaptionskip}{2pt}
	\includegraphics[scale=0.8]{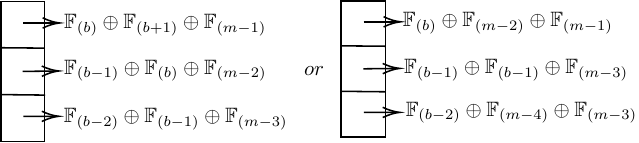}
    \caption{}
    \label{Figure:dim_gra_figure-13}
\end{figure}
\endgroup

\textbf{Subcase (iii)}: Assume $H_*(\{(0, 0)\}) = \mathbb{F}_{(d)} \oplus \mathbb{F}_{(e)} \oplus \mathbb{F}_{(f)} \oplus \mathbb{F}_{(g)} \oplus \mathbb{F}_{(h)}$.

In this case, taking $p = 0$ in Equation \eqref{equation: exact sequence of X_p}, the map $i_*$ is zero and the map $j_*$ is injective. The boundary map $\delta: H_*(\{(0, 0)\}) \rightarrow H_*(UX_1)$ is surjective. Without loss of generality, assume $H_*(UX_1) = \mathbb{F}_{(a-2)} \oplus \mathbb{F}_{(b-2)} \oplus \mathbb{F}_{(c-2)}$ and $H_*(X_0) = \mathbb{F}_{(d)} \oplus \mathbb{F}_{(e)}$. See Figure \ref{Figure:dim_gra_figure-14}.

\begingroup
\setlength{\intextsep}{2pt}
\begin{figure}[H]
    \centering
    \setlength{\abovecaptionskip}{2pt}
	\includegraphics[scale=0.8]{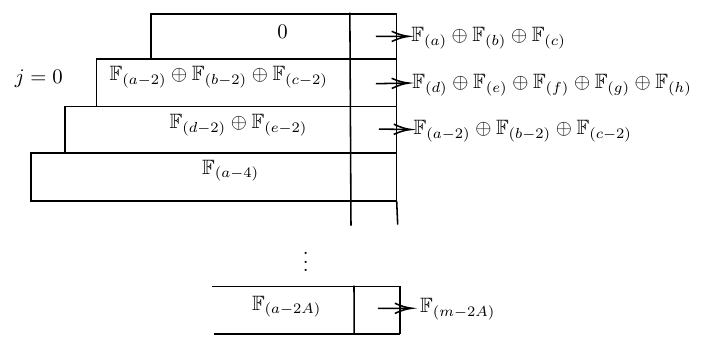}
    \caption{}
    \label{Figure:dim_gra_figure-14}
\end{figure}
\endgroup

Now set $p = -1$ in Equation \eqref{equation: exact sequence of X_p}. Since the places of $a, b, c$ are symmetric, we may assume $b-3 = d-2$ and $c-3 = e-2$. Then $H_*(X_{-1}) = \mathbb{F}_{(a-2)}$.

\begingroup
\setlength{\intextsep}{2pt}
\begin{figure}[H]
    \centering
    \setlength{\abovecaptionskip}{2pt}
	\includegraphics[scale=0.8]{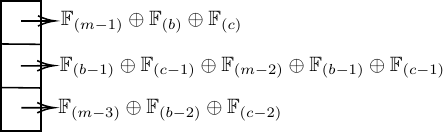}
    \caption{}
    \label{figure: (1,5,1) (0,3,2,1) case (iii)}
\end{figure}
\endgroup

At height $-A$, we have $a-2A = m-2A-1$. The results are illustrated in Figure \ref{figure: (1,5,1) (0,3,2,1) case (iii)}.

\settocdepth{section} % 临时只显示到 section
\subsubsection{(1,2,3,0) case}
\resettocdepth      % 恢复原 tocdepth

In this case, $m$ is even and $H_*(X_A) = \mathbb{F}_{(m)}$. Consider Equation \eqref{equation: exact sequence of X_p} with $p = 2$; we obtain $H_*(X_2) = \mathbb{F}_{(m-2A+4)}$. Let $\alpha = m-2A+2$; then $H_*(UX_2) = \mathbb{F}_{(\alpha)}$.

Taking $p = -1$ in Equation \eqref{equation: exact sequence of X_p}, we obtain $H_*(X_{-1}) = 0$. Moreover, the boundary map $\delta:$ $H_*(C\{(0, -1)\})$ $ \rightarrow$ $H_*(UX_0)$ is an isomorphism of degree -1, and both sides of this map have rank three. Similarly, taking $p = 0$ in Equation \eqref{equation: exact sequence of X_p}, we obtain $\operatorname{rank} H_*(\{(0, 0)\}) \in \{1, 3, 5\}$.

Assume $H_*(\{(0,1)\}) = \mathbb{F}_{(a)} \oplus \mathbb{F}_{(b)} \oplus \mathbb{F}_{(c)}$. Then by $\operatorname{Spin}^c$ conjugation, $H_*(\{(0,-1)\}) = \mathbb{F}_{(a-2)} \oplus \mathbb{F}_{(b-2)} \oplus \mathbb{F}_{(c-2)}$.

\textbf{Subcase (\romannumeral1)}: The group $\widehat{HFK}(K, 0)$ has rank one; assume it is $\mathbb{F}_{(d)}$.

Take $p = 1$ in Equation \eqref{equation: exact sequence of X_p}. The map $i_*$ is zero and $j_*$ is injective. We assume $H_*(X_1) = \mathbb{F}_{(a)} \oplus \mathbb{F}_{(b)}$ and $c-1 = \alpha$.

Now take $p = 0$ in Equation \eqref{equation: exact sequence of X_p}. The map $i_*$ is injective and $j_*$ is surjective. It follows that $H_*(X_0)$ $=$ $\mathbb{F}_{(a-2)} \oplus \mathbb{F}_{(b-2)} \oplus \mathbb{F}_{(d)}$.

Take $p = -1$ in Equation \eqref{equation: exact sequence of X_p}. Since $H_*(X_{-1})$ is zero, the map $\delta:$ $H_*(\{(0, -1)\})$ $\rightarrow$ $H_*(UX_0)$ is an isomorphism of degree -1. 

The relationships we obtain are
$\begin{cases}
\begin{array}{ll}
c=\alpha+1, & c-3=b-4, \\
a-3=d-2,    & b-3=a-4. \\
\end{array}
\end{cases}$

\begingroup
\setlength{\intextsep}{0pt}
\begin{figure}[H]
    \centering
    \setlength{\abovecaptionskip}{2pt}
	\includegraphics[scale=0.8]{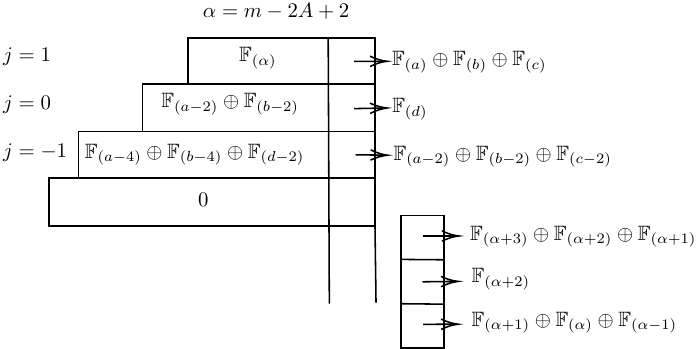}
    \caption{}
    \label{Figure:dim_gra_figure-6}
\end{figure}
\endgroup
From these relations, we can solve for all variables, as shown in Figure \ref{Figure:dim_gra_figure-6}.

\textbf{Subcase (\romannumeral2)}: Assume $\widehat{HFK}(K,0) = \mathbb{F}_{(d)} \oplus \mathbb{F}_{(e)} \oplus \mathbb{F}_{(f)}$.

Again, we assume $c-1 = \alpha$ and $H_*(X_1) = \mathbb{F}_{(a)} \oplus \mathbb{F}_{(b)}$.

Take $p = 0$ in Equation \eqref{equation: exact sequence of X_p}. The image of $i_*$ has rank one and the image of $j_*$ has rank two. We may assume $H_*(X_0) = \mathbb{F}_{(b-2)} \oplus \mathbb{F}_{(d)} \oplus \mathbb{F}_{(e)}$. The boundary map $\delta$ gives $f-1 = a-2$. See Figure \ref{Figure:dim_gra_figure-7}.

\begingroup
\setlength{\intextsep}{0pt}
\begin{figure}[H]
    \centering
    \setlength{\abovecaptionskip}{2pt}
	\includegraphics[scale=0.8]{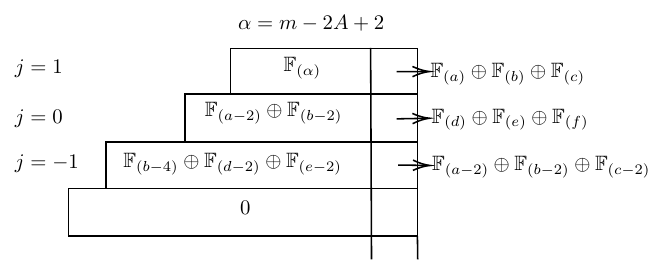}
    \caption{}
    \label{Figure:dim_gra_figure-7}
\end{figure}
\endgroup
Take $p = -1$ in Equation \eqref{equation: exact sequence of X_p}. The boundary map $\delta$ is an isomorphism of degree -1. So far, $d$ and $e$ are symmetric; we may assume $b-3 = d-2$. 
\begingroup
\setlength{\intextsep}{0pt}
\begin{figure}[H]
    \centering
    \setlength{\abovecaptionskip}{2pt}
	\includegraphics[scale=0.8]{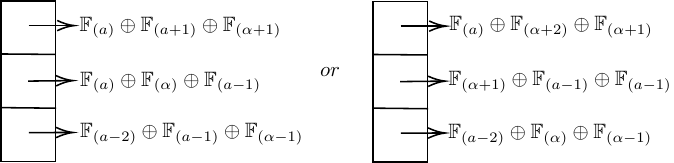}
    \caption{}
    \label{Figure:dim_gra_figure-8}
\end{figure}
\endgroup

For the undetermined variables, either
$a-3=b-4, c-3=e-2\ \text{or}\ a-3=e-2, c-3=b-4$. The results are summarized in Figure \ref{Figure:dim_gra_figure-8}.

\textbf{Subcase (\romannumeral3)}: Assume $\widehat{HFK}(K,0) = \mathbb{F}_{(d)} \oplus \mathbb{F}_{(e)} \oplus \mathbb{F}_{(f)} \oplus \mathbb{F}_{(g)} \oplus \mathbb{F}_{(h)}$.

Without loss of generality, we assume $c-1 = \alpha$ and $H_*(X_1) = \mathbb{F}_{(a)} \oplus \mathbb{F}_{(b)}$.

\begingroup
\setlength{\intextsep}{0pt}
\begin{figure}[H]
    \centering
    \setlength{\abovecaptionskip}{2pt}
	\includegraphics[scale=0.8]{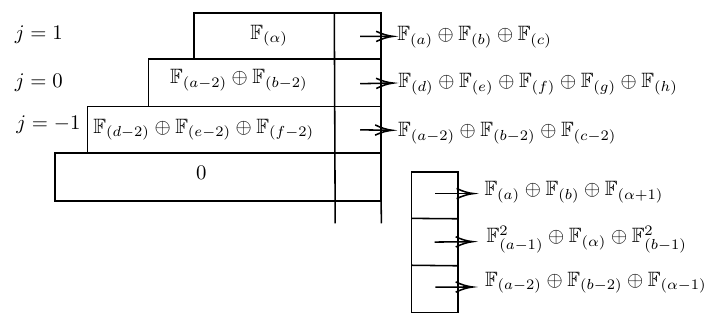}
    \caption{}
    \label{Figure:dim_gra_figure-9}
\end{figure}
\endgroup

In this case, the map $j_*:H_*(X_0) \rightarrow H_*(\{(0, 0)\})$ is injective and $\delta:$ $H_*(\{(0, 0)\})$ $\rightarrow$ $H_*(UX_1)$ is surjective. We may assume $g-1=a-2$ and $h-1=b-2$; then $H_*(X_0)$ $=$ $\mathbb{F}_{(d)} \oplus \mathbb{F}_{(e)} \oplus \mathbb{F}_{(f)}$. See Figure \ref{Figure:dim_gra_figure-9}.

The map $\delta:$ $H_*(\{(0, -1)\})$ $\rightarrow$ $H_*(UX_0)$ is an isomorphism of degree -1. Thus, we obtain $a-3=d-2$, $b-3=e-2$, and $c-3=f-2$. The computational result is shown in Figure \ref{Figure:dim_gra_figure-9}.
\settocdepth{section} % 临时只显示到 section
\subsubsection{(1,3,2,0) case}
\resettocdepth      % 恢复原 tocdepth

As in the $(1,2,3,0)$ case, $m$ is even and $H_*(X_A) = \mathbb{F}_{(m)}$. Let $\alpha = m-2A+2$; then $H_*(UX_2) = \mathbb{F}_{(\alpha)}$.

Take $p = -1$ in Equation \eqref{equation: exact sequence of X_p}. We obtain $H_*(X_{-1}) = 0$ and $H_*(UX_0)$ has rank two. The boundary map $\delta:$ $H_*(\{(0, -1)\})$ $\rightarrow$ $H_*(UX_0)$ is an isomorphism of degree -1.

Analysis of Equation \eqref{equation: exact sequence of X_p} with $p = 0$ shows that $\operatorname{rank} H_*(\{(0, 0)\}) \in \{1, 3, 5\}$.

Assume $H_*(\{(0,1)\}) = \mathbb{F}_{(a)} \oplus \mathbb{F}_{(b)}$. Then $H_*(\{(0,-1)\}) = \mathbb{F}_{(a-2)} \oplus \mathbb{F}_{(b-2)}$ by $\operatorname{Spin}^c$ conjugation.

\textbf{Subcase (i)}: Assume $H_*(\{(0,0)\}) = \mathbb{F}_{(c)}$. 

Note that $H_*(X_1) = \mathbb{F}_{(\alpha)} \oplus \mathbb{F}_{(a)} \oplus \mathbb{F}_{(b)}$. Consequently, $H_*(UX_1) = \mathbb{F}_{(\alpha-2)} \oplus \mathbb{F}_{(a-2)} \oplus \mathbb{F}_{(b-2)}$.

\begingroup
\setlength{\intextsep}{0pt}
\begin{figure}[H]
    \centering
    \setlength{\abovecaptionskip}{0pt}
	\includegraphics[scale=0.8]{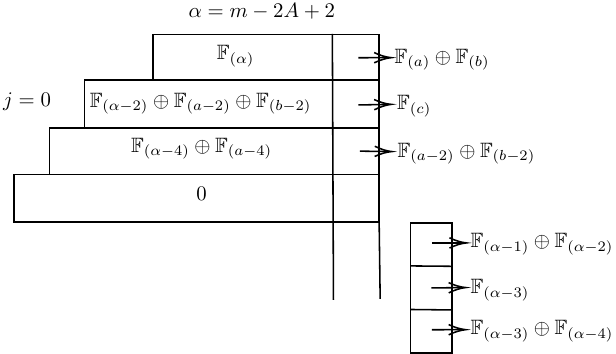}
    \caption{}
    \label{Figure:dim_gra_figure-16}
\end{figure}
\endgroup

Now take $p = 0$ in Equation \eqref{equation: exact sequence of X_p}. The places of $a$ and $b$ are symmetric. The boundary map $\delta$ sends $\mathbb{F}_{(c)}$ to either $\mathbb{F}_{(\alpha-2)}$ or $\mathbb{F}_{(b-2)}$.

If $\delta$ sends $\mathbb{F}_{(c)}$ to $\mathbb{F}_{(\alpha-2)}$, then $H_*(X_0) = \mathbb{F}_{(a-2)} \oplus \mathbb{F}_{(b-2)}$. Taking $p = -1$ in Equation \eqref{equation: exact sequence of X_p}, the map $\delta: H_*(\{(0,-1)\}) \rightarrow H_*(UX_0)$ is an isomorphism, and we obtain $\{a-3,b-3\}=\{a-4,b-4\}$, which is impossible. 

Thus, $\delta$ sends $\mathbb{F}_{(c)}$ to $\mathbb{F}_{(b-2)}$ and $c-1=b-2$.
Setting $p = -1$ in Equation \eqref{equation: exact sequence of X_p} yields $b-3=a-4$ and $a-3=\alpha-4$, as shown in Figure \ref{Figure:dim_gra_figure-16}.

\textbf{Subcase (ii)}: Assume $H_*(\{(0, 0)\}) = \mathbb{F}_{(c)} \oplus \mathbb{F}_{(d)} \oplus \mathbb{F}_{(e)}$.

\begingroup
\setlength{\intextsep}{1pt}
\begin{figure}[H]
    \centering
    \setlength{\abovecaptionskip}{2pt}
	\includegraphics[scale=0.8]{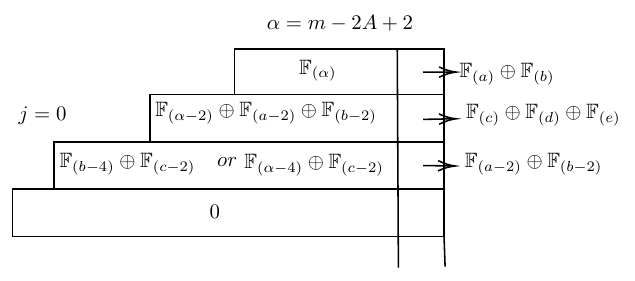}
    \caption{}
    \label{Figure:dim_gra_figure-17}
\end{figure}
\endgroup
\vspace{-4pt}

Again, $H_*(X_1) = \mathbb{F}_{(\alpha)} \oplus \mathbb{F}_{(a)} \oplus \mathbb{F}_{(b)}$. Take $p = 0$ in Equation \eqref{equation: exact sequence of X_p}. The images of $i_*$ and $j_*$ both have rank one. The places of $a$ and $b$ are symmetric, as are those of $c$, $d$, and $e$. There are two possibilities, either $H_*(X_0) = \mathbb{F}_{(\alpha-2)} \oplus \mathbb{F}_{(c)}$ or $H_*(X_0) = \mathbb{F}_{(b-2)} \oplus \mathbb{F}_{(c)}$; see Figure \ref{Figure:dim_gra_figure-17}. 

In the former case, $d-1=a-2$ and $e-1=b-2$. Consider Equation \eqref{equation: exact sequence of X_p} with $p = -1$; the boundary map gives $\{a-3,b-3\} = \{\alpha-4,c-2\}$. Hence $a-3 = \alpha-4$ and $b-3 = c-2$. 

In the second case, $d-1 = \alpha-2$ and $e-1=a-2$. Setting $p = -1$ in Equation \eqref{equation: exact sequence of X_p} gives $a-3=b-4$ and $b-3=c-2$.
\begingroup
\setlength{\intextsep}{1pt}
\begin{figure}[H]
    \centering
    \setlength{\abovecaptionskip}{2pt}
	\includegraphics[scale=0.8]{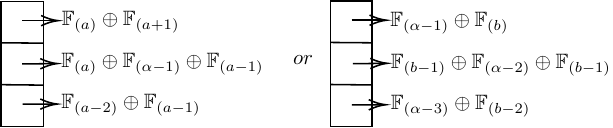}
    \caption{}
    \label{Figure:dim_gra_figure-18}
\end{figure}
\endgroup
\vspace{-4pt}

The computational results are illustrated in Figure
\ref{Figure:dim_gra_figure-18}.

\textbf{Subcase (iii)}: Assume $H_*(\{(0,0)\}) = \mathbb{F}_{(c)} \oplus \mathbb{F}_{(d)} \oplus \mathbb{F}_{(e)} \oplus \mathbb{F}_{(f)} \oplus \mathbb{F}_{(g)}$.

\begingroup
\setlength{\intextsep}{1pt}
\begin{figure}[H]
    \centering
    \setlength{\abovecaptionskip}{1pt}
	\includegraphics[scale=0.8]{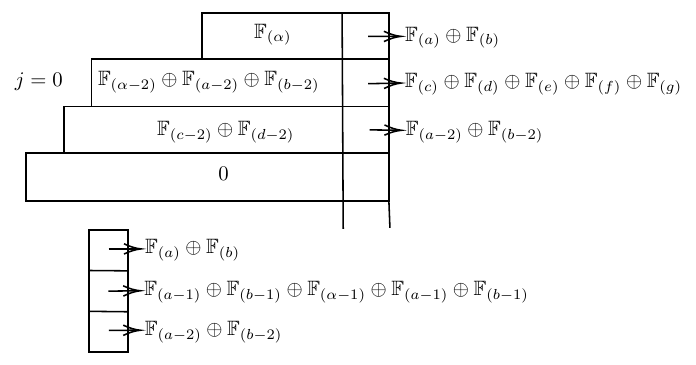}
    \caption{}
    \label{Figure:dim_gra_figure-19}
\end{figure}
\endgroup
In this case, take $p = 0$ in Equation \eqref{equation: exact sequence of X_p}. The map $i_*$ is zero, and $j_*$ is injective.

Chasing the Maslov grading in the exact sequences yields $e-1 = \alpha-2$, $f-1=a-2$, $g-1=b-2$, $a-3=c-2$ and $b-3=d-2$. The result is illustrated in Figure \ref{Figure:dim_gra_figure-19}.

\subsection{Construction of hv-basis}\label{subsection: Construction of hv-basis}

In this section, we prove the existence of an hv-basis for height 4 knots of the $(1,5,1)$ type. In Section \ref{subsection: Dimension and Maslov grading}, we obtained sixteen possibilities for the dimension and Maslov grading, which will be discussed separately.

The proof of Proposition \ref{prop: 151 table first row, positively two-twisted staircase} will present in detail the procedures for determining a vertically simplified basis.
\vspace{-0.5em}

\begin{proposition}\label{prop: 151 table first row, positively two-twisted staircase}
Let $K$ be a height 4 knot of the $(1,5,1)$ type. Suppose that for $p = 2, 1, 0, -1$, we have $\operatorname{rank}H_*({X_p}) = 0, 2, 3, 1$, and $\operatorname{rank}H_*({Y_p}) = 1, 3, 2, 0$, respectively. Assume that
$\widehat{HFK}(K, 1) = \mathbb{F}_{(m+1)} \oplus \mathbb{F}_{(m+2)}$ and $\widehat{HFK}(K, 0) = \mathbb{F}_{(m+1)}$. Then $CFK^\infty(K)$ admits an hv-basis, which is a positive doubly-twisted staircase. 
\end{proposition}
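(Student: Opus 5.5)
We follow the three-step strategy of Section \ref{section:Height 4 knots}. Step one is already done for us: by Subcase (i) of the $(0,2,3,1)$ case, $\widehat{HFK}(K,\pm1)$, $\widehat{HFK}(K,0)$ and all $\widehat{HFK}(K,A)$ with $|A|\ge 2$ are known, together with their Maslov gradings (Lemma \ref{lemma: 151 X_2 and Y_2} and Figure \ref{Figure:dim_gra_figure-1}). We label the generators as follows. Let $x_1^{(m+1)},x_1^{(m+2)}$ span $C\{(0,1)\}$ and $x_0^{(m+1)}$ span $C\{(0,0)\}$. By conjugation, $x_{-1}^{(m-1)},x_{-1}^{(m)}$ span $C\{(0,-1)\}$. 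The remaining one-dimensional levels form the staircase pattern of Lemma \ref{lemma: 151 X_2 and Y_2}. In particular the generator $x$ at height $A$ has odd grading $m$, since $\operatorname{rank}H_*(X_2)=0$ puts us in case (1) of that lemma.

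\textbf{Step two: vertically simplified basis.} We climb from the bottom. Above height $1$, and symmetrically below height $-1$, the snake argument of Lemma \ref{lemma: 151 X_2 and Y_2} already fixes the vertical arrows. These are the staircase steps determined by $H_*(Y_p)$ alternating between $0$ and $\mathbb{F}$ via sequence \eqref{equation: exact sequence of Y_p}. The real work is at heights $1,0,-1$, where we use the known ranks $\operatorname{rank}H_*(Y_p)=1,3,2,0$ for $p=2,1,0,-1$ and Maslov gradings to decide the vertical differentials.

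Since $H_*(Y_{-1})=0$ and $H_*(Y_0)$ has rank $2$, the map $H_*(Y_0)\to H_*(\{(0,-1)\})$ is an isomorphism. Hence nothing at height $-1$ is hit from below, while the lower staircase generator (of grading $m-2$, at height $-A$) must be killed by some vertical arrow from above. Grading forces the source of that arrow to be $x_{-1}^{(m-1)}$, giving a long vertical arrow $x_{-1}^{(m-1)}\to x_{-A}$. Passing to $Y_1$ (rank $3$), the class $x_0^{(m+1)}$ must survive; in the grading it could only hit $x_{-1}^{(m)}$, and the rank count forces that component to vanish. Passing to $Y_2$ (rank $1$, grading $m-2A+2$ up to the shift), two of the three classes must die. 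Grading forces $x_1^{(m+2)}\mapsto x_0^{(m+1)}$ (length $1$) and $x_1^{(m+1)}\mapsto x_{-1}^{(m)}$ (length $2$). After filtered basis changes, which add lower-filtration terms of the same grading, we obtain a vertically simplified basis. This basis has a length-one arrow from $x_1^{(m+2)}$ to $x_0^{(m+1)}$, the length-two arrow from $x_1^{(m+1)}$ to $x_{-1}^{(m)}$, and the staircase arrows elsewhere.

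\textbf{Step three: horizontal simplification.} By the symmetry \eqref{equation:(i,j) symmetry}, the horizontal structure on $C\{j=0\}$ mirrors the vertical structure. We run the same snake argument horizontally using $H_*(X_p)$, with ranks $0,2,3,1$. This yields horizontal arrows $x_0^{(m+1)}\to U x_{-1}^{(m-1)}$ of length $1$ and $x_1^{(m+1)}\to U^2 x_{-1}^{(m)}$ of length $2$, in mirror pattern, together with the staircase arrows. The resulting configuration is the positive doubly-twisted staircase.

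The main obstacle is making the horizontal basis changes compatible with the vertical simplification. Adding, for example, $x_1^{(m+1)}$ to $x_1^{(m+2)}$ when the two lie in the same Maslov class could destroy the vertical arrows. We handle this using grading: the generators at height $1$ have distinct gradings $m+1$ and $m+2$, and likewise at height $-1$, so the only allowed changes are adding strictly lower-filtration terms of the same grading. We check that each required change either preserves $\partial\bmod U$ or can be compensated by the corresponding change on the target. Where $H_*(X_p)$ alone is ambiguous, we fall back on $H_*(A_0)=\mathbb{F}^5$ to exclude the unwanted horizontal component from $x_1^{(m+2)}$. Lemma \ref{lemma: 1xn lemma} rules out a stray length-one horizontal arrow landing at the end of the length-$2$ vertical arrow.
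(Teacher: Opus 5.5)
Your plan is the paper's: grid climbing on $C\{i=0\}$ using $H_*(Y_p)$ and Maslov gradings, then the $(i,j)$-symmetry for $C\{j=0\}$. Your steps at $Y_1$ and $Y_2$ are also right: $x_{(0,m+1)}$ is a vertical cycle, and the arrows $x_{(1,m+2)}\to x_{(0,m+1)}$ (length $1$) and $x_{(1,m+1)}\to x_{(-1,m)}$ (length $2$) are forced.

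\emph{The long vertical arrow is wrong.} The generator at height $-A$ is the conjugate of $x_{(A,m)}$, so its grading is $m-2A$ (odd), not $m-2$. By Lemma~\ref{lem: differentials below height -A+1}, which is what $H_*(Y_{-1})=0$ encodes, it is already the \emph{source} of a vertical arrow into the lower staircase. So nothing there needs to be killed, and a grading $m-1$ generator could not hit it anyway. Your arrow $x_{(-1,m-1)}\to x_{(-A,m-2A)}$ also contradicts your own isomorphism $H_*(Y_0)\cong H_*(C\{(0,-1)\})$, which says both generators at height $-1$ are vertical cycles. What actually happens is this. The class surviving in $H_*(Y_2)=\mathbb{F}_{(m-1)}$ (grading $m-1$, not $m-2A+2$) is $[x_{(-1,m-1)}]$. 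It persists in $H_*(Y_p)$ for $2\le p\le A$. It must be killed when the odd generator $x_{(A,m)}$ is added, since $H_*(Y_{A+1})=0$. This forces the arrow $x_{(A,m)}\to x_{(-1,m-1)}$ of length $A+1$, which ties the middle levels to the upper staircase. It is missing from your argument, so your list of arrows does not yet exhibit the claimed configuration.

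\emph{The horizontal arrows you list are not horizontal.} $Ux_{(-1,m-1)}$ lies at Alexander level $-2$ in grading $m-3$, whereas a horizontal component of $\partial x_{(0,m+1)}$ would have to lie at level $0$ in grading $m$. Likewise $U^2x_{(-1,m)}$ sits at level $-3$, not level $1$. A horizontal arrow always goes from a generator to $U^k$ times a generator $k$ levels \emph{higher}. The mirrored structure in $C\{j=0\}$ is
$y_{(1,m+2)}\to y_{(0,m+1)}$, $y_{(1,m+1)}\to y_{(-1,m)}$ and $y_{(A,m)}\to y_{(-1,m-1)}$.
Translating back via $y_{(-i,m_i-2i)}=U^ix_{(i,m_i)}$ gives
$x_{(-1,m)}\to Ux_{(0,m+1)}$, $x_{(-1,m-1)}\to U^2x_{(1,m+2)}$ and $x_{(-A,m-2A)}\to U^{A+1}x_{(1,m+1)}$.
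With these arrows, $x_{(0,m+1)}$ is the corner receiving both length-one arrows, which makes the configuration positive, and the two length-two arrows produce the two twists.

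Finally, the compatibility problem you single out as the main obstacle does not arise at the middle levels. There the gradings are distinct, and, as in the paper, every arrow is forced by gradings and the ranks of $H_*(Y_p)$ and $H_*(Z_p)$. So neither $H_*(A_0)=\mathbb{F}^5$ nor Lemma~\ref{lemma: 1xn lemma} is needed.
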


\vspace{-0.5em}

Before giving the proof, we state some notational conventions and lemmas. Recall that the generators of each $\widehat{HFK}(K,i)$ form a filtered basis for $C\{i = 0\}$. 

We denote by $x_{(A,m_A)}$ the generator with Alexander grading $A$ and Maslov grading $m_A$. If there is more than one generator with the same Maslov grading at height $A$, we label them $x_{(A,m_A,1)}$, $x_{(A,m_A,2)}$, $\cdots$, from left to right in the illustrations. Recall the assumption that $g(K) > 1$. We use $e$ and $o$ as abbreviations for "even" and "odd", respectively, indicating the parity of the Maslov grading of the generator. The complex $C\{i = 0\}$ is illustrated in Figure \ref{Figure: 151verbasis-1}. The dots denote the generators, and the number near each generator denotes its Maslov gradng. We often refer to the Alexander filtration level $j$ of a generator at its height.

\begingroup
\setlength{\intextsep}{2pt}
\begin{figure}[H]
    \centering
    \setlength{\abovecaptionskip}{2pt}
    \includegraphics[scale=0.7]{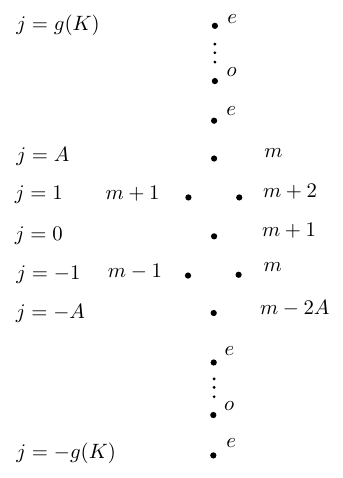}
    \caption{}
    \label{Figure: 151verbasis-1}
\end{figure}
\endgroup
\begin{lemma}\label{lem: differentials below height -A+1} The differentials among generators of height no greater than $-A$ can be described as follows: an adjacent pair of generators in the vertical direction is connected by a vertical differential, where the differential starts from the generators with odd Maslov grading.
\end{lemma}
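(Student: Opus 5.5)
The plan is to climb the grid $C\{i=0\}$ from the bottom, height $-g$, up to height $-A$. At each step I use the short exact sequence \eqref{equation: exact sequence of Y_p}, $0\to Y_p\to Y_{p+1}\to C\{(0,p)\}\to 0$, together with the known ranks of $H_*(Y_p)$ for $p\le -A+1$. By Corollary \ref{cor: 151 outside 1 are 0 or 1-dim}, each $C\{(0,p)\}$ with $p\le -A$ is at most one-dimensional.

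\textbf{Step 1: the ranks of $H_*(Y_p)$ below $-A+1$.} By the symmetry \eqref{equation:(i,j) symmetry}, $H_*(Y_{-m+1})\cong H_{*+2m}(X_m)$. For $m\ge A$ the groups $H_*(X_m)$ were computed in the chase of Lemma \ref{lemma: 151 X_2 and Y_2}: they are $0$ or $\mathbb{F}$, and they change exactly at the heights of the generators above $A$. By conjugation symmetry of $\widehat{HFK}$, these heights are the negatives of the heights of the generators below $-A$. Transporting this through the symmetry gives, for $p\le -A+1$:
\begin{itemize}
\item $H_*(Y_p)\cong\mathbb{F}$ or $0$;
\item the rank alternates $0\to\mathbb{F}\to 0\to\cdots$ as $p$ crosses successive generator heights, starting from $H_*(Y_{-g})=0$;
\item $H_*(Y_{-g+1})\cong\mathbb{F}$, since it is the image of $H_*(X_g)\cong\mathbb{F}$.
\end{itemize}
The Maslov gradings come from the same chase. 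The generators appear in pairs: an odd-graded generator sits directly above an even-graded one, with grading exactly one larger.

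\textbf{Step 2: induction up the grid.} I keep the following hypothesis at each height $p$: the generators of height $<p$ already form a vertically simplified basis of $Y_p$ in which the only differentials are vertical arrows pairing adjacent generators, and every arrow starts at an odd-graded generator. Let $w$ be the next generator, at height $p$. Then $\partial w$ is a cycle in $Y_p$ of Maslov grading $M(w)-1$. There are two cases.
\begin{itemize}
\item If the rank of $H_*(Y)$ drops when passing from $Y_p$ to $Y_{p+1}$, then $w$ is odd. The connecting map in \eqref{equation: exact sequence of Y_p} must be injective, so $[\partial w]$ generates $H_*(Y_p)\cong\mathbb{F}$. That generator is the unique unpaired even generator $x$, which sits directly below. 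So $\partial w=x+\partial z$ for some $z$ of lower filtration. Replacing $w$ by $w+z$ is a filtered basis change, and it gives $\partial w=x$: a vertical arrow from an odd generator to the adjacent generator.
\item If the rank rises, then $[\partial w]=0$ in $H_*(Y_p)$, which is $0$ (or at least the class must vanish, by exactness). So $\partial w=\partial z$ with $z\in Y_p$, and replacing $w$ by $w+z$ makes $w$ a cycle. This $w$ is the new even, unpaired generator.
\end{itemize}
Grading considerations also force the sums to be taken only over the appropriate parity, so the basis changes are homogeneous. The process stops at height $-A$, which gives the lemma.

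\textbf{Main obstacle.} I expect Step 1 to be the hardest part: correctly transporting the chase of Lemma \ref{lemma: 151 X_2 and Y_2} through \eqref{equation:(i,j) symmetry} and conjugation to pin down both the ranks and the Maslov gradings of $H_*(Y_p)$, and in particular checking that the alternation begins with the lowest generator unpaired and even. Once those ranks are fixed, Step 2 is a routine cancellation-style basis change.
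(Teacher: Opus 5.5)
Your proof is correct and follows essentially the paper's argument. Both climb $C\{i=0\}$ upward from height $-g(K)$ using the sequence $0\to Y_p\to Y_{p+1}\to C\{(0,p)\}\to 0$, with $H_*(Y_p)$ alternating between $0$ and $\mathbb{F}$, and both let the Maslov gradings decide which generator is odd and where its differential lands.

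The differences are cosmetic. The paper gets the $0/\mathbb{F}$ alternation directly from the dichotomy behind Lemma~\ref{lemma: 1 and 1 then 1 or 0}, rather than transporting the top chase through \eqref{equation:(i,j) symmetry}. Your filtered basis change $w\mapsto w+z$ is harmless but unnecessary. The earlier generators have strictly increasing gradings and there are no nonzero odd-graded cycles in $Y_p$, so grading alone forces $z=0$. That is how the paper's appeal to grading and $\partial^2=0$ disposes of stray differentials.
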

\vspace{-1.5em}

\begin{proof}
We focus on how $H_*(X_p)$ and $H_*(Y_p)$ change as $p$ increases (while remaining no greater than $-A$). When $p < -g(K)$, we have $H_*(Y_p) = 0$ and  $H_*(X_{p}) = \mathbb{F}$ by Lemma \ref{lemma: 1 and 1 then 1 or 0}.

Suppose $p$ grows to $-g(K)+1$. Note that $H_*(\{(0,-g(K)\}) = \mathbb{F}$, and its Maslov grading must be even (see Remark \ref{rem: lowest generator in C{i=0} must have even Maslov grading} below). Equation \eqref{equation: exact sequence of Y_p} tells us that $H_*(Y_{-g(K)+1})$ becomes $\mathbb{F}$, while $H_*(X_{-g(K)+1})$ becomes 0. Thus, the complex $Y_{-g(K)+1}$ contains a single generator, and there are no differentials in $Y_{-g(K)+1}$.

As $p$ increases further, we eventually reach a height $q$ where $H_*(\{(0,q)\}) = \mathbb{F}_{(odd)}$. The pair $(H_*(X_p)$, $H_*(Y_p))$ then changes from $(0,\mathbb{F})$ to $(\mathbb{F},0)$. This forces $x_{(q, m_q)}$ to differentiate to $x_{(-g(K), -2g(K))}$. Thus, we obtain a pair of generators connected by a vertical differential. The Maslov grading difference and the condition $\partial^2=0$ prevent the generators above height $q$ from differentiating to this pair. 

As $p$ increases, the pair $(H_*(X_p), H_*(Y_p))$ alternates between $(0, \mathbb{F})$ and $(\mathbb{F}, 0)$. In a similar way, we conclude that these generators group into pairs and are connected by vertical differentials as claimed.
\end{proof}
\begin{remark}\label{rem: lowest generator in C{i=0} must have even Maslov grading}
The homology of $C\{i = 0\}$ is the $\widehat{HF}$ of $S^3$, supported in zero grading:
\begingroup
\setlength{\abovedisplayskip}{4pt}
\setlength{\abovedisplayshortskip}{2pt}
\setlength{\belowdisplayskip}{4pt}
\setlength{\belowdisplayshortskip}{2pt}
\[H_*(\{(0, g(K))\}) = \mathbb{F}_{(0)} = H_*(\{i = 0\}) \cong 
\widehat{HF}(S^3)\]
\endgroup
By $\operatorname{Spin}^c$ conjugation \cite[Lemma 3.10]{OSknotinvar}, $H_*(\{(0, -g(K))\}) = \mathbb{F}_{(-2g(K))}$.
\end{remark}
\noindent \textit{Proof of Proposition \ref{prop: 151 table first row, positively two-twisted staircase}}:

The differentials at height $-1, 0, 1$ are also determined under the constraints of $H_*(Y_p)$ and Maslov grading, by applying the grid climbing method and increasing the height by one each time. 

\begingroup
\setlength{\intextsep}{2pt}
\begin{figure}[H]
\centering
    \setlength{\abovecaptionskip}{2pt}
	\begin{minipage}[t]{0.3\textwidth}
       \includegraphics[scale=0.7]{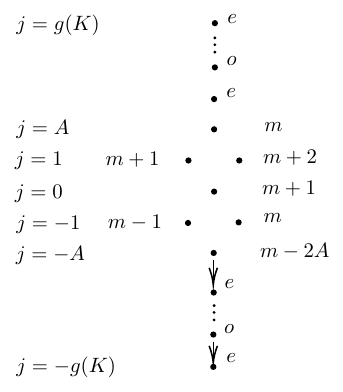} 
    \end{minipage}
	\begin{minipage}[t]{0.3\textwidth}
       \includegraphics[scale=0.7]{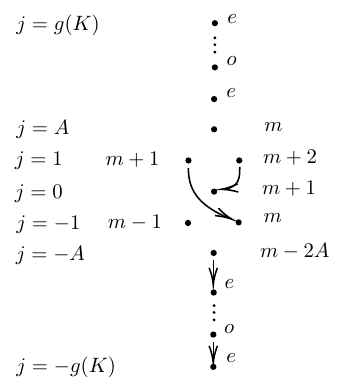} 
    \end{minipage}
    \begin{minipage}[t]{0.3\textwidth}
       \includegraphics[scale=0.7]{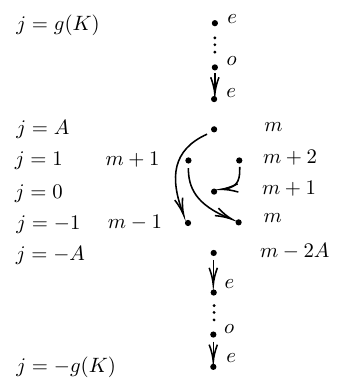} 
    \end{minipage}
    \caption{}
    \label{figure: (1,5,1) vertical differentials first case}
\end{figure} 
\endgroup

The result is illustrated in Figure \ref{figure: (1,5,1) vertical differentials first case}. Generators above height 1 are connected by differentials in a way similar to those in Lemma \ref{lem: differentials below height -A+1}.

Up to this point, we have obtained a vertically simplified basis for $CFK^{\infty}(K)$. We now proceed to make this vertically simplified basis also horizontally simplified. 

Recall that $C=CFK^{\infty}(K)$ is freely generated as an $\mathbb{F}[U,U^{-1}]$-module by the elements of the vertically simplified basis; i.e., the vertically simplified basis is a filtered basis for $CFK^\infty(K)$. We will show that the operations we perform to make this filtered basis horizontally simplified do not affect its vertically simplicity. Consequently, we obtain an hv-basis.  

An essential tool is Equation \eqref{equation:(i,j) symmetry}: $H_*(X_m)\cong H_{*-2m}(Y_{-m+1})$, see \cite[Proposition 3.10]{OSknotinvar} and also
\cite[Lemma 2.16]{HedLevsurformulaforknotFloer}. This gives identical dimensions and Maslov grading conditions (under symmetry) on $H_*(Z_m)$. The case we are dealing with is special in that we perform no basis changes at all to determine the differentials in $C\{i = 0\}$. Constraints from $H_*(Z_m)$ and Maslov grading then determine the horizontal differentials without ambiguity. 
\begingroup
\setlength{\intextsep}{2pt}
\begin{figure}[H]
    \centering
    \setlength{\abovecaptionskip}{2pt}
    \includegraphics[scale=0.8]{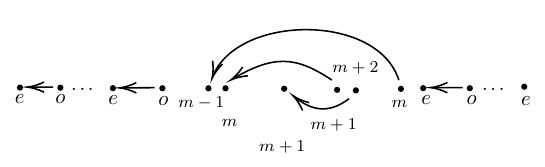}
    \caption{}
    \label{figure: (1,5,1) horizontal differentials first case}
\end{figure}
\endgroup

As a result, the differentials in $C\{j=0\}$ are shown in Figure \ref{figure: (1,5,1) horizontal differentials first case}.

The filtered basis we have obtained so far turns out to be an hv-basis, as illustrated in Figure \ref{figure: (1,5,1) hv-basis first case}. We place the generators and differentials via proper $U$-actions so that each basis element, vertical differential, and horizontal differential appears only once. We denote this subset by $C_{hv}$. 

\begingroup
\setlength{\intextsep}{2pt}
\begin{figure}[H]
    \centering
    \setlength{\abovecaptionskip}{2pt}
    \includegraphics[scale=0.8]{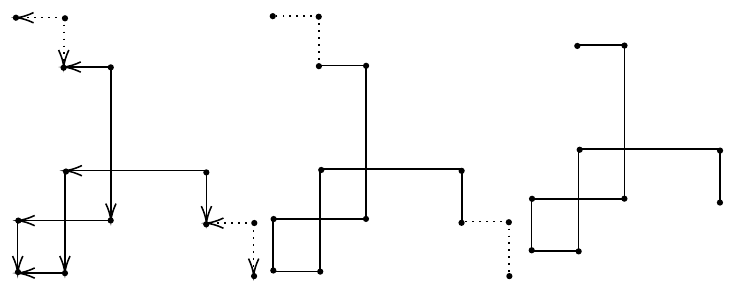}
    \caption{}
    \label{figure: (1,5,1) hv-basis first case}
\end{figure}
\endgroup

Since horizontal differentials must point leftward and vertical differentials must point downward, we may omit the arrows. The dashed lines indicate an indefinite number of generators and differentials. For convenience, we take the case with the minimal number of generators outside height $A$ as our representative. $\hfill \qed$

\vspace{1em}

From now on, we will take several pages to discuss the remaining cases. We introduce Lemma \ref{lemma: hori filtered basis change does not change vertical simplicity}, which describes the effect on vertical differentials when we perform certain filtered basis changes to simplify horizontal differentials.

We denote by $x_{(u,m_u)}$ the generator with coordinates $(0,u)$ and Maslov grading $m_u$, and by $y_{(v, m_v)}$ the generator with coordinates $(v,0)$ and Maslov grading $m_v$.

\vspace{-0.5em}
\begin{lemma}\label{lemma: hori filtered basis change does not change vertical simplicity}
Suppose $\{x_{(i, m_i)}\}$ is a vertically simplified basis for $CFK^\infty(K)$, then the translations $\{y_{(-i, m_{i}-2i)} = U^{i}x_{(i, m_i)}\}$ is a filtered basis for $C\{j = 0\}$; we denote it by $\{y_{(j, n_j)}\}$. We perform a filtered basis change: replace $y_{(j, n_j)}$ by $y'_{(j, n'_j)} = y_{(j, n_j)} + y_{(j-l, n_{j-l})}$, where $l > 0$ and $n_j = n_{j-l}$, while setting $y'_{(k, n'_k)} = y_{(k, n_k)}$ for $k \neq j$. If we translate $\{y'_{(j, n'_j)}\}$ back to a filtered basis $\{x'_{(-j, m'_{-j})} = U^{j} y'_{(j, n'_j)}\}$ for $C\{i = 0\}$, then $\{x'_{(i, m'_i)}\}$ is still a vertically simplified basis with the same vertical differentials as $\{x_{(i, m_i)}\}$, under the identification of $x_{(i,m_i)}$ with $x'_{(i,m'_i)}$. 
\end{lemma}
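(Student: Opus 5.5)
The plan is to translate the basis change back to $C\{i=0\}$ and check it against the definition of a vertically simplified basis. The key observation is that the new element has the form $x'_{(-j)} = x_{(-j)} + U^{l}x_{(-j+l)}$. The correction term $U^{l}x_{(-j+l)}$ lies in $U\cdot C$, and the vertical-simplicity condition is a congruence modulo $U\cdot C$, so the correction should be invisible to it.

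The first step is to compute the new element explicitly. Since $y_{(j-l)} = U^{-(j-l)}x_{(-j+l)}$, we get
\[
x'_{(-j)} = U^{j}y'_{(j)} = x_{(-j)} + U^{l}x_{(-j+l)},
\]
with $l>0$, and all other $x'_{(i)}=x_{(i)}$. Next, check that $\{x'_{(i)}\}$ is still a filtered $\mathbb{F}[U]$-basis with the same Alexander and Maslov gradings. Maslov gradings agree because $n_j=n_{j-l}$ forces $M(U^{l}x_{(-j+l)}) = M(x_{(-j)})$. The change of basis is unitriangular, hence invertible over $\mathbb{F}[U]$. For the Alexander filtration, $A(U^{l}x_{(-j+l)}) = -j+l-l = -j$, so the filtration level is at most $-j$. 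Moreover, $[x'_{(-j)}] = [x_{(-j)}] + [U^{l}x_{(-j+l)}]$ in the associated graded of $C^-$, and in the quotient $\operatorname{gr}(C/UC)$ the correction term vanishes. So I would argue filtered independence in $C\{i=0\} = C/UC$, which is what the filtered basis of $\widehat{CFK}$ requires.

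Then verify vertical simplicity modulo $U\cdot C$. For $i\neq -j$, $\partial x'_{(i)} = \partial x_{(i)}$, which is either in $UC$ or congruent to $x_{(i+1)}$, where $i+1$ indexes the target. If that target is $x_{(-j)}$, then $x_{(-j)} \equiv x'_{(-j)} \pmod{UC}$, so the arrow persists. For $i=-j$, we have $\partial x'_{(-j)} = \partial x_{(-j)} + U^{l}\partial x_{(-j+l)} \equiv \partial x_{(-j)} \pmod{UC}$, and the target is unchanged modulo $UC$. Finally, rewriting $\partial x_{(i)}$ in the new basis only replaces $x_{(-j)}$ by $x'_{(-j)} - U^{l}x_{(-j+l)}$, which changes things only modulo $UC$. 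Hence the vertical arrows, including their lengths, are identical under the identification $x\leftrightarrow x'$.

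The main obstacle I expect is the bookkeeping between $C^-$ and $C\{i=0\}$. One must be clear that vertical simplicity is a statement modulo $U$, and the filtered-basis requirement should be checked in $C/UC$. There the change is literally the identity, so it is not the subtle point. The subtle point is confirming that the new set is still an $\mathbb{F}[U]$-basis of $C$ respecting gradings, which follows from unitriangularity.
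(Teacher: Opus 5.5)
Your proposal is correct and follows the same idea as the paper. After translating back, the correction term $U^{l}x_{(-j+l)}$ sits at strictly lower $U$-filtration level, i.e.\ in $U\cdot C$, so it neither creates nor destroys vertical arrows; your two checks ($i=-j$ and $i\neq -j$) are exactly the paper's ``differentials starting from $y'$'' and ``differentials ending at $y'$'' cases, rephrased as congruences mod $U\cdot C$, and you additionally verify the grading and filtered-basis conditions that the paper leaves implicit.
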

\vspace{-1.0em}

\begin{proof}

We examine the effects on vertical differentials when we carry the filtered basis change described in the lemma. First, consider differentials starting from $y'_{(j,n'_j)}$. Since $y'_{(j, n'_j)} = y_{(j, n_j)} + y_{(j-l, n_{j-l})}$, every differential from $y_{(j-l,n_{j-l})}$ to $z$ gives rise to a differential from $y'_{(j, n'_j)}$ to $z$.
Because $l > 0$, such a differential strictly decreases the $i$ coordinate (i.e., the $U$-filtration level). Hence, when translating $y'_{(j, n'_j)}$ back to a filtered basis for $C\{i = 0\}$, no vertical differentials are introduced. 

Second, consider differentials ending at $y'_{(j, n'_j)}$. A differential from $w$ to $y_{(j,n_j)}$ becomes two differentials: from $w$ to $y'_{(j, n'_j)}$ and from $w$ to $y_{(j-l, n_{j-l})}$. This also does not add any vertical differentials.
\vspace{-0.5em}
\end{proof}

\begin{remark}
Lemma \ref{lemma: hori filtered basis change does not change vertical simplicity} ensures that under the horizontal filtered basis changes described above, the filtered basis we obtain in the end is both vertically and horizontally simplified.
\end{remark}

% We call the height $1,0,-1$ levels the middle height part. 

\begin{proposition}\label{prop: 151 table second row, a positively twisted staircase plus one box or a staircase plus a negatively twisted box}
Let $K$ be a height 4 knot of the $(1,5,1)$ type. Suppose that for $p = 2, 1, 0, -1$, we have $\operatorname{rank}H_*({X_p}) = 0, 2, 3, 1$ and $\operatorname{rank}H_*({Y_p}) = 1, 3, 2, 0$, respectively. Assume that 
$\widehat{HFK}(K,1) = \mathbb{F}_{(a)} \oplus \mathbb{F}_{(m+1)}$ and $\widehat{HFK}(K,0) = \mathbb{F}_{(a-1)} \oplus \mathbb{F}_{(m)} \oplus \mathbb{F}_{(a-1)}$. Then $CFK^\infty(K)$ admits an hv-basis, which is either a positive twisted staircase plus one box or a staircase plus a twisted box.
\end{proposition}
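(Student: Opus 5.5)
I will follow the same three-step template as in the proof of Proposition \ref{prop: 151 table first row, positively two-twisted staircase}. The first step determines the vertical differentials by grid climbing. Below height $-A+1$ the structure is given by Lemma \ref{lem: differentials below height -A+1}, and above height $A$ it is the mirror picture. So the only work is at heights $-1,0,1$, where the generators are as follows. At height $1$ they are $x_{(1,a)}, x_{(1,m+1)}$. At height $0$ they are $x_{(0,a-1,1)}, x_{(0,m)}, x_{(0,a-1,2)}$. At height $-1$, by $\operatorname{Spin}^c$ conjugation, they are $x_{(-1,a-2)}, x_{(-1,m-1)}$.

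Climbing upward, I track $H_*(Y_p)$ for $p=0,1,2$, which has ranks $2,3,1$ (with gradings read off Figure \ref{Figure:dim_gra_figure-2}), and use the fact that $\partial$ lowers the Maslov grading by one. This gives the following.
\begin{itemize}
\item The generator $x_{(0,m)}$, of odd grading $m$ when $m$ is odd, must hit the unpaired even generator below, namely $x_{(-A,m-1)}$ or the appropriate one.
\item Exactly one combination of $x_{(0,a-1,1)}, x_{(0,a-1,2)}$ must hit $x_{(-1,a-2)}$.
\item Then $x_{(1,a)}$ must hit the remaining combination at height $0$ of grading $a-1$.
\item Finally $x_{(1,m+1)}$ is either a cycle or hits $x_{(0,m)}$, as forced by the rank of $H_*(Y_2)$.
\end{itemize}
Since the two grading-$(a-1)$ generators at height $0$ are interchangeable, I perform a filtered change of basis within height $0$. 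I replace $x_{(0,a-1,2)}$ by the sum of the two generators, so that the generator hitting $x_{(-1,a-2)}$ and the one hit by $x_{(1,a)}$ become distinct basis elements. This yields a vertically simplified basis. Checking $\partial^2=0$ and the homology of $Y_1$ fixes which ambiguities survive.

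The second step translates by $U^i$ to a filtered basis of $C\{j=0\}$ and determines the horizontal differentials in the same way, now using the symmetry \eqref{equation:(i,j) symmetry} to get the homologies of the $Z_m$. The resulting horizontal pattern at the middle levels again involves the pair of grading-$(a-1)$ generators. It can differ from the vertical pattern by the choice of which combination of them is the source or target of the horizontal arrows. To fix this I use the basis changes of Lemma \ref{lemma: hori filtered basis change does not change vertical simplicity}, which add a lower-$j$ generator of equal grading and preserve vertical simplicity. When the needed change is instead within a single level, I check by hand that it does not create new vertical arrows.

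The main obstacle is the case split at the center. After the vertical basis is fixed, the horizontal arrow into or out of the grading-$(a-1)$ pair may go to the same basis element that carries the vertical arrow, or to the other one.
\begin{itemize}
\item In the first case the six middle generators together with $x_{(\pm A,\cdot)}$ link into a twisted staircase, and the remaining four form a box. This gives the positive twisted staircase plus one box.
\item In the second case the arrows chain six generators into a twisted box, and the long arrows to heights $\pm A$ become part of an ordinary staircase. This gives a staircase plus a twisted box.
\end{itemize}
The dangerous alternative is that the horizontal arrow goes to the mixed element $x_{(0,a-1,1)}+x_{(0,a-1,2)}$ in a way that no admissible basis change can split. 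I expect to rule this out by computing $H_*(A_0)$ with the cancellation lemma (Lemma \ref{lemma: cancellation lemma}): cancel the long staircase pairs first, then check that the remaining middle configuration gives $H_*(A_0)\cong\mathbb{F}^5$ only in the two listed configurations. I would also invoke the $1\times n$ lemma (Lemma \ref{lemma: 1xn lemma}) to exclude length-one horizontal arrows that would contradict the vertical structure.
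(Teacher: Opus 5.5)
There is a genuine gap. The dichotomy you organize the proof around is not the one that occurs, and the cases where the real work lies are not treated.

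Your vertical step is also wrong in its specifics.
\begin{itemize}
\item Since $H_*(Y_{-A+1})\cong H_*(X_A)\cong H_*(X_2)=0$ up to grading shift, every generator at height $\le -A$ is already paired. So nothing there is available as a target for $x_{(0,m)}$. In fact there is no $x_{(-A,m-1)}$: the generator at height $-A$ has grading $m-2A$.
\item Since $\operatorname{rank}H_*(Y_1)=3$ and $\operatorname{rank}H_*(Y_2)=1$, both height-one generators must start vertical arrows. So $x_{(1,m+1)}$ is never a cycle.
\item The surviving class of $H_*(Y_2)\cong H_*(Y_A)$ has grading $m-1$, because $x_{(A,m)}$ must kill it ($H_*(Y_{A+1})\cong H_*(X_{-A})=0$).
\end{itemize}
For generic $a$ this forces, after a basis change at height $0$, the arrows $x_{(0,a-1,1)}\to x_{(-1,a-2)}$, $x_{(1,a)}\to x_{(0,a-1,2)}$, $x_{(1,m+1)}\to x_{(0,m)}$, and the long arrow $x_{(A,m)}\to x_{(-1,m-1)}$. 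The twist therefore consists of the three generators $x_{(1,m+1)},x_{(0,m)},x_{(-1,m-1)}$ together with $x_{(\pm A,\cdot)}$. The box consists of the four generators of gradings $a,a-1,a-1,a-2$. It is not six middle generators.

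More seriously, the proof has to split according to coincidences of Maslov gradings, $a\in\{m,m+1,m+2\}$ versus the rest, and you never do this.

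When $a\notin\{m,m+1,m+2\}$ the horizontal arrows are forced as well. Your ``second case'' (the horizontal arrow using the \emph{other} grading-$(a-1)$ generator) would be a length-one horizontal arrow from the end of one vertical arrow to the start of another, e.g.\ $x_{(0,a-1,2)}\to Ux_{(1,a)}$. The very Lemma~\ref{lemma: 1xn lemma} you plan to invoke forbids this, so that case cannot produce a twisted box.

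A twisted box requires a length-two vertical arrow from height $1$ to height $-1$. The only one compatible with the gradings and with the ranks of $H_*(Y_0),H_*(Y_1),H_*(Y_2)$ is $x_{(1,m)}\to x_{(-1,m-1)}$, which needs $a=m$. It arises as a second choice of \emph{vertically} simplified basis (as opposed to $x_{(1,m)}\to x_{(0,m-1,\cdot)}$), in which $x_{(A,m)}$ lands at height $0$. Your step~1 asserts that $x_{(1,a)}$ hits height $0$, so it excludes exactly this configuration.

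In the degenerate cases the paper proceeds as follows.
\begin{itemize}
\item It applies Lemma~\ref{lemma: cancellation lemma} inside $A_0$ and uses $H_*(A_0)\cong\mathbb{F}^5$ to isolate the differential from $C\{(0,0)\}$ to $C\{(-1,0)\}$.
\item It then uses Lemma~\ref{lemma: 1xn lemma} and $\partial^2=0$ on $1\times 1$ squares.
\item For $a=m+1$, heights $1,0,-1$ carry two, three and two generators of a common grading. There it needs the simultaneous basis change \eqref{equation: simultaneous basis change}. The same matrix acts on $x_{(0,m,2)},x_{(0,m,3)}$ and on $y_{(-1,m-1,\cdot)}=Ux_{(1,m+1,\cdot)}$. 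This makes the horizontal arrow out of $y_{(0,m,1)}$ hit a single basis element while keeping the vertical pairing between heights $1$ and $0$.
\end{itemize}
Neither Lemma~\ref{lemma: hori filtered basis change does not change vertical simplicity} nor a change within a single level achieves that last step.

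Your cancellation-lemma idea is the right tool. It belongs in these degenerate cases, not in deciding between two horizontal choices in the generic one.
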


\begin{proof}
We discuss separately according to the relationship between $a$ and $m$. 

\textit{If $a \notin \{m,m+1,m+2\}$}, under the constraints of Maslov grading and $H_*(Y_p)$, we apply the grid climbing method to determine an hv-basis in a similar way. See Figure \ref{figure: 151-2-1}.

\begingroup
\setlength{\intextsep}{2pt}
\begin{figure}[H]
    \setlength{\abovecaptionskip}{2pt}
    \begin{minipage}[t]{0.34\textwidth}
    \centering
       \includegraphics[scale=0.8]{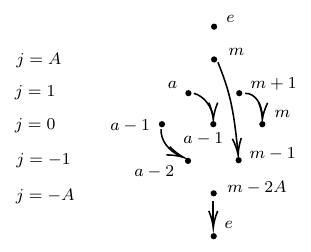} 
    \end{minipage}
	\begin{minipage}[t]{0.3\textwidth}
    \centering
       \includegraphics[scale=0.75]{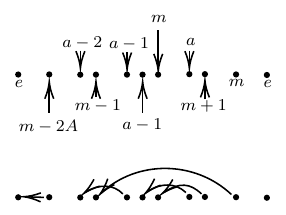} 
    \end{minipage}
    \begin{minipage}[t]{0.34\textwidth}
    \centering
       \includegraphics[scale=0.75]{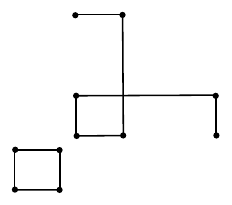} 
    \end{minipage}
    \caption{}
    \label{figure: 151-2-1}
\end{figure}
\endgroup

\textit{If $a = m$}, a vertical arrow starting from $x_{(1, m)}$ ends at either $x_{(0, m-1)}$ or $x_{(-1, m-1)}$; see Figure \ref{figure: 151-2-2}.

\begingroup
\setlength{\intextsep}{2pt}
\begin{figure}[H]
    \setlength{\abovecaptionskip}{2pt}
    \begin{minipage}[t]{0.48\textwidth}
    \centering
       \includegraphics[scale=0.8]{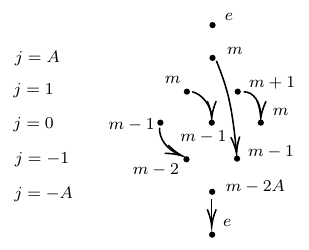} 
       \caption*{case (i)}
    \end{minipage}
	\begin{minipage}[t]{0.48\textwidth}
    \centering
       \includegraphics[scale=0.8]{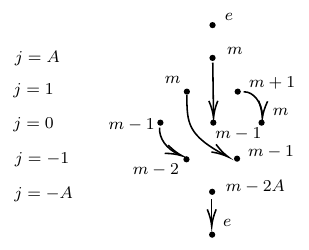} 
       \caption*{case (ii)}
    \end{minipage}
    \caption{}
    \label{figure: 151-2-2}
\end{figure}
\endgroup

\textbf{Case (\romannumeral 1)}: There is a cancellation pair between $C\{(0, 0)\}$ and $C\{(-1, 0)\}$, since $H_*(Z_{-1}) = 0, H_*(Z_{0}) = \mathbb{F}^2$, and $H_*(Z_1) = \mathbb{F}^3$. There are many possibilities for the differentials between $C\{(0, 0)\}$ and $C\{(-1, 0)\}$. We apply Lemma \ref{lemma: cancellation lemma} to $A_0$ (see Figure \ref{figure: H_*(A_0) and cancellation lemma}). 

\begingroup
\setlength{\intextsep}{2pt}
\begin{figure}[H]
    \centering
    \setlength{\abovecaptionskip}{2pt}
	\begin{minipage}[t]{0.4\textwidth}
       \includegraphics[scale=0.8]{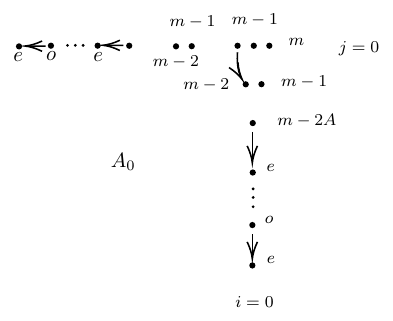} 
    \end{minipage}
	\begin{minipage}[t]{0.4\textwidth}
       \includegraphics[scale=0.8]{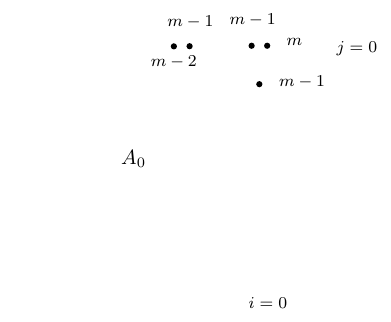} 
    \end{minipage}
    \caption{Cancellation lemma}
    \label{figure: H_*(A_0) and cancellation lemma}
\end{figure}
\endgroup

After cancellation, five generators remain in $A_0$, with possibly undetermined differentials between $C\{(0, 0)\}$ and $C\{(-1, 0)\}$. The condition $H_*(A_0) = \mathbb{F}^5$ prohibits differentials among the remaining five generators. Thus, the only differential from $C\{(0, 0)\}$ to $C\{(-1, 0)\}$ is the arrow from $y_{(0, m-1, 1)}$ to $y_{(-1, m-2)}$. 

Now consider the undetermined horizontal differentials. By Lemma \ref{lemma: 1xn lemma} and applying the $\partial^2=0$ condition to a $1\times 1$ square, there must be a differential from $y_{(1, m)}$ to $y_{(0, m-1, 2)}$. It is possible that $y_{(1, m)}$ also differentiates to $y_{(-1, m-1)}$. If so, we replace $y_{(0, m-1, 2)}$ by $y_{(0, m-1, 2)} + y_{(-1, m-1)}$. We then assume that $y_{(1, m)}$ differentiates only to $y_{(0, m-1, 2)}$. Figure \ref{figure: 151-2-3} shows the effect of this filtered basis change, which simplifies horizontal differentials but introduces additional diagonal differentials.

\begingroup
\setlength{\intextsep}{2pt}
\begin{figure}[H]
    \setlength{\abovecaptionskip}{2pt}
    \begin{minipage}[c]{0.48\textwidth}
    \centering
       \includegraphics[scale=0.8]{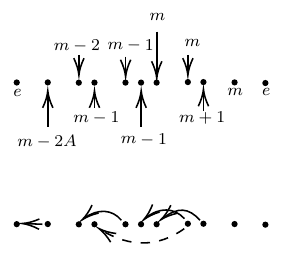} 
    \end{minipage}
	\begin{minipage}[c]{0.48\textwidth}
    \centering
       \includegraphics[scale=0.8]{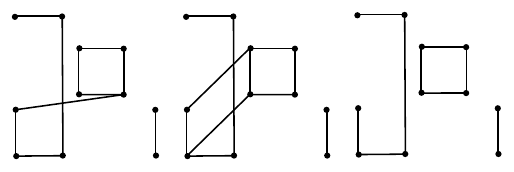} 
    \end{minipage}
    \caption{}
    \label{figure: 151-2-3}
\end{figure}
\endgroup

Next, if $y_{(A, m)}$ differentiates to $y_{(0, m-1, 2)} + y_{(-1, m-1)}$, we replace $y_{(A, m)}$ by $y_{(A, m)} + y_{(1, m)}$. We then obtain an hv-basis, which is a twisted staircase plus one box, as shown in Figure \ref{figure: 151-2-4}.

\begingroup
\setlength{\intextsep}{2pt}
\begin{figure}[H]
    \centering
    \setlength{\abovecaptionskip}{2pt}
       \includegraphics[scale=0.8]{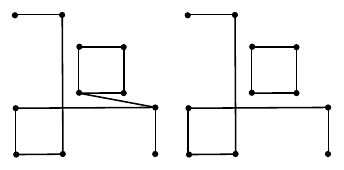} 
    \caption{}
    \label{figure: 151-2-4}
\end{figure}
\endgroup

\textbf{Case (\romannumeral 2)}: By the same argument as in Figure \ref{figure: H_*(A_0) and cancellation lemma}, the differentials between $C\{(0, 0)\}$ and $C\{(-1, 0)\}$ go from $y_{(0, m-1, 1)}$ to $y_{(-1, m-2)}$.

The generator $y_{(1, m)}$ cannot differentiate to $y_{(0, m-1, 2)}$ since this will result in a incomplete $1\times 1$ square, contradicting the $\partial^2=0$ condition, thus, $y_{(1, m)}$ must differentiate to $y_{(-1, m-1)}$. It follows that $y_{(1, m+1)}$ differentiates to $y_{(0, m)}$, because the homology conditions on $Z_m$ require two cancellation pairs starting from $C\{(1, 0)\}.$

It is possible that $y_{(A, m)}$ differentiates to $y_{(0, m-1,2)} + y_{(-1, m-1)}$, if so, we replace $y_{(A, m)}$ by $y_{(A, m)} + y_{(1, m)}$. We then obtain an hv-basis, which is a staircase plus a twisted box; see Figure \ref{figure: 151-2-5}.

\begingroup
\setlength{\intextsep}{2pt}
\begin{figure}[H]
    \centering
    \setlength{\abovecaptionskip}{2pt}
       \includegraphics[scale=0.8]{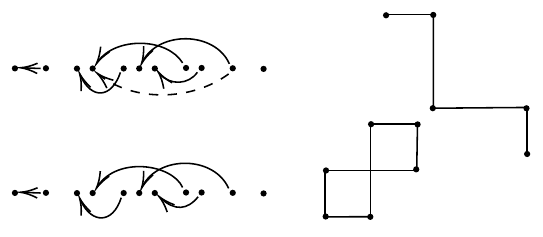} 
    \caption{}
    \label{figure: 151-2-5}
\end{figure}
\endgroup

\textit{If $a=m+1$}, we determine the vertically simplified basis for $C\{i = 0\}$ in a similar way, as illustrated in Figure \ref{figure: 151-2-6}.

\begingroup
\setlength{\intextsep}{2pt}
\begin{figure}[H]
    \centering
    \setlength{\abovecaptionskip}{2pt}
       \includegraphics[scale=0.8]{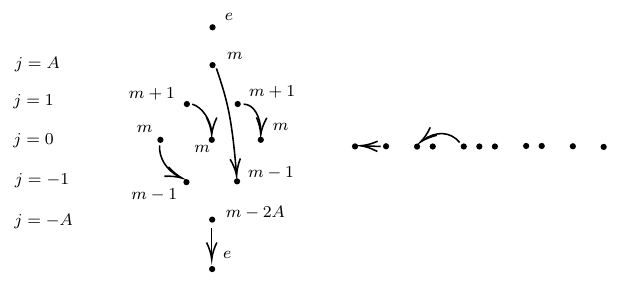} 
    \caption{}
    \label{figure: 151-2-6}
\end{figure}
\endgroup

In this case, by the same argument as in Figure \ref{figure: H_*(A_0) and cancellation lemma}, $y_{(0, m, 1)}$ differentiates to a linear combination of $y_{(-1, m-1, 1)}$ and $y_{(-1, m-1, 2)}$, while the generators $y_{(0, m, 2)}$ and $y_{(0, m, 3)}$ cannot differentiate to $C\{(-1, 0)\}$.

We perform a simultaneous basis change on $y_{(-1, m-1, 1)}$, $y_{(-1, m-1, 2)}$ 
and on $x_{(0,m,2)}$, $x_{(0,m,3)}$, using an appropriate matrix $\left(\begin{smallmatrix}
    p_{11} & p_{12}\\
    p_{21} & p_{22}
\end{smallmatrix}\right)$, as explained in Equation \ref{equation: simultaneous basis change}. This allows us to assume that the only differential from $C\{(0, 0)\}$ to $C\{(-1, 0)\}$ is the one from $y'_{(0, m, 1)}$ to $y'_{(-1, m-1, 1)}$, and that the differentials from $x'_{(1, m+1, 1)}$, $x'_{(1, m+1, 2)}$ to $x'_{(0, m, 2)}$, $x'_{(0, m, 3)}$ remain one to one, as in the vertically simplified basis $\{x_{(i, m_i)}\}$. 
\begingroup
\setlength{\abovedisplayskip}{3pt}
\setlength{\abovedisplayshortskip}{3pt}
\setlength{\belowdisplayskip}{3pt}
\setlength{\belowdisplayshortskip}{3pt}
\begin{equation}
\label{equation: simultaneous basis change}
\begin{cases}
\begin{array}{l}
(y'_{(-1,m-1,1)}, y'_{(-1,m-1,2)}) =(y_{(-1,m-1,1)}, y_{(-1,m-1,2)})
\begin{pmatrix}
    p_{11} & p_{12}\\
    p_{21} & p_{22}
\end{pmatrix} \\
(x'_{(0,m,2)}, x'_{(0,m,3)}) =(x_{(0,m,2)}, x_{(0,m,3)})\begin{pmatrix}
    p_{11} & p_{12}\\
    p_{21} & p_{22}
\end{pmatrix}
\end{array}
\end{cases}
\end{equation}
\endgroup

For convenience, we still denote $y_{(j, m_j)}$ instead of $y'_{(j, m_j)}$, and similarly for $x_{(i, m_i)}$. We now discuss the differentials from $y_{(1, m+1, 1)}, y_{(1, m+1, 2)}$ to $y_{(0, m, 2)}, y_{(0, m, 3)}$. By Lemma \ref{lemma: 1xn lemma}, there is an arrow from $y_{(1, m+1, 1)}$ to $y_{(0, m, 2)}$, and $y_{(1, m+1, 1)}$ does not differentiate to $y_{(0, m, 3)}$. This forces $y_{(1, m+1, 2)}$ to differentiate to $y_{(0, m, 3)}$. If $y_{(1, m+1, 2)}$ also differentiates to $y_{(0, m, 2)}$, we replace $y_{(1, m+1, 2)}$ by $y_{(1, m+1, 2)} + y_{(1, m+1, 1)}$. This adds a vertical differential from $x_{(A, m)}$ to $x_{(-1, m-1, 1)}$. We can replace $x_{(A, m)}$ by $x_{(A, m)} + x_{(0, m, 1)}$ to eliminate this additional vertical differential.

\begingroup
\setlength{\intextsep}{2pt}
\begin{figure}[H]
    \centering
    \setlength{\abovecaptionskip}{2pt}
       \includegraphics[scale=0.75]{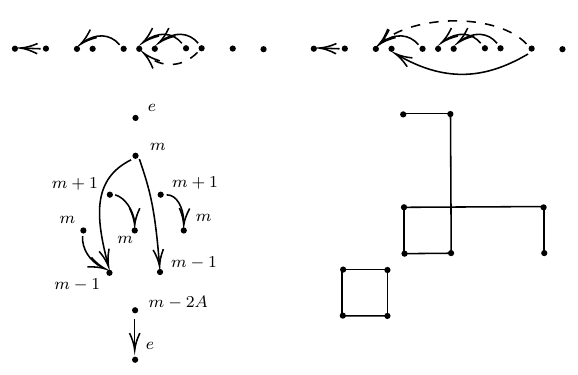} 
    \caption{}
    \label{figure: 151-2-7}
\end{figure}
\endgroup

The generator $y_{(A, m)}$ differentiates to a linear combination of $y_{(-1, m-1, 1)}$ and $y_{(-1, m-1, 2)}$. If necessary, we replace $y_{(A, m)}$ by $y_{(A, m)} + y_{(0, m, 1)}$ and assume that $y_{(A, m)}$ differentiates to $y_{(-1, m-1, 2)}$. The hv-basis we obtain is a positively twisted staircase plus one box, see Figure \ref{figure: 151-2-7}.

\textit{If $a=m+2$}, the vertically simplified basis can be determined in a similar way, under the constraints of Maslov grading and $H_*(Y_p)$, as shown on the left in Figure \ref{figure: 151-2-8}.

\begingroup
\setlength{\intextsep}{2pt}
\begin{figure}[H]
    \setlength{\abovecaptionskip}{2pt}
    \begin{minipage}[c]{0.66\textwidth}
    \centering
       \includegraphics[scale=0.8]{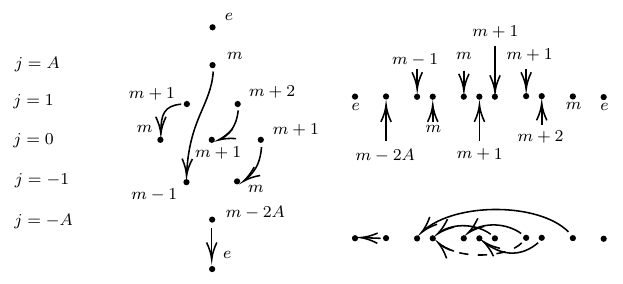} 
    \end{minipage}
	\begin{minipage}[c]{0.3\textwidth}
    \centering
       \includegraphics[scale=0.8]{fig/151-2-5.pdf} 
    \end{minipage}
    \caption{}
    \label{figure: 151-2-8}
\end{figure}
\endgroup

In this case, applying Lemma \ref{lemma: cancellation lemma} to $A_0$ shows that the arrow from $y_{(0, m+1, 2)}$ to $y_{(-1, m)}$ is the only differential between $C\{(0, 0)\}$ and $C\{(-1, 0)\}$. By Lemma \ref{lemma: 1xn lemma}, the generator $y_{(1, m+2)}$ differentiates to $y_{(0, m+1, 1)}$, and $y_{(1, m+1)}$ differentiates to $y_{(0, m)}$. Note that $y_{(1, m+1)}$ may also differentiate to $y_{(-1,m)}$, in that case we replace $y_{(1, m+1)}$ by $y_{(1, m+1)} + y_{(0, m+1, 2)}$. The hv-basis we obtain is a positively twisted staircase plus one box.
\end{proof}

% \begin{lemma}\label{lemma: 1x2 rectangle lemma}
% Assume three generators $x,y,z$ in $CFK^\infty(K)$ have coordinates (1,2),(1,0),(0,0) respectively. $x$ differentiates to $y$, $y$ differentiates to $z$. Then either
% \begin{enumerate}[label=(\arabic*)]
%     \item a horizontal or a vertical differential starts from $x$,

%     \item or a horizontal or a vertical differential ends at $z$.
    
% \end{enumerate}
% \end{lemma}

% \begin{proof}

% The relative positions of these three generators are illustrated below. Similar to the argument in the proof of Lemma \ref{lemma: 1xn lemma}, if $x$ shall differentiate to $z$ through two steps, $x$ can only first differentiate to $(0,2),(0,1),(1,1),(1,0)$ positions. These four possibilities correspond to the four possible results in the statement.

% \begin{figure}[H]
%     \centering
%        \includegraphics[scale=0.8]{fig/1times2_rectangle_lemma.pdf} 
%     \caption{}
%     \label{figure: 1x2 rectangle lemma}
% \end{figure}

% \end{proof}
\vspace{-0.5em}

\begin{proposition}\label{proposition: 151 table third row, either a positive twisted staircase plus a box or a staircase plus a twisted box}
Let $K$ be a height 4 knot of the $(1,5,1)$ type. Suppose that for $p = 2,1,0,-1$, we have $\operatorname{rank}H_*({X_p}) = 0,2,3,1$ and $\operatorname{rank}H_*({Y_p}) = 1,3,2,0$, respectively. Assume that 
$\widehat{HFK}(K,1) = \mathbb{F}_{(a)} \oplus \mathbb{F}_{(a+1)}$ and $\widehat{HFK}(K,0) = \mathbb{F}_{(a)} \oplus \mathbb{F}_{(m-1)} \oplus \mathbb{F}_{(a-1)}$. Then $CFK^\infty(K)$ admits an hv-basis, which is either a positive twisted staircase plus one box or a staircase plus a twisted box.
\end{proposition}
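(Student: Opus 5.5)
The plan follows the template of Proposition~\ref{prop: 151 table second row, a positively twisted staircase plus one box or a staircase plus a negatively twisted box}. I first build a vertically simplified basis of $C\{i=0\}$ by grid climbing, and then make it horizontally simplified. Below height $-A$ and above height $1$, Lemma~\ref{lem: differentials below height -A+1} and its mirror already give the staircase pairing. So everything happens in the middle heights $1,0,-1$, together with the generator $x_{(A,m)}$ and its partner at height $-A$. The data come from the third row of Table~\ref{table:hv-basis of 151 type, g(K)>1-1} (Figure~\ref{Figure:dim_gra_figure-3}), which gives the Maslov gradings
\[
\widehat{HFK}(K,1)=\mathbb{F}_{(a)}\oplus\mathbb{F}_{(a+1)},\qquad \widehat{HFK}(K,0)=\mathbb{F}_{(a)}\oplus\mathbb{F}_{(m-1)}\oplus\mathbb{F}_{(a-1)},
\]
and height $-1$ is obtained by conjugation. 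Together with the ranks $(1,3,2,0)$ of $H_*(Y_p)$ for $p=2,1,0,-1$, these data fix the vertical arrows once the gradings are generic.

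I will split into cases according to whether $m-1$ collides with the gradings $a-1,a,a+1$ in the middle. \textbf{Generic case} ($m-1\notin\{a-1,a,a+1\}$, roughly): Maslov grading alone forces the vertical arrows. These are $x_{(1,a+1)}\to x_{(0,a)}$, then $x_{(0,a-1)}$ (or $x_{(1,a)}$) paired across heights with $x_{(-1,\cdot)}$, and $x_{(A,m)}\to x_{(0,m-1)}$, which is the twist. By the symmetry \eqref{equation:(i,j) symmetry}, $H_*(Z_p)$ has the mirrored ranks, and the horizontal arrows are then read off uniquely. The result is a positive twisted staircase plus a box.

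\textbf{Degenerate cases} ($m-1\in\{a-1,a,a+1\}$): the vertical arrow out of a height-$1$ generator, or out of $x_{(A,m)}$, may land at height $0$ or at height $-1$. This gives sub-cases (i) and (ii), exactly as in the $a=m$ analysis of the previous proposition. In each sub-case I will:
\begin{itemize}
\item apply Lemma~\ref{lemma: cancellation lemma} to $A_0$, cancelling the known vertical and horizontal pairs, and use $H_*(A_0)=\mathbb{F}^5$ to forbid every remaining differential between $C\{(0,0)\}$ and $C\{(-1,0)\}$ except a single determined arrow;
\item use Lemma~\ref{lemma: 1xn lemma} and $\partial^2=0$ on $1\times1$ squares to decide which height-$1$ generator hits which height-$0$ generator horizontally;
\item remove any extra horizontal components by filtered basis changes of the form permitted in Lemma~\ref{lemma: hori filtered basis change does not change vertical simplicity}, for instance $y\mapsto y+y'$ with $y'$ strictly further left and of equal grading. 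Where two generators share a grading, I will use a simultaneous matrix change as in \eqref{equation: simultaneous basis change}.
\end{itemize}
When the forced vertical arrow lands at height $0$, the outcome should be a twisted staircase plus a box. When it lands at height $-1$ (an incomplete square forbids the other option), the outcome should be a staircase plus a twisted box.

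The main obstacle I expect is the case where two height-$0$ generators have equal Maslov grading, namely $m-1=a$ or $m-1=a-1$. There the horizontal differentials from $C\{(0,0)\}$ to $C\{(-1,0)\}$ are a priori an arbitrary matrix, and the basis change that simplifies them might reintroduce vertical arrows from $x_{(A,m)}$. My first attempt is to cancel inside $A_0$ and use the rank-$5$ condition to show the matrix has rank one. After that I will clean up any new vertical component of $x_{(A,m)}$ by replacing it with $x_{(A,m)}+x_{(0,\cdot)}$, as was done in the $a=m+1$ case, and check that this replacement preserves horizontal simplicity.
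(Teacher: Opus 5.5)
Your toolkit (grid climbing, mirroring by the $(i,j)$-symmetry, the cancellation lemma on $A_0$, the $1\times n$ lemma, permitted filtered basis changes) is the paper's. However, your generic case is wrong, and with it the assignment of configurations to cases. Run the rank counts.
\begin{itemize}
\item Going from $\operatorname{rank}H_*(Y_1)=3$ to $\operatorname{rank}H_*(Y_2)=1$ across two generators at height $1$ makes the connecting map out of $C\{(0,1)\}$ injective. So both height-$1$ generators are tails of vertical arrows.
\item Going from $\operatorname{rank}H_*(Y_0)=2$ to $\operatorname{rank}H_*(Y_1)=3$ across three generators at height $0$ allows exactly one arrow from height $0$ to height $-1$, where the gradings are $a-2,a-1$.
\end{itemize}
Now suppose $m-1\notin\{a-1,a\}$. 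Then $x_{(1,a+1)}$ can only hit $x_{(0,a)}$. Hence the height-$0$ arrow is $x_{(0,a-1)}\to x_{(-1,a-2)}$, so $x_{(1,a)}\to x_{(-1,a-1)}$ is forced and has length $2$. Finally $x_{(A,m)}\to x_{(0,m-1)}$ kills the class $H_*(Y_2)=\mathbb{F}_{(m-1)}$.

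Mirroring gives the horizontal arrows, between the appropriate $U$-translates: $x_{(-1,a-1)}\to x_{(0,a)}$, $x_{(0,a-1)}\to x_{(1,a)}$, $x_{(-1,a-2)}\to x_{(1,a+1)}$ (length $2$), and $x_{(-A,m-2A)}\to x_{(0,m-1)}$. The six generators at heights $1,0,-1$ close up into a twisted box, and $x_{(0,m-1)}$ is an ordinary staircase corner. So the generic case is a \emph{staircase plus a twisted box}, as in the paper, and $x_{(A,m)}\to x_{(0,m-1)}$ is not a twist. Your own degenerate-case rule already contradicts your generic claim: an arrow out of the height-$1$ generator landing at height $-1$ gives a staircase plus a twisted box. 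Also, $m-1=a+1$ is not a degenerate value.

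The twisted staircase plus a box arises only for $a=m$. There $x_{(1,m)}$ lands on the second grading-$(m-1)$ generator at height $0$, which forces $x_{(A,m)}$ down to $x_{(-1,m-1)}$; that length-$(A+1)$ arrow and its mirror form the twist.

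For $a=m-1$ your anticipated obstacle is misplaced. The arrows $y_{(0,m-2)}\to y_{(-1,m-3)}$ and $y_{(1,m-1)}\to y_{(-1,m-2)}$ are forced as before. The real issue is the $2\times 2$ block from $\{y_{(1,m)},y_{(A,m)}\}$ onto the two grading-$(m-1)$ generators in column $0$. Only the triangular change $y_{(A,m)}\mapsto y_{(A,m)}+y_{(1,m)}$ is permitted. So you need a $\partial^2=0$ argument on the $1\times 2$ rectangle from $U^{-1}x_{(1,m-1)}$ down to position $(0,0)$. It shows that $y_{(1,m)}=U^{-1}x_{(-1,m-2)}$ hits only the generator vertically hit by $x_{(1,m)}$, because no second path reaches the other one. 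Consequently this case yields only a staircase plus a twisted box.

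Since the statement is a disjunction, the proposition survives. Still, your proposal as written asserts the wrong configuration in the main case and misplaces where the twist comes from.
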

\vspace{-0.5em}

\begin{proof}
\textit{If $a\notin\{m-1,m\}$}, the vertically simplified basis and the horizontal differentials can be determined under the constraints of Maslov grading, $H_*(Y_p)$ and $H_*(Z_p)$. The hv-basis we obtain is a staircase plus a twisted box, as shown in Figure \ref{figure: 151-2-9}.

\begingroup
\setlength{\intextsep}{2pt}
\begin{figure}[H]
    \setlength{\abovecaptionskip}{2pt}
    \begin{minipage}[c]{0.66\textwidth}
    \centering
       \includegraphics[scale=0.8]{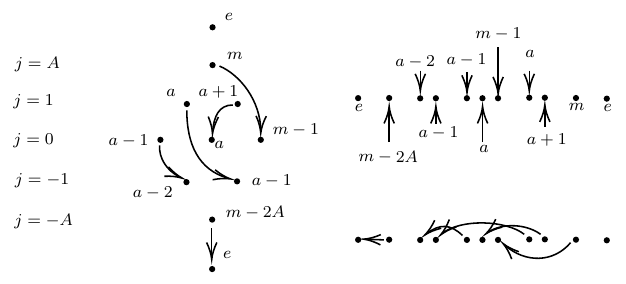} 
    \end{minipage}
	\begin{minipage}[c]{0.3\textwidth}
    \centering
       \includegraphics[scale=0.8]{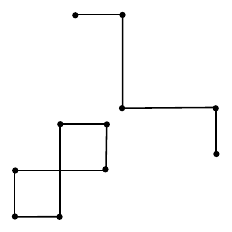} 
    \end{minipage}
    \caption{}
    \label{figure: 151-2-9}
\end{figure}
\endgroup

\textit{If $a=m$}, there are two possible vertically simplified bases.

\textbf{Case (\romannumeral 1)}: The vertically simplified basis is the same as the above one. By Lemma \ref{lemma: cancellation lemma} and the condition $H_*(A_0) = \mathbb{F}^5$, the generator $y_{(0, m-1, 1)}$ differentiates to $y_{(-1, m-2)}$. The generator $y_{(0, m-1, 2)}$ does not differentiate to $y_{(-1, m-2)}$ due to the $\partial^2=0$ constraint. The generator $y_{(1, m)}$ cannot differentiate to $y_{(0, m-1, 2)}$ by Lemma \ref{lemma: 1xn lemma}, since there is a vertical differential from $x_{(0,m-1,1)}$ to $x_{(-1,m-2)}$. By Maslov grading, $y_{(1,m)}$ must differentiate to $y_{(-1, m-1)}$. The hv-basis we obtain is again a staircase plus a twisted box; see Figure \ref{figure: 151-2-10}.

\begingroup
\setlength{\intextsep}{2pt}
\begin{figure}[H]
    \setlength{\abovecaptionskip}{2pt}
    \begin{minipage}[c]{0.66\textwidth}
    \centering
       \includegraphics[scale=0.8]{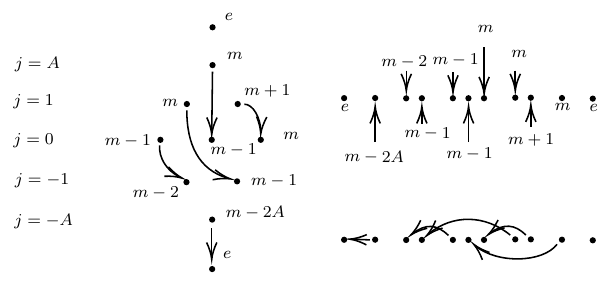} 
    \end{minipage}
	\begin{minipage}[c]{0.3\textwidth}
    \centering
       \includegraphics[scale=0.8]{fig/151-2-16.pdf} 
    \end{minipage}
    \caption{}
    \label{figure: 151-2-10}
\end{figure}
\endgroup

\textbf{Case (\romannumeral 2)}: The differential from $C\{(0, 0)\}$ to $C\{(-1, 0)\}$ is the same as in Case (i), and $y_{(1, m)}$ differentiates to a linear combination of $y_{(0, m-1, 2)}$ and $y_{(-1, m-1)}$. We replace $y_{(0, m-1, 2)}$ by $y_{(0, m-1, 2)} + y_{(-1, m-1)}$ if necessary and assume that $y_{(1, m)}$ differentiates to $y_{(0, m-1, 2)}$. 

If $y_{(A, m)}$ differentiates to $y_{(0, m-1, 2)} + y_{(-1, m-1)}$, we replace $y_{(A, m)}$ by $y_{(A, m)} + y_{(1, m)}$. We then obtain an hv-basis, which is a positively twisted staircase plus one box; see Figure \ref{figure: 151-2-11}.

\begingroup
\setlength{\intextsep}{2pt}
\begin{figure}[H]
    \setlength{\abovecaptionskip}{2pt}
    \begin{minipage}[c]{0.66\textwidth}
    \centering
       \includegraphics[scale=0.8]{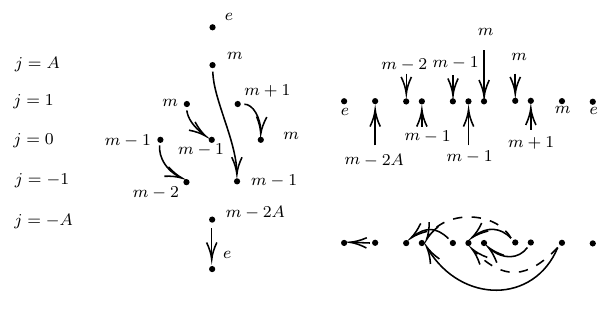} 
    \end{minipage}
	\begin{minipage}[c]{0.3\textwidth}
    \centering
       \includegraphics[scale=0.8]{fig/151-2-5.pdf} 
    \end{minipage}
    \caption{}
    \label{figure: 151-2-11}
\end{figure}
\endgroup

\textit{If $a = m-1$}, there is a differential from $y_{(0, m-2)}$ to $y_{(-1, m-3)}$, and for similar reasons $y_{(1, m-1)}$ must differentiate to $y_{(-1, m-2)}$. We discuss the differentials from the two generators with Maslov grading $m$, $y_{(1, m)}$ and $y_{(A, m)}$, to $y_{(0, m-1, 1)}$ and $y_{(0, m-1, 2)}$, as follows. 

Suppose $y_{(1, m)} = U^{-1} x_{(-1, m-2)}$ differentiates to $y_{(0, m-1, 1)}$. By Lemma \ref{lemma: 1xn lemma}, $U^{-1}x_{(1, m-1)}$ must differentiate horizontally to $x_{(A, m)}$, but we know that $x_{(-1,m-3)}$ is not the starting point of a horizontal differential, a contradiction (see the right side of Figure \ref{figure: 151-2-12}).

\begingroup
\setlength{\intextsep}{2pt}
\begin{figure}[H]
    \setlength{\abovecaptionskip}{2pt}
    \begin{minipage}[c]{0.66\textwidth}
    \centering
       \includegraphics[scale=0.8]{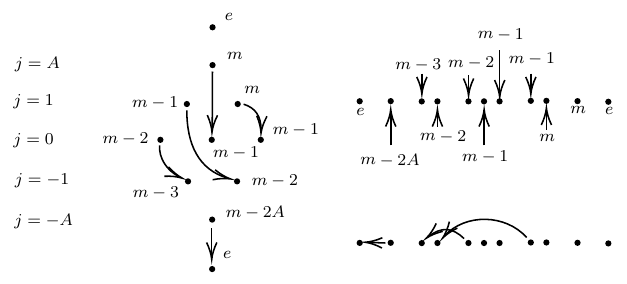} 
    \end{minipage}
	\begin{minipage}[c]{0.3\textwidth}
    \centering
       \includegraphics[scale=0.8]{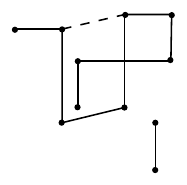} 
    \end{minipage}
    \caption{}
    \label{figure: 151-2-12}
\end{figure}
\endgroup

This forces $y_{(1, m)}$ to differentiate to $y_{(0, m-1, 2)}$. Then $y_{(A, m)}$ differentiates to either $y_{(0, m-1, 1)}$ or $y_{(0, m-1, 1)} + y_{(0, m-1, 2)}$. In the latter case, we replace $y_{(A, m)}$ by $y_{(A, m)} + y_{(1, m)}$. The hv-basis we obtain is a staircase plus a twisted box, as shown in Figure \ref{figure: 151-2-13}.

\begingroup
\setlength{\intextsep}{2pt}
\begin{figure}[H]
    \setlength{\abovecaptionskip}{2pt}
    \begin{minipage}[c]{0.55\textwidth}
    \centering
       \includegraphics[scale=0.8]{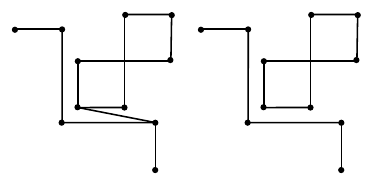} 
    \end{minipage}
	\begin{minipage}[c]{0.4\textwidth}
    \centering
       \includegraphics[scale=0.8]{fig/151-2-16.pdf} 
    \end{minipage}
    \caption{}
    \label{figure: 151-2-13}
\end{figure}
\endgroup

\end{proof}

\begin{proposition}\label{prop: 151 table 4th row, a staircase plus two positive boxes}
Let $K$ be a height 4 knot of the $(1,5,1)$ type. Suppose that for $p = 2, 1, 0, -1$ we have $\operatorname{rank}H_*({X_p}) = 0, 2, 3, 1$ and $\operatorname{rank}H_*({Y_p}) = 1, 3, 2, 0$, respectively. Assume that
$\widehat{HFK}(K,1) = \mathbb{F}_{(a)} \oplus \mathbb{F}_{(b)}$ and $\widehat{HFK}(K,0) = \mathbb{F}^2_{(a-1)} \oplus \mathbb{F}_{(m-1)} \oplus \mathbb{F}^2_{(b-1)}$. Then $CFK^\infty(K)$ admits an hv-basis, which is a staircase plus two boxes, both located in the central position.
\end{proposition}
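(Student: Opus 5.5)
We follow the pattern of Propositions \ref{prop: 151 table first row, positively two-twisted staircase}--\ref{proposition: 151 table third row, either a positive twisted staircase plus a box or a staircase plus a twisted box}: first produce a vertically simplified basis for $C\{i=0\}$ by grid climbing, then simplify the horizontal differentials using filtered basis changes of the type in Lemma~\ref{lemma: hori filtered basis change does not change vertical simplicity}, so that vertical simplicity is preserved. The Maslov data come from Subcase (iii) of the $(0,2,3,1)$ case (Figure~\ref{figure: case (iii) of (1,5,1), (0,2,3,1)}). We have $H_*(X_1)=\mathbb{F}_{(a)}\oplus\mathbb{F}_{(b)}$ and $H_*(X_0)=\mathbb{F}_{(a-1)}\oplus\mathbb{F}_{(b-1)}\oplus\mathbb{F}_{(m-1)}$. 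The other two generators at height $0$, in gradings $a-1$ and $b-1$, map under $\delta$ onto $H_*(UX_1)$. Finally $H_*(\{(0,-1)\})=\mathbb{F}_{(a-2)}\oplus\mathbb{F}_{(b-2)}$.

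\emph{Vertical step.} By Lemma~\ref{lem: differentials below height -A+1} and its symmetric version, everything outside heights $-1,0,1$ is a staircase tail, except that $x_{(-A,m-2A)}$ pairs with $x_{(0,m-1)}$ or is fixed by it. Climbing through heights $-1,0,1$ with $H_*(Y_0)=\mathbb{F}^2$ and $H_*(Y_1)=\mathbb{F}^3$ (dual to the $X$ ranks via \eqref{equation:(i,j) symmetry}), the two height-$0$ generators of gradings $a-1$ and $b-1$ that survive in $H_*(X_0)$ must kill the height $-1$ generators $x_{(-1,a-2)}$ and $x_{(-1,b-2)}$. The two generators at height $1$ must then hit the other two height-$0$ generators of gradings $a-1$ and $b-1$. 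The grading $m-1$ generator stays in the staircase. Possibly after a filtered change at height $0$ (adding same-grading generators), this gives two vertical length-one arrows $1\to0$, two arrows $0\to-1$, and the staircase.

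\emph{Horizontal step.} By symmetry the same pattern holds in $C\{j=0\}$ with $Z_p$ in place of $Y_p$. In the generic case $a\ne b$ and $a,b\notin\{m,m+1\}$, Maslov gradings pin every horizontal arrow, and each quadruple of gradings $(a,a-1,a-1,a-2)$, resp.\ $b$, closes into a $1\times1$ box by $\partial^2=0$ and Lemma~\ref{lemma: 1xn lemma}.

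\emph{Degenerate gradings.} When $a=b$, or $a$ or $b$ is close to $m$, several generators share a grading. For $a=b$ I would perform a simultaneous $GL_2$ change on the height $-1$ pair and the matching height-$0$ pair, as in \eqref{equation: simultaneous basis change}. This makes the horizontal arrows one-to-one while keeping the vertical arrows one-to-one. Leftover arrows into the staircase (e.g.\ from $y_{(A,m)}$) are removed by adding height-$1$ box generators, exactly as in the previous propositions.

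\emph{Main obstacle.} The main obstacle is excluding a horizontal arrow between a box generator and the central staircase generator at height $0$, or a crossing in which the two boxes fuse into a twisted configuration. For this I would apply the cancellation lemma (Lemma~\ref{lemma: cancellation lemma}) to $A_0$: cancel the vertical pairs and the staircase pairs. The condition $H_*(A_0)=\mathbb{F}^5$ then forbids any further differential among the five survivors, which should force both boxes to be centrally located and decoupled.
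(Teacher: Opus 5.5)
Your route is the paper's own: grid climbing with $H_*(Y_p)$ for the vertical basis, the cancellation lemma on $A_0$ with $H_*(A_0)\cong\mathbb{F}^5$ for the arrows from $C\{(0,0)\}$ to $C\{(-1,0)\}$, and Lemma~\ref{lemma: 1xn lemma} with $H_*(Z_p)$ and Maslov gradings for the arrows from $C\{(1,0)\}$ to $C\{(0,0)\}$. For $a=b$ you also use the simultaneous change \eqref{equation: simultaneous basis change}; the paper's special cases are $a=m$, $a=b=m$ and $a=b\neq m$. However, one identification in your vertical step is backwards. It concerns exactly the matching that makes the boxes close, so it needs fixing.

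\textbf{The backwards identification.} The height-$0$ generators of gradings $a-1,b-1$ that survive in $H_*(X_0)$ are the \emph{targets} of the vertical arrows from height $1$. They are not the sources of the arrows to $x_{(-1,a-2)},x_{(-1,b-2)}$; those sources are the generators that $\delta$ sends onto $H_*(UX_1)$. Here is why, in the generic case:
\begin{itemize}
\item Since $H_*(X_2)=0$, projection gives $H_*(UX_1)\cong C\{(-1,0)\}=\langle Ux_{(1,a)},Ux_{(1,b)}\rangle$.
\item Hence the image of $H_*(X_0)$ in grading $a-1$ equals the kernel of the length-one horizontal map $C\{(0,0)\}\to C\{(-1,0)\}$, and this kernel is one-dimensional.
\item By Lemma~\ref{lemma: 1xn lemma}, this kernel contains the target $w_a$ of the vertical arrow from $x_{(1,a)}$, so it equals $\langle w_a\rangle$.
\item $\langle w_a\rangle$ is also the kernel of the vertical map to height $-1$. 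So a survivor in $H_*(X_0)$ cannot be the source of a vertical arrow.
\end{itemize}
With your labeling, $x_{(1,a)}$ would hit a generator that carries a length-one horizontal arrow back to $Ux_{(1,a)}$, which Lemma~\ref{lemma: 1xn lemma} forbids.

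\textbf{How to fix it.} State the vertical step purely via $Y_0,Y_1,Y_2$: two generators of gradings $a-1,b-1$ kill height $-1$, and the height-$1$ generators hit the other two. Then let Lemma~\ref{lemma: 1xn lemma} and the $A_0$ cancellation produce the matching in the horizontal step.

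\textbf{A smaller slip.} The staircase generator whose vertical arrow ends at $x_{(0,m-1)}$ is $x_{(A,m)}$, not $x_{(-A,m-2A)}$. The generator $x_{(-A,m-2A)}$ is a vertical source going further down. It meets $x_{(0,m-1)}$ only through the horizontal arrow from $U^{-A}x_{(-A,m-2A)}$.
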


\begin{proof}
\textit{If $a \neq m, b \neq m, a \neq b$}, through necessary basis changes, we can assume the generators at height 0 are placed in a suitable order and obtain a vertically simplified basis as shown on the left of Figure \ref{figure: 151-2-14}. 

\begingroup
\setlength{\intextsep}{2pt}
\begin{figure}[H]
    \setlength{\abovecaptionskip}{2pt}
    \begin{minipage}[t]{0.66\textwidth}
    \centering
       \includegraphics[scale=0.8]{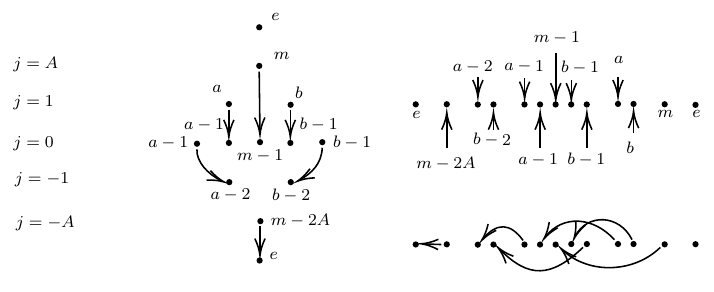} 
    \end{minipage}
	\begin{minipage}[t]{0.3\textwidth}
    \centering
       \includegraphics[scale=0.8]{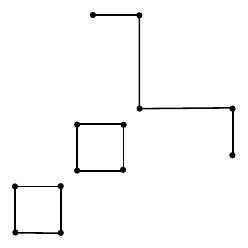} 
    \end{minipage}
    \caption{}
    \label{figure: 151-2-14}
\end{figure}
\endgroup

By the cancellation lemma \ref{lemma: cancellation lemma}, we carry the same argument as in figure \ref{figure: H_*(A_0) and cancellation lemma}. We can determine the differentials between $C\{(0, 0)\}$ and $C\{(-1, 0)\}$. The differentials from $C\{(1, 0)\}$ to $C\{(0, 0)\}$ are determined by applying Lemma \ref{lemma: 1xn lemma} and the contraints of $H_*(Z_m)$ and Maslov grading. The hv-basis we obtain is a staircase plus two boxes, both located in the central position.

\textit{If $a=m, b\neq m$}, we can place the three generators with Maslov grading $m-1$ at height $0$ in a suitable order to obtain a vertically simplified basis identical to the case just discussed. The remaining differentials are determined by constraints of $H_*(Z_m)$ and Maslov grading, and applying Lemma \ref{lemma: 1xn lemma} or the $\partial^2=0$ constraint. We again obtain an hv-basis as shown in Figure \ref{figure: 151-2-14}.

\textit{If $a=b=m$}, we perform the simultaneous basis change described in Equation \eqref{equation: simultaneous basis change} on the differentials from $y_{(0,m-1,1)}$ and $y_{(0,m-1,5)}$ to $y_{(-1,m-2,1)}$ and $y_{(-1,m-2,2)}$, preserving the one-to-one correspondence from $y_{(-1,m-2,1)}$ and $y_{(-1,m-2,2)}$ to $Uy_{(0,m-1,2)}$ and $Uy_{(0,m-1,4)}$; see Figure \ref{figure: 151-2-15}. This yields an identical vertically simplified basis. The remaining discussion is similar, and we obtain an hv-basis as shown in Figure \ref{figure: 151-2-14}.

\begingroup
\setlength{\intextsep}{2pt}
\begin{figure}[H]
    \setlength{\abovecaptionskip}{2pt}
    \begin{minipage}[c]{0.66\textwidth}
    \centering
       \includegraphics[scale=0.8]{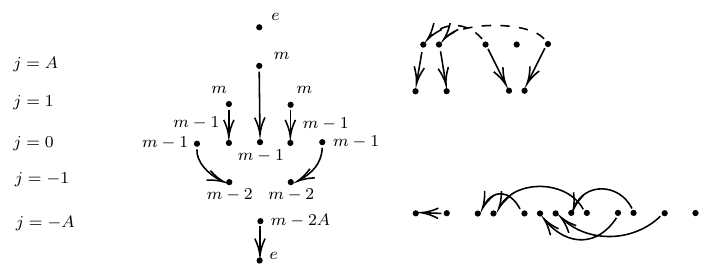} 
    \end{minipage}
	\begin{minipage}[c]{0.3\textwidth}
    \centering
       \includegraphics[scale=0.8]{fig/151-4-2.pdf} 
    \end{minipage}
    \caption{}
    \label{figure: 151-2-15}
\end{figure}
\endgroup

\textit{If $a=b\neq m$}, we perform a simultaneous basis change on the differentials from $y_{(0,a-1,1)}$ and $y_{(0,a-1,4)}$ to $y_{(-1,a-2,1)}$ and $y_{(-1,a-2,2)}$, preserving the one-to-one correspondence from $y_{(-1,a-2,1)}$ and $y_{(-1,a-2,2)}$ to $Uy_{(0,a-1,2)}$ and $Uy_{(0,a-1,3)}$. Through a similar discussion, we obtain an hv-basis, which is again a staircase plus two boxes.
\end{proof}

\begin{proposition}\label{prop: 151 table 5th row, a negative twisted staircase}
Let $K$ be a height 4 knot of the $(1,5,1)$ type. Suppose that for $p = 2,1,0,-1$, we have $\operatorname{rank}H_*({X_p}) = 0,3,2,1$ and $\operatorname{rank}H_*({Y_p}) = 1,2,3,0$, respectively. Assume that
$\widehat{HFK}(K,1) = \mathbb{F}_{(m-1)} \oplus \mathbb{F}_{(m-2)} \oplus \mathbb{F}_{(m-3)}$ and $\widehat{HFK}(K,0) = \mathbb{F}_{(m-4)}$, then $CFK^\infty(K)$ admits an hv-basis, which is a negative doubly-twisted staircase.
\end{proposition}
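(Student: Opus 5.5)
This is the mirror of Proposition \ref{prop: 151 table first row, positively two-twisted staircase}, so I would run the same grid-climbing argument with the roles of $X_p$ and $Y_p$ interchanged. In this subcase the chase in Figure \ref{figure: case (i) of (1,5,1), (0,3,2,1)} has already fixed the data. At height $1$ there are generators of Maslov gradings $m-1,m-2,m-3$. At height $0$ there is a single generator of grading $m-4$. At height $-1$ there are three generators of gradings $m-3,m-4,m-5$, by $\operatorname{Spin}^c$ conjugation. The generator $x_{(A,m)}$ sits at height $A\ge 2$, and Lemma \ref{lemma: 151 X_2 and Y_2} (case (1), since $H_*(X_2)=0$) fixes the parities above height $1$.

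\textbf{Step 1: vertical differentials.} Below height $-A$, Lemma \ref{lem: differentials below height -A+1} gives the usual staircase pairs. I would then climb through heights $-A+1,\dots,-1,0,1,2,\dots$ one at a time. At each height I use the sequence \eqref{equation: exact sequence of Y_p} and the known values $\operatorname{rank}H_*(Y_p)=1,2,3,0$ for $p=2,1,0,-1$, together with Maslov grading ($\partial$ lowers grading by exactly one). These constraints should decide each vertical arrow uniquely.

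At height $-1$, the requirement $H_*(Y_{-1})=0$ forces the three height $-1$ generators to be the endpoints or starting points of arrows to and from below or above. The grading $m-4$ generator at height $0$ must then be hit by, or hit, the appropriate height $-1$ or height $1$ generator. I expect a single consistent pattern: $x_{(1,m-3)}\to x_{(0,m-4)}$, with $x_{(1,m-1)}$ and $x_{(1,m-2)}$ pairing with the height $-1$ generators or with $x_{(A,m)}$ as the grading allows. Exactly as in the first proposition, no basis change should be needed, because all the relevant gradings at a given height are distinct. This produces a vertically simplified basis.

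\textbf{Step 2: horizontal differentials.} I would apply the symmetry \eqref{equation:(i,j) symmetry} to transfer the rank conditions to $H_*(Z_p)$. Then I would climb $C\{j=0\}$ in the same way. Since the gradings are again pairwise distinct at each relevant column, Maslov grading and the $Z_p$ ranks should determine every horizontal arrow without ambiguity.

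As a cross-check I would use Lemma \ref{lemma: 1xn lemma} and $\partial^2=0$ on $1\times1$ squares to exclude competing patterns. I would also use the condition $H_*(A_0)=\mathbb{F}^5$, via the cancellation lemma \ref{lemma: cancellation lemma} applied to $A_0$, to rule out any stray arrow between columns $0$ and $-1$. Any horizontal simplification needed would be of the type in Lemma \ref{lemma: hori filtered basis change does not change vertical simplicity}, so vertical simplicity is preserved.

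\textbf{Step 3: identify the shape.} The resulting basis should be the mirror picture of Figure \ref{figure: (1,5,1) hv-basis first case}: the corner generators of the two twists are sources rather than targets, which is the negative doubly-twisted staircase.

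The main obstacle is Step 1 near heights $0,\pm1$. There one must check that the grading coincidences, for instance $m-3$ occurring at both height $1$ and height $-1$, do not allow an alternative vertical pairing. I would test each alternative against $\operatorname{rank}H_*(Y_0)=3$ and $\operatorname{rank}H_*(Y_1)=2$, and against the $1\times n$ lemma.
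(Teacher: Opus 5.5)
Your strategy is the paper's: grid climbing with Maslov grading and the ranks of $H_*(Y_p)$ fixes the vertical arrows, the symmetric $H_*(Z_p)$ constraints fix the horizontal ones, and no basis change is needed. The paper's proof consists of exactly this argument. However, the one concrete pattern you predict in Step 1 is wrong, and Step 1 is the entire content of the proof.

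Since $H_*(Y_{-1})=0$ and no generators lie strictly between heights $-A$ and $-1$, every generator at height $\le -A$ is paired with another such generator. Hence none of the three height $-1$ generators (gradings $m-3,m-4,m-5$) starts a vertical arrow, and each must be hit from height $0$ or $1$, because heights $\ge A$ carry gradings $\ge m$. The differential lowers grading by exactly one, and the only candidates of gradings $m-2,m-3,m-4$ are $x_{(1,m-2)},x_{(1,m-3)},x_{(0,m-4)}$. This forces $x_{(1,m-2)}\to x_{(-1,m-3)}$ and $x_{(1,m-3)}\to x_{(-1,m-4)}$ (length $2$), and $x_{(0,m-4)}\to x_{(-1,m-5)}$ (length $1$). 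The remaining generator $x_{(1,m-1)}$ represents $H_*(Y_2)=\mathbb{F}_{(m-1)}$ and is hit by $x_{(A,m)}$.

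In particular, your $x_{(1,m-3)}\to x_{(0,m-4)}$ cannot occur. It would make $x_{(0,m-4)}$ a target, giving $\operatorname{rank}H_*(Y_1)=4$ rather than $2$, and it would leave $x_{(-1,m-4)}$ and $x_{(-1,m-5)}$ with no possible source. The grading coincidence that actually has to be resolved is $m-4$ at heights $0$ and $-1$, not $m-3$ at heights $\pm1$; it is settled by $\operatorname{rank}H_*(Y_1)=2$. Also, $H_*(Y_{-1})=0$ does not say the height $-1$ generators connect "below or above": it says precisely that they cannot start arrows.

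Running the mirror argument on $C\{j=0\}$ then gives the following horizontal arrows:
\begin{itemize}
\item from $x_{(0,m-4)}$ to $x_{(1,m-3)}$, of length $1$;
\item from $x_{(-1,m-5)}$ to $x_{(1,m-2)}$ and from $x_{(-1,m-4)}$ to $x_{(1,m-1)}$, of length $2$;
\item from $x_{(-A,m-2A)}$ to $x_{(-1,m-3)}$, of length $A-1$.
\end{itemize}
So the central generator $x_{(0,m-4)}$ is the source of both of its length-one arrows, which is what makes the doubly-twisted staircase negative (compare the first-row case, where the centre is a target). Your Step 3 conclusion is right, but it only follows once Step 1 is corrected. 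The cancellation-lemma and $1\times n$ cross-checks are not needed here.
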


\begingroup
\setlength{\intextsep}{2pt}
\begin{figure}[H]
    \setlength{\abovecaptionskip}{2pt}
    \begin{minipage}[c]{0.66\textwidth}
    \centering
       \includegraphics[scale=0.8]{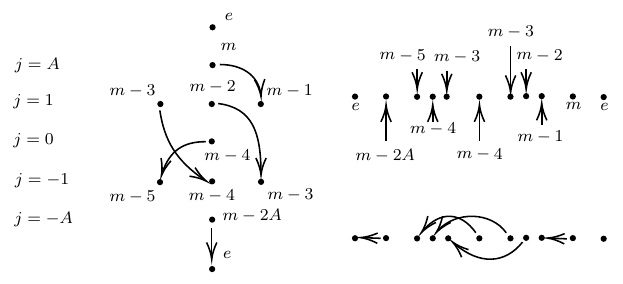} 
    \end{minipage}
	\begin{minipage}[c]{0.3\textwidth}
    \centering
       \includegraphics[scale=0.8]{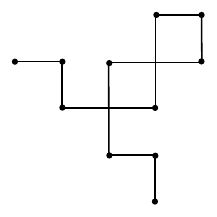} 
    \end{minipage}
    \caption{}
    \label{figure: 151-5-1}
\end{figure}
\endgroup
\begin{proof}
The given Maslov grading conditions and homologies of $Y_m$ can determine vertical and horizontal differentials without refering to basis changes, similar to the situation in Proposition \ref{prop: 151 table first row, positively two-twisted staircase}. As shown in Figure \ref{figure: 151-5-1}, the hv-basis we obtain is a negatively doubly-twisted staircase.
\end{proof}

\begin{proposition}\label{prop: 151 table 6th row}
Let $K$ be a height 4 knot of the $(1,5,1)$ type. Suppose that for $p = 2,1,0,-1$, we have $\operatorname{rank}H_*({X_p}) = 0,3,2,1$ and $\operatorname{rank}H_*({Y_p}) = 1,2,3,0$, respectively. Assume that
$\widehat{HFK}(K,1) = \mathbb{F}_{(a)} \oplus \mathbb{F}_{(a+1)} \oplus \mathbb{F}_{(m-1)}$ and $\widehat{HFK}(K,0) = \mathbb{F}_{(a-1)} \oplus \mathbb{F}_{(a)} \oplus \mathbb{F}_{(m-2)}$. Then $CFK^\infty(K)$ admits an hv-basis, which is either a negative twisted staircase plus one box or a staircase plus a twisted box.
\end{proposition}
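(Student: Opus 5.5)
We follow the template of Propositions \ref{prop: 151 table second row, a positively twisted staircase plus one box or a staircase plus a twisted box} and \ref{proposition: 151 table third row, either a positive twisted staircase plus a box or a staircase plus a twisted box}, now in the $(0,3,2,1)$ setting where $\operatorname{rank}H_*(Y_p)=1,2,3,0$ for $p=2,1,0,-1$. First we build a vertically simplified basis for $C\{i=0\}$ by grid climbing. Below height $-A$, Lemma \ref{lem: differentials below height -A+1} already gives the vertical pairing, and above height $1$ the symmetric pairing holds by the same argument. At heights $-1,0,1$ we climb one level at a time using the sequences \eqref{equation: exact sequence of Y_p} together with the ranks of $H_*(Y_p)$ and the Maslov gradings from Figure \ref{Figure:dim_gra_figure-13} (first possibility). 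The generators are $x_{(1,a)},x_{(1,a+1)},x_{(1,m-1)}$ at height $1$, $x_{(0,a-1)},x_{(0,a)},x_{(0,m-2)}$ at height $0$, and their conjugates at height $-1$. Passing from $Y_1$ (rank $2$) to $Y_2$ (rank $1$) forces one generator at height $1$ to kill a class; passing from $Y_0$ (rank $3$) to $Y_1$ forces a vertical arrow out of height $0$, and so on.

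We then split according to coincidences among $a$ and $m$, namely whether $a\in\{m-3,m-2,m-1,m\}$ or not. In the generic case the Maslov gradings determine every vertical arrow uniquely. Dually, the constraints from $H_*(Z_p)$ via \eqref{equation:(i,j) symmetry} determine every horizontal arrow uniquely, so no basis change is needed. We read off an hv-basis: the $x_{(1,m-1)}$ strand joins the staircase through a twist, giving either a negative twisted staircase plus one box or a staircase plus a twisted box, according to which height the odd generator $x_{(1,m-1)}$ pairs with.

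In the degenerate cases two generators at the same height share a Maslov grading, or a generator at height $1$ may hit either height $0$ or height $-1$. Here we list the possible vertically simplified bases, as in Case (i)/(ii) of Proposition \ref{prop: 151 table second row, a positively twisted staircase plus one box or a staircase plus a twisted box}. For the horizontal differentials we proceed in three steps.
\begin{enumerate}[label=(\alph*)]
\item Apply the cancellation Lemma \ref{lemma: cancellation lemma} to $A_0$. Cancel the known vertical pairs and then use $H_*(A_0)=\mathbb{F}^5$ to forbid all but one arrow from $C\{(0,0)\}$ to $C\{(-1,0)\}$.
\item Use the $1\times n$ Lemma \ref{lemma: 1xn lemma} and $\partial^2=0$ on $1\times1$ squares to decide where the arrows from $C\{(1,0)\}$ land.
\item Clean up any leftover extra components, e.g.\ $y_{(1,\cdot)}\mapsto y_{(0,\cdot)}+y_{(-1,\cdot)}$ or $y_{(A,m)}$ hitting a sum. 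For this, use replacements $y\mapsto y+y'$ with $y'$ of lower $i$ and equal Maslov grading, or simultaneous $2\times2$ changes as in \eqref{equation: simultaneous basis change}. Lemma \ref{lemma: hori filtered basis change does not change vertical simplicity} guarantees that vertical simplicity survives.
\end{enumerate}

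The main obstacle is step (a) in the degenerate cases, for instance $a=m-1$ or $a=m-2$. There, several generators at heights $0$ and $-1$ have gradings allowing many combinations of arrows between $C\{(0,0)\}$ and $C\{(-1,0)\}$, and $H_*(Z_p)$ alone does not pin them down. We plan to perform the cancellation in $A_0$ explicitly, record the induced differentials $\overline d$ among the five survivors, and check that each forbidden configuration would lower $\operatorname{rank}H_*(A_0)$ below $5$. After this, the negative variant appears in place of the positive one of Proposition \ref{prop: 151 table second row, a positively twisted staircase plus one box or a staircase plus a twisted box}, because here the extra generator sits at height $1$ rather than height $0$.
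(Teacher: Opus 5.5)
You have the right toolkit (grid climbing, cancellation in $A_0$, Lemma~\ref{lemma: 1xn lemma}, filtered basis changes). However, your arrow counts are carried over from the $(0,2,3,1)$ template of Proposition~\ref{prop: 151 table second row, a positively twisted staircase plus one box or a staircase plus a twisted box}, and this makes step (a) aim at a false statement. Here heights $1,0,-1$ each carry three generators, and $\operatorname{rank}H_*(Y_p)=\operatorname{rank}H_*(Z_p)=1,2,3,0$ for $p=2,1,0,-1$. The relations $2=3+3-2r$ and $1=2+3-2r$ then give exactly two vertical arrows out of height $0$ and two out of height $1$, not one. Dually, since $H_*(Z_{-1})=0$, the component of $\partial$ from $C\{(0,0)\}$ to $C\{(-1,0)\}$ has rank two, so $H_*(A_0)\cong\mathbb{F}^5$ cannot leave a single arrow there.

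In the paper's case $a=m-2$, the cancellation argument actually gives three facts:
\begin{itemize}
\item $y_{(0,m-3)}\to y_{(-1,m-4)}$;
\item $y_{(0,m-2,1)}$ has no leftward arrow;
\item $\partial y_{(0,m-2,2)}$ is an undetermined nonzero vector in $\langle y_{(-1,m-3,1)},y_{(-1,m-3,2)}\rangle$.
\end{itemize}
The choice of this vector is exactly what separates the two conclusions. If it is $y_{(-1,m-3,1)}$, then Lemma~\ref{lemma: 1xn lemma} forces $y_{(1,m-1,2)}\to y_{(0,m-2,1)}$, and one gets the negative twisted staircase plus a box. The sum reduces to this after replacing $y_{(-1,m-3,1)}$ by $y_{(-1,m-3,1)}+y_{(-1,m-3,2)}$. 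If it is $y_{(-1,m-3,2)}$, one gets the staircase plus a twisted box. Your plan contains no such split. Your generic case is also misdescribed: for $a\notin\{m-2,m-1\}$ only the staircase plus a twisted box occurs. Moreover, $x_{(1,m-1)}$ has even grading, not odd, since $m$ is odd when $H_*(X_2)=0$.

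The case $a=m-1$ needs an idea that your steps (b)--(c) do not supply. After the simultaneous basis change making $y_{(0,m-2,j)}\to y_{(-1,m-3,j)}$, the question is which of $y_{(1,m-1,1)},y_{(1,m-1,2)}$ carries the length-two horizontal arrow to $y_{(-1,m-2)}$; the other one is hit by $y_{(A,m)}$. Your tools do not decide this:
\begin{itemize}
\item Lemma~\ref{lemma: 1xn lemma} only constrains length-one arrows, and no $1\times1$ square is involved.
\item Mixing these two same-level generators is not covered by Lemma~\ref{lemma: hori filtered basis change does not change vertical simplicity}, which only adds generators of strictly smaller $i$.
\end{itemize}
The paper excludes $y_{(1,m-1,2)}\to y_{(-1,m-2)}$ by a longer $\partial^2$ chase. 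Such an arrow, together with $\partial^2 x_{(0,m-2,2)}=0$, would force a diagonal differential $y_{(-1,m-3,2)}\to Uy_{(-1,m-2)}$. That diagonal term cannot cancel in $\partial^2$ of the grading-$m$ generator at height $A$, since that generator starts no other vertical arrow and $x_{(1,m)}$ ends none. Only then do you get $y_{(1,m-1,1)}\to y_{(-1,m-2)}$ and $y_{(A,m)}\to y_{(1,m-1,2)}$, i.e.\ a staircase plus a twisted box. So the real work is the three-way split on $\partial y_{(0,m-2,2)}$ for $a=m-2$ and this diagonal-differential argument for $a=m-1$, not cutting the arrows out of $C\{(0,0)\}$ down to one.
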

\vspace{-1.5em}

\begin{proof}

We discuss separately according to the relationship between $a$ and $m$.

\textit{If $a\notin\{m-2,m-1\}$}, the vertical and horizontal differentials are determined by constraints of Maslov grading and homologies of $Y_m$ and $Z_m$. The hv-basis we obtain is a staircase plus a twisted box; see Figure \ref{figure: 151-6-1}.

\begingroup
\setlength{\intextsep}{2pt}
\begin{figure}[H]
    \setlength{\abovecaptionskip}{2pt}
    \begin{minipage}[t]{0.66\textwidth}
    \centering
       \includegraphics[scale=0.8]{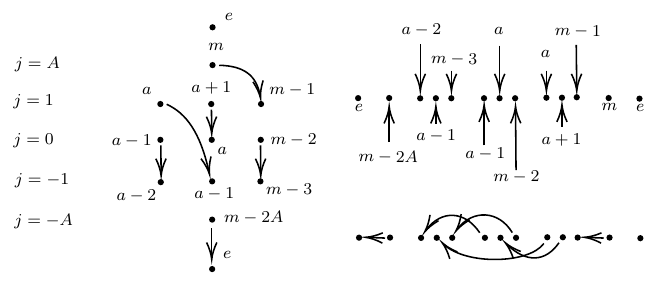} 
    \end{minipage}
	\begin{minipage}[t]{0.3\textwidth}
    \centering
       \includegraphics[scale=0.8]{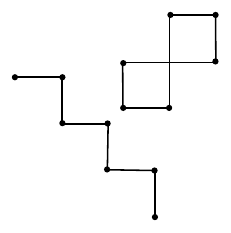} 
    \end{minipage}
    \caption{}
    \label{figure: 151-6-1}
\end{figure}
\endgroup

\textit{If $a = m-2$}, up to reordering and basis changes on $x_{(-1,m-3,1)}$ and $x_{(-1,m-3,2)}$, and on $x_{(0,m-2,1)}$ and $x_{(0,m-2,2)}$, we assume the obtained vertically simplified basis is identical to the one above.

In $C\{j=0\}$, by Lemma \ref{lemma: cancellation lemma} the constaint $H_*(A_0)\cong \mathbb{F}^5$, the generator $y_{(0,m-3)}$ must differentiate to $y_{(-1,m-4)}$, while $y_{(0,m-2,1)}$ does not differentiate leftward. The generator $y_{(0,m-2,2)}$ differentiates to a linear combination of $y_{(-1,m-3,1)}$ and $y_{(-1,m-3,2)}$; see Figure \ref{figure: 151-6-2}. We discuss separately according to $\partial y_{(0,m-2,2)}$.

\begingroup
\setlength{\intextsep}{2pt}
\begin{figure}[H]
    \centering
    \setlength{\abovecaptionskip}{2pt}
       \includegraphics[scale=0.8]{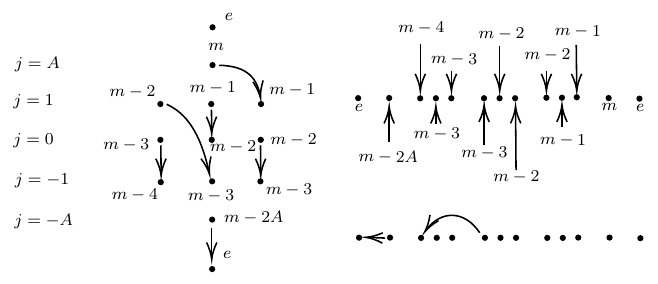} 
    \caption{}
    \label{figure: 151-6-2}
\end{figure}
\endgroup

\textbf{Case (i)}: If $y_{(0,m-2,2)}$ differentiates to $y_{(-1,m-3,1)}$, then by Lemma \ref{lemma: 1xn lemma}, $y_{(1,m-1,2)}$ differentiates to $y_{(0,m-2,1)}$. It follows that $y_{(1,m-2)}$ differentiates to either $y_{(-1,m-3,2)}$ or $y_{(-1,m-3,1)}+y_{(-1,m-3,2)}$. In the latter case, we replace $y_{(1,m-2)}$ by $y_{(1,m-2)}+y_{(0,m-2,2)}$. Thus, we assume that $y_{(1,m-2)}$ differentiates to $y_{(-1,m-3,2)}$.

\begingroup
\setlength{\intextsep}{0pt}
\begin{figure}[H]
    \centering
    \setlength{\abovecaptionskip}{1pt}
       \includegraphics[scale=0.8]{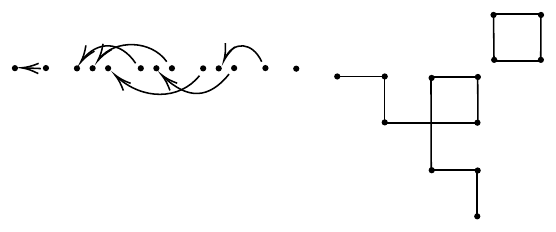} 
    \caption{}
    \label{figure: 151-6-3}
\end{figure}
\endgroup

By Maslov grading constraints and the condition $\partial^2 = 0$, the generator $y_{(A, m)}$ must differentiate to $y_{(1, m-1, 1)}$. The hv-basis we obtain is shown in Figure \ref{figure: 151-6-3}.

\textbf{Case(ii)}: If $y_{(0,m-2,2)}$ differentiates to $y_{(-1,m-3,1)}+y_{(-1,m-3,2)}$, we replace $y_{(-1,m-3,1)}$ by $y_{(-1,m-3,1)}+y_{(-1,m-3,2)}$. This operation does not change vertical differentials and we return to case (i).

\textbf{Case (iii)}: If $y_{(0, m-2, 2)}$ differentiates to $y_{(-1,m-3,2)}$, then $y_{(1,m-2)}$ differentiates to $y_{(-1,m-3,1)}$ or $y_{(-1,m-3,1)}+y_{(-1,m-3,2)}$. In the latter case, we replace $y_{(-1,m-3,1)}$ by $y_{(-1,m-3,1)}$ $+y_{(-1,m-3,2)}$. Hence we may assume that $y_{(1,m-2)}$ differentiates to $y_{(-1,m-3,1)}$.

\begingroup
\setlength{\intextsep}{0pt}
\begin{figure}[H]
    \centering
    \setlength{\abovecaptionskip}{1pt}
       \includegraphics[scale=0.8]{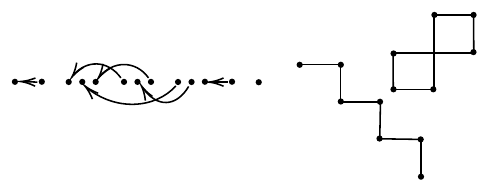} 
    \caption{}
    \label{figure: 151-6-4}
\end{figure}
\endgroup

By Lemma \ref{lemma: 1xn lemma}, the generator $y_{(1,m-1,1)}$ differentiates to $y_{(0,m-2,1)}$, while $y_{(1,m-1,2)}$ does not differentiate leftward. Finally, the generator $y_{(A,m)}$ must differentiate to $y_{(1,m-1,2)}$. We then obtain an hv-basis, which is a staircase plus a twisted box.

\textit{If $a = m-1$}, up to a reordering of $x_{(-1,m-3,1)}$ and $x_{(-1,m-3,2)}$, and of $x_{(0,m-1,1)}$ and $x_{(0,m-1,2)}$, we can obtain a vertically simplified basis as shown in Figure \ref{figure: 151-6-5}.

\begingroup
\setlength{\intextsep}{0pt}
\begin{figure}[H]
    \centering
    \setlength{\abovecaptionskip}{1pt}
       \includegraphics[scale=0.8]{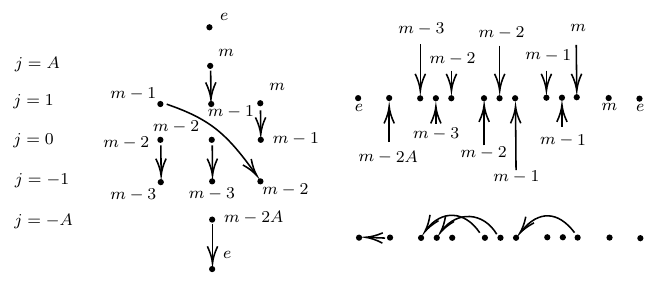} 
    \caption{}
    \label{figure: 151-6-5}
\end{figure}
\endgroup

When determining the horizontal differentials, we perform a simultaneous basis change as described in Equation \eqref{equation: simultaneous basis change} on $x_{(0,m-2,1)}, x_{(0,m-2,2)}$ and $x_{(-1,m-1,1)}, x_{(-1,m-1,2)}$, preserving the differentials from $x_{(0,m-2,1)}$, $x_{(0,m-2,2)}$ to $x_{(-1,m-3,1)}$, $x_{(-1,m-3,2)}$, while making $y_{(0,m-2,1)}(x_{(0,m-2,1)})$ and $y_{(0,m-2,2)}(x_{(0,m-2,2)})$ differentiate to $y_{(-1,m-3,1)}$ and $y_{(-1,m-3,2)}$, respectively.

We now determine the differentials from $y_{(1,m-1,1)}$ and $y_{(1,m-1,2)}$ to $y_{(-1,m-2)}$, and from $y_{(A,m)}$ to $y_{(1,m-1,1)}$ and $y_{(1,m-1,2)}$. If there were a differential from $y_{(1,m-1,2)}$ to $y_{(-1,m-2)}$, then by the $\partial^2x_{(0,m-2,2)}=0$ constraint there would have to be a diagonal differential from $y_{(-1,m-3,2)}$ to $Uy_{(-1,m-2)}$, which leads to a contradiction when considering $\partial^2x_{(-A,m)}=0$, since $x_{(A,m)}$ is not the starting point of another vertical differential and $x_{(1,m)}$ is not the ending point of a vertical differential.

\begingroup
\setlength{\intextsep}{0pt}
\begin{figure}[H]
    \centering
        \setlength{\abovecaptionskip}{1pt}
       \includegraphics[scale=0.8]{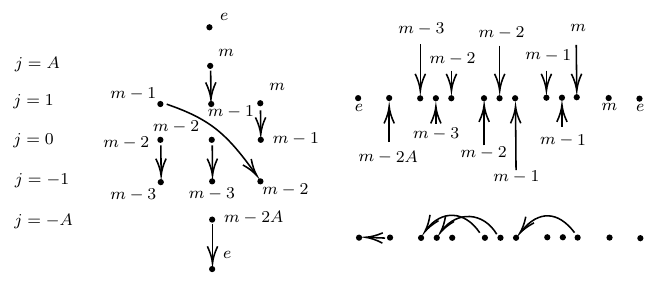} 
    \caption{}
    \label{figure: 151-6-6}
\end{figure}
\endgroup

Consequently, the preimage of $y_{(-1,m-2)}$ is $y_{(1,m-1,1)}$, and $y_{(A,m)}$ differentiates to $y_{(1,m-1,2)}$. We then obtain an hv-basis, which is a staircase plus a twisted box; see Figure \ref{figure: 151-6-6}.
\end{proof}
\vspace{-0.5em}

\begin{proposition}\label{prop: 151 table 7th row, a negative twisted staircase plus a box}
Let $K$ be a height 4 knot of the $(1,5,1)$ type. Suppose that for $p = 2, 1, 0, -1$, we have $\operatorname{rank}H_*({X_p}) = 0, 3, 2, 1$ and $\operatorname{rank}H_*({Y_p}) = 1, 2, 3, 0$, respectively. Assume that 
$\widehat{HFK}(K,1) = \mathbb{F}_{(a)} \oplus \mathbb{F}_{(m-2)} \oplus \mathbb{F}_{(m-1)}$ and $\widehat{HFK}(K,0) = \mathbb{F}_{(a-1)} \oplus \mathbb{F}_{(a-1)} \oplus \mathbb{F}_{(m-3)}$, then $CFK^\infty(K)$ admits an hv-basis, which is a negative twisted staircase plus one box.
\end{proposition}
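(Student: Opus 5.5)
I will follow the same template as Propositions \ref{prop: 151 table second row, a positively twisted staircase plus one box or a staircase plus a negatively twisted box} and \ref{prop: 151 table 6th row}. The starting data are the $(0,3,2,1)$ ranks, $\operatorname{rank}H_*(Y_p)=1,2,3,0$, and the gradings $\widehat{HFK}(K,1)=\mathbb{F}_{(a)}\oplus\mathbb{F}_{(m-2)}\oplus\mathbb{F}_{(m-1)}$ and $\widehat{HFK}(K,0)=\mathbb{F}^2_{(a-1)}\oplus\mathbb{F}_{(m-3)}$. By Spin$^c$ conjugation, $\widehat{HFK}(K,-1)=\mathbb{F}_{(a-2)}\oplus\mathbb{F}_{(m-4)}\oplus\mathbb{F}_{(m-3)}$. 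Outside heights $\pm1,0$ the vertical differentials are given by Lemma \ref{lem: differentials below height -A+1} and its mirror above height $1$.

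\textbf{Vertical differentials.} First I run grid climbing on $C\{i=0\}$ from height $-1$ up to height $1$, using the ranks of $H_*(Y_p)$ and the requirement that arrows drop Maslov grading by one. In the generic case $a\notin\{m-2,m-1,m\}$ I expect the following. The odd-graded generator at height $-A$ (or the appropriate staircase generator) pairs with $x_{(-1,m-4)}$. The two generators at height $0$ of grading $a-1$ receive arrows from $x_{(1,a)}$ and hit $x_{(-1,a-2)}$ respectively, giving the box column. The chain $x_{(1,m-1)}\to x_{(0,m-2)}$, or its analogue, together with $x_{(1,m-2)}\to x_{(-1,m-3)}$, produces the twisted part. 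Each step is forced because $H_*(Y_p)$ jumps by exactly one between heights, and the Maslov gradings leave a unique admissible target.

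\textbf{Horizontal differentials.} Next I determine $C\{j=0\}$ from the symmetric data $H_*(Z_p)$, which is obtained via \eqref{equation:(i,j) symmetry}. For the arrows between $C\{(0,0)\}$ and $C\{(-1,0)\}$, I apply Lemma \ref{lemma: cancellation lemma} to $A_0$ as in Figure \ref{figure: H_*(A_0) and cancellation lemma}. After cancelling the forced vertical pairs, the condition $H_*(A_0)=\mathbb{F}^5$ forbids any further differentials among the remaining five generators. This pins the leftward arrows down to a single pairing inside the box. Lemma \ref{lemma: 1xn lemma} and the $\partial^2=0$ condition on $1\times1$ squares then fix the arrows from $C\{(1,0)\}$ to $C\{(0,0)\}$. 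Any extra components are removed by replacements $y\mapsto y+y'$ with $y'$ further left, and Lemma \ref{lemma: hori filtered basis change does not change vertical simplicity} guarantees that vertical simplicity is preserved. I expect the result to be a negative twisted staircase (mirror of Figure \ref{figure: 151-2-4}) together with a box centred at height $0$.

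\textbf{Coincident gradings.} The main obstacle is the set of non-generic values $a\in\{m-2,m-1,m\}$. In these cases a generator of the box shares a Maslov grading with a staircase generator at the same height, so the vertical arrow has two possible targets or sources. For each such value I split into subcases according to where the ambiguous arrow lands, as in Cases (i)/(ii) of Proposition \ref{prop: 151 table second row, a positively twisted staircase plus one box or a staircase plus a negatively twisted box}. Where two generators have equal grading at heights $0$ and $-1$, I use the simultaneous basis change \eqref{equation: simultaneous basis change}, so that the leftward map becomes diagonal while the vertical pairing is kept one-to-one. In subcases that would seemingly yield a twisted box, I will try to exclude them using Lemma \ref{lemma: 1xn lemma}, or using $\partial^2=0$ on $x_{(\pm A,m)}$ as in the $a=m-1$ case of Proposition \ref{prop: 151 table 6th row}. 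I expect the grading pattern here, with two copies of $a-1$ at height $0$, to always leave a genuine box rather than a twisted box.
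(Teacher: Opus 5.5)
Your treatment of generic $a$ and of $a=m-2$ matches the paper: grid climbing, the simultaneous basis change \eqref{equation: simultaneous basis change}, Lemma~\ref{lemma: 1xn lemma}, and lower-filtration replacements justified by Lemma~\ref{lemma: hori filtered basis change does not change vertical simplicity}.

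\textbf{The gap.} The step that fails is your plan to exclude every subcase that ends in a twisted box. At $a=m-1$ the input is $\widehat{HFK}(K,1)=\mathbb{F}_{(m-2)}\oplus\mathbb{F}^2_{(m-1)}$ and $\widehat{HFK}(K,0)=\mathbb{F}_{(m-3)}\oplus\mathbb{F}^2_{(m-2)}$, with ranks $(0,3,2,1)$. This is exactly the input of Proposition~\ref{prop: 151 table 6th row} at $a=m-2$. The paper's proof just says so and defers to that case, whose Case (iii) ends in a staircase plus a twisted box.

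No use of Lemma~\ref{lemma: 1xn lemma}, of $\partial^2=0$, or of the $H_*(A_p)$, $H_*(X_p)$, $H_*(Y_p)$ constraints can remove that outcome, because it is realised by a consistent complex. In $\mathbb{F}[U,V]$ notation, with the labelling from the definition of a twisted box, the twisted box closes up as
\begin{gather*}
\partial x_3=Ux_1+Vx_5,\quad \partial x_1=V^2x_6+UVx_2,\quad \partial x_5=U^2x_2+UVx_6,\\
\partial x_2=Vx_4,\quad \partial x_6=Ux_4,\quad \partial x_4=0.
\end{gather*}
This complex satisfies the following:
\begin{enumerate}
\item $\partial^2=0$;
\item it contributes $\mathbb{F}^4$ to $H_*(A_0)$ and nothing to $H_*(A_p)$ for $p\neq 0$;
\item it contributes ranks $(0,2,2,0)$ to $H_*(X_p)$ for $p=2,1,0,-1$.
\end{enumerate}
Take its direct sum with the staircase whose central generators have gradings $m-1,m-2,m-3$ at heights $1,0,-1$; that staircase supplies the remaining $(0,1,0,1)$. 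Shift the box so that $x_1,x_2$ have gradings $m-2,m-1$. The result satisfies every hypothesis of the proposition with $a=m-1$.

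This is not an artefact of the chosen basis either. By invariance of the immersed curve (Section~\ref{subsection: Immersed curves}), the closed component here meets the vertical line six times, whereas a box gives four. So this complex admits no hv-basis of the stated shape.

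At $a=m-1$ you must therefore import the analysis of Proposition~\ref{prop: 151 table 6th row} and conclude only ``negative twisted staircase plus one box, or staircase plus a twisted box''. That is still within Theorem~\ref{thm: classification of height 4}. The single-shape conclusion is not reachable there, and your expectation that the two copies of $\mathbb{F}_{(a-1)}$ always leave a genuine box fails exactly at this value.

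\textbf{Smaller corrections to your generic picture.} Grid climbing with $\operatorname{rank}H_*(Y_p)=1,2,3,0$ forces the vertical arrows in this order:
\begin{enumerate}
\item $x_{(0,a-1,1)}\to x_{(-1,a-2)}$ and $x_{(0,m-3)}\to x_{(-1,m-4)}$;
\item $x_{(1,a)}\to x_{(0,a-1,2)}$ and the length-two arrow $x_{(1,m-2)}\to x_{(-1,m-3)}$;
\item $x_{(A,m)}\to x_{(1,m-1)}$.
\end{enumerate}
Consequently:
\begin{enumerate}
\item $x_{(-1,m-4)}$ is not paired with the height-$(-A)$ generator, since their gradings differ by $2A-4$.
\item There is no generator $x_{(0,m-2)}$.
\item $H_*(Z_1)$ forces two leftward arrows from $C\{(0,0)\}$ to $C\{(-1,0)\}$, one in the box and one at the twist $x_{(0,m-3)}$, not a single one.
\end{enumerate}
Also, $a=m$ is not a coincidence value and needs no separate treatment; only $a\in\{m-2,m-1\}$ do.
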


\begin{proof}
We separate our discussion based on the relationship between $a$ and $m$.

\textit{If $a\notin\{m-2, m-1\}$}, up to a reordering of $x_{(0,a-1,1)}$ and $x_{(0,a-1,2)}$, the vertical and horizontal differentials can be determined by $H_*(Y_p)$, $H_*(Z_p)$ and Maslov grading. We obtain an hv-basis whose configuration is a negatively twisted staircase plus one box; see Figure \ref{figure: 151-6-7}.

\begingroup
\setlength{\intextsep}{0pt}
\begin{figure}[H]
    \setlength{\abovecaptionskip}{1pt}
    \begin{minipage}[t]{0.66\textwidth}
    \centering
       \includegraphics[scale=0.8]{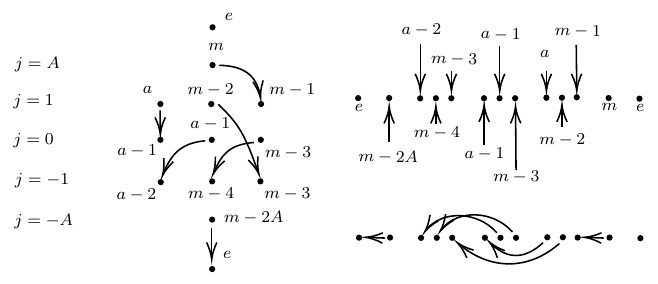} 
    \end{minipage}
	\begin{minipage}[t]{0.3\textwidth}
    \centering
       \includegraphics[scale=0.8]{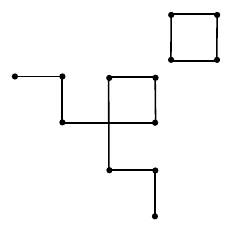} 
    \end{minipage}
    \caption{}
    \label{figure: 151-6-7}
\end{figure}
\endgroup

\textit{If $a=m-2$}, we can obtain an identical vertically simplified basis. By performing a simultaneous basis change, we assume that $y_{(0,m-3,2)}$ and $y_{(0,m-3,3)}$ differentiate to $y_{(-1,m-4,1)}$ and $y_{(-1,m-4,2)}$, respectively. By Lemma \ref{lemma: 1xn lemma}, the generator $y_{(1,m-2,1)}$ must differentiate to either $y_{(0,m-3,1)}$ or $y_{(0,m-3,1)}+y_{(-1,m-3)}$, and $y_{(1,m-2,2)}$ must differentiate to $y_{(-1,m-3)}$. If $y_{(1,m-2,1)}$ differentiates to $y_{(0,m-3,1)}+y_{(-1,m-3)}$, we replace $y_{(0,m-3,1)}$ by $y_{(0,m-3,1)}+y_{(-1,m-3)}$. The hv-basis we obtain is still a negatively twisted staircase plus a box; see Figure \ref{figure: 151-6-8}.

\begingroup
\setlength{\intextsep}{1pt}
\begin{figure}[H]
    \setlength{\abovecaptionskip}{1pt}
    \centering
       \includegraphics[scale=0.8]{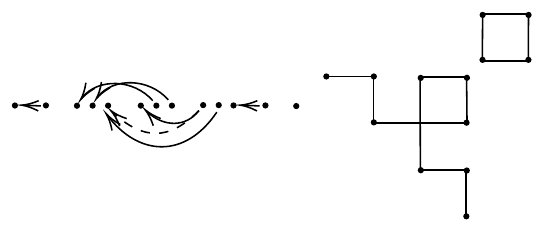} 
    \caption{}
    \label{figure: 151-6-8}
\end{figure}
\endgroup

The case $a=m-1$ is the same as the case $a=m-2$ in Proposition \ref{prop: 151 table 6th row}.
\end{proof}
\vspace{-0.5em}

\begin{proposition}\label{prop: 151 table 8th row, a staircase plus two boxes}
Let $K$ be a height 4 knot of the $(1,5,1)$ type. Suppose that for $p = 2,1,0,-1$, we have $\operatorname{rank}H_*({X_p}) = 0,3,2,1$ and $\operatorname{rank}H_*({Y_p}) = 1,2,3,0$, respectively. Assume that
$\widehat{HFK}(K,1) = \mathbb{F}_{(a)} \oplus \mathbb{F}_{(b)} \oplus \mathbb{F}_{(m-1)}$ and $\widehat{HFK}(K,0) = \mathbb{F}^2_{(a-1)} \oplus \mathbb{F}^2_{(b-1)} \oplus \mathbb{F}_{(m-2)}$. Then $CFK^\infty(K)$ admits an hv-basis, which is a staircase plus two boxes, both located in the central position.
\end{proposition}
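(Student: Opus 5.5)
We follow the pattern of Proposition \ref{prop: 151 table 4th row, a staircase plus two positive boxes}, which is the mirror situation in the $(0,2,3,1)$ case. We begin from the Maslov grading data in Figure \ref{figure: (1,5,1) (0,3,2,1) case (iii)}. There $\widehat{HFK}(K,1)=\mathbb{F}_{(a)}\oplus\mathbb{F}_{(b)}\oplus\mathbb{F}_{(m-1)}$ and $\widehat{HFK}(K,0)=\mathbb{F}^2_{(a-1)}\oplus\mathbb{F}^2_{(b-1)}\oplus\mathbb{F}_{(m-2)}$, and by conjugation $\widehat{HFK}(K,-1)=\mathbb{F}_{(a-2)}\oplus\mathbb{F}_{(b-2)}\oplus\mathbb{F}_{(m-3)}$. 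Below height $-A$ and above height $A$, Lemma \ref{lem: differentials below height -A+1} supplies the staircase pairs.

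\emph{Vertical differentials.} We climb heights $-1,0,1$ using the constraints $\operatorname{rank}H_*(Y_p)=1,2,3,0$ for $p=2,1,0,-1$, together with Maslov gradings.
\begin{itemize}
\item Passing from $Y_{-1}$ to $Y_0$ raises the rank from $0$ to $3$, so no generator at height $-1$ receives or emits a vertical arrow into the lower staircase, apart from $x_{(-1,m-3)}$, which pairs with the staircase generator at height $-A$.
\item Passing from $Y_0$ to $Y_1$ lowers the rank from $3$ to $2$. So exactly two generators at height $0$ kill the classes $x_{(-1,a-2)}$ and $x_{(-1,b-2)}$. Maslov gradings force these to be one copy of $\mathbb{F}_{(a-1)}$ and one copy of $\mathbb{F}_{(b-1)}$.
\item Passing from $Y_1$ to $Y_2$ forces $x_{(1,a)}$ and $x_{(1,b)}$ to hit the remaining copies at height $0$, and leaves $x_{(1,m-1)}$ to pair with the staircase generator at height $A$.
\end{itemize}
When $a=b$, or when $a$ or $b$ lies in $\{m-1,m\}$, the targets are not unique. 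In that case we apply filtered basis changes within a fixed Maslov grading, reordering and adding generators at the same height, to obtain the same vertically simplified picture as in the generic case.

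\emph{Horizontal differentials.} We pass to $C\{j=0\}$ via $y=U^{i}x$ and use the symmetry \eqref{equation:(i,j) symmetry}, which gives the rank constraints on $H_*(Z_p)$. The key step is to pin down the arrows from $C\{(0,0)\}$ to $C\{(-1,0)\}$. For this we apply Lemma \ref{lemma: cancellation lemma} to $A_0$, cancelling the vertical pairs exactly as in Figure \ref{figure: H_*(A_0) and cancellation lemma}. The condition $H_*(A_0)=\mathbb{F}^5$ then forbids any leftover differentials among the surviving generators. This forces the two generators at height $0$ that are starting points of vertical arrows to have no horizontal arrows, and forces the two generators that are endpoints of vertical arrows to have horizontal arrows onto $y_{(-1,a-2)}$ and $y_{(-1,b-2)}$. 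Next, Lemma \ref{lemma: 1xn lemma} together with $\partial^2=0$ on $1\times1$ squares forces $y_{(1,a)}$ and $y_{(1,b)}$ to hit the two height-$0$ generators that start vertical arrows. This closes two $1\times 1$ boxes, and $y_{(1,m-1)}$ joins the staircase. Any extra horizontal components (for example $y_{(1,a)}\mapsto$ a height-$(-1)$ term, or $y_{(A,m)}$ hitting a box generator) are removed by the replacements $y\mapsto y+y'$ with $y'$ of equal Maslov grading at lower level, of the kind allowed in Lemma \ref{lemma: hori filtered basis change does not change vertical simplicity}, so vertical simplicity is preserved.

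\emph{Main obstacle.} The hard part is the degenerate coincidences, especially $a=b$ and $a=b=m-1$. There, generators of equal grading at heights $0$ and $-1$ can mix, and the arrows between $C\{(0,0)\}$ and $C\{(-1,0)\}$ form a genuine $2\times 2$ matrix. In this case we first use the simultaneous basis change \eqref{equation: simultaneous basis change}, applied to the $x$'s at height $0$ and the $y$'s at height $-1$, to diagonalize that matrix while keeping the vertical arrows one-to-one, as in the $a=b=m$ case of Proposition \ref{prop: 151 table 4th row, a staircase plus two positive boxes}. If $a$ or $b$ equals $m-1$, an arrow from $y_{(1,m-1)}$ into a box might not be removable by Lemma \ref{lemma: hori filtered basis change does not change vertical simplicity}. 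To rule out such arrows we would argue via $\partial^2=0$ applied to the staircase generator at height $\pm A$, mimicking the $a=m-1$ argument in Proposition \ref{prop: 151 table 6th row}. The result in every case is a staircase plus two central boxes.
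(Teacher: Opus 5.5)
Your toolkit matches the paper's: the case split on coincidences among $a,b,m-1$, grid climbing with $H_*(Y_p)$, symmetry to $H_*(Z_p)$, cancellation on $A_0$, Lemma~\ref{lemma: 1xn lemma}, and simultaneous basis changes. But the configuration you extract from it is wrong at the central step, so as written the argument cannot produce two boxes.

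\emph{The vertical count.} For $Y_0\subset Y_1$ the homology ranks are $3$ and $2$, with $\operatorname{rank}C\{(0,0)\}=5$. Exactness then forces the connecting map $C\{(0,0)\}\to H_*(Y_0)$ to be surjective and $H_*(Y_0)\to H_*(Y_1)$ to vanish. With only two arrows, as you propose, $H_*(Y_1)$ would have rank $1+3=4$. So all three generators at height $-1$ are hit from height $0$, and the third arrow is $x_{(0,m-2)}\to x_{(-1,m-3)}$; the generator $x_{(0,m-2)}$ does not appear in your picture at all. The vertical pairing you propose between $x_{(-1,m-3)}$ and the generator at height $-A$ cannot exist. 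Since $H_*(Y_{-1})=0$, after a basis change nothing at height $-1$ emits a vertical arrow. Moreover $x_{(-A,m-2A)}$ has odd grading ($m$ is odd because $H_*(X_2)=0$), so by Lemma~\ref{lem: differentials below height -A+1} it is itself a vertical source. The staircase actually crosses the middle via the vertical arrows $x_{(A,m)}\to x_{(1,m-1)}$ and $x_{(0,m-2)}\to x_{(-1,m-3)}$, and the horizontal arrows $x_{(0,m-2)}\to Ux_{(1,m-1)}$ and $x_{(-A,m-2A)}\to U^{A-1}x_{(-1,m-3)}$. In particular $y_{(1,m-1)}$ is hit by $y_{(A,m)}$ and emits no horizontal arrow.

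\emph{The horizontal roles are reversed,} and the condition you invoke shows it. Since $H_*(Y_{-1})=H_*(UX_2)=0$, we have $H_*(Y_0)\cong C\{(0,-1)\}$ and $H_*(UX_1)\cong C\{(-1,0)\}$. Let $D=(D_v,D_h)\colon C\{(0,0)\}\to H_*(Y_0)\oplus H_*(UX_1)$ be the connecting map for the subcomplex $Y_0\oplus UX_1\subset A_0$ with quotient $C\{(0,0)\}$. The exact sequence gives $5=\operatorname{rank}H_*(A_0)=11-2\operatorname{rank}D$, so $\operatorname{rank}D=3$. Both $D_v$ and $D_h$ have rank $3$ (the latter from the sequence for $X_0$), hence $\ker D_h=\ker D_v$, which is spanned by the two vertical \emph{targets} at height $0$. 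Therefore the generator $x_{(0,a-1,2)}$ that hits $x_{(-1,a-2)}$ is the one mapping horizontally to $y_{(-1,a-2)}=Ux_{(1,a)}$. The target $x_{(0,a-1,1)}$ of $x_{(1,a)}$ is the one hit by $y_{(1,a)}=U^{-1}x_{(-1,a-2)}$. This is the opposite of what you assert.

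Your assignment is exactly what Lemma~\ref{lemma: 1xn lemma} forbids. In the generic case the only length-two path from $x_{(1,a)}$ to $Ux_{(1,a)}$ runs through $x_{(0,a-1,1)}$, so an arrow $x_{(0,a-1,1)}\to Ux_{(1,a)}$ forces $\partial^2x_{(1,a)}\neq0$. Likewise $y_{(1,a)}\to y_{(0,a-1,2)}$ forces $\partial^2x_{(0,a-1,2)}\neq0$.

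With both points corrected, the rest of your outline runs as in the paper: replacements as in Lemma~\ref{lemma: hori filtered basis change does not change vertical simplicity}, and reordering plus the simultaneous basis change \eqref{equation: simultaneous basis change} when $a=b$ or $a=m-1$. For $a=m-1$ the real issue is not an arrow out of $y_{(1,m-1)}$. It is that gradings then coincide between the staircase and a box, so you must first decide which generator of each grading plays which role.
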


\begin{proof}

We discuss separately according to the relationships among $a,b$ and $m$. The proof is similar to that of Proposition \ref{prop: 151 table 4th row, a staircase plus two positive boxes}.

\textit{If $a\neq m-1, b\neq m-1, a\neq b$}, we can determine a vertically simplified basis under the constraints of $H_*(Y_p)$ and Maslov grading. The horizontal differentials can be determined by constraints of $H_*(Z_m)$ and $H_*(Y_m)$ as well. The hv-basis we obtain is a staircase plus two boxes, both located in the central position.

\begingroup
\setlength{\intextsep}{0pt}
\begin{figure}[H]
    \setlength{\abovecaptionskip}{1pt}
    \begin{minipage}[t]{0.68\textwidth}
    \centering
       \includegraphics[scale=0.8]{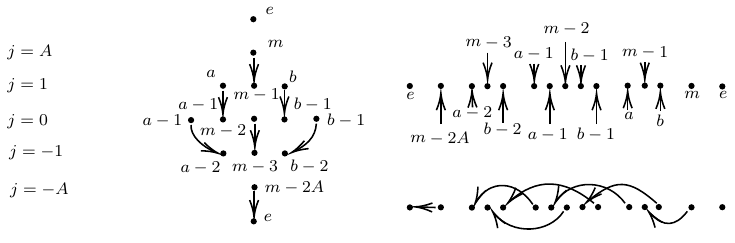} 
    \end{minipage}
	\begin{minipage}[t]{0.28\textwidth}
    \centering
       \includegraphics[scale=0.8]{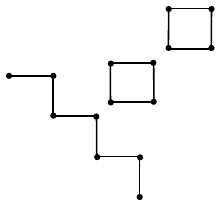} 
    \end{minipage}
    \caption{}
    \label{figure: 151-8-1}
\end{figure}
\endgroup

\textit{If $a=m-1, a\neq b$}, up to a reordering of $x_{(0,m-2,1)},x_{(0,m-2,2)}$ and $x_{(0,m-2,3)}$, we can assume that the vertically simplified basis we obtain is identical to the previous case. When determining the horizontal differentials, we apply a simultaneous basis change and Lemma \ref{lemma: 1xn lemma}. The hv-basis we obtain is still a staircase plus two boxes.

\textit{If $a=b=m-1$}, we fix a proper order on the five generators at height $0$ such that the resulting vertically simplified basis is the same as before. The horizontal differentials can then be determined by applying simultaneous basis change and Lemma \ref{lemma: 1xn lemma}. We obtain an identical hv-basis.

\textit{If $a=b \neq m-1$}, in this case the generators with Maslov gradings $a,a-1,a-2$ and those with gradings $m-1,m-2,m-3$ can be discuss separately in the sense that no vertical differentials overlap them. We obtain an identical hv-basis in the end. 
\end{proof}
\vspace{-0.5em}

Recall that the parameter $\alpha$ equals $m-2A+2$.

\vspace{-0.5em}
\begin{proposition}\label{prop: 151 table 9th row, a positive doubly-twisted staircase}
Let $K$ be a height 4 knot of the $(1,5,1)$ type. Suppose that for $p = 2,1,0,-1$, we have $\operatorname{rank}H_*({X_p}) = 1,2,3,0$ and $\operatorname{rank}H_*({Y_p}) = 0,3,2,1$, respectively. Assume that
$\widehat{HFK}(K,1) = \mathbb{F}_{(\alpha+1)} \oplus \mathbb{F}_{(\alpha+2)} \oplus \mathbb{F}_{(\alpha+3)}$ and $\widehat{HFK}(K,0) = \mathbb{F}_{(\alpha+2)}$, then $CFK^\infty(K)$ admits an hv-basis, which is a positive doubly-twisted staircase.
\end{proposition}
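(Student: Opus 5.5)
The plan is to imitate the proof of Proposition \ref{prop: 151 table first row, positively two-twisted staircase}. The key point is that the Maslov gradings in this case are rigid enough that grid climbing pins down every vertical and horizontal differential without any basis change.

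\textbf{Step 1: the grading data.} First I would record the grading data at all heights. In the $(1,2,3,0)$ case, $m$ is even and $H_*(X_A)=\mathbb{F}_{(m)}$. Tracing downward with the $U$-action as in Lemma \ref{lemma: 151 X_2 and Y_2} gives $H_*(UX_2)=\mathbb{F}_{(\alpha)}$. Subcase (i) of the $(1,2,3,0)$ analysis then gives
\[
\widehat{HFK}(K,1)=\mathbb{F}_{(\alpha+1)}\oplus\mathbb{F}_{(\alpha+2)}\oplus\mathbb{F}_{(\alpha+3)}, \qquad \widehat{HFK}(K,0)=\mathbb{F}_{(\alpha+2)}.
\]
By Spin$^c$ conjugation, $\widehat{HFK}(K,-1)=\mathbb{F}_{(\alpha-1)}\oplus\mathbb{F}_{(\alpha)}\oplus\mathbb{F}_{(\alpha+1)}$. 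From the proof of that subcase I would also extract the homologies of $X_p$ and $Y_p$ for $p=2,1,0,-1$, together with their gradings.

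\textbf{Step 2: vertical differentials.} Lemma \ref{lem: differentials below height -A+1} handles heights $\le -A$, and by symmetry the heights above $A$ pair up in the same way. So the real work is to climb from height $-A$ through $-1,0,1$ up to $A$, using the exact sequence \eqref{equation: exact sequence of Y_p} at each step. At $p=-1$, $Y_{-1}$ has rank one, and adding the three generators at height $-1$ must produce $H_*(Y_0)$ of rank two. Since each height carries at most one generator in each Maslov grading, the parity and grading bookkeeping should force a unique vertical arrow at each step. In particular, $x_{(0,\alpha+2)}$ should go to $x_{(-1,\alpha+1)}$, and $x_{(1,\alpha+3)}$ should go to $x_{(0,\alpha+2)}$, with the remaining generators paired with the staircase generators at heights $\pm A$. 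Because no two generators at the same height share a Maslov grading, no basis changes are needed, and what comes out is the vertical half of the doubly-twisted staircase.

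\textbf{Step 3: horizontal differentials.} Next I would use \eqref{equation:(i,j) symmetry} to turn the data on $Y_p$ into identical data on $Z_p$, and determine $C\{j=0\}$ the same way. Maslov gradings and $H_*(Z_m)$ should again force each horizontal arrow uniquely. Where a choice appears to remain, I would rule it out with the $1\times n$ Lemma \ref{lemma: 1xn lemma} and $\partial^2=0$. Since no filtered basis change is performed, Lemma \ref{lemma: hori filtered basis change does not change vertical simplicity} is not even needed: the basis is automatically an hv-basis. Reading off the picture should then give the positive doubly-twisted staircase, the mirror image of the configuration in Proposition \ref{prop: 151 table first row, positively two-twisted staircase} with the twists now arising from $X_2\neq 0$.

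\textbf{Main obstacle.} I expect the hardest step to be checking that the horizontal differentials in $A_0$ are forced with no ambiguity. Concretely, I must exclude an arrow from the height-$0$ generator to the wrong one of the three generators at height $-1$ (or the analogous ambiguity at height $1$), since a wrong arrow would break the twist pattern. I would first try to rule this out by Maslov grading alone. If that fails, I would apply the cancellation Lemma \ref{lemma: cancellation lemma} to $A_0$ and invoke $H_*(A_0)\cong\mathbb{F}^5$, as in Figure \ref{figure: H_*(A_0) and cancellation lemma}.
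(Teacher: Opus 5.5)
Your strategy matches the paper's proof. Because the Maslov gradings at each of heights $1,0,-1$ are pairwise distinct, grid climbing with \eqref{equation: exact sequence of Y_p} forces every vertical arrow, the horizontal arrows follow by \eqref{equation:(i,j) symmetry}, and no basis change is needed. However, the concrete arrows you predict in Step 2 are wrong, and they are mutually inconsistent. You posit both $x_{(1,\alpha+3)}\to x_{(0,\alpha+2)}$ and $x_{(0,\alpha+2)}\to x_{(-1,\alpha+1)}$. Then $x_{(0,\alpha+2)}$ would be both the head and the tail of a vertical arrow, which is impossible in a vertically simplified basis, since $\partial^2 x_{(1,\alpha+3)}$ would be nonzero modulo $U$. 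The rank data actually force the following.
\begin{itemize}
\item By \eqref{equation:(i,j) symmetry}, $H_*(Y_{-1})=\mathbb{F}_{(\alpha-2)}$, carried by $x_{(-A,\alpha-2)}$. Passing from $Y_{-1}$ (rank $1$) to $Y_0$ (rank $2$), the connecting map has rank one, so $x_{(-1,\alpha-1)}\to x_{(-A,\alpha-2)}$.
\item Passing from $Y_0$ to $Y_1$, the rank rises from $2$ to $3$, so the connecting map out of $C\{(0,0)\}$ is zero. \emph{No} vertical arrow starts at $x_{(0,\alpha+2)}$.
\item Passing from $Y_1$ (rank $3$, gradings $\alpha,\alpha+1,\alpha+2$) to $Y_2$ (rank $0$), the connecting map is an isomorphism. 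This gives $x_{(1,\alpha+1)}\to x_{(-1,\alpha)}$ and $x_{(1,\alpha+2)}\to x_{(-1,\alpha+1)}$, both of length $2$, and $x_{(1,\alpha+3)}\to x_{(0,\alpha+2)}$ of length $1$.
\end{itemize}
So only one middle generator reaches the staircase vertically. Nothing in the middle pairs vertically with $x_{(A,m)}$, which emits no arrow because $H_*(Y_A)=H_*(Y_2)=0$. This contradicts your claim that the remaining generators pair with the staircase at $\pm A$.

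Transposing, the horizontal arrows are:
\begin{itemize}
\item $y_{(1,\alpha+3)}\to y_{(0,\alpha+2)}$, of length one;
\item $y_{(1,\alpha+2)}\to y_{(-1,\alpha+1)}$;
\item $y_{(1,\alpha+1)}\to y_{(-1,\alpha)}$;
\item $y_{(-1,\alpha-1)}\to y_{(-A,\alpha-2)}$.
\end{itemize}
The generator $y_{(0,\alpha+2)}$ emits nothing, which is forced by $\operatorname{rank}H_*(Z_1)=3$. So the ``main obstacle'' you anticipate does not arise. There is no leftward arrow out of the $i=0$ generator of $A_0$ at all, and with distinct gradings there is never a ``wrong'' target to exclude. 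Neither the cancellation lemma nor $H_*(A_0)\cong\mathbb{F}^5$ is needed, and the paper uses neither here.

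With the corrected arrows, the basis assembles into the single zigzag $x_{(A,m)}\xleftarrow{h}x_{(1,\alpha+1)}\xrightarrow{v}x_{(-1,\alpha)}\xrightarrow{h}x_{(1,\alpha+3)}\xrightarrow{v}x_{(0,\alpha+2)}\xleftarrow{h}x_{(-1,\alpha+1)}\xleftarrow{v}x_{(1,\alpha+2)}\xleftarrow{h}x_{(-1,\alpha-1)}\xrightarrow{v}x_{(-A,\alpha-2)}$. Both length-one arrows end at $x_{(0,\alpha+2)}$, which is exactly the positive doubly-twisted staircase. With your Step 2 arrows you would not even have a vertically simplified basis.
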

\vspace{-0.5em}

\begin{proof}
This case is similar to Proposition \ref{prop: 151 table 5th row, a negative twisted staircase}. The generators at each Alexander filtration level have distinct Maslov grading. Therefore, the constraints of Maslov grading and the homology of $Y_m$ determine the vertical differentials and yield a vertically simplified basis without refering to a discussion of possible differentials, as illustrated on the left in Figure \ref{figure: 151-9-1}. The horizontal differentials are determined symmetrically. The hv-basis we obtain is a positive doubly-twisted staircase; see the right side of Figure \ref{figure: 151-9-1}.

\begingroup
\setlength{\intextsep}{0pt}
\begin{figure}[H]
    \setlength{\abovecaptionskip}{1pt}
    \begin{minipage}[c]{0.66\textwidth}
    \centering
       \includegraphics[scale=0.8]{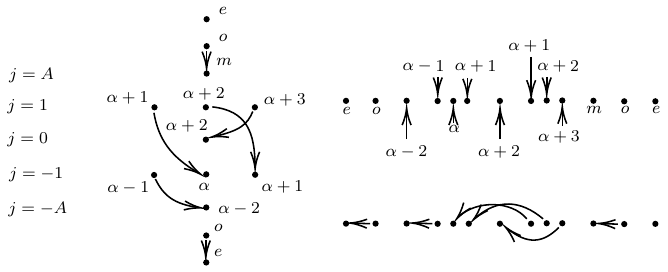} 
    \end{minipage}
	\begin{minipage}[c]{0.3\textwidth}
    \centering
       \includegraphics[scale=0.8]{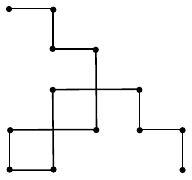} 
    \end{minipage}
    \caption{}
    \label{figure: 151-9-1}
\end{figure}
\endgroup

\end{proof}

\begin{proposition}\label{prop: 151 table 10th row}
Let $K$ be a height 4 knot of the $(1,5,1)$ type. Suppose that for $p = 2, 1, 0, -1$, we have $\operatorname{rank}H_*({X_p}) = 1,2,3,0$ and $\operatorname{rank}H_*({Y_p}) = 0,3,2,1$, respectively. Assume that
$\widehat{HFK}(K,1) = \mathbb{F}_{(a-1)} \oplus \mathbb{F}_{(a)} \oplus \mathbb{F}_{(\alpha+1)}$ and $\widehat{HFK}(K,0) = \mathbb{F}_{(a-2)} \oplus \mathbb{F}_{(a-1)} \oplus \mathbb{F}_{(\alpha)}$. Then $CFK^\infty(K)$ admits an hv-basis, which is either a staircase plus a twisted box or a positive twisted staircase plus one box.
\end{proposition}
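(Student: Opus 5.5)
I will follow the same template as Propositions \ref{prop: 151 table second row, a positively twisted staircase plus one box or a staircase plus a negatively twisted box} and \ref{prop: 151 table 6th row}, which are the mirror configurations of this row. The case analysis is on the relationship between $a$ and $\alpha$. The only coincidences of Maslov gradings across heights $1,0,-1$ occur when $a\in\{\alpha+1,\alpha+2\}$. Here we use $\widehat{HFK}(K,-1)=\mathbb{F}_{(a-3)}\oplus\mathbb{F}_{(a-2)}\oplus\mathbb{F}_{(\alpha-1)}$ by conjugation, together with the generator $x_{(A,m)}$ with $m=\alpha+2A-2$.

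\textbf{Generic case $a\notin\{\alpha+1,\alpha+2\}$.} All the Maslov gradings involved in potential vertical arrows between heights $-1,0,1,2$ are forced.
\begin{enumerate}
\item I would run grid climbing upward from height $-A$, using Lemma \ref{lem: differentials below height -A+1} below height $-A+1$.
\item I would use the ranks $\operatorname{rank}H_*(Y_p)=0,3,2,1$ for $p=2,1,0,-1$ and the sequence \eqref{equation: exact sequence of Y_p} to decide, level by level, which generators at heights $-1,0,1$ are vertical cancellation partners. I expect:
\begin{itemize}
\item the pair $\mathbb{F}_{(a-1)}\to\mathbb{F}_{(a-2)}$ between heights $1$ and $0$;
\item the pair $\mathbb{F}_{(a-2)}\to\mathbb{F}_{(a-3)}$ between heights $0$ and $-1$;
\item $\mathbb{F}_{(\alpha+1)}$ at height $1$ and $\mathbb{F}_{(\alpha)}$ at height $0$ joining the staircase, through arrows to/from $x_{(A,m)}$ and $\mathbb{F}_{(\alpha-1)}$.
\end{itemize}
\item The horizontal differentials then follow by the $(i,j)$ symmetry \eqref{equation:(i,j) symmetry}, which gives $H_*(Z_p)$ with the mirrored ranks.
\end{enumerate}
I expect the result to be a staircase plus a twisted box, mirroring Figure \ref{figure: 151-6-1}.

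\textbf{Case $a=\alpha+1$ or $a=\alpha+2$.} Two generators at some height share a grading, so the vertically simplified basis is determined only up to reordering and a $2\times 2$ basis change. I would split into the two possible vertical configurations (cases (i), (ii)), as in Proposition \ref{prop: 151 table second row, a positively twisted staircase plus one box or a staircase plus a negatively twisted box}. In each:
\begin{enumerate}
\item Apply the cancellation Lemma \ref{lemma: cancellation lemma} to $A_0$. The condition $H_*(A_0)\cong\mathbb{F}^5$ pins down the unique horizontal differential from $C\{(0,0)\}$ to $C\{(-1,0)\}$.
\item Where two targets share a grading, use a simultaneous basis change \eqref{equation: simultaneous basis change} to make that differential one-to-one while preserving the vertical arrows.
\item Use Lemma \ref{lemma: 1xn lemma} and $\partial^2=0$ on $1\times1$ squares to decide whether $y_{(1,\cdot)}$ maps to height $0$ or to height $-1$.
\item Remove extra components with the basis changes of Lemma \ref{lemma: hori filtered basis change does not change vertical simplicity}: replace $y_{(A,m)}$ by $y_{(A,m)}+y_{(1,\cdot)}$, or $y_{(0,\cdot)}$ by $y_{(0,\cdot)}+y_{(-1,\cdot)}$. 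By that lemma, vertical simplicity survives.
\end{enumerate}
The outcome is a positive twisted staircase plus one box when the horizontal arrow from height $1$ lands at height $0$ inside the twist (the $1\times1$ square closes up). It is a staircase plus a twisted box when that arrow lands at height $-1$.

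\textbf{Main obstacle.} The hardest step is the coincident-grading subcase. There the horizontal arrow out of $y_{(1,\alpha+1)}$ could a priori be a combination hitting both heights $0$ and $-1$, and $H_*(Y_p)$ and $H_*(Z_p)$ alone do not separate the two configurations. My first attempt there would be the diagonal-differential argument of Proposition \ref{prop: 151 table 6th row} (case $a=m-1$). That is, I would show that an offending component forces a diagonal arrow contradicting $\partial^2 x_{(-A,m)}=0$, and otherwise absorb the component by a filtered basis change.
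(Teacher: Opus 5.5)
Your toolbox matches the paper's: grid climbing with $H_*(Y_p)$, symmetry for $H_*(Z_p)$, cancellation in $A_0$, Lemma~\ref{lemma: 1xn lemma}, $\partial^2=0$, and filtered basis changes. Your generic conclusion (staircase plus twisted box) is also right. However, the vertical pairing you predict is wrong, and the coincident cases turn on arguments your plan does not contain.

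\emph{Generic case.} The single $\mathbb{F}_{(a-2)}$ at height $0$ cannot be both the target of an arrow from height $1$ and the source of one to height $-1$. By \eqref{equation:(i,j) symmetry} and the computed $H_*(X_p)$, one has $H_*(Y_{-1})=\mathbb{F}_{(\alpha-2)}$, $H_*(Y_0)=\mathbb{F}_{(a-3)}\oplus\mathbb{F}_{(a-2)}$, $H_*(Y_1)=\mathbb{F}_{(a-2)}\oplus\mathbb{F}_{(a-1)}\oplus\mathbb{F}_{(\alpha)}$ and $H_*(Y_2)=0$. Then \eqref{equation: exact sequence of Y_p} forces $x_{(-1,\alpha-1)}\to x_{(-A,\alpha-2)}$, $x_{(0,a-2)}\to x_{(-1,a-3)}$, $x_{(1,a)}\to x_{(0,a-1)}$, $x_{(1,\alpha+1)}\to x_{(0,\alpha)}$, and a length-two arrow $x_{(1,a-1)}\to x_{(-1,a-2)}$. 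This length-two arrow and its mirror $y_{(1,a-1)}\to y_{(-1,a-2)}$ are what close the six generators of gradings $a-3,\dots,a$ into a twisted box.

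\emph{Coincident cases.} For both $a=\alpha+1$ and $a=\alpha+2$ the vertically simplified basis is unique up to relabelling, so there is no split into two vertical configurations.

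For $a=\alpha+1$ only the staircase plus twisted box occurs. Once cancellation gives $y_{(0,\alpha-1)}\to y_{(-1,\alpha-2)}$ and a simultaneous change arranges $y_{(1,\alpha)}\to y_{(-1,\alpha-1,1)}$ and $y_{(-1,\alpha-1,2)}\to y_{(-A,\alpha-2)}$, what must be shown is that $\partial y_{(1,\alpha+1,1)}$ has no $y_{(0,\alpha,2)}$-component.
\begin{itemize}
\item This coefficient cannot be absorbed. It is invariant under every change compatible with the vertical arrows and that normalization: on each pair only the triangular move adding a vertical target to a vertical source is allowed, e.g.\ $y_{(1,\alpha+1,2)}\mapsto y_{(1,\alpha+1,2)}+y_{(1,\alpha+1,1)}$, never the reverse. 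Lemma~\ref{lemma: hori filtered basis change does not change vertical simplicity} only covers moves with $l>0$.
\item Lemma~\ref{lemma: 1xn lemma} alone does not forbid it, since $x_{(0,\alpha,2)}$ is itself a vertical target.
\item The paper kills it with $\partial^2 x_{(1,\alpha)}=0$, not with $\partial^2$ at the bottom of the staircase. The path $x_{(1,\alpha)}\to x_{(-1,\alpha-1,1)}\to Ux_{(0,\alpha,2)}$ forces a diagonal arrow $x_{(1,\alpha)}\to Ux_{(1,\alpha+1,2)}$. Its composite with $Ux_{(1,\alpha+1,2)}=y_{(-1,\alpha-1,2)}\to y_{(-A,\alpha-2)}$ then has no second path.
\end{itemize}

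For $a=\alpha+2$, $H_*(Z_p)$ makes $\langle y_{(0,\alpha,1)}\rangle\to\langle y_{(-1,\alpha-1,1)},y_{(-1,\alpha-1,2)}\rangle\to\langle y_{(-A,\alpha-2)}\rangle$ exact. You must case on the kernel relative to the vertical arrows $x_{(1,\alpha+1,1)}\to x_{(0,\alpha,2)}$ (length one) and $x_{(1,\alpha+1,2)}\to x_{(-1,\alpha)}$ (length two).
\begin{itemize}
\item If the kernel is $y_{(-1,\alpha-1,1)}$, a $1\times1$ square closes and you get the positive twisted staircase plus one box. The sum reduces to this case via $y_{(-1,\alpha-1,1)}\mapsto y_{(-1,\alpha-1,1)}+y_{(-1,\alpha-1,2)}$ together with $x_{(0,\alpha,2)}\mapsto x_{(0,\alpha,2)}+x_{(-1,\alpha)}$.
\item If the kernel is $y_{(-1,\alpha-1,2)}$, no admissible change moves it to the first case, since that would swap two generators whose vertical targets lie at different heights. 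Here $\partial^2=0$ forces $y_{(1,\alpha+1,1)}\to y_{(-1,\alpha)}$, giving a staircase plus twisted box.
\end{itemize}
Your step~2 can make this map one-to-one but cannot choose the matching. That invariant, not a vertical ambiguity, is what separates the two outcomes.
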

\begin{proof}
Similarly, we discuss separately according to the relationship between $a$ and $\alpha$.

\begingroup
\setlength{\intextsep}{0pt}
\begin{figure}[H]
    \setlength{\abovecaptionskip}{1pt}
    \begin{minipage}[c]{0.66\textwidth}
    \centering
       \includegraphics[scale=0.8]{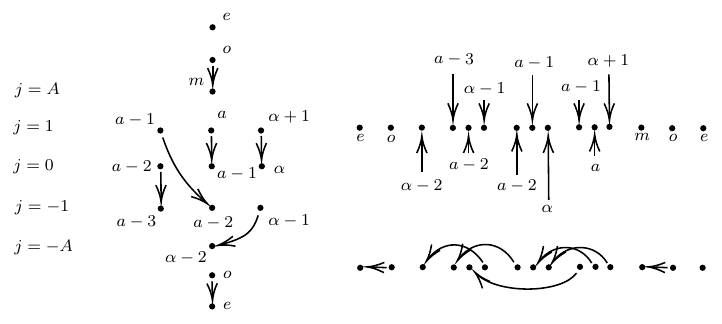} 
    \end{minipage}
	\begin{minipage}[c]{0.3\textwidth}
    \centering
       \includegraphics[scale=0.8]{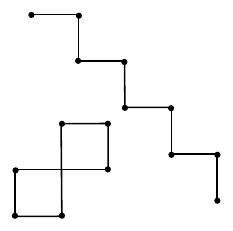} 
    \end{minipage}
    \caption{}
    \label{figure: 151-10-1}
\end{figure}
\endgroup

\textit{If $a\notin\{\alpha+1,\alpha+2\}$}, the homology of $Y_m$ and the Maslov grading constraints determine the differentials (see Figure \ref{figure: 151-10-1}). We obtain a staircase plus a twisted box.

% As shown in Figure \ref{figure: 151-10-1}, the hv-basis we obtain is a staircase plus a twisted box.

\textit{If $a=\alpha+1$}. Maslov grading and $H_*(Y_m)$ determine a vertically simplified basis identical to the case above. By the $H_*(A_0)\cong \mathbb{F}^5$ condition and Lemma \ref{lemma: cancellation lemma}, similar to the analysis in Figure \ref{figure: H_*(A_0) and cancellation lemma}, we know that $y_{(0,\alpha-1)}$ differentiates to $y_{(-1,\alpha-2)}$. The generator $y_{(1,\alpha)}$ differentiates to a linear combination of $y_{(-1,\alpha-1,1)}$ and $y_{(-1,\alpha-1,2)}$ by Maslov grading. 

\begingroup
\setlength{\intextsep}{0pt}
\begin{figure}[H]
    \setlength{\abovecaptionskip}{1pt}
    \centering
       \includegraphics[scale=0.8]{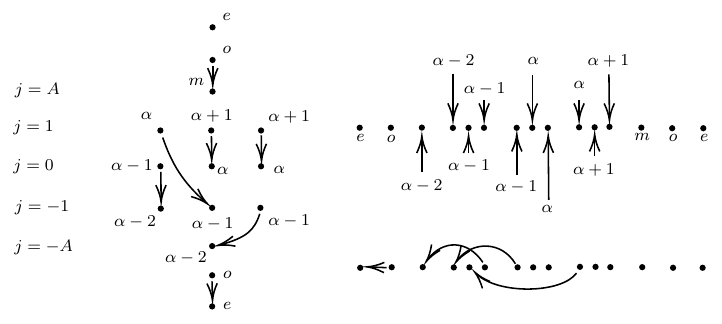} 
       \caption{}
       \label{figure: 151-10-2}
\end{figure}
\endgroup

 The preimage of $y_{(-A,\alpha-2)}$ is a linear combination of $y_{(-1,\alpha-1,1)}$ and $y_{(-1,\alpha-1,2)}$. We perform a simultaneous basis change, after which $y_{(-1,\alpha-1,2)}$ differentiates to $y_{(-A,\alpha-2)}$ and $y_{(1,\alpha)}$ differentiates to $y_{(-1,\alpha-1,1)}$, and the vertical one-to-one correspondence between $x_{(1,\alpha+1,1)}, x_{(1,\alpha+1,2)}$ and $x_{(0,\alpha,1)},x_{(0,\alpha,2)}$ is preserved, as shown in Figure \ref{figure: 151-10-2}.

We claim that $y_{(1,\alpha+1,1)}$ cannot differentiate to $y_{(0,\alpha,2)}$. If it did, then $x_{(1,\alpha)}$ would differentiate twice to $Uy_{(0,\alpha,2)}$. By Lemma \ref{lemma: 1xn lemma}, there would have to be a diagonal differential from $x_{(1,\alpha)}$ to $Ux_{(1,\alpha+1,2)}$. Then $x_{(1,\alpha)}$ would differentiate twice to $y_{(-A,\alpha-2)}$. There does not exist second path from $x_{(1,\alpha)}$ to $y_{(-A,\alpha-2)}$, contradicting $\partial^2 x_{(1,\alpha)}=0$.

It follows that $y_{(1,\alpha+1,1)}$ differentiates to $y_{(0,\alpha,1)}$, and $y_{(1,\alpha+1,2)}$ differentiates to either $y_{(0,\alpha,2)}$ or $y_{(0,\alpha,2)}+y_{(0,\alpha,1)}$. In the latter case, we replace $y_{(1,\alpha+1,2)}$ by $y_{(1,\alpha+1,2)}+y_{(1,\alpha+1,1)}$, which does not affect the vertical differentials. We then obtain an hv-basis, which is a staircase plus a twisted box; see Figure \ref{figure: 151-10-3}.

\begingroup
\setlength{\intextsep}{0pt}
\begin{figure}[H]
    \setlength{\abovecaptionskip}{1pt}
    \centering
       \includegraphics[scale=0.8]{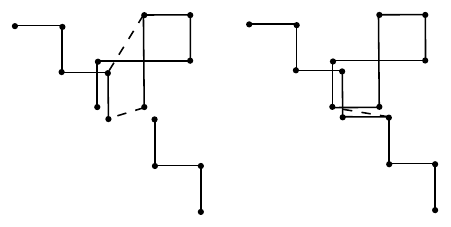} 
       \caption{}
       \label{figure: 151-10-3}
\end{figure}
\endgroup

\textit{If $a=\alpha+2$}, the hv-basis we obtain is shown in Figure \ref{figure: 151-10-4}. We separate our discussion according to the preimage of $y_{(-A,\alpha-2)}$. 
The differentials from $y_{(0,\alpha,1)}$ to $\mathbb{F}<y_{(-1,\alpha-1,1)}>\oplus\ \mathbb{F}<y_{(-1,\alpha-1,2)}>$, together with those from $\mathbb{F}<y_{(-1,\alpha-1,1)}> \oplus \mathbb{F}\ <y_{(-1,\alpha-1,2)}>$ to $y_{(-A,\alpha-2)}$, form a short exact sequence by constraints of $H_*(Z_m)$. Maslov grading forces $y_{(1,\alpha+2)}$ to differentiate to $y_{(0,\alpha+1)}$; see Figure \ref{figure: 151-10-4}

\begingroup
\setlength{\intextsep}{0pt}
\begin{figure}[H]
    \setlength{\abovecaptionskip}{1pt}
    \centering
       \includegraphics[scale=0.8]{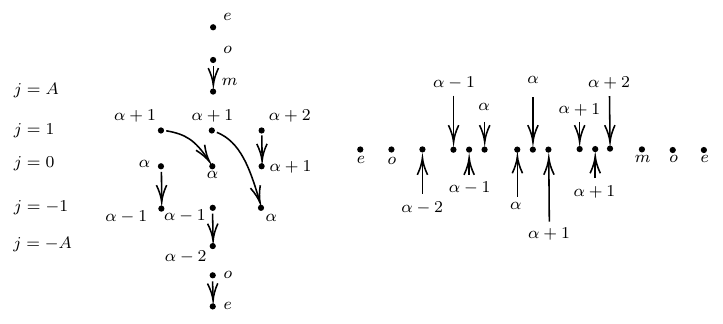} 
       \caption{}
       \label{figure: 151-10-4}
\end{figure}
\endgroup

\textbf{Case (i)}: The preimage of $y_{(-A,\alpha-2)}$ is $y_{(-1,\alpha-1,2)}$. Then $y_{(0,\alpha,1)}$ differentiates to $y_{(-1,\alpha-1,1)}$. By Lemma \ref{lemma: 1xn lemma}, the generator $y_{(1,\alpha+1,1)}$ differentiates to $y_{(0,\alpha,2)}$ to form a $1\times 1$ square.

It is possible that $y_{(1, \alpha+1, 1)}$ differentiates to $y_{(0, \alpha, 2)} + y_{(-1, \alpha)}$. If so, we replace $y_{(0, \alpha, 2)}$ by $y_{(0, \alpha, 2)} + y_{(-1, \alpha)}$. Thus, we may assume that $y_{(1, \alpha+1, 1)}$ differentiates to $y_{(0, \alpha, 2)}$.

The differentials starting from $y_{(1, \alpha+1, 2)}$ are undetermined. If $y_{(1,\alpha+1,2)}$ differentiates to $y_{(-1,\alpha)}+y_{(0,\alpha,2)}$, we replace $y_{(1,\alpha+1,2)}$ by $y_{(1,\alpha+1,2)}+y_{(1,\alpha+1,1)}$. This does not change the vertical differentials and we thus assume $y_{(1, \alpha+1, 2)}$ differentiates to $y_{(-1,\alpha)}$. The hv-basis we obtain is a positive twisted staircase plus one box; see Figure \ref{figure: 151-10-5}.

\begingroup
\setlength{\intextsep}{0pt}
\begin{figure}[H]
    \setlength{\abovecaptionskip}{1pt}
    \centering
       \includegraphics[scale=0.8]{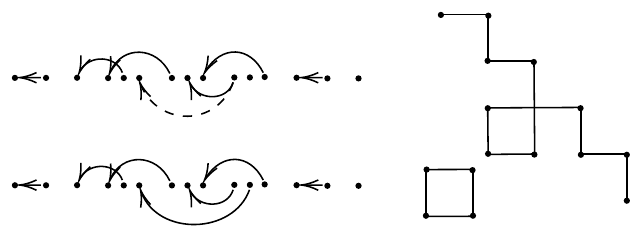} 
       \caption{}
       \label{figure: 151-10-5}
\end{figure}
\endgroup

\textbf{Case (ii)}: The preimage of $y_{(-A,\alpha-2)}$ is $y_{(-1,\alpha-1,1)} + y_{(-1,\alpha-1,2)}$. Then $y_{(0,\alpha,1)}$ differentiates to $y_{(-1,\alpha-1,1)}+y_{(-1,\alpha-1,2)}$. We replace $y_{(-1,\alpha-1,1)}$ by $y_{(-1,\alpha-1,1)}+y_{(-1,\alpha-1,2)}$ and $x_{(0,\alpha,2)}$ by $x_{(0,\alpha,2)}+x_{(-1,\alpha)}$. This case is then reduced to Case (i); see Figure \ref{figure: 151-10-6}.

\begingroup
\setlength{\intextsep}{0pt}
\begin{figure}[H]
    \centering
    \setlength{\abovecaptionskip}{1pt}
       \includegraphics[scale=0.8]{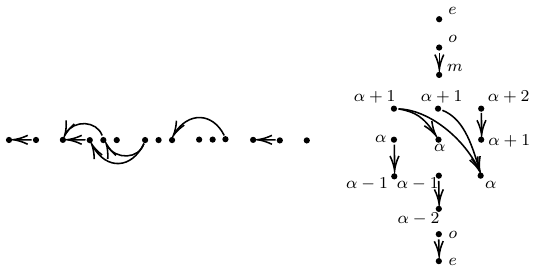} 
       \caption{}
       \label{figure: 151-10-6}
\end{figure}
\endgroup

\textbf{Case (iii)}: The preimage of $y_{(-A,\alpha-2)}$ is $y_{(-1,\alpha-1,1)}$ and $y_{(0,\alpha,1)}$ differentiates to $y_{(-1,\alpha-1,2)}$.

\begingroup
\setlength{\intextsep}{0pt}
\begin{figure}[H]
    \centering
    \setlength{\abovecaptionskip}{1pt}
       \includegraphics[scale=0.8]{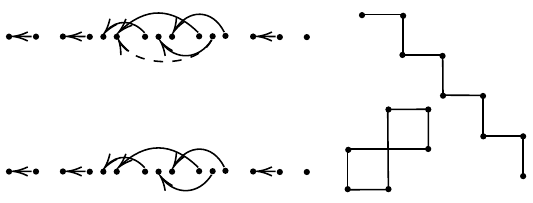} 
       \caption{}
       \label{figure: 151-10-7}
\end{figure}
\endgroup

 By Lemma \ref{lemma: 1xn lemma} and the $\partial^2=0$ constraint, $y_{(1,\alpha+1,1)}$ cannot differentiate to $y_{(0,\alpha,2)}$. Therefore, the generator $y_{(1,\alpha+1,1)}$ differentiates to $y_{(-1,\alpha)}$.
The generator $y_{(1,\alpha+1,2)}$ must differentiate to either $y_{(0,\alpha,2)}$ or $y_{(-1,\alpha)}+y_{(0,\alpha,2)}$. In the latter case, we replace $y_{(0,\alpha,2)}$ by $y_{(0,\alpha,2)}+y_{(-1,\alpha)}$. We obtain a staircase plus a twisted box, as shown in Figure \ref{figure: 151-10-7}.
\end{proof}

\begin{proposition}\label{prop: 151 table 11th row, a negative twisted staircase plus a box or a }
Let $K$ be a height 4 knot of the $(1,5,1)$ type. Suppose that for $p = 2,1,0,-1$, we have $\operatorname{rank}H_*({X_p}) = 1,2,3,0$ and $\operatorname{rank}H_*({Y_p}) = 0,3,2,1$, respectively. Assume that
$\widehat{HFK}(K,1) = \mathbb{F}_{(a)} \oplus \mathbb{F}_{(\alpha+1)} \oplus \mathbb{F}_{(\alpha+2)}$ and $\widehat{HFK}(K,0) = \mathbb{F}_{(a-1)} \oplus \mathbb{F}_{(a-1)} \oplus \mathbb{F}_{(\alpha+1)}$. Then $CFK^\infty(K)$ admits an hv-basis, which is either a positive twisted staircase plus one box or a staircase plus a twisted box.
\end{proposition}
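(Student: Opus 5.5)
We follow the pattern of Propositions \ref{prop: 151 table second row, a positively twisted staircase plus one box or a staircase plus a negatively twisted box} and \ref{prop: 151 table 10th row}, splitting into cases by the relationship between $a$ and $\alpha$. The relevant gradings are those at height $1$, namely $\{a,\alpha+1,\alpha+2\}$, and at height $0$, namely $\{a-1,a-1,\alpha+1\}$. Coincidences can occur only when $a\in\{\alpha+1,\alpha+2,\alpha+3\}$, or when $a-1=\alpha+1$. So the cases are $a\notin\{\alpha+1,\alpha+2,\alpha+3\}$ (generic), $a=\alpha+1$, $a=\alpha+2$ and $a=\alpha+3$.

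In the generic case, grid climbing up from height $-A$ (using Lemma \ref{lem: differentials below height -A+1}, the ranks $\operatorname{rank}H_*(Y_p)=0,3,2,1$ and Maslov gradings) forces a unique vertically simplified basis. Concretely:
\begin{itemize}
\item $x_{(1,a)}$ maps vertically to one of the two generators $x_{(0,a-1,\cdot)}$;
\item the other generator $x_{(0,a-1,\cdot)}$ maps to $x_{(-1,a-2)}$;
\item $x_{(1,\alpha+2)}\to x_{(0,\alpha+1)}$;
\item $x_{(1,\alpha+1)}$ is paired with $x_{(-A,\alpha-2)}$ or $x_{(-1,\alpha)}$, as the $Y_p$ homology dictates.
\end{itemize}
By the symmetry \eqref{equation:(i,j) symmetry}, the $Z_p$ constraints then determine the horizontal arrows. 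This yields a positive twisted staircase (twist involving $x_{(A,\cdot)}$, $x_{(1,\alpha+1)}$, $x_{(0,\alpha+1)}$ and the top step) plus a box on the $a$-generators.

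In the degenerate cases we will:
\begin{itemize}
\item reorder and perform basis changes among equal-grading generators at heights $0$ and $\pm1$ so that the vertically simplified basis matches the generic one;
\item apply Lemma \ref{lemma: cancellation lemma} to $A_0$ together with $H_*(A_0)=\mathbb{F}^5$ to determine the unique leftward differential from $C\{(0,0)\}$ to $C\{(-1,0)\}$;
\item use a simultaneous basis change as in \eqref{equation: simultaneous basis change} whenever two generators at height $-1$ (or $0$) share a grading, so that this differential is one-to-one while the vertical pairings are preserved;
\item use Lemma \ref{lemma: 1xn lemma} and $\partial^2=0$ on $1\times1$ squares to decide whether $y_{(1,\cdot)}$ maps to a height-$0$ generator, completing a box, or to the height $-1$ generator, producing a twist;
\item kill remaining unwanted horizontal components by replacements of the form $y\mapsto y+y'$ with $y'$ strictly further left, which preserve vertical simplicity by Lemma \ref{lemma: hori filtered basis change does not change vertical simplicity}. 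Where the replacement instead acts on the $x$ side (as in Proposition \ref{prop: 151 table second row, a positively twisted staircase plus one box or a staircase plus a negatively twisted box}), we check directly that any extra vertical arrow can be cancelled by adding a generator of the same grading one level down.
\end{itemize}

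Depending on which branch the $1\times n$ lemma forces, each case ends in either a positive twisted staircase plus one box or a staircase plus a twisted box. This mirrors the dichotomy in Case (i) versus Case (ii) of Proposition \ref{prop: 151 table second row, a positively twisted staircase plus one box or a staircase plus a negatively twisted box}.

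The main obstacle is the case $a=\alpha+2$. There the height-$0$ grading $a-1$ coincides with $\alpha+1$, giving three generators at height $0$ with the same grading, and the height-$1$ generators $x_{(1,a)}$ and $x_{(1,\alpha+2)}$ also coincide in grading. The vertical pairing is therefore ambiguous up to a $GL_2$ or $GL_3$ action, and a simultaneous change must be chosen compatibly on both the $x$ and $y$ sides. Here we will first fix the preimage of $x_{(-1,\cdot)}$ via $Y_0$ and the cancellation argument in $A_0$. We then split according to where $y_{(1,\alpha+1)}$ lands, as in Cases (i)–(iii) of Proposition \ref{prop: 151 table 10th row}. We rule out bad configurations by the $\partial^2 x_{(1,\alpha+1)}=0$ argument through a forced diagonal arrow, exactly as done there.
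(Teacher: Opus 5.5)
Your plan is essentially the paper's proof: split on $a$ versus $\alpha$, settle the generic case by grid climbing with the symmetric $Z_p$ constraints, and in the coincident cases use Lemma~\ref{lemma: cancellation lemma} on $A_0$, simultaneous basis changes, Lemma~\ref{lemma: 1xn lemma} and replacements as in Lemma~\ref{lemma: hori filtered basis change does not change vertical simplicity}. The paper differs only in bookkeeping: it treats $a=\alpha+3$ as generic, notes that $a=\alpha+1$ is literally the case $a=\alpha+2$ of Proposition~\ref{prop: 151 table 10th row}, and handles $a=\alpha+2$ with one simultaneous basis change plus replacements, always obtaining a positive twisted staircase plus one box. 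One slip to fix in your generic description: $x_{(1,\alpha+1)}$ cannot pair with $x_{(-A,\alpha-2)}$, whose Maslov grading is three lower; instead $\operatorname{rank}H_*(Y_{-1})=1$ and $\operatorname{rank}H_*(Y_0)=2$ force $x_{(-1,\alpha-1)}\to x_{(-A,\alpha-2)}$, after which $x_{(1,\alpha+1)}\to x_{(-1,\alpha)}$ is the only possibility.
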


\begin{proof}
Again, we discuss separately according to the relationship between $a$ and $\alpha$.

\textit{If $a\notin\{\alpha+1,\alpha+2\}$}, both the vertical and horizontal differentials are completely determined by $H_*(Y_p), H_*(Z_p)$ and Maslov grading. The hv-basis we obtain is shown in Figure \ref{figure: 151-11-1}, which is a positive twisted staircase plus one box.

\begingroup
\setlength{\intextsep}{0pt}
\begin{figure}[H]
    \setlength{\abovecaptionskip}{1pt}
    \begin{minipage}[c]{0.66\textwidth}
    \centering
       \includegraphics[scale=0.75]{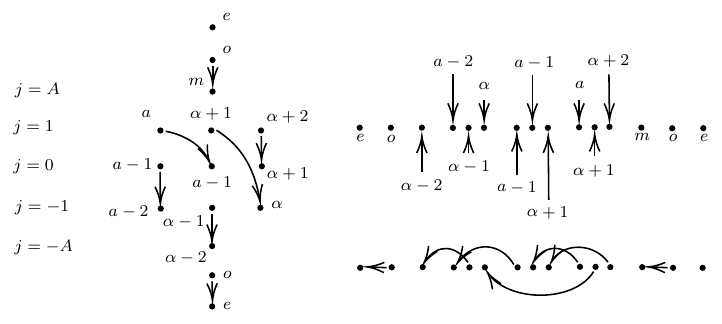} 
    \end{minipage}
	\begin{minipage}[c]{0.3\textwidth}
    \centering
       \includegraphics[scale=0.75]{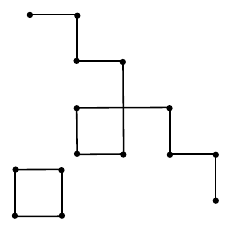} 
    \end{minipage}
    \caption{}
    \label{figure: 151-11-1}
\end{figure}
\endgroup

\textit{If $a=\alpha+1$}, this is identical to the case $a=\alpha+2$ in Proposition \ref{prop: 151 table 10th row}. The hv-basis we obtain is either a positive twisted staircase plus one box or a staircase plus a twisted box.

\textit{If $a=\alpha+2$}, by Maslov grading and $H_*(Y_m)$ constraints, we obtain a vertically simplified basis as shown in the left of Figure \ref{figure: 151-11-2}. The generator $y_{(-1,\alpha-1)}$ differentiates to $y_{(-A,\alpha-2)}$. By performing a simultaneous basis change, we may assume that $y_{(0,\alpha+1,3)}$ differentiates to $y_{(-1,\alpha,2)}$ while keeping the vertical differentials from $x_{(1,\alpha+2,1)}$ and $x_{(1,\alpha+2,2)}$ to $x_{(0,\alpha+1,1)}$ and $x_{(0,\alpha+1,2)}$ unchanged. Lemma \ref{lemma: 1xn lemma} tells us that $y_{(1,\alpha+2,2)}$ must differentiate to $y_{(0,\alpha+1,2)}$ to form a $1\times 1$ square box; see Figure \ref{figure: 151-11-2}.

\begingroup
\setlength{\intextsep}{0pt}
\begin{figure}[H]
    \centering
    \setlength{\abovecaptionskip}{1pt}
       \includegraphics[scale=0.8]{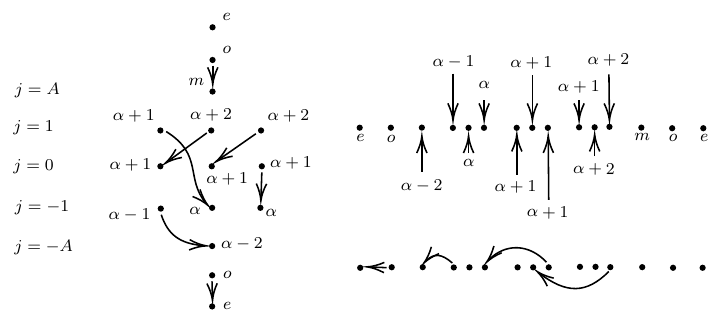} 
       \caption{}
       \label{figure: 151-11-2}
\end{figure}
\endgroup

The generator $y_{(1,\alpha+1)}$ may differentiate to $y_{(-1,\alpha,1)}+y_{(-1,\alpha,2)}$. If so, we replace $y_{(1,\alpha+1)}$ by $y_{(1,\alpha+1)}+y_{(0,\alpha+1,3)}$. We then assume that $y_{(1,\alpha+1)}$ differentiates to $y_{(-1,\alpha,1)}$.

The generator $y_{(1,\alpha+2,1)}$ may differentiate to $y_{(0,\alpha+1,1)}+y_{(0,\alpha+1,2)}$. If so, we replace $y_{(1,\alpha+2,1)}$ by $y_{(1,\alpha+2,1)}+y_{(1,\alpha+2,2)}$, and $x_{(1,\alpha+1)}$ by $x_{(1,\alpha+1)}+x_{(0,\alpha+1,3)}$. We obtain an hv-basis, which is a positive twisted staircase plus one box; see Figure \ref{figure: 151-11-3}.

\begingroup
\setlength{\intextsep}{0pt}
\begin{figure}[H]
    \setlength{\abovecaptionskip}{1pt}
    \begin{minipage}[c]{0.66\textwidth}
    \centering
       \includegraphics[scale=0.8]{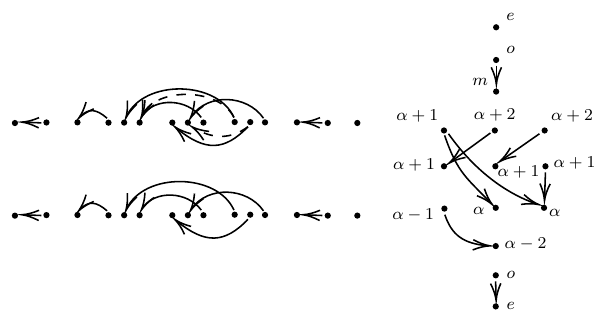} 
    \end{minipage}
	\begin{minipage}[c]{0.3\textwidth}
    \centering
       \includegraphics[scale=0.8]{fig/151-11-2.pdf} 
    \end{minipage}
    \caption{}
    \label{figure: 151-11-3}
\end{figure}
\endgroup
\end{proof}

\begin{proposition}\label{prop: 151 table 12th row, a staircase plus two boxes}
Let $K$ be a height 4 knot of the $(1,5,1)$ type. Suppose that for $p = 2,1,0,-1$, we have $\operatorname{rank}H_*({X_p}) = 1,2,3,0$ and $\operatorname{rank}H_*({Y_p}) = 0,3,2,1$, respectively. Assume that
$\widehat{HFK}(K,1) = \mathbb{F}_{(a)} \oplus \mathbb{F}_{(b)} \oplus \mathbb{F}_{(\alpha+1)}$ and $\widehat{HFK}(K,0) = \mathbb{F}^2_{(a-1)} \oplus \mathbb{F}^2_{(b-1)} \oplus \mathbb{F}_{(\alpha)}$. Then $CFK^\infty(K)$ admits an hv-basis, which is a staircase plus two boxes, both located in the central position.
\end{proposition}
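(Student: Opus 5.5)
The plan mirrors the proofs of Propositions \ref{prop: 151 table 4th row, a staircase plus two positive boxes} and \ref{prop: 151 table 8th row, a staircase plus two boxes}. It uses the grading data from subcase (iii) of the $(1,2,3,0)$ case (Figure \ref{Figure:dim_gra_figure-9}): $a-3=d-2$, $b-3=e-2$, $c-3=f-2$ with $c=\alpha+1$. We split according to the coincidences among $a$, $b$ and $\alpha+1$. The cases are: (1) $a,b,\alpha+1$ pairwise distinct; (2) exactly one of $a,b$ equals $\alpha+1$; (3) $a=b\neq\alpha+1$; (4) $a=b=\alpha+1$.

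\textbf{Generic case.} Each Maslov grading at heights $1,0,-1$ is carried by generators that can only pair in one way. Lemma \ref{lem: differentials below height -A+1} and its mirror above height $1$ give the outer staircase. At the middle, the following vertical differentials are forced by Maslov grading and the values $\operatorname{rank}H_*(Y_p)=0,3,2,1$, by grid climbing from height $-1$ upward:
\begin{itemize}
\item $x_{(1,a)}\to x_{(0,a-1,1)}$, $x_{(0,a-1,2)}\to x_{(-1,a-2)}$, and the analogous pair in grading $b$;
\item $x_{(1,\alpha+1)}\to x_{(0,\alpha)}$ and $x_{(A,m)}\to x_{(-1,\alpha-1)}$.
\end{itemize}
These join the central staircase step and two $1\times1$ squares.

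Horizontally, the symmetry \eqref{equation:(i,j) symmetry} gives the $H_*(Z_p)$ constraints. We then apply Lemma \ref{lemma: cancellation lemma} to $A_0$, as in Figure \ref{figure: H_*(A_0) and cancellation lemma}. The condition $H_*(A_0)\cong\mathbb{F}^5$ forces each of $y_{(0,a-1,1)}$ and $y_{(0,b-1,1)}$ to hit the corresponding generator at $(-1,0)$, and no other arrows between $C\{(0,0)\}$ and $C\{(-1,0)\}$. Lemma \ref{lemma: 1xn lemma} together with $\partial^2=0$ on the $1\times1$ squares then makes $y_{(1,a)}\to y_{(0,a-1,2)}$ and similarly for $b$. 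This completes two central boxes and leaves the staircase edge $y_{(1,\alpha+1)}\to y_{(-1,\alpha)}$ (or its reflection). Any stray extra term is removed by a move $y\mapsto y+y'$ of the kind in Lemma \ref{lemma: hori filtered basis change does not change vertical simplicity}, which preserves vertical simplicity.

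\textbf{Degenerate cases.} When $a=\alpha+1$ (or $b=\alpha+1$), three height-$0$ generators share grading $\alpha$. We first reorder and take linear combinations at heights $0$ and $\pm1$ so that the vertically simplified basis matches the generic one. The expected obstacle is the horizontal step: $y_{(1,\alpha+1)}$ and the box corner in the same grading could hit a mixture of targets. Here I would apply the simultaneous basis change \eqref{equation: simultaneous basis change} so the arrows from $C\{(0,0)\}$ to $C\{(-1,0)\}$ become one-to-one while the vertical correspondences are kept. Lemma \ref{lemma: 1xn lemma} then rules out an arrow from the staircase generator into a box corner, since it would create an incomplete $1\times1$ square.

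When $a=b$, we use a simultaneous basis change on the two pairs of grading $a-1$, as in the $a=b\neq m$ case of Proposition \ref{prop: 151 table 4th row, a staircase plus two positive boxes}. If moreover $a=b\neq\alpha+1$, the grading-$a$ generators do not overlap vertically with the staircase generators and can be treated separately.

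The main difficulty is case (4), $a=b=\alpha+1$. There five height-$0$ generators occupy only two grading levels, and ruling out pathological diagonal-induced mixtures requires $H_*(A_0)\cong\mathbb{F}^5$ and $\partial^2=0$ together. In every case the outcome is a staircase plus two boxes, both in the central position.
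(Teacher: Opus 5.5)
Your case split (pairwise distinct; exactly one of $a,b$ equal to $\alpha+1$; $a=b\neq\alpha+1$; $a=b=\alpha+1$) and your toolkit match the paper's. However, the central configuration you call forced in the generic case is not what grid climbing gives, and it contradicts the data you cite.

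In this $(1,2,3,0)$ family $m$ is even, and $H_*(Y_A)=H_*(Y_2)=0$ while $H_*(Y_{A+1})\cong\mathbb{F}$. So $x_{(A,m)}$ is a vertical end, not a start. Indeed $x_{(A,m)}\to x_{(-1,\alpha-1)}$ would lower the Maslov grading by $m-\alpha+1=2A-1\geq 3$.

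Climbing from below gives the correct picture:
\begin{itemize}
\item $Y_{-1}$ has the single survivor $x_{(-A,\alpha-2)}$ (as $m-2A=\alpha-2$). So $\operatorname{rank}H_*(Y_0)=2$ forces $x_{(-1,\alpha-1)}\to x_{(-A,\alpha-2)}$; in your picture $H_*(Y_0)$ would have rank $4$.
\item $\operatorname{rank}H_*(Y_1)=3$ forces one generator of grading $a-1$ and one of grading $b-1$ at height $0$ onto $x_{(-1,a-2)}$ and $x_{(-1,b-2)}$.
\item $H_*(Y_2)=0$ forces $x_{(1,a)},x_{(1,b)},x_{(1,\alpha+1)}$ onto the three remaining height-$0$ survivors.
\end{itemize}
Mirroring, the horizontal staircase arrows are $y_{(1,\alpha+1)}\to y_{(0,\alpha)}$ and $y_{(-1,\alpha-1)}\to y_{(-A,\alpha-2)}$. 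So the central piece is $x_{(A,m)}\xleftarrow{\mathrm{h}}x_{(1,\alpha+1)}\xrightarrow{\mathrm{v}}x_{(0,\alpha)}\xleftarrow{\mathrm{h}}x_{(-1,\alpha-1)}\xrightarrow{\mathrm{v}}x_{(-A,\alpha-2)}$, with lengths $A-1,1,1,A-1$. Your edge $y_{(1,\alpha+1)}\to y_{(-1,\alpha)}$ has no target unless $a$ or $b$ equals $\alpha+2$.

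Your boxes are also mislabelled. With your vertical arrows $x_{(1,a)}\to x_{(0,a-1,1)}$ and $x_{(0,a-1,2)}\to x_{(-1,a-2)}$, the horizontal arrow into $y_{(-1,a-2)}=Ux_{(1,a)}$ must leave $y_{(0,a-1,2)}$, and $y_{(1,a)}$ must hit $y_{(0,a-1,1)}$. Your assignment puts $Ux_{(1,a)}$ into $\partial^2x_{(1,a)}$, which is exactly what Lemma~\ref{lemma: 1xn lemma} forbids.

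This is not cosmetic, because the long arrows are what make the degenerate cases delicate. Your sketch there only straightens the length-one arrows from $C\{(0,0)\}$ to $C\{(-1,0)\}$. When, say, $a=\alpha+1$, heights $1$ and $-1$ each carry two generators, of grading $\alpha+1$ and $\alpha-1$ respectively. The long horizontal arrow into $U^{A-1}x_{(A,m)}$ (the preimage of $y_{(-A,\alpha-2)}$) and the long vertical arrow into $x_{(-A,\alpha-2)}$ may each start at an arbitrary combination of these two. You must show both can be separated from the box compatibly with the vertical structure. One way:
\begin{itemize}
\item Fix the height-$(-1)$ basis so that one generator carries the long vertical arrow.
\item Lemma~\ref{lemma: 1xn lemma} then leaves the box's starting corner $q$ as the only grading-$\alpha$ generator at height $0$ with a length-one horizontal arrow into $U\cdot C\{(0,1)\}$.
\item Every two-step path from $q$ to $U^{A}x_{(A,m)}$ passes through $U\cdot C\{(0,1)\}$. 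So $\partial^2q=0$ puts the target of $q$ in the kernel of the long-arrow functional.
\item A single simultaneous basis change \eqref{equation: simultaneous basis change} of the two height-$1$ generators and their vertical targets then makes the box and the staircase use different generators.
\item $\partial^2q=0$ then also yields the fourth side of the box.
\end{itemize}
This is the analogue of the preimage-of-$y_{(-A,\alpha-2)}$ analysis in Proposition~\ref{prop: 151 table 10th row}. The case $a=b=\alpha+1$ needs the same argument with $3\times 3$ matrices. Your degenerate-case argument is built on the wrong central picture and does not address this coordination, so those cases are not established as written.
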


\begin{proof}
The proof is similar to that of Propositions \ref{prop: 151 table 4th row, a staircase plus two positive boxes} and \ref{prop: 151 table 8th row, a staircase plus two boxes}. 

\textit{If $a\neq b, a\neq \alpha+1, b\neq \alpha+1$}, the vertical differentials are determined by constraints from $H_*(Y_p)$ and Maslov grading. Up to a reordering of generators with identical Maslov grading, we obtain a vertically simplified basis as shown on the left in Figure \ref{figure: 151-12-1}. By constraints of $H_*(Z_m)$ and Maslov grading, and applying Lemma \ref{lemma: 1xn lemma}, the horizontal differentials can be determined. The hv-basis we obtain is a staircase plus two boxes; see Figure \ref{figure: 151-12-1}.

\begingroup
\setlength{\intextsep}{0pt}
\begin{figure}[H]
    \setlength{\abovecaptionskip}{1pt}
    \begin{minipage}[c]{0.66\textwidth}
    \centering
       \includegraphics[scale=0.8]{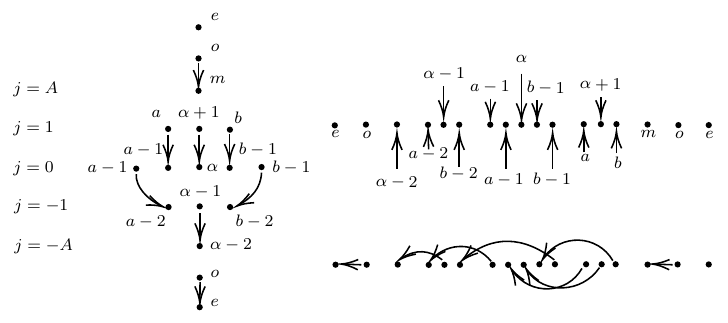} 
    \end{minipage}
	\begin{minipage}[c]{0.3\textwidth}
    \centering
       \includegraphics[scale=0.8]{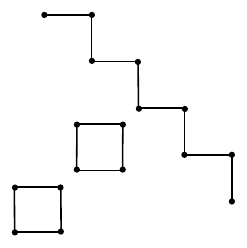} 
    \end{minipage}
    \caption{}
    \label{figure: 151-12-1}
\end{figure}
\endgroup

\textit{If $a=b\neq \alpha+1$}, the vertical differentials is determined under constraints of Maslov grading and $H_*(Y_m)$. After reordering generators with identical Maslov grading, we obtain a vertically simplified basis identical to the previous case. 

\begingroup
\setlength{\intextsep}{0pt}
\begin{figure}[H]
    \setlength{\abovecaptionskip}{1pt}
    \begin{minipage}[c]{0.66\textwidth}
    \centering
       \includegraphics[scale=0.8]{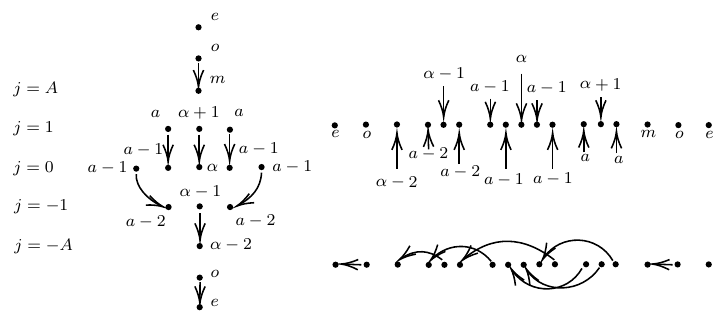} 
    \end{minipage}
	\begin{minipage}[c]{0.3\textwidth}
    \centering
       \includegraphics[scale=0.8]{fig/151-12-2.pdf} 
    \end{minipage}
    \caption{}
    \label{figure: 151-12-2}
\end{figure}
\endgroup

We perform a simultaneous basis change on $y_{(-1,a-2,1)},y_{(-1,a-2,2)}$ and $y_{(0,a-1,2)},y_{(0,a-1,3)}$, after which $y_{(0,a-1,1)}$ and $y_{(0,a-1,4)}$ differentiate to $y_{(-1,a-2,1)}$ and $y_{(-1,a-2,2)}$, respectively. The horizontal differentials are determined symmetrically; see Figure \ref{figure: 151-12-2}.

\textit{If $a=b=\alpha+1$}, again, by a suitable reordering of generators and applying simultaneous basis change together with Lemma \ref{lemma: 1xn lemma}, we can arrange the vertical and horizontal differentials to be identical to the previous case. The hv-basis we obtain is a staircase plus two boxes. 

\textit{If $a=\alpha+1, b\neq \alpha+1$}, in this case, the generators with Maslov gradings $\alpha+1,\alpha,\alpha-1$ and those with gradings $b,b-1,b-2$ are separated into two parts. We use Maslov grading and $H_*(Y_m), H_*(Z_m)$ constraints to determine the differentials. The hv-basis we obtain is identical.
\end{proof}

\vspace{-0.5em}
\begin{proposition}\label{prop: 151 table 13th row, a negative twisted staircase}
Let $K$ be a height 4 knot of the $(1,5,1)$ type. Suppose that for $p = 2,1,0,-1$, we have $\operatorname{rank}H_*({X_p}) = 1,3,2,0$ and $\operatorname{rank}H_*({Y_p}) = 0,2,3,1$, respectively. Assume that
$\widehat{HFK}(K,1) = \mathbb{F}_{(\alpha-2)} \oplus \mathbb{F}_{(\alpha-1)}$ and $\widehat{HFK}(K,0) = \mathbb{F}_{(\alpha-3)}$. Then $CFK^\infty(K)$ admits an hv-basis, which is a negative doubly-twisted staircase.
\end{proposition}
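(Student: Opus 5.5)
This case mirrors Proposition \ref{prop: 151 table 5th row, a negative twisted staircase}, and also Proposition \ref{prop: 151 table 9th row, a positive doubly-twisted staircase}. The plan is to use grid climbing with no basis changes. The point to exploit is that at each of the heights $1,0,-1$ the generators have pairwise distinct Maslov gradings. These are $\widehat{HFK}(K,1)=\mathbb{F}_{(\alpha-2)}\oplus\mathbb{F}_{(\alpha-1)}$, $\widehat{HFK}(K,0)=\mathbb{F}_{(\alpha-3)}$, and, by $\operatorname{Spin}^c$ conjugation, $\widehat{HFK}(K,-1)=\mathbb{F}_{(\alpha-4)}\oplus\mathbb{F}_{(\alpha-3)}$. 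The generator $x_{(A,m)}$ has even Maslov grading $m=\alpha+2A-2$, since we are in the $(1,3,2,0)$ case. Consequently every possible vertical or horizontal differential is forced by grading, and I expect no linear-combination ambiguities to arise.

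\textbf{Vertical differentials.} Below height $-A$ I would invoke Lemma \ref{lem: differentials below height -A+1}, and above height $A$ its symmetric analogue; together these pair consecutive generators by vertical arrows. I would then climb through heights $-A,-1,0,1,A$, keeping track of $\operatorname{rank}H_*(Y_p)=\dots,0,2,3,1$ for $p=2,1,0,-1$, together with the even/odd pattern from Lemma \ref{lemma: 151 X_2 and Y_2}. From the exact sequence \eqref{equation: exact sequence of Y_p} I read off at each step whether a new generator is a cycle that survives in $H_*(Y_{p+1})$ or is the start of a vertical arrow killing a class of $H_*(Y_p)$. The Maslov grading (its target must have grading one less) then identifies the endpoint uniquely. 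The expected result is vertical arrows $x_{(1,\alpha-2)}\to x_{(0,\alpha-3)}$ and $x_{(-1,\alpha-3)}\to x_{(-A,\cdot)}$. In addition, one generator at height $1$ either receives an arrow from $x_{(A,m)}$ or is the surviving generator, according to the grading parity. This should produce the mirror picture of Figure \ref{figure: 151-5-1}.

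\textbf{Horizontal differentials.} Via the translation $y=U^{i}x$ I pass to $C\{j=0\}$, where the symmetry \eqref{equation:(i,j) symmetry} gives $H_*(Z_p)$ the same ranks and gradings as the $Y$'s. I would repeat the climb with the $Z_p$ and determine the horizontal arrows. Because the gradings at each level are distinct, no basis changes are needed, so vertical simplicity is automatically preserved; in any case Lemma \ref{lemma: hori filtered basis change does not change vertical simplicity} guarantees this. Reading off the picture should then give the negative doubly-twisted staircase.

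\textbf{Main obstacle.} The step I expect to be delicate is verifying, at heights $0$ and $\pm1$, that the grading bookkeeping is consistent: for example, that $x_{(0,\alpha-3)}$ and $x_{(-1,\alpha-3)}$ share a grading but cannot be connected vertically. I would check this against the ranks of $H_*(Y_0)$ and $H_*(Y_1)$, and against $\partial^2=0$ together with Lemma \ref{lemma: 1xn lemma} on the horizontal side. Whether the twist is positive or negative is fixed by whether the corner generator at height $1$ is the start or the end of its arrows. I would confirm this using the symmetry with the $(0,3,2,1)$ case of Proposition \ref{prop: 151 table 5th row, a negative twisted staircase}.
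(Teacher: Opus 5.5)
Your method is the paper's method: grid climbing with no basis changes, using the distinct gradings at each height, then transferring to $C\{j=0\}$ by the symmetry \eqref{equation:(i,j) symmetry}. The problem is that the vertical arrows you predict are wrong and contradict the hypotheses.

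Run \eqref{equation: exact sequence of Y_p} with $\operatorname{rank}H_*(Y_p)=1,3,2,0$ for $p=-1,0,1,2$. Here $H_*(Y_{-1})=\mathbb{F}_{(\alpha-2)}$ is carried by $x_{(-A,\alpha-2)}$, since $m-2A=\alpha-2$.
\begin{itemize}
\item From $Y_{-1}$ to $Y_0$ the rank goes $1\to 3$, so the connecting map vanishes. Both generators at height $-1$ survive, and no vertical arrow leaves height $-1$. Your arrow $x_{(-1,\alpha-3)}\to x_{(-A,\cdot)}$ is also impossible by grading, since its target would need grading $\alpha-4$, not $\alpha-2$.
\item From $Y_0$ to $Y_1$ the rank drops $3\to 2$, so $x_{(0,\alpha-3)}$ must emit a length-one arrow to $x_{(-1,\alpha-4)}$. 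It therefore cannot also be the endpoint of your arrow from $x_{(1,\alpha-2)}$.
\item From $Y_1$ to $Y_2$ the rank drops $2\to 0$, so $\widehat{HFK}(K,1)\to H_*(Y_1)=\mathbb{F}_{(\alpha-2)}\oplus\mathbb{F}_{(\alpha-3)}$ is an isomorphism. This gives $x_{(1,\alpha-2)}\to x_{(-1,\alpha-3)}$ (length $2$) and the long arrow $x_{(1,\alpha-1)}\to x_{(-A,\alpha-2)}$ (length $A+1$).
\item Since $Y_A=Y_2$ has zero homology, $x_{(A,m)}$ emits no vertical arrow.
\end{itemize}
So your parity hedge at height $1$ is not an open alternative. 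The hypothesis $H_*(Y_2)=0$ settles it, and neither of your two options occurs: both height-$1$ generators emit arrows, and $x_{(A,m)}$ does not hit height $1$.

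Your configuration lacks the long arrow to $x_{(-A,\alpha-2)}$ and the length-two arrow landing at height $-1$, so it cannot be a doubly-twisted staircase, and it is inconsistent with the ranks of $H_*(Y_p)$. With the correct vertical arrows, symmetry gives the horizontal arrows $x_{(0,\alpha-3)}\to Ux_{(1,\alpha-2)}$, $x_{(-1,\alpha-4)}\to U^2x_{(1,\alpha-1)}$ and $x_{(-1,\alpha-3)}\to U^{A+1}x_{(A,m)}$, which assemble into a single zigzag. The corner of the twist is $x_{(0,\alpha-3)}$ at height $0$, not a generator at height $1$. It is the source of both of its length-one arrows, which is exactly why the twist is negative.

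Finally, the step you flag as delicate is trivial. The generators $x_{(0,\alpha-3)}$ and $x_{(-1,\alpha-3)}$ have the same grading, so no differential can join them.
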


\vspace{-0.5em}
\begin{proof}
In this case, the constraints of Maslov grading and $H_*(Y_p), H_*(Z_m)$ determine the vertical and horizontal differentials through grid climbing method; see Figure \ref{figure: 151-13-1}.

\begingroup
\setlength{\intextsep}{0pt}
\begin{figure}[H]
    \setlength{\abovecaptionskip}{0pt}
    \begin{minipage}[c]{0.66\textwidth}
    \centering
       \includegraphics[scale=0.7]{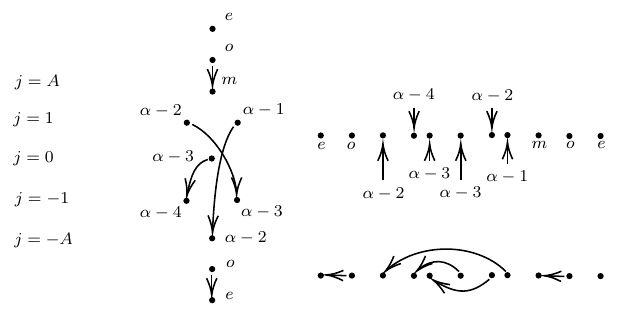} 
    \end{minipage}
	\begin{minipage}[c]{0.3\textwidth}
    \centering
       \includegraphics[scale=0.7]{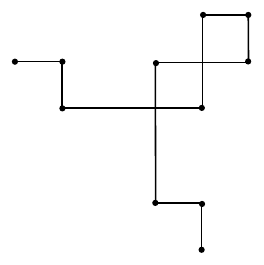} 
    \end{minipage}
    \caption{}
    \label{figure: 151-13-1}
\end{figure}
\endgroup

The hv-basis we obtain is a negative doubly-twisted staircase.
\end{proof}
\vspace{-0.5em}

\begin{proposition}\label{prop: 151 table 14th row}
Let $K$ be a height 4 knot of the $(1,5,1)$ type. Suppose that for $p = 2,1,0,-1$, we have $\operatorname{rank}H_*({X_p}) = 1,3,2,0$ and $\operatorname{rank}H_*({Y_p}) = 0,2,3,1$, respectively. Assume that
$\widehat{HFK}(K,1) = \mathbb{F}_{(a)} \oplus \mathbb{F}_{(\alpha-1)}$ and $\widehat{HFK}(K,0) = \mathbb{F}_{(a-1)} \oplus \mathbb{F}_{(a-1)} \oplus \mathbb{F}_{(\alpha-2)}$. Then $CFK^\infty(K)$ admits an hv-basis, which is either a staircase plus a twisted box or a negative twisted staircase plus one box.
\end{proposition}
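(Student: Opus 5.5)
The approach follows the pattern of Propositions \ref{prop: 151 table 6th row} and \ref{prop: 151 table 10th row}: split according to how the free parameter $a$ sits relative to $\alpha$. In the $(1,3,2,0)$ case we have $H_*(X_{-1})=0$. The boundary map $\delta\colon H_*(\{(0,-1)\})\to H_*(UX_0)$ is an isomorphism, and $H_*(Y_p)$ has ranks $0,2,3,1$ for $p=2,1,0,-1$. The gradings in this subcase come from Figure \ref{Figure:dim_gra_figure-16}, or its analogue in the present row. At height $1$ the generators are $x_{(1,a)}$ and $x_{(1,\alpha-1)}$. At height $0$ they are $x_{(0,a-1,1)}$, $x_{(0,a-1,2)}$ and $x_{(0,\alpha-2)}$. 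At height $-1$ they are the conjugates $x_{(-1,a-2)}$ and $x_{(-1,\alpha-3)}$. The generic case will be $a\notin\{\alpha-1,\alpha\}$, with the two coincidences $a=\alpha-1$ and $a=\alpha$ treated separately. These are the only values for which two generators at adjacent heights have Maslov gradings differing by one in a way that creates ambiguity.

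\textbf{Generic case.} I would run grid climbing from height $-A$ upward. Below height $-A$, Lemma \ref{lem: differentials below height -A+1} already fixes the vertical pairs. At heights $-1,0,1$ the changes in $H_*(Y_p)$ (rank $1\to3\to2\to0$), together with distinct Maslov gradings, should force the following vertical arrows:
\begin{itemize}
\item from $x_{(1,a)}$ to one of the height-$0$ generators of grading $a-1$;
\item from the other grading-$(a-1)$ generator to $x_{(-1,a-2)}$;
\item from $x_{(0,\alpha-2)}$ to $x_{(-1,\alpha-3)}$ or into the staircase.
\end{itemize}
After reordering the two grading-$(a-1)$ generators this gives a vertically simplified basis. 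By the symmetry \eqref{equation:(i,j) symmetry}, the horizontal arrows are read off the same way from $H_*(Z_m)$. The result is a staircase plus a twisted box when the $a$-generators close up with the central twist, and a negative twisted staircase plus one box otherwise. The two grading-$(a-1)$ generators together with $x_{(1,a)}$ and $x_{(-1,a-2)}$ form either a box or part of a twisted box, and Lemma \ref{lemma: 1xn lemma} decides the $1\times1$ square.

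\textbf{Degenerate cases.} When $a=\alpha-1$ or $a=\alpha$, a generator of the $a$-family shares a Maslov grading with one of the $\alpha$-family at an adjacent height. The horizontal arrows from $C\{(0,0)\}$ to $C\{(-1,0)\}$ are then not forced by gradings. Here I would do the following.
\begin{enumerate}
\item Apply Lemma \ref{lemma: cancellation lemma} to $A_0$ exactly as in Figure \ref{figure: H_*(A_0) and cancellation lemma}. The condition $H_*(A_0)=\mathbb{F}^5$ leaves a single cancellation pair between heights $0$ and $-1$.
\item Perform a simultaneous basis change of the form \eqref{equation: simultaneous basis change} so this pair is a single arrow, while the vertical correspondences are preserved.
\item Use Lemma \ref{lemma: 1xn lemma} and $\partial^2=0$ on the relevant $1\times1$ squares to decide whether $y_{(1,\cdot)}$ hits a height-$0$ or a height-$(-1)$ generator.
\item Remove any extra summands by replacements $y\mapsto y+y'$ of the type allowed in Lemma \ref{lemma: hori filtered basis change does not change vertical simplicity}, so that vertical simplicity survives.
\end{enumerate}

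The main obstacle is the case analysis in the degenerate values. There, a possible horizontal arrow into the grading-$(a-1)$ pair might not be removable by Lemma \ref{lemma: 1xn lemma} alone. For that situation I would copy the argument from the $a=m-1$ case of Proposition \ref{prop: 151 table 6th row}. Suppose a forbidden arrow exists. Then $\partial^2$ of the adjacent generator forces a diagonal differential, and this contradicts $\partial^2 x_{(-A,\cdot)}=0$, because the relevant endpoints are not starts or ends of other vertical arrows. This should rule out the pathological configurations. The surviving ones are then, up to basis change, a staircase plus a twisted box or a negative twisted staircase plus one box.
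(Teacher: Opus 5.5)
Your toolkit matches the paper's: the split on $a$ versus $\alpha$, grid climbing, then Lemma~\ref{lemma: cancellation lemma} on $A_0$, a simultaneous basis change, Lemma~\ref{lemma: 1xn lemma}, and replacements as in Lemma~\ref{lemma: hori filtered basis change does not change vertical simplicity}. However, you misplace where the second alternative of the conclusion comes from, and so your case analysis omits the branch that produces it.

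For $a\notin\{\alpha-1,\alpha\}$ the vertical pairing is forced. $H_*(Y_0)$ is spanned by classes of gradings $\alpha-2$ (represented by $x_{(-A,\alpha-2)}$), $a-2$ and $\alpha-3$. So the two height-$0$ killers are $x_{(0,a-1,2)}\to x_{(-1,a-2)}$ and $x_{(0,\alpha-2)}\to x_{(-1,\alpha-3)}$. Your alternative ``$x_{(0,\alpha-2)}$ into the staircase'' is excluded, because $x_{(-A,\alpha-2)}$ has the same grading as $x_{(0,\alpha-2)}$. Then $x_{(1,a)}\to x_{(0,a-1,1)}$ and $x_{(1,\alpha-1)}\to x_{(-A,\alpha-2)}$. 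With the symmetric horizontal arrows, this is always a negative twisted staircase plus one box. Nothing in the generic case lets ``the $a$-generators close up with the central twist''.

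The staircase plus a twisted box occurs only when $a=\alpha$. There the ambiguity is \emph{vertical}, not merely horizontal as your degenerate-case plan assumes. A grading-$(\alpha-1)$ generator at height $0$ may kill either $[x_{(-1,\alpha-2)}]$ or $[x_{(-A,\alpha-2)}]$ in $H_*(Y_0)$, so there are two inequivalent vertically simplified bases. The first gives the negative twisted staircase plus one box. In the second, $x_{(0,\alpha-1,2)}\to x_{(-A,\alpha-2)}$ and $x_{(1,\alpha-1)}\to x_{(-1,\alpha-2)}$. Lemma~\ref{lemma: 1xn lemma} and $\partial^2 y_{(0,\alpha-1,2)}=0$ then forbid $y_{(0,\alpha-1,2)}\to y_{(-1,\alpha-2)}$, forcing $y_{(0,\alpha-1,2)}\to y_{(-A,\alpha-2)}$. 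After a possible replacement, this gives the staircase plus twisted box. Your steps (1)--(4) only adjust horizontal arrows on top of a fixed vertical basis, so this branch is never reached and the argument as planned is incomplete.

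A smaller inaccuracy concerns $a=\alpha-1$. There the condition $H_*(A_0)\cong\mathbb{F}^5$ does not leave a single cancellation pair between heights $0$ and $-1$. It forces the map from $y_{(0,\alpha-2,2)},y_{(0,\alpha-2,3)}$ to $y_{(-1,\alpha-3,1)},y_{(-1,\alpha-3,2)}$ to be an isomorphism of $2$-dimensional spaces. That is precisely what the simultaneous basis change diagonalizes, and this case yields only the negative twisted staircase plus one box.
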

\vspace{-0.5em}

\begin{proof}

\textit{If $a\notin\{\alpha-1,\alpha\}$}, up to a reordering of $x_{(0,a-1,1)}$ and $x_{(0,a-1,2)}$, we can assume the obtained vertical differentials are as shown on the left in Figure \ref{figure: 151-14-1}. In this case, the horizontal differentials are determined by Maslov grading and the cancellation lemma (Lemma \ref{lemma: cancellation lemma}) without refering to basis change. We then obtain an hv-basis, which is a negative twisted staircase plus one box, as shown in Figure \ref{figure: 151-14-1}. 

\begingroup
\setlength{\intextsep}{0pt}
\begin{figure}[H]
    \setlength{\abovecaptionskip}{0pt}
    \begin{minipage}[c]{0.66\textwidth}
    \centering
       \includegraphics[scale=0.7]{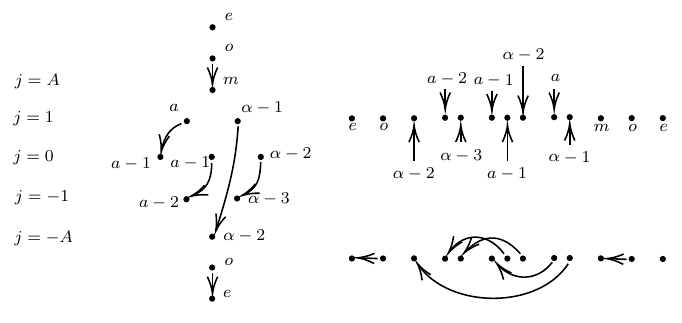} 
    \end{minipage}
	\begin{minipage}[c]{0.3\textwidth}
    \centering
       \includegraphics[scale=0.7]{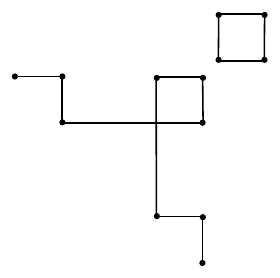} 
    \end{minipage}
    \caption{}
    \label{figure: 151-14-1}
\end{figure}
\endgroup

\textit{If $a=\alpha-1$}, the vertically simplified basis we obtain is identical. The condition $H_*(A_0)\cong \mathbb{F}^5$ requires that differentials from $y_{(0,\alpha-2,2)}, y_{(0,\alpha-2,3)}$ to $y_{(-1,\alpha-3,1)},y_{(-1,\alpha-3,2)}$ build an isomorphism between these two-dimensional spaces. We perform a simultaneous basis change on $y_{(0,\alpha-2,2)}$, $y_{(0,\alpha-2,3)}$ and $y_{(1,\alpha-1,1)}$, $y_{(1,\alpha-1,2)}$. We then assume that $y_{(0,\alpha-2,2)}$ differentiates to $y_{(-1,\alpha-3,1)}$, and $y_{(0,\alpha-2,3)}$ differentiates to $y_{(-1,\alpha-3,2)}$, preserving the vertical differentials between $x_{(0,\alpha-2,2)}$, $x_{(0,\alpha-2,3)}$ and $x_{(-1,\alpha-3,1)}$, $x_{(-1,\alpha-3,2)}$. See Figure \ref{figure: 151-14-2}.

\begingroup
\setlength{\intextsep}{0pt}
\begin{figure}[H]
    \setlength{\abovecaptionskip}{0pt}
    \begin{minipage}[c]{0.66\textwidth}
    \centering
       \includegraphics[scale=0.70]{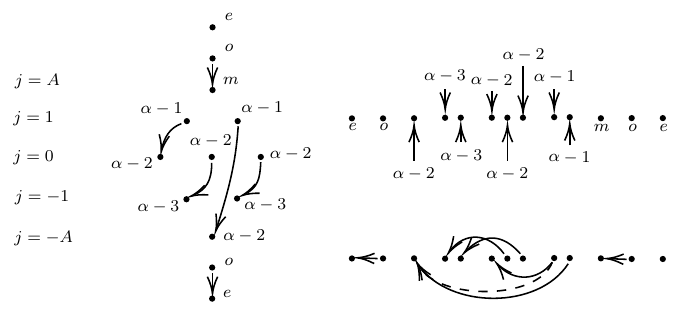} 
    \end{minipage}
	\begin{minipage}[c]{0.3\textwidth}
    \centering
       \includegraphics[scale=0.70]{fig/151-14-2.pdf} 
    \end{minipage}
    \caption{}
    \label{figure: 151-14-2}
\end{figure}
\endgroup

By Lemma \ref{lemma: 1xn lemma} and the condition $\partial^2 x_{(0,\alpha-2,2)}=0$, the preimage of $y_{(0,\alpha-2,1)}$ is $y_{(1,\alpha-1,1)}$. If the generator $y_{(1,\alpha-1,1)}$ differentiates to $y_{(0,\alpha-2,1)}$ $+y_{(-A,\alpha-2)}$, we replace $y_{(0,\alpha-2,1)}$ by $y_{(0,\alpha-2,1)}+y_{(-A,\alpha-2)}$. Thus, we may assume that $y_{(1,\alpha-1,1)}$ differentiates to $y_{(0,\alpha-2,1)}$. It follows that $y_{(1,\alpha-1,2)}$ differentiates to $y_{(-A,\alpha-2)}$. We then obtain an hv-basis, which is a negative twisted staircase plus one box, as shown in Figure \ref{figure: 151-14-2}.

\textit{If $a=\alpha$}, there are two possible vertically simplified basis.

\textbf{Case (i)}: The generator $x_{(0,\alpha-1,2)}$ differentiates to $x_{(-1,\alpha-2)}$. The vertically simplified basis we obtain is shown on the left in Figure \ref{figure: 151-14-3}.

Consider the constraint $H_*(A_0)\cong\mathbb{F}^5$ and apply Lemma \ref{lemma: cancellation lemma}, the generator $y_{(0,\alpha-2)}$ differentiates to $y_{(-1,\alpha-3)}$. Note that $y_{(0,\alpha-1,2)}$ differentiates to $y_{(-1,\alpha-2)}$ or $y_{(-1,\alpha-2)}+y_{(-A,\alpha-2)}$, and $y_{(1,\alpha)}$ must differentiate to $y_{(0,\alpha-1,1)}$ by constraint $\partial^2 y_{(0,\alpha-1,2)}=0$. After replacing $y_{(-1,\alpha-2)}$ by $y_{(-1,\alpha-2)}+y_{(-A,\alpha-2)}$, we may assume that $y_{(0,\alpha-1,2)}$ differentiates to $y_{(-1,\alpha-2)}$.

The generator $y_{(1,\alpha-1)}$ differentiates to either $y_{(-A,\alpha-2)}$ or $y_{(-A,\alpha-2)}+y_{(-1,\alpha-2)}$. In the latter case, we replace $y_{(1,\alpha-1)}$ by $y_{(1,\alpha-1)}+y_{(0,\alpha-1,2)}$ and assume that $y_{(1,\alpha-1)}$ differentiates to $y_{(-A,\alpha-2)}$. We obtain an hv-basis, which is a negative twisted staircase plus one box; see Figure \ref{figure: 151-14-3}.

\begingroup
\setlength{\intextsep}{0pt}
\begin{figure}[H]
    \setlength{\abovecaptionskip}{1pt}
    \begin{minipage}[c]{0.66\textwidth}
    \centering
       \includegraphics[scale=0.7]{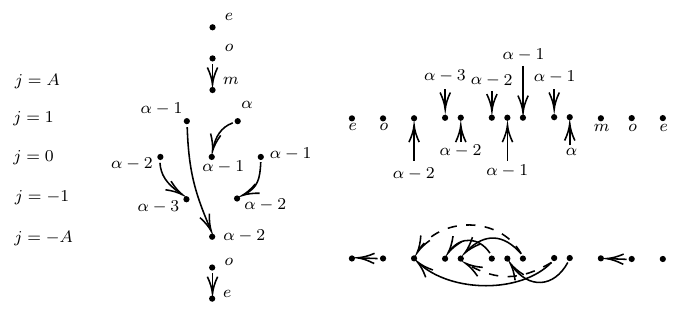} 
    \end{minipage}
	\begin{minipage}[c]{0.3\textwidth}
    \centering
       \includegraphics[scale=0.7]{fig/151-14-2.pdf} 
    \end{minipage}
    \caption{}
    \label{figure: 151-14-3}
\end{figure}
\endgroup

\textbf{Case (ii)}: In our obtained hv-basis, the generator $x_{(0,\alpha-1,2)}$ differentiates to $x_{(-A,\alpha-2)}$ and $x_{(1,\alpha-1)}$ differentiates to $x_{(-1,\alpha-2)}$.

By Lemma \ref{lemma: 1xn lemma} and the $\partial^2y_{(0,\alpha-1,2)}=0$ condition, the generator $y_{(0,\alpha-1,2)}$ cannot differentiate to $y_{(-1,\alpha-2)}$; therefore it must differentiate to $y_{(-A,\alpha-2)}$. By Maslov grading constraints, $y_{(0,\alpha-2)}$ differentiates to $y_{(-1,\alpha-3)}$ and $y_{(1,\alpha)}$ differentiates to $y_{(0,\alpha-1,1)}$.

\begingroup
\setlength{\intextsep}{2pt}
\begin{figure}[H]
    \setlength{\abovecaptionskip}{1pt}
    \begin{minipage}[t]{0.66\textwidth}
    \centering
       \includegraphics[scale=0.7]{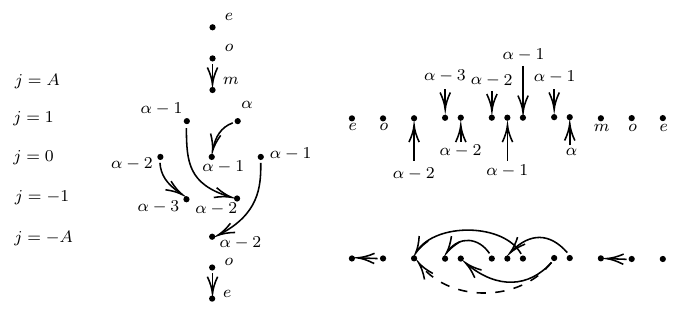} 
    \end{minipage}
	\begin{minipage}[t]{0.3\textwidth}
    \centering
       \includegraphics[scale=0.7]{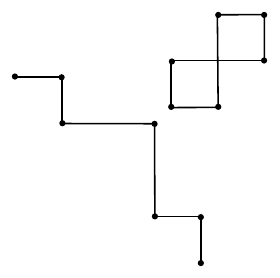} 
    \end{minipage}
    \caption{}
    \label{figure: 151-14-4}
\end{figure}
\endgroup

If $y_{(1,\alpha-1)}$ differentiates to $y_{(-1,\alpha-2)}+y_{(-A,\alpha-2)}$, we replace $y_{(-1,\alpha-2)}$ by $y_{(-1,\alpha-2)}+y_{(-A,\alpha-2)}$ and then assume that $y_{(1,\alpha-1)}$ differentiates to $y_{(-1,\alpha-2)}$. The resulting hv-basis is a staircase plus a twisted box, as shown in Figure \ref{figure: 151-14-4}.
\end{proof}

\begin{proposition}\label{prop: 151 table 15th row, a negative twisted staircase plus a box or a }
Let $K$ be a height 4 knot of the $(1,5,1)$ type. Suppose that for $p = 2,1,0,-1$, we have $\operatorname{rank}H_*({X_p}) = 1,3,2,0$ and $\operatorname{rank}H_*({Y_p}) = 0,2,3,1$, respectively. Assume that
$\widehat{HFK}(K,1) = \mathbb{F}_{(a)} \oplus \mathbb{F}_{(a+1)}$ and $\widehat{HFK}(K,0) = \mathbb{F}_{(a-1)} \oplus \mathbb{F}_{(a)} \oplus \mathbb{F}_{(\alpha-1)}$. Then $CFK^\infty(K)$ admits an hv-basis, which is a negative twisted staircase plus one box or a staircase plus a twisted box.
\end{proposition}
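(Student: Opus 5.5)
The plan follows the template of Propositions \ref{prop: 151 table 6th row} and \ref{prop: 151 table 10th row}, which have the same shape of data (a pair of consecutive gradings $a,a+1$ at height $1$, and $a-1,a$ plus one extra generator at height $0$). We are in the $(1,3,2,0)$ case, with $\operatorname{rank}H_*(Y_p)=0,2,3,1$. First, the generators above height $1$ and below height $-1$ are paired by vertical differentials as in Lemma \ref{lem: differentials below height -A+1}; the generator $x_{(A,m)}$ with $m=\alpha+2A-2$ even is the top of the symmetric part. At heights $1,0,-1$ we run grid climbing with the constraints from $H_*(Y_p)$ and Maslov grading, using $\widehat{HFK}(K,-1)=\mathbb{F}_{(a-2)}\oplus\mathbb{F}_{(a-1)}$ (by $\operatorname{Spin}^c$ conjugation) and the grading equations of Figure \ref{Figure:dim_gra_figure-18}. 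This produces a vertically simplified basis. Then we translate to $C\{j=0\}$ via $y=U^{i}x$, determine the horizontal differentials, and simplify them using only the basis changes allowed by Lemma \ref{lemma: hori filtered basis change does not change vertical simplicity}, so vertical simplicity is preserved.

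The case split is by coincidences between $a$ and $\alpha$. The grading $\alpha-1$ of the extra generator at height $0$ can only collide with $a-1$ or $a$, so the cases are \emph{$a\notin\{\alpha-1,\alpha\}$}, \emph{$a=\alpha$}, and \emph{$a=\alpha-1$}.

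\emph{Generic case.} All relevant gradings are distinct, so $H_*(Y_p)$, $H_*(Z_p)$ and Maslov grading force every differential. Vertically, $x_{(1,a+1)}\to x_{(0,a)}$, $x_{(0,a-1)}\to x_{(-1,a-2)}$, and $x_{(0,\alpha-1)}$ is tied to the staircase tail. Horizontally the picture is the mirror image. The expected outcome is a staircase plus a twisted box, or the negative twisted staircase plus one box, depending on which of $x_{(1,a)}$, $x_{(0,\alpha-1)}$ connects to the staircase.

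\emph{Coincident cases.} Here two generators share a grading, and there are two candidate vertically simplified bases, as in Case (i)/(ii) of Proposition \ref{prop: 151 table 14th row}. For each one:
\begin{enumerate}
\item Apply Lemma \ref{lemma: cancellation lemma} to $A_0$ together with $H_*(A_0)\cong\mathbb{F}^5$. This pins down which element of $C\{(0,0)\}$ maps to $C\{(-1,0)\}$; all other potential arrows between these columns are forbidden, since they would reduce the rank below $5$.
\item Use Lemma \ref{lemma: 1xn lemma} and $\partial^2=0$ on $1\times1$ squares to decide whether $y_{(1,a)}$ (or $y_{(1,a+1)}$) closes a box with $y_{(0,\cdot)}$ or instead goes to $y_{(-1,\cdot)}$.
\item Remove residual linear combinations by replacements $y\mapsto y+y'$ with $y'$ at a strictly lower $i$-level, which are allowed by Lemma \ref{lemma: hori filtered basis change does not change vertical simplicity}. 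Where two generators at one height share a grading, use a simultaneous basis change of the form \eqref{equation: simultaneous basis change}.
\end{enumerate}

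I expect the main obstacle to be the subcase $a=\alpha$, where $x_{(1,a)}$ has grading $\alpha$, the same grading as $x_{(A,m)}$ shifted by the $U$-translation. Ruling out a horizontal arrow like $y_{(1,\alpha)}\to y_{(0,\alpha-1,\cdot)}$ that would produce an incomplete square then needs a $\partial^2=0$ argument along a diagonal differential, in the style of Proposition \ref{prop: 151 table 10th row}. I would first try to show that such an arrow forces a diagonal differential to $Ux_{(1,\alpha+1)}$ that has no second path, giving a contradiction. That would leave the negative twisted staircase plus one box and the staircase plus a twisted box as the only outcomes.
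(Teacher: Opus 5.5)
The overall route matches the paper: grid climbing for a vertically simplified basis, the split $a\notin\{\alpha-1,\alpha\}$, $a=\alpha-1$, $a=\alpha$, and horizontal simplification by the replacements of Lemma~\ref{lemma: hori filtered basis change does not change vertical simplicity}. For $a=\alpha-1$ the data are exactly those of the case $a=\alpha$ in Proposition~\ref{prop: 151 table 14th row}, so that case is settled by citation.

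Two smaller inaccuracies first.
\begin{itemize}
\item In the generic case nothing is ``depending''. Since $x_{(1,a)}$ cannot reach grading $\alpha-2$, the arrows are forced: $x_{(1,a+1)}\to x_{(0,a)}$, $x_{(1,a)}\to x_{(-1,a-1)}$, $x_{(0,a-1)}\to x_{(-1,a-2)}$ and $x_{(0,\alpha-1)}\to x_{(-A,\alpha-2)}$, together with their horizontal mirrors. This is always a staircase plus a twisted box.
\item For $a=\alpha$ there is only one vertically simplified basis. Row-reducing at height $0$ gives $x_{(0,\alpha-1,1)}\to x_{(-1,\alpha-2)}$ and $x_{(0,\alpha-1,2)}\to x_{(-A,\alpha-2)}$, together with $x_{(1,\alpha+1)}\to x_{(0,\alpha)}$ and $x_{(1,\alpha)}\to x_{(-1,\alpha-1)}$.
\end{itemize}

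The genuine gap is in the case $a=\alpha$. There you aim at the wrong arrow, and the tool you assign to the relevant block gives no information.

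\emph{The arrow you flag is easy.} Ruling out $y_{(1,\alpha)}\to y_{(0,\alpha-1,\cdot)}$ needs no diagonal argument. Since $H_*(Z_1)\cong H_*(Y_1)\cong\mathbb{F}_{(\alpha-1)}\oplus\mathbb{F}_{(\alpha)}$, the horizontal images of $y_{(0,\alpha-1,1)}$ and $y_{(0,\alpha-1,2)}$ in $\langle y_{(-1,\alpha-2)},y_{(-A,\alpha-2)}\rangle$ are independent. Since $\operatorname{rank}H_*(Z_0)=3$, $y_{(-1,\alpha-1)}$ is a horizontal cycle. Then $\partial^2=0$ in $C\{j=0\}$ together with $H_*(Z_2)=0$ gives $y_{(1,\alpha)}\to y_{(-1,\alpha-1)}$ and $y_{(1,\alpha+1)}\to y_{(0,\alpha)}$.

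\emph{The real issue is the invertible $2\times2$ block} from $y_{(0,\alpha-1,1)},y_{(0,\alpha-1,2)}$ to $y_{(-1,\alpha-2)},y_{(-A,\alpha-2)}$. Your step~(1) is vacuous for it. In $A_0$ the two vertical arrows already cancel $x_{(0,\alpha-1,1)},x_{(0,\alpha-1,2)}$ against $x_{(-1,\alpha-2)},x_{(-A,\alpha-2)}$, so $H_*(A_0)\cong\mathbb{F}^5$ holds for every choice of the block. Basis changes do not help either. No filtered basis change that keeps the vertical arrows can remove a $y_{(-1,\alpha-2)}$-component of $\partial y_{(0,\alpha-1,2)}$, because Lemma~\ref{lemma: hori filtered basis change does not change vertical simplicity} only lets you add $y_{(-A,\alpha-2)}$ to $y_{(-1,\alpha-2)}$, not the reverse.

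\emph{The missing step is a $\partial^2$ count on the $1\times2$ rectangle.} Suppose $\partial y_{(0,\alpha-1,2)}$ had a component on $y_{(-1,\alpha-2)}=Ux_{(1,\alpha)}$. Then $\partial^2x_{(0,\alpha-1,2)}$ would contain $Ux_{(-1,\alpha-1)}$, via the vertical arrow $x_{(1,\alpha)}\to x_{(-1,\alpha-1)}$. There is no second path to cancel it:
\begin{itemize}
\item The only other grading-$(\alpha-2)$ generators in the rectangle spanned by $x_{(0,\alpha-1,2)}$ and $Ux_{(-1,\alpha-1)}$ are $x_{(-1,\alpha-2)}$ and $Ux_{(0,\alpha)}$.
\item $x_{(-1,\alpha-2)}$ is not hit vertically by $x_{(0,\alpha-1,2)}$.
\item $Ux_{(0,\alpha)}$ starts no vertical arrow.
\item When $A=2$ one also needs that $y_{(A,m)}$ has no horizontal arrow to $y_{(1,\alpha+1)}$, as the paper notes.
\end{itemize}

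Hence $\partial y_{(0,\alpha-1,2)}=y_{(-A,\alpha-2)}$. Invertibility of the block then leaves $\partial y_{(0,\alpha-1,1)}\in\{y_{(-1,\alpha-2)},\,y_{(-1,\alpha-2)}+y_{(-A,\alpha-2)}\}$. The admissible replacement $y_{(-1,\alpha-2)}\mapsto y_{(-1,\alpha-2)}+y_{(-A,\alpha-2)}$ reduces the second option to the first, giving a staircase plus a twisted box.
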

\vspace{-10pt}
\begin{proof}

\textit{If $a\notin\{\alpha,\alpha-1\}$}, the vertical differentials can be determined by Maslov grading and constraints of $H_*(Y_p)$; see Figure \ref{figure: 151-15-1}. The horizontal differentials are determined symmetrically. We obtain an hv-basis, a staircase plus a twisted box.

\begingroup
\setlength{\intextsep}{0pt}
\begin{figure}[H]
    \setlength{\abovecaptionskip}{1pt}
    \begin{minipage}[t]{0.66\textwidth}
    \centering
       \includegraphics[scale=0.7]{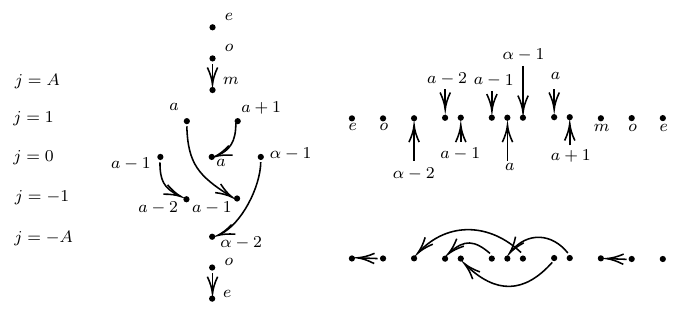} 
    \end{minipage}
	\begin{minipage}[t]{0.3\textwidth}
    \centering
       \includegraphics[scale=0.7]{fig/151-14-6.pdf} 
    \end{minipage}
    \caption{}
    \label{figure: 151-15-1}
\end{figure}
\endgroup

\textit{If $a=\alpha-1$}, this is the same as the case $a=\alpha$ in Proposition \ref{prop: 151 table 14th row}.

\textit{If $a=\alpha$}, there is only one possible configuration for the vertical differentials; see the left side of Figure \ref{figure: 151-15-2}. By the constraints of Maslov grading and the homologies of $Z_m$, the generators $y_{(1,\alpha)}$ and $y_{(1,\alpha+1)}$ differentiate to $y_{(-1,\alpha-1)}$ and $y_{(0,\alpha)}$, respectively (see Figure \ref{figure: 151-15-2}). We separate the discussion into two cases according to $\partial y_{(0,\alpha-1,1)}$.

\begingroup
\setlength{\intextsep}{0pt}
\begin{figure}[H]
    \centering
    \setlength{\abovecaptionskip}{1pt}
       \includegraphics[scale=0.7]{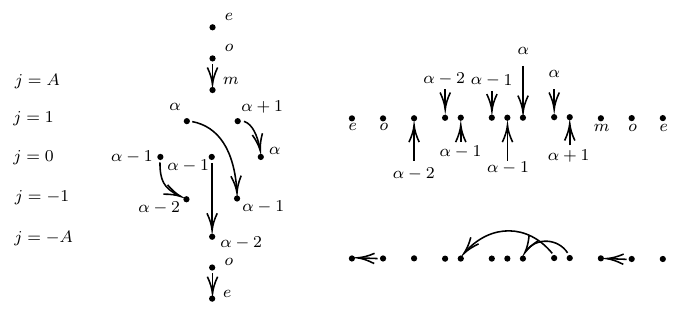} 
    \caption{}
    \label{figure: 151-15-2}
\end{figure}
\endgroup

\textbf{Case (i)}: The generator $y_{(0,\alpha-1,1)}$ differentiates to $y_{(-1,\alpha-2)}$. Then $y_{(0,\alpha-1,2)}$ differentiates to either $y_{(-A,\alpha-2)}$ or $y_{(-A,\alpha-2)}+y_{(-1,\alpha-2)}$. The latter possibility can be excluded by Lemma \ref{lemma: 1xn lemma} and the constraint $\partial^2 y_{(0,\alpha-1,2)}=0$, since there are no horizontal differential from $y_{(A,m)}$ to $y_{(1,\alpha+1)}$. We obtain an hv-basis whose configuration is a staircase plus a twisted box, as shown in Figure \ref{figure: 151-15-3}.

\begingroup
\setlength{\intextsep}{0pt}
\begin{figure}[H]
    \centering
    \setlength{\abovecaptionskip}{1pt}
       \includegraphics[scale=0.7]{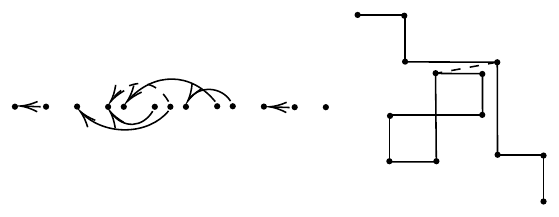} 
    \caption{}
    \label{figure: 151-15-3}
\end{figure}
\endgroup

\textbf{Case (ii)}: The generator $y_{(0,\alpha-1,1)}$ differentiates to $y_{(-1,\alpha-2)}+y_{(-A,\alpha-2)}$. From the argument above, we know that $y_{(0,\alpha-1,2)}$ must differentiate to $y_{(-A,\alpha-2)}$. Replacing $y_{(-1,\alpha-2)}$ by $y_{(-1,\alpha-2)}+y_{(-A,\alpha-2)}$ yields an hv-basis that is a staircase plus a twisted box; see Figure \ref{figure: 151-15-4}.

\begingroup
\setlength{\intextsep}{0pt}
\begin{figure}[H]
    \setlength{\abovecaptionskip}{0pt}
    \begin{minipage}[c]{0.48\textwidth}
    \centering
       \includegraphics[scale=0.7]{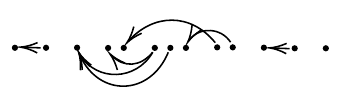} 
    \end{minipage}
	\begin{minipage}[c]{0.48\textwidth}
    \centering
       \includegraphics[scale=0.65]{fig/151-14-6.pdf} 
    \end{minipage}
    \caption{}
    \label{figure: 151-15-4}
\end{figure}
\endgroup
\end{proof}

\begin{proposition}\label{prop: 151 table 16th row, a staircase plus two boxes}
Let $K$ be a height 4 knot of the $(1,5,1)$ type. Suppose that for $p = 2,1,0,-1$ we have $\operatorname{rank}H_*({X_p}) = 1,3,2,0$ and $\operatorname{rank}H_*({Y_p}) = 0,2,3,1$, respectively. Assume that
$\widehat{HFK}(K,1) = \mathbb{F}_{(a)} \oplus \mathbb{F}_{(b)}$ and $\widehat{HFK}(K,0) = \mathbb{F}^2_{(a-1)} \oplus \mathbb{F}^2_{(b-1)} \oplus \mathbb{F}_{(\alpha-1)}$. Then $CFK^\infty(K)$ admits an hv-basis, which is a staircase plus two boxes, both located in the central position.
\end{proposition}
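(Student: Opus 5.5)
The plan mirrors the proofs of Propositions \ref{prop: 151 table 4th row, a staircase plus two positive boxes}, \ref{prop: 151 table 8th row, a staircase plus two boxes} and \ref{prop: 151 table 12th row, a staircase plus two boxes}. We split according to the coincidences among $a$, $b$ and the distinguished grading. In the $(1,3,2,0)$ subcase (iii) computation, the distinguished grading is $\alpha-1$ at height $0$, coming from $\mathbb{F}_{(\alpha-2)}\cong H_*(UX_1)$-type data. The relevant coincidences are therefore $a=\alpha$ or $b=\alpha$, since then the height-$1$ generator of grading $a$ shares grading $\alpha-1$ with the special generator at height $0$, together with $a=b$. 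The cases are: (1) $a\neq b$ and $a,b\neq\alpha$; (2) exactly one of $a,b$ equals $\alpha$; (3) $a=b\neq\alpha$; (4) $a=b=\alpha$. In each case we first produce a vertically simplified basis of $C\{i=0\}$ by grid climbing, and then adjust it horizontally using only the moves permitted by Lemma \ref{lemma: hori filtered basis change does not change vertical simplicity}.

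\textbf{Vertical step.} Lemma \ref{lem: differentials below height -A+1} and its mirror above height $1$ handle every height outside $\{-1,0,1\}$, so the staircase tails are fixed. At heights $1,0,-1$ we use $\operatorname{rank}H_*(Y_p)=0,2,3,1$ for $p=2,1,0,-1$ together with the Maslov gradings. Climbing upward, the following must happen. The generator $x_{(0,\alpha-1)}$ pairs with the tail, so that $H_*(Y_0)$ absorbs it. Each grading-$(a-1)$ and grading-$(b-1)$ generator at height $0$ either receives an arrow from height $1$ or emits one to height $-1$ (of grading $a-2$, $b-2$), one of each per grading. The two-dimensional grading blocks at height $0$ are then reordered by a linear change within the block so that the arrows are one-to-one. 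When gradings coincide (cases (2)–(4)), the blocks merge into three- or five-dimensional spaces of equal grading. We then choose a filtered basis adapted simultaneously to the images of $\partial$ from height $1$ and the kernels to height $-1$; this is ordinary linear algebra, since these maps have full rank by the ranks of $H_*(Y_p)$. The result is the same vertically simplified picture as in case (1): two vertical length-one arrows from height $1$ into height $0$, two from height $0$ into height $-1$, and the staircase.

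\textbf{Horizontal step.} This is the step I expect to be the main obstacle. By \eqref{equation:(i,j) symmetry}, the groups $H_*(Z_p)$ have the symmetric ranks, which forces two horizontal cancellation pairs from $C\{(1,0)\}$ and two from $C\{(0,0)\}$ into $C\{(-1,0)\}$.

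First, we apply Lemma \ref{lemma: cancellation lemma} to $A_0$ and use $H_*(A_0)\cong\mathbb{F}^5$, exactly as in Figure \ref{figure: H_*(A_0) and cancellation lemma}. This rules out any leftward differential from the grading-$(\alpha-1)$ generator and from the generators that are already vertical targets. It also shows that the map from the remaining two height-$0$ generators to $C\{(-1,0)\}$ is an isomorphism. A simultaneous basis change as in \eqref{equation: simultaneous basis change} makes this map diagonal while preserving the vertical arrows.

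Next, Lemma \ref{lemma: 1xn lemma}, together with $\partial^2=0$ on the $1\times1$ squares, forces each height-$1$ generator to hit precisely the height-$0$ generator completing its box. It also excludes arrows landing on generators that are heads of vertical arrows.

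Any leftover unwanted terms have the form $y+y'$ with $y'$ strictly further left and of equal grading. These are removed by the substitutions of Lemma \ref{lemma: hori filtered basis change does not change vertical simplicity}.

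In the coincident cases, the delicate point is to show that the grading-$(\alpha-1)$ generator is not involved in a horizontal arrow to height $-1$. If it were, the resulting configuration would be a twisted piece. This is excluded by the cancellation argument on $A_0$: such an arrow would drop $\operatorname{rank} H_*(A_0)$ below $5$. The outcome in every case is a staircase plus two $1\times1$ boxes centered at height $0$.
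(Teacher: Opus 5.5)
Your case split and overall route (grid climbing for the vertical basis, then the $A_0$ cancellation argument, simultaneous basis changes and $1\times 1$ squares for the horizontal arrows) match the paper's proof, and in the generic case the gradings separate the blocks, so your argument goes through there. The gap is at the step you yourself flag as delicate, the cases $\alpha\in\{a,b\}$: the condition $H_*(A_0)\cong\mathbb{F}^5$ cannot exclude the twist.

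After cancelling the tails, $A_0$ has eleven generators, and $\partial^v$, $\partial^h$ each have rank $3$ on $C\{(0,0)\}$. So $\operatorname{rank}H_*(A_0)=5$ says exactly that $\ker\partial^v=\ker\partial^h$ on $C\{(0,0)\}$. Vertical and horizontal targets therefore span the same subspace, and $\partial^h$ induces an isomorphism from $C\{(0,0)\}/\ker\partial^v$ onto $C\{(-1,0)\}\oplus C\{(-A,0)\}$. It says nothing about how the sources are matched.

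For $a=\alpha$, put $w_1=\partial^v x_{(1,\alpha)}$, $\partial^v w_2=x_{(-1,\alpha-2)}$ and $\partial^v w_3=x_{(-A,\alpha-2)}$. Take the twisted matching $\partial^h w_2=U^Ax_{(A,m)}$, $\partial^h w_3=Ux_{(1,\alpha)}$, which is a twisted staircase with a length-one twist plus the $b$-box. The $\alpha$-part of $A_0$ then has homology of rank $7-2\cdot 2=3$ and the $b$-part rank $2$, so $H_*(A_0)\cong\mathbb{F}^5$. Your claim that such an arrow drops the rank below $5$ is therefore false.

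A simultaneous basis change cannot repair this either. The admissible filtered changes are unitriangular: you may add $x_{(-A,\alpha-2)}$ to $x_{(-1,\alpha-2)}$ and $U^{A-1}x_{(A,m)}$ to $x_{(1,\alpha)}$, but not conversely, so the anti-diagonal matching cannot be diagonalized. Relatedly, your sentence that $A_0$ rules out any leftward arrow from the grading-$(\alpha-1)$ generator is wrong. That generator must carry the horizontal staircase arrow to $y_{(-A,\alpha-2)}$, mirroring its vertical arrow to $x_{(-A,\alpha-2)}$.

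The missing idea is $\partial^2=0$ on a $1\times1$ square, which is the role of the paper's appeal to Lemma~\ref{lemma: 1xn lemma} and to tracing $1\times 1$ squares. Suppose $\partial w_3$ contains $Ux_{(1,\alpha)}$. Then the path $w_3\to Ux_{(1,\alpha)}\to Uw_1$ puts $Uw_1$ into $\partial^2 w_3$. A second path would need a grading-$(\alpha-2)$ intermediate at $(-1,0)$ or $(0,-1)$. At $(-1,0)$ there is only $Ux_{(1,\alpha)}$ itself when $b\neq\alpha$. At $(0,-1)$ there is only $x_{(-1,\alpha-2)}$, which does not occur in $\partial w_3$ because $\partial w_3\equiv x_{(-A,\alpha-2)}\pmod U$. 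The corners are excluded by reducedness. Hence that coefficient vanishes.

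Then $\partial^2 w_2=0$ forces $\partial^h w_2=Ux_{(1,\alpha)}+\epsilon\,U^Ax_{(A,m)}$, and $\epsilon$ is removed by replacing $x_{(1,\alpha)}$ with $x_{(1,\alpha)}+U^{A-1}x_{(A,m)}$. The same argument, applied to the components along $Ux_{(1,\alpha,1)}$ and $Ux_{(1,\alpha,2)}$, handles $a=b=\alpha$. This is also why a twist must have length at least $2$: the compensating diagonal arrow needs room.
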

\vspace{-12pt}

\begin{proof}
The proof is similar to those in Propositions \ref{prop: 151 table 4th row, a staircase plus two positive boxes}, \ref{prop: 151 table 8th row, a staircase plus two boxes}, and \ref{prop: 151 table 12th row, a staircase plus two boxes}.

\textit{If} $a\neq \alpha, b\neq \alpha, a\neq b$, up to a reordering of generators with identical Maslov grading, we obtain a vertically simplified basis as illustrated on the left in Figure \ref{figure: 151-16-1}. When determining the horizontal differentials, we need to apply Lemma \ref{lemma: 1xn lemma} and trace the $1\times 1$ squares. The hv-basis we obtain is a staircase plus two boxes; see Figure \ref{figure: 151-16-1}.

\begingroup
\setlength{\intextsep}{0pt}
\begin{figure}[H]
    \setlength{\abovecaptionskip}{0pt}
    \begin{minipage}[t]{0.66\textwidth}
    \centering
       \includegraphics[scale=0.7]{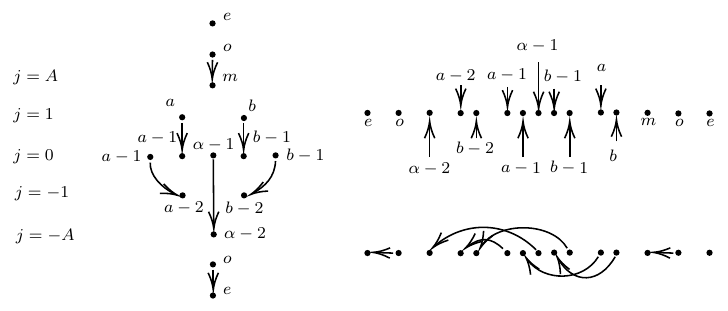} 
    \end{minipage}
	\begin{minipage}[t]{0.3\textwidth}
    \centering
       \includegraphics[scale=0.7]{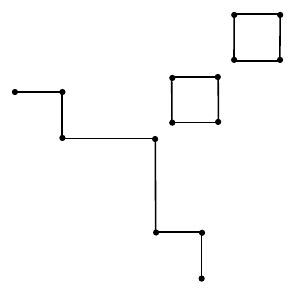} 
    \end{minipage}
    \caption{}
    \label{figure: 151-16-1}
\end{figure}
\endgroup

\textit{If} $a=b\neq \alpha$, Maslov grading tells us that $y_{(0,\alpha-1)}$ must differentiate to $y_{(-A,\alpha-2)}$. By applying a simultaneous basis change to $y_{(-1,a-2,1)},y_{(-1,a-2,2)}$ and $y_{(0,a-1,2)},y_{(0,a-1,3)}$, we may assume that $y_{(0,a-1,1)}$ and $y_{(0,a-1,4)}$ differentiate to $y_{(-1,a-2,1)}$ and $y_{(-1,a-2,2)}$, respectively, preserving the vertical one-to-one correspondence between $x_{(1,a,1)},x_{(1,a,2)}$ and $x_{(0,a-1,2)}, x_{(0,a-1,3)}$. 
\begingroup
\setlength{\intextsep}{0pt}
\begin{figure}[H]
    \centering
    \setlength{\abovecaptionskip}{0pt}
       \includegraphics[scale=0.7]{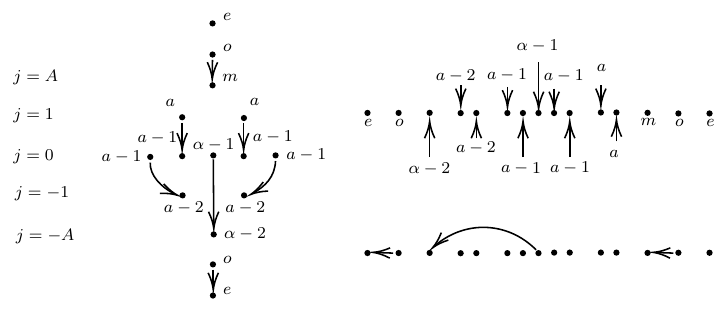} 
    \caption{}
    \label{figure: 151-16-2}
\end{figure}
\endgroup

The remaining differentials are determined similarly by the constraints of Maslov grading and $H_*(Y_p)$. Again, we obtain a staircase plus two boxes.

\textit{If} $a=b=\alpha$, we carry basis changes and reorder the generators at the same Alexander filtration level, then assume that $x_{(0,\alpha-1,1)}, x_{(0,\alpha-1,3)}$, and $x_{(0,\alpha-1,5)}$ differentiate to $x_{(-1,\alpha-2,1)}$, $x_{(-A,\alpha-2)}$, and $x_{(-1,\alpha-2,2)}$, respectively. This yields an identical vertically simplified basis. We use the Maslov grading, $H_*(Z_m)$ constraints and Lemma \ref{lemma: 1xn lemma} to determine the remaining horizontal differentials. The hv-basis obtained in this case is again a staircase plus two boxes.

\textit{If} $a=\alpha, b\neq \alpha$, in this case there are no differentials between the generators with Maslov gradings $\alpha,\alpha-1,\alpha-2$ and those with gradings $a,a-1,a-2$. The constraints of Maslov grading and $H_*(Y_p), H_*(Z_m)$ yield an identical hv-basis, a staircase plus two boxes.
\end{proof}

\subsection{Special case of genus one}\label{subsection:3.4 Special case of genus one}

For a height 4 knot $K$ of the $(1,5,1)$ type, if $g(K) = 1$, the classification results of its hv-basis are exhibited in Table \ref{table: 151 g(K) = 1}.

The hv-basis in the case $g(K) = 1$ can be viewed as a truncation of the general case. The discussion is identical to that of the middle height level part in the general case. We also apply the cancellation lemma (Lemma \ref{lemma: cancellation lemma}), the constraint from $H_*(A_0)$, and the $1\times n$ Lemma \ref{lemma: 1xn lemma}. The relative simplicity lies in that we start directly from height $1$ and trace downward. There are only two cases for $H_*(X_p)$ with $p = 1,0,-1$, namely $(2,3,1)$ and $(3,2,1)$, due to the condition $H_*(X_2) = 0$ and Equation \eqref{equation:(i,j) symmetry}.

\begin{table}[H]
\centering
\begin{tabular}{|c|c|c|}
\hline

\makecell{$rk H_*(X_p)$ \\ $p = 1,0,-1$} & $\widehat{HFK}(K,i),i=1,0$ & hv-basis  \\

\hline

\multirow{4}{*}{(2,3,1)} & \makecell{$\mathbb{F}_{(a)}\oplus \mathbb{F}_{(a-1)}$ \\ $\mathbb{F}_{(a-1)}$ } &  {\includegraphics[scale=0.8, valign=c]{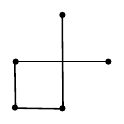}}

\\

\cline{2-3}

& \makecell{$\mathbb{F}_{(a)}\oplus \mathbb{F}_{(a-1)}$ \\ $\mathbb{F}_{(a-1)}\oplus \mathbb{F}_{(d)}\oplus \mathbb{F}_{(a-2)}$ } &{\includegraphics[scale=0.8, valign=c]{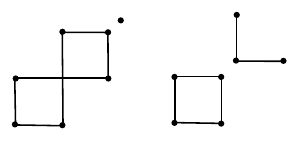}} \\

\cline{2-3}

& \makecell{ $\mathbb{F}_{(a)}\oplus \mathbb{F}_{(b)}$ \\ $\mathbb{F}_{(a-1)} \oplus \mathbb{F}_{(b-1)} \oplus \mathbb{F}_{(b-1)}$ } &{\includegraphics[scale=0.8, valign=c]{fig/1-2}} \\

\cline{2-3}

& \makecell{ $\mathbb{F}_{(a)}\oplus \mathbb{F}_{(b)}$ \\ $\mathbb{F}_{(a-1)}\oplus \mathbb{F}_{(b-1)} \oplus \mathbb{F}_{(e)} \oplus \mathbb{F}_{(a-1)} \oplus \mathbb{F}_{(b-1)}$ } & {\includegraphics[scale=0.8, valign=c]{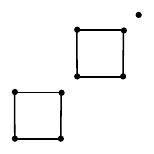}} \\

\hline

\multirow{4}{*}{(3,2,1)} & \makecell{ $\mathbb{F}_{(a)}\oplus \mathbb{F}_{(a+1)}\oplus \mathbb{F}_{(a-1)}$ \\ $\mathbb{F}_{(a-2)}$ } & {\includegraphics[scale=0.8, valign=c]{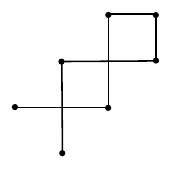}}\\

\cline{2-3}

& \makecell{ $\mathbb{F}_{(a)}\oplus \mathbb{F}_{(a+1)}\oplus \mathbb{F}_{(b)}$ \\ $\mathbb{F}_{(a-1)}\oplus \mathbb{F}_{(b-1)}\oplus \mathbb{F}_{(b-1)}$ } & {\includegraphics[scale=0.8,  valign=c]{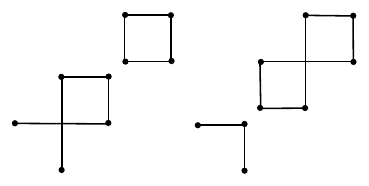}}\\

\cline{2-3}

& \makecell{ $\mathbb{F}_{(a)}\oplus \mathbb{F}_{(a+1)}\oplus \mathbb{F}_{(b)}$ \\ $\mathbb{F}_{(a)}\oplus \mathbb{F}_{(a-1)}\oplus \mathbb{F}_{(b-1)}$ } & {\includegraphics[scale=0.8, valign=c]{fig/1-6}}\\

\cline{2-3}

& \makecell{ $\mathbb{F}_{(a)}\oplus \mathbb{F}_{(b)}\oplus \mathbb{F}_{(c)}$ \\ $\mathbb{F}_{(b-1)}\oplus \mathbb{F}_{(c-1)}\oplus \mathbb{F}_{(a-1)}\oplus \mathbb{F}_{(b-1)}\oplus \mathbb{F}_{(c-1)}$ } & {\includegraphics[scale=0.8, valign=c]{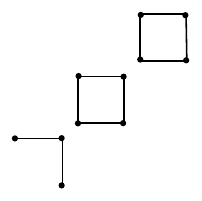}}\\

\hline
\end{tabular}
\caption{}
\label{table: 151 g(K) = 1}
\end{table}

\newpage

\section{Height 4 knots of the (1,3,1,3,1) type}\label{section: (1,3,1,3,1) type}

In this section, we classify the hv-basis for height 4 knots of the $(1,3,1,3,1)$ type. Similar to the $(1,5,1)$ type, we discuss separately according to the different cases of $\operatorname{rank} H_*(X_m)$. We state our main theorem at the beginning and then present the discussion case by case. Given a height 4 knot $K$ of the $(1,3,1,3,1)$ type with parameter $h$, in Section \ref{subsection:5.1 Cases of generic genera} we classify the hv-basis of $K$ under the assumption that the genus of $K$ is greater than 2 but not equal to $h+1$. In Section \ref{subsection:5.2 Cases of special genera}, we list the classification results for the cases $g(K) = 2$ and $g(K) = h+1$. 

The following is the main theorem of this section.

\begin{theorem}\label{thm: 131 has hv-basis} 
Suppose $K$ is a height 4 knot of the $(1,3,1,3,1)$ type with parameter $h$. Then $C  =  CFK^{\infty}(K)$ admits an hv-basis. Under this hv-basis, the horizontal and vertical differentials take the form of either a staircase with two separate twists, where the two twists are located at heights $\pm h$, or a staircase plus two separate boxes, where the two boxes are also located at height $\pm h$. The results are illustrated in Figure \ref{Figure: figure of main theorem for (1,3,1,3,1) type}.
\end{theorem}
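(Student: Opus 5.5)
The proof follows the three-step strategy of Section \ref{section:Height 4 knots}, now run around the two special heights $\pm h$ instead of around height $0$. By the symmetry \eqref{equation:(i,j) symmetry} and Spin$^c$ conjugation, it suffices to understand the picture near height $h>0$; the picture near $-h$ is its conjugate. We first treat the generic genera $g(K)>2$, $g(K)\neq h+1$, and at the end record the truncated cases $g(K)=2$ and $g(K)=h+1$ separately, as in Section \ref{subsection:3.4 Special case of genus one}.

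\emph{Step 1: $\widehat{HFK}$ and Maslov gradings.} Corollary \ref{cor: 13131 outside h are 0 or 1-dim} shows that $\widehat{HFK}(K,A)$ has rank at most one for $|A|>h+1$ and for $|A|<h-1$. In these ranges the snake computation of Lemma \ref{lemma: 151 X_2 and Y_2} applies verbatim, so the Maslov gradings alternate in parity as one descends. Moreover $(\operatorname{rank}H_*(X_p),\operatorname{rank}H_*(Y_p))$ alternates between $(1,0)$ and $(0,1)$.

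At $m=h$ we have $H_*(A_h)=\mathbb{F}^3$ and $H_*(A_{h+1})=H_*(A_{h-1})=\mathbb{F}$. Applying Lemmas \ref{lemma: 1 and 2k+1 then k or k+1} and \ref{lemma: 2k+1 and 1 then k or k+1} with $k=1$ gives
\[(\operatorname{rank}H_*(X_p))_{p=h+1,h,h-1}\in\{(0,1,2),(0,2,1),(1,1,2),(1,2,1)\},\]
constrained further by Remark \ref{remark: (1,0)or(0,1) remark}. The value at $h+1$ is fixed by the parity coming down from the top, as in Lemma \ref{lemma: 151 X_2 and Y_2}. Chasing the sequences \eqref{equation: exact sequence of X_p} and \eqref{equation: exact sequence of Y_p} through heights $h+1,h,h-1$ should give $\widehat{HFK}(K,h)$ of rank $1$ or $3$ with neighbouring ranks $\le 2$. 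The parametrized Maslov gradings are then pinned down by matching with the descending staircase at height $h-2$. Because the region near $h$ is separated from the region near $0$ by height-$0$ data of rank one, the central part of the complex is an ordinary staircase. This is the key difference from the $(1,5,1)$ analysis: no interaction between the two special regions.

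\emph{Steps 2 and 3: the hv-basis.} For each case from Step 1, we run grid climbing upward from $-g(K)$, using Lemma \ref{lem: differentials below height -A+1} away from $\pm h$. This produces vertical arrows from the $H_*(Y_p)$ jumps and the Maslov constraints, with reorderings and simultaneous basis changes \eqref{equation: simultaneous basis change} wherever gradings coincide. Near $\pm h$ this yields a vertically simplified basis whose local picture is either a box or a single twist. We then climb in $C\{j=0\}$ using $H_*(Z_m)$, and Lemma \ref{lemma: hori filtered basis change does not change vertical simplicity} guarantees that the horizontal filtered changes keep the basis vertically simplified. The condition $H_*(A_{\pm h})=\mathbb{F}^3$, together with the cancellation Lemma \ref{lemma: cancellation lemma}, rules out extra horizontal arrows between $C\{(0,h)\}$ and $C\{(-1,h)\}$. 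Lemma \ref{lemma: 1xn lemma} and $\partial^2=0$ force the $1\times1$ squares. Symmetry then places the conjugate feature at $-h$, giving either two symmetric boxes (case (5)) or two symmetric twists of the same sign (case $(6_\pm)$).

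\emph{Main obstacle.} The hard part is the coincidence cases in which the Maslov gradings of the generators near $h$ agree with those of the staircase steps passing through height $h$. There the vertical arrows are not determined uniquely, and horizontal arrows might mix the box or twist with the staircase. In those cases I would first try a direct cancellation argument in $A_h$ with target rank exactly $3$, as in Figure \ref{figure: H_*(A_0) and cancellation lemma}. If that is not enough, I would fall back on chasing $\partial^2=0$ along diagonal arrows, as in Proposition \ref{prop: 151 table 6th row}, to exclude mixed configurations.
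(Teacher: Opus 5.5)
Your architecture matches the paper's: the snake method for $\widehat{HFK}$, vertical then horizontal grid climbing, Lemmas \ref{lemma: cancellation lemma}, \ref{lemma: 1xn lemma} and \ref{lemma: hori filtered basis change does not change vertical simplicity}, and the truncated genera handled separately. Two steps fail as written, however.

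First, the case list in Step~1 is wrong. With $H_*(A_{h\pm1})=\mathbb{F}$ and $H_*(A_h)=\mathbb{F}^3$, Lemmas \ref{lemma: 1 and 2k+1 then k or k+1} and \ref{lemma: 2k+1 and 1 then k or k+1} give $\operatorname{rank}H_*(X_{h+1}),\operatorname{rank}H_*(X_h)\in\{1,2\}$. The $k=0$ version gives $\operatorname{rank}H_*(X_{h+2}),\operatorname{rank}H_*(X_{h-1})\in\{0,1\}$. It is $X_{h+2}$, not $X_{h+1}$, whose rank is fixed by the parity of $m$. Nothing couples these four numbers: unlike the $(1,5,1)$ case, \eqref{equation:(i,j) symmetry} relates $X_h$ to $Y_{-h+1}$, which sits at the \emph{other} special height. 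So the paper has sixteen patterns for $p=h+2,\dots,h-1$.

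Even if your triples are read as $p=h+2,h+1,h$, you have dropped the four patterns with $\operatorname{rank}H_*(X_{h+1})=\operatorname{rank}H_*(X_h)$, and you ignore $X_{h-1}$. These dropped patterns are not degenerate. The sequence \eqref{equation: exact sequence of X_p} at $p=h$ gives $\operatorname{rank}\widehat{HFK}(K,h)\equiv\operatorname{rank}H_*(X_{h+1})+\operatorname{rank}H_*(X_h)\pmod 2$. So in those patterns $\widehat{HFK}(K,h)$ has rank $0$ or $2$, not $1$ or $3$ as you claim. The rank-$2$ outcome is exactly a box at height $h$ not met by the staircase, e.g.\ pattern $(0,1,1,0)$ with $\widehat{HFK}(K,h)=\mathbb{F}^2_{(a-1)}$. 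Your Step~1 would never reach these knots. (Your claim that the windows at $\pm h$ are separated by rank-one levels also needs $h\ge 2$; for $h=1$ they share height $0$.)

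Second, and more seriously, you take for granted that grid climbing near $\pm h$ yields ``either a box or a single twist''. You keep $\partial^2=0$ only as a fallback for Maslov coincidences. But that is where the content of the theorem lies, and it is false as stated: Maslov grading, $H_*(Y_p)$ and $H_*(Z_m)$ are all satisfied by many configurations that are neither.

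For example, take pattern $(0,1,1,0)$ with $\widehat{HFK}(K,h)=0$. The gradings are forced ($m+2$ at height $h+1$, $m-1$ at $h-1$), and every level has rank at most one, so there are no coincidences. Grid climbing then produces a complete, unambiguous candidate hv-basis. It has no generator at height $h$, hence no box or twist there. The paper kills it by a separate argument:
\begin{enumerate}
\item $\partial^2=0$ at a corner generator forces a diagonal arrow, because the rectangle involved contains only $U$-translates of specific basis elements and Maslov grading allows no other path.
\item $\partial^2$ of another generator then cannot vanish, because all horizontal and vertical arrows are already fixed.
\end{enumerate}
The same argument removes nearly every non-surviving configuration in the tables: all subcases with $b\neq a-1$, and, when $\widehat{HFK}(K,h)$ has rank one, every grading solution except the twist. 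In a few subcases of patterns $(0,2,1,\ast)$ and $(0,2,2,0)$, the paper instead recomputes $H_*(A_h)$ from the candidate and finds it is not $\mathbb{F}^3$.

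Without this systematic exclusion step, your plan gives no reason why only boxes and twists survive. So the dichotomy in the theorem is not established.
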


\begingroup
\setlength{\intextsep}{0pt}
\begin{figure}[H]
	\centering
	\setlength{\abovecaptionskip}{1pt}
	\begin{minipage}[b]{0.32\linewidth}
		\centering
		\includegraphics[scale = 0.75]{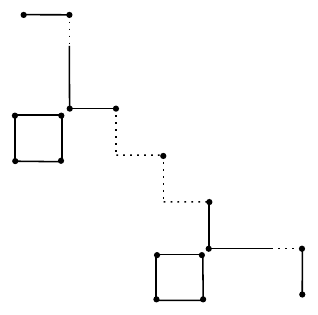}
		\caption*{(5)}
	\end{minipage}
	\begin{minipage}[b]{0.33\linewidth}
		\centering
		\includegraphics[scale = 0.75]{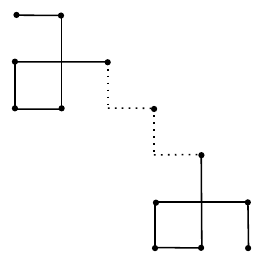}
		\caption*{$(6_{+})$}
	\end{minipage}
	\begin{minipage}[b]{0.33\linewidth}
		\centering
		\includegraphics[scale = 0.75]{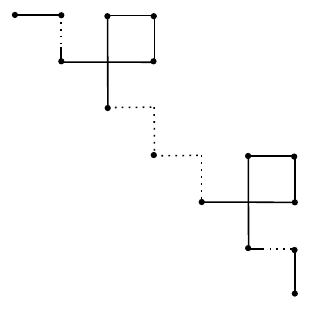}
		\caption*{$(6_{-})$}
	\end{minipage}
    \caption{hv-basis for height 4 knots of the $(1,3,1,3,1)$ type}
    \label{Figure: figure of main theorem for (1,3,1,3,1) type}
\end{figure}
\endgroup

\subsection{Cases of generic genera}\label{subsection:5.1 Cases of generic genera}

In this section, we classify the hv-basis for a height 4 knot $K$ of the $(1,3,1,3,1)$ type with parameter $h$, where $g(K) > 2$ and $g(K) \neq h+1$. Consider $CFK^\infty(K)$; let $x$ denote the generator of lowest Alexander grading strictly greater than $h+1$. We use $m$ to denote its Maslov grading and $A$ to denote its Alexander grading (i.e., $\widehat{HFK}(K, A)  =  \mathbb{F}_{(m)}$).

Applying Lemmas \ref{lemma: 1 and 2k+1 then k or k+1} and \ref{lemma: 2k+1 and 1 then k or k+1} and analyze the short exact sequence consisting of $UX_{p+1},X_{p}$ and $C\{(0,p)\}$, there are 16 possibilities for $H_*(X_m)$ with $m = h+2, h+1, h, h-1$, as shown in the tables below. Before giving the proof, we list the classification results for all cases in Tables \ref{table:(1,3,...,3,1)-1}, \ref{table:(1,3,...,3,1)-2}, \ref{table:(1,3,...,3,1)-3}, and \ref{table:(1,3,...,3,1)-4}.

\newpage

\[ (1,3,1,3,1),g(K) > h+1 \]

\begin{table}[H]
\centering
\begin{tabular}{|c|c|c|}
	\hline
	
	\makecell{$rk H_*(X_p)$ \\ $p = h+2,h+1,h,h-1$ } & \makecell{$\widehat{HFK}(K,i)$\\ $i = h+1,h,h-1$} & hv-basis \\
	
	\hline
	
	(0,1,1,0)& \makecell{$\mathbb{F}_{(a)}$\\ $\mathbb{F}_{(a-1)}\oplus\mathbb{F}_{(a-1)}$ \\ $\mathbb{F}_{(a-2)}$ } & {\includegraphics[scale = 0.58, margin =1pt, valign = c]{fig/7-0}} \\
	
	\hline
	
	(0,1,1,1)& \makecell{$\mathbb{F}_{(a)}$\\ $\mathbb{F}_{(a-1)}\oplus\mathbb{F}_{(a-1)}$ \\ $\mathbb{F}_{(m-1)}\oplus\mathbb{F}_{(a-2)}$ } & {\includegraphics[scale = 0.58, margin = 1pt, valign = c]{fig/7-1}} \\
	
	\hline
	
	\multirow{2}{*}{(0,1,2,0)}& \makecell{$\mathbb{F}_{(m+1)}$ \\ $\mathbb{F}_{(m)}$ \\ $\mathbb{F}_{(m-2)}\oplus\mathbb{F}_{(m-1)}$ }& {\includegraphics[scale = 0.58, margin = 1pt, valign = c]{fig/7-2}}  \\
	
	\cline{2-3}
	
	& \makecell{$\mathbb{F}_{(a)}$ \\ $\mathbb{F}_{(a-1)}\oplus\mathbb{F}_{(a-1)}\oplus\mathbb{F}_{(m-1)}$ \\ $\mathbb{F}_{(a-2)}\oplus\mathbb{F}_{(m-2)}$ } & {\includegraphics[scale = 0.58, margin = 1pt, valign = c]{fig/7-3}}  \\
	
	\hline
	
	\multirow{2}{*}{(0,1,2,1)}& \makecell{$\mathbb{F}_{(m+1)}$ \\ $\mathbb{F}_{(m)}$ \\ $\mathbb{F}_{(m-1)}$} & {\includegraphics[scale = 0.58, margin = 1pt, valign = c]{fig/7-4}}  \\
	
	\cline{2-3}
	
	& \makecell{$\mathbb{F}_{(a)}$ \\ $\mathbb{F}_{(a-1)}\oplus\mathbb{F}_{(a-1)}\oplus\mathbb{F}_{(m-1)}$ \\ $\mathbb{F}_{(a-2)}$ } & {\includegraphics[scale = 0.58, margin = 1pt, valign = c]{fig/7-5}}  \\
	
	\hline
\end{tabular}
\caption{}
\label{table:(1,3,...,3,1)-1}
\end{table}
\newpage

\begin{table}[H]
\centering
\begin{tabular}{|c|c|c|}
	\hline
	
	\multirow{2}{*}{(0,2,1,0)}& \makecell{$\mathbb{F}_{(m-2)}\oplus\mathbb{F}_{(m-1)}$ \\ $\mathbb{F}_{(m-3)}$ \\ $\mathbb{F}_{(m-4)}$} & {\includegraphics[scale = 0.65, margin = 1pt, valign = c]{fig/7-6}} \\
	
	\cline{2-3}
	
	& \makecell{ $\mathbb{F}_{(a)}\oplus\mathbb{F}_{(m-1)}$ \\ $\mathbb{F}_{(a-1)}\oplus\mathbb{F}_{(a-1)}\oplus\mathbb{F}_{(m-2)}$  \\ $\mathbb{F}_{(a-2)}$ } & {\includegraphics[scale = 0.65, margin = 1pt, valign = c]{fig/7-7}} \\
	
	\hline
	
	\multirow{2}{*}{(0,2,1,1)}& \makecell{$\mathbb{F}_{(m-2)}\oplus\mathbb{F}_{(m-1)}$ \\ $\mathbb{F}_{(m-3)}$ \\ $\mathbb{F}_{(m-4)}\oplus\mathbb{F}_{(m-3)}$} &{\includegraphics[scale = 0.65, margin = 1pt, valign = c]{fig/7-8}} 
	\\
	
	\cline{2-3}
	
	& \makecell{ $\mathbb{F}_{(m-1)}\oplus\mathbb{F}_{(b)}$ \\ $\mathbb{F}_{(m-2)}\oplus\mathbb{F}_{(b-1)}\oplus\mathbb{F}_{(b-1)}$  \\ $\mathbb{F}_{(m-3)}\oplus\mathbb{F}_{(b-2)}$ } & {\includegraphics[scale = 0.65, margin = 1pt, valign = c]{fig/7-9}} 
	\\
	
	\hline
	
	(0,2,2,0)& \makecell{ $\mathbb{F}_{(m-1)}\oplus\mathbb{F}_{(b)}$ \\ $\mathbb{F}_{(b-1)}\oplus\mathbb{F}_{(b-1)}$  \\ $\mathbb{F}_{(m-4)}\oplus\mathbb{F}_{(b-2)}$ } & {\includegraphics[scale = 0.65, margin = 1pt, valign = c]{fig/7-10}}  \\
	
	\hline
	
	(0,2,2,1)& \makecell{ $\mathbb{F}_{(m-1)}\oplus\mathbb{F}_{(a)}$ \\ $\mathbb{F}_{(a-1)}\oplus\mathbb{F}_{(a-1)}$  \\ $\mathbb{F}_{(a-2)}$ } & {\includegraphics[scale = 0.65, margin = 1pt, valign = c]{fig/7-11}}  \\
	
	\hline
\end{tabular}
\caption{}
\label{table:(1,3,...,3,1)-2}
\end{table}
\newpage

\begin{table}[H]
\centering
\begin{tabular}{|c|c|c|}
	
	\hline
	
	(1,1,1,0)& \makecell{$\mathbb{F}_{(a)}\oplus\mathbb{F}_{(\alpha+1)}$\\ $\mathbb{F}_{(a-1)}\oplus\mathbb{F}_{(a-1)}$ \\ $\mathbb{F}_{(a-2)}$ } & {\includegraphics[scale = 0.7, margin = 1pt, valign = c]{fig/7-0-1}} \\
	
	\hline
	(1,1,1,1)& \makecell{$\mathbb{F}_{(a)}\oplus\mathbb{F}_{(\alpha+1)}$\\ $\mathbb{F}_{(a-1)}\oplus\mathbb{F}_{(a-1)}$ \\ $\mathbb{F}_{(a-2)}\oplus\mathbb{F}_{(\alpha)}$ } & {\includegraphics[scale = 0.7, margin = 1pt, valign = c]{fig/7-1-1}} 
	\\
	
	\hline
	
	\multirow{2}{*}{(1,1,2,0)}& \makecell{$\mathbb{F}_{(\alpha+1)}\oplus\mathbb{F}_{(\alpha+2)}$ \\ $\mathbb{F}_{(\alpha+1)}$ \\ $\mathbb{F}_{(\alpha-1)}\oplus\mathbb{F}_{(\alpha)}$ }&{\includegraphics[scale = 0.7, margin = 1pt, valign = c]{fig/7-2-1}}  
	\\
	
	\cline{2-3}
	
	& \makecell{$\mathbb{F}_{(a)}\oplus\mathbb{F}_{(\alpha+1)}$ \\ $\mathbb{F}_{(a-1)}\oplus\mathbb{F}_{(a-1)}\oplus\mathbb{F}_{(\alpha)}$ \\ $\mathbb{F}_{(a-2)}\oplus\mathbb{F}_{(\alpha-1)}$ }  & {\includegraphics[scale = 0.7, margin = 1pt, valign = c]{fig/7-3-1}}  
	\\
	
	\hline
	
	\multirow{2}{*}{(1,1,2,1)}& \makecell{$\mathbb{F}_{(\alpha+1)}\oplus\mathbb{F}_{(\alpha+2)}$ \\ $\mathbb{F}_{(\alpha+1)}$ \\ $\mathbb{F}_{(\alpha)}$} &{\includegraphics[scale = 0.7, margin = 1pt, valign = c]{fig/7-4-1}} 
	\\
	
	\cline{2-3}
	
	& \makecell{$\mathbb{F}_{(a)}\oplus\mathbb{F}_{(\alpha+1)}$ \\ $\mathbb{F}_{(a-1)}\oplus\mathbb{F}_{(a-1)}\oplus\mathbb{F}_{(\alpha)}$ \\ $\mathbb{F}_{(a-2)}$ } & {\includegraphics[scale = 0.7, margin = 1pt, valign = c]{fig/7-5-1}} 
	\\
	
	\hline
\end{tabular}
\caption{}
\label{table:(1,3,...,3,1)-3}
\end{table}
\newpage

\begin{table}[H]
\centering
\begin{tabular}{|c|c|c|}
	\hline	
	
	\multirow{2}{*}{(1,2,1,0)}& \makecell{$\mathbb{F}_{(\alpha-1)}$ \\ $\mathbb{F}_{(\alpha-2)}$ \\ $\mathbb{F}_{(\alpha-3)}$} & {\includegraphics[scale = 0.65, margin = 1pt, valign = c]{fig/7-6-1}} 
	\\
	
	\cline{2-3}
	
	& \makecell{ $\mathbb{F}_{(a)}$ \\ $\mathbb{F}_{(a-1)}\oplus\mathbb{F}_{(a-1)}\oplus\mathbb{F}_{(\alpha-1)}$  \\ $\mathbb{F}_{(a-2)}$ } & {\includegraphics[scale = 0.65, margin = 1pt, valign = c]{fig/7-7-1}} 
	\\
	
	\hline
	
	\multirow{2}{*}{(1,2,1,1)}& \makecell{$\mathbb{F}_{(\alpha-1)}$ \\ $\mathbb{F}_{(\alpha-2)}$ \\ $\mathbb{F}_{(\alpha-3)}\oplus\mathbb{F}_{(\alpha-2)}$} & {\includegraphics[scale = 0.65, margin = 1pt, valign = c]{fig/7-8-1}} 
	\\
	
	\cline{2-3}
	
	& \makecell{ $\mathbb{F}_{(a)}$ \\ $\mathbb{F}_{(a-1)}\oplus\mathbb{F}_{(a-1)}\oplus\mathbb{F}_{(\alpha-1)}$  \\ $\mathbb{F}_{(a-2)}\oplus\mathbb{F}_{(\alpha-2)}$ } & {\includegraphics[scale = 0.65, margin = 1pt, valign = c]{fig/7-9-1}} 
	\\
	
	\hline
	
	(1,2,2,0)& \makecell{ $\mathbb{F}_{(a)}$ \\ $\mathbb{F}_{(a-1)}\oplus\mathbb{F}_{(a-1)}$  \\ $\mathbb{F}_{(a-2)}\oplus\mathbb{F}_{(\alpha-3)}$ } & {\includegraphics[scale = 0.65, margin = 1pt, valign = c]{fig/7-10-1}}  
	\\
	
	\hline
	
	(1,2,2,1)& \makecell{ $\mathbb{F}_{(a)}$ \\ $\mathbb{F}_{(a-1)}\oplus\mathbb{F}_{(a-1)}$  \\ $\mathbb{F}_{(a-2)}$ } & {\includegraphics[scale = 0.65, margin = 1pt, valign = c]{fig/7-11-1}} 
	\\
	
	\hline
\end{tabular}
\caption{}
\label{table:(1,3,...,3,1)-4}
\end{table}

\newpage
\settocdepth{section} % 临时只显示到 section
\subsubsection{(0,1,1,0) case}
\resettocdepth      % 恢复原 tocdepth

We start with the $(0,1,1,0)$ case. SImilar to the argument in Lemma \ref{lemma: 151 X_2 and Y_2}, we know that the condition $H_*(X_{h+2}) = 0$ is equivalent to $m$ being odd. 

Recall the short exact sequence \eqref{equation: exact sequence of X_p}:
\begingroup
\setlength{\abovedisplayskip}{3pt}
\setlength{\abovedisplayshortskip}{3pt}
\setlength{\belowdisplayskip}{3pt}
\setlength{\belowdisplayshortskip}{3pt}
\[ 0\rightarrow  UX_{p+1}\rightarrow X_{p}\rightarrow C\{(0,p)\}\rightarrow 0 \]
\endgroup
Taking $p = h+1,h-1$, we obtain that $\widehat{HFK}(K,h\pm 1)$ are one-dimensional. For $p = h$, we obtain that $\widehat{HFK}(K,h)$ is either zero or two-dimensional. We will frequently use the symmetric property of homology, $H_*(X_{-p}) \cong H_{*+2p}(Y_{p+1})$; see Equation \eqref{equation:(i,j) symmetry}.

\textbf{Subcase (\romannumeral1)}: Assume $\widehat{HFK}(K,h) = 0$, $\widehat{HFK}(K,h+1) = \mathbb{F}_{(a)}$, and $\widehat{HFK}(K,h-1) = \mathbb{F}_{(b)}$; see the left of Figure \ref{figure:(13131) (0,1,1,0) case(i)}(later computation gives $b=a-3$).

\begingroup
\setlength{\intextsep}{1pt}
\begin{figure}[H]
\centering
    \setlength{\abovecaptionskip}{0pt}
    \begin{minipage}[c]{0.48\textwidth}
	   \includegraphics[scale = 0.75]{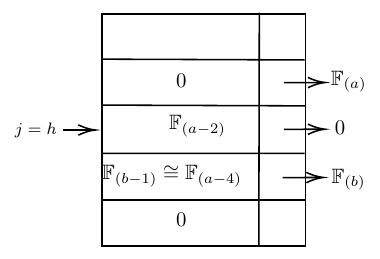}
    \end{minipage}
    \begin{minipage}[c]{0.48\textwidth}
	   \includegraphics[scale = 0.75]{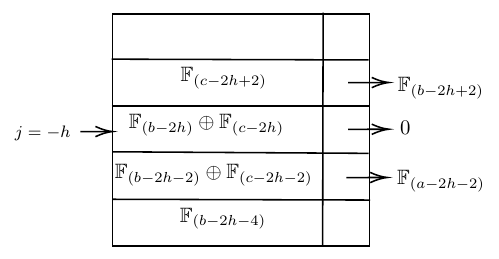}
    \end{minipage}
    \caption{}
    \label{figure:(13131) (0,1,1,0) case(i)}
\end{figure}
\endgroup
\vspace{-8pt}
Taking $p = h+1$ in Equation \eqref{equation: exact sequence of X_p}, we obtain $H_*(X_{h+1})\cong H_*(\{(0,h+1)\}) = \mathbb{F}_{(a)}$. Then $H_*({UX_{h+1}})\cong H_*({X_h}) \cong = \mathbb{F}_{(a-2)}$, hence $H_*({UX_h}) = \mathbb{F}_{(a-4)}$. The boundary map $\delta: H_*({\{(0,h-1)\}})\rightarrow H_*({UX_h})$ is an isomorphism of degree -1, then $b-1=a-4, H_*({\{(0,h-1)\}}) = \mathbb{F}_{(a-3)}$; see Figure \ref{figure:(13131) (0,1,1,0) case(i)}. 

By Spin$^c$ conjugation \cite[Proposition 3.10]{OSknotinvar}, we have $H_*({\{0,-h+1\}}) = \mathbb{F}_{(b-2h+2)}$ and
$H_*({\{0,-h-1\}}) = \mathbb{F}_{(a-2h-2)}$.

Recall $\operatorname{rank} H_*(X_p)\cong H_{*-2p}(Y_{-p+1})$, so $\operatorname{rank} H_*(Y_{-h+2})=0=\operatorname{rank} H_*(X_{h-1})$. Lemma \ref{lemma: 1 and 1 then 1 or 0} tells us that $\operatorname{rank}\ H_*(X_{-h+2})=1$. We assume $H_*({X_{-h+2}}) = \mathbb{F}_{(c-2h+4)}$, as illustrated in Figure \ref{figure:(13131) (0,1,1,0) case(i)}. We thus obtain $H_*({UX_{-h}}) = \mathbb{F}_{(b-2h-2)}\oplus\mathbb{F}_{(c-2h-2)}$. The boundary map $\delta$ must send $\mathbb{F}_{(a-2h-2)}$ to $\mathbb{F}_{(c-2h-2)}$, since $b-2h-2=a-2h-5$. We then get $c-2h-2 = a-2h-3, c=a-1$ and $H_*({X_{-h-1}}) = \mathbb{F}_{(b-2h-2)}=\mathbb{F}_{(a-2h-5)}$. Applying the $U$-action to send $X_{-h-1}$ downward to $UX_{-A}$, we obtain $m-2A-1 = b-2A = a-2A-3$, hence $a = m+2$, then $b=m-1, c=m+1$.

We thus obtain $\widehat{HFK}(K,h+1) = \mathbb{F}_{(m+2)}$, $\widehat{HFK}(K,h) = 0, \widehat{HFK}(K,h-1) = \mathbb{F}_{(m-1)}$.

The homology group $H_*(X_{-h+2})$ $ = $ $\mathbb{F}_{(c-2h+4)}$ $ = $ $\mathbb{F}_{(m-2h+5)}$. Assume the first nonzero generator above height $-h+1$ is located at Alexander grading $-k$ level, then $\widehat{HFK}(K,-k) = \mathbb{F}_{(m+1-2k)}$ and $\widehat{HFK}(K,k) = \mathbb{F}_{(m+1)}$. Equivalently, the first nonzero homology generator below height $h-1$ is $\mathbb{F}_{(m+1)}$.

\begingroup
\setlength{\intextsep}{1pt}
\begin{figure}[H]
    \setlength{\abovecaptionskip}{0pt}
    \begin{minipage}[t]{0.48\textwidth}
    \centering
       \includegraphics[scale = 0.75]{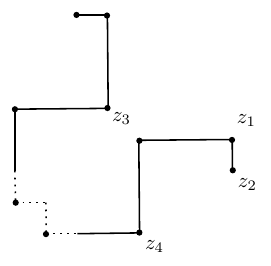} 
    \end{minipage}
	\begin{minipage}[t]{0.48\textwidth}
    \centering
       \includegraphics[scale = 0.75]{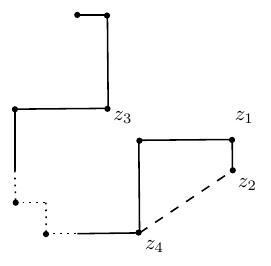} 
    \end{minipage}
    \caption{}
    \label{figure:(1,3,1,3,1) first case}
\end{figure}
\endgroup
\vspace{-8pt}

Under the constraints of Maslov grading and $H_*(Y_p)$, we obtain an hv-basis as depicted in Figure \ref{figure:(1,3,1,3,1) first case}.

We denote part of the generators by $z_i$ (see the right of Figure \ref{figure:(1,3,1,3,1) first case}). The length of the vertical differential between $z_3$ and $z_4$ is 2 and the length of the horizontal differential from $z_1$ to $z_3$ is no less than 3. To satisfy the condition $\partial^2 z_1 = 0$, there must be a diagonal differential from $z_2$ to $z_4$, since the only generators appearing in the rectangle region spanned by $z_1,z_3,z_4$ are $U-$ translations of $z_1,z_2,z_4$. There does not exist other paths from $z_1$ to $z_4$ by Maslov grading constraints. We now check the condition $\partial^2 z_2=0$. Since all horizontal and vertical differentials are already determined, there can not exist a second path from $z_2$ to $\partial z_4$. This gives a contradiction.

Thus, we exclude this subcase.

\textbf{Subcase (\romannumeral 2)}: Assume that $\widehat{HFK}(K,h+1) = \mathbb{F}_{(a)},$ $\widehat{HFK}(K,h) = \mathbb{F}_{(a-1)}$ $\oplus$ $\mathbb{F}_{(b)}$, and $\widehat{HFK}(K,h-1) = \mathbb{F}_{(b-1)}$. See Figure \ref{figure:(13131) (0,1,1,0) case(ii)}.

\begingroup
\setlength{\intextsep}{2pt}
\begin{figure}[H]
	\centering
    \setlength{\abovecaptionskip}{1pt}
    \begin{minipage}[c]{0.40\textwidth}
	   \includegraphics[scale = 0.8]{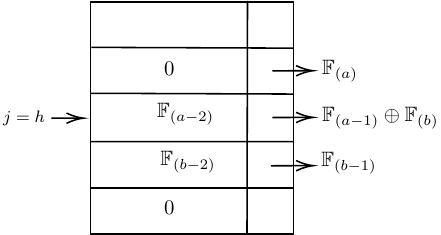}
    \end{minipage}
    \begin{minipage}[c]{0.52\textwidth}
	   \includegraphics[scale = 0.8]{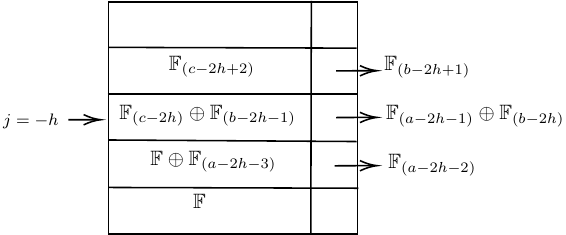}
    \end{minipage}
    \caption{}
    \label{figure:(13131) (0,1,1,0) case(ii)}
\end{figure}
\endgroup
\vspace{-8pt}

In the negative height part, assume $H_*({X_{-h+2}}) = \mathbb{F}_{(c-2h+4)}$, then $H_*(UX_{-h+2})=\mathbb{F}_{(c-2h+2)}$. We obtain
$H_*({X_{-h+1}}) = \mathbb{F}_{(c-2h+2)}$ $\oplus$ $\mathbb{F}_{(b-2h+1)}$. The boundary map $\delta: H_*({\{(0,-h-1)\}})\rightarrow H_*({UX_{-h}})$ sends $\mathbb{F}_{(a-2h-2)}$ to an $\mathbb{F}_{(a-2h-3)}$ component (in the direct sum) in $H_*({UX_{-h}})$, hence there is an $\mathbb{F}_{(a-2h-1)}$ component in $H_*(X_{-h})$.

We discuss separately according to the map $\delta: H_*({\{(0,-h)\}})\rightarrow H_*({UX_{-h+1}})$.

\begin{itemize}

    \item $\delta$ sends $\mathbb{F}_{(b-2h)}$ to $\mathbb{F}_{(b-2h-1)}$.
    
We obtain $H_*({X_{-h}})$ $ = $ $\mathbb{F}_{(a-2h-1)}$ $\oplus$ $\mathbb{F}_{(c-2h)}$, so $H_*({UX_{-h}})$ $ = $ $\mathbb{F}_{(a-2h-3)}$ $\oplus$ $\mathbb{F}_{(c-2h-2)}$ and $H_*({X_{-h-1}})$ $ = $ $\mathbb{F}_{(c-2h-2)}$. At height $-A$, we have $c-2A = m-2A-1$, hence $c = m-1$.

    \item $\delta$ sends $\mathbb{F}_{(a-2h-1)}$ to $\mathbb{F}_{(c-2h)}$.
    
Then $H_*({X_{-h}})$ $ = $ $\mathbb{F}_{(b-2h-1)}$ $\oplus$ $\mathbb{F}_{(b-2h)}$, so $H_*({UX_{-h}})$ $ = $ $\mathbb{F}_{(b-2h-3)}$ $\oplus$ $\mathbb{F}_{(b-2h-2)}$. Either $b-2h-3 = a-2h-3$ and $b-2A = m-2A-1$, or $b-2h-2 = a-2h-3$ and $b-2A-1 = m-2A-1$.

    \item $\delta$ sends $\mathbb{F}_{(a-2h-1)}$ to $\mathbb{F}_{(b-2h-1)}$.
    
Then $H_*({X_{-h}})$ $ = $ $\mathbb{F}_{(b-2h)}$ $\oplus$ $\mathbb{F}_{(c-2h)}$, so $H_*({UX_{-h}})$ $ = $ $\mathbb{F}_{(b-2h-2)}$ $\oplus$ $\mathbb{F}_{(c-2h-2)}$. Here $b-2h-2 = a-2h-3$, so $H_*({X_{-h-1}})$ $ = $ $\mathbb{F}_{(c-2h-2)}$. At height $-A$, we have $c-2A = m-2A-1$.
    
\end{itemize}

We list the computational results in Table \ref{table:131-3}.

\begingroup
\setlength{\intextsep}{2pt}
\begin{table}[H]
\centering
\setlength{\abovecaptionskip}{1pt}
\begin{tabular}{|c|c|c|c|c|}
\hline
$a$ & $a$ & $a$ & $m+1$ & $m-1$ \\
\hline
$b$ & $b$ & $a-1$ & $m$ & $m-1$ \\
\hline
$c$ & $m-1$ & $m-1$ & $m-1$ & $m-3$ \\
\hline
\end{tabular}
\caption{}
\label{table:131-3}
\end{table}
\endgroup

The situations in second and third columns are included in the first column. We first discuss the fourth column. 

\textit{If} $a = m-1,b = m-1,c = m-3$. the Maslov grading and homologies of $Y_p$ determine the vertical and horizontal differentials by grid climbing method, as shown in Figure \ref{figure:(13131) (0,1,1,0)-1}.

\begingroup
\setlength{\intextsep}{0pt}
\begin{figure}[H]
	\centering
    \setlength{\abovecaptionskip}{0pt}
    \begin{minipage}[c]{0.48\textwidth}
	   \includegraphics[scale = 0.7]{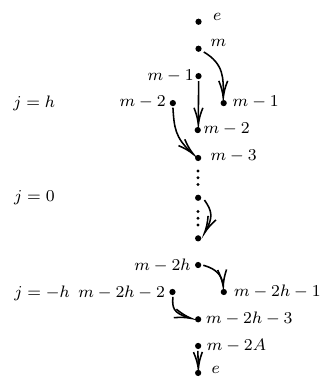}
    \end{minipage}
    \begin{minipage}[c]{0.48\textwidth}
	   \includegraphics[scale = 0.7]{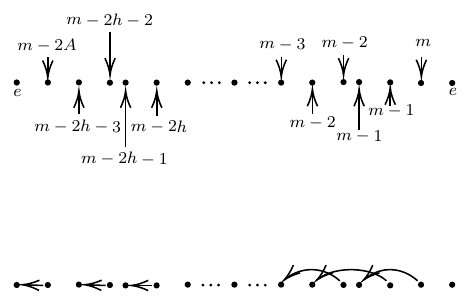}
    \end{minipage}
    \caption{}
    \label{figure:(13131) (0,1,1,0)-1}
\end{figure}
\endgroup
\vspace{-8pt}

Consider the $1\times 2$ rectangle with vertices $x_{(h+1,m-1)}$, $x_{(h-1,m-2)}$ and $U^{-h+1}y_{(-h,m-2h-1)}$. A vertical differential from the generator $Ux_{(A,m)}$ goes downward to $U^{-h+1}y_{(-h,m-2h-1)}$ (see Figure \ref{figure:(13131) (0,1,1,0)-2}). Apply Lemma \ref{lemma: 1xn lemma} and the constraint $\partial^2 x_{(h+1,m-1)}=0$, since the horizontal and vertical differentials are already exhausted, there does not exist another path from $x_{(h+1,m-1)}$ to $U^{-h+1}y_{(-h,m-2h-1)}$. We thus rule out this case.
\begingroup
\setlength{\intextsep}{0pt}
\begin{figure}[H]
    \centering
    \setlength{\abovecaptionskip}{0pt}
	\includegraphics[scale = 0.7]{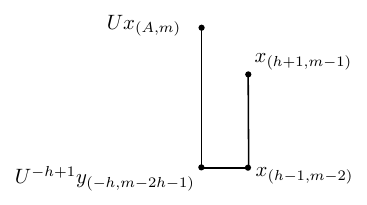}
    \caption{}
    \label{figure:(13131) (0,1,1,0)-2}
\end{figure}
\endgroup
\vspace{-8pt}

\textit{If $c = m-1$}, we discuss separately according to the relationships among $a, b, \text{and}\ m$. 

% In summarize, we either exclude the possibility or obtain an hv-basis whose configuration is a staircase plus two separate boxes.

\textit{If $b = a-1, a\neq m, a\neq m+1$}, we can obtain a vertically simplified basis by Maslov grading and $H_*(Y_p)$ constraints, as shown in Figure \ref{figure:(13131) (0,1,1,0)-3}. The horizontal differentials can also be determined by $H_*(Z_p)$ and Maslov grading. The hv-basis we obtain is a staircase plus two boxes located at heights $\pm h$. The number of generators between height $h-1$ and $-h+1$ is indeterminate. We draw three generators between height $h-1$ and $-h+1$ in the illustration, using dashed lines to represent differentials of indeterminate length (see Figure \ref{figure:(13131) (0,1,1,0)-3}).

\begingroup
\setlength{\intextsep}{0pt}
\begin{figure}[H]
	\centering
    \setlength{\abovecaptionskip}{0pt}
    \begin{minipage}[c]{0.4\textwidth}
	   \includegraphics[scale = 0.7]{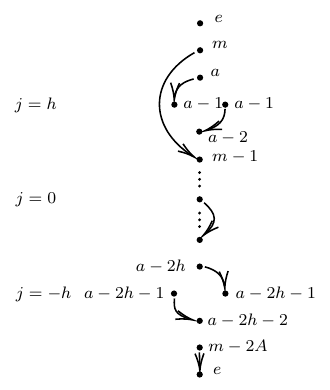}
    \end{minipage}
    \begin{minipage}[c]{0.3\textwidth}
	   \includegraphics[scale = 0.7]{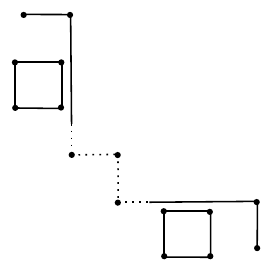}
    \end{minipage}
    \caption{}
    \label{figure:(13131) (0,1,1,0)-3}
\end{figure}
\endgroup
\vspace{-8pt}

\textit{If $b = a-1 = m$}, there are two possible vertically simplified bases; see Figure \ref{figure:(13131) (0,1,1,0)-4}.

\begingroup
\setlength{\intextsep}{1pt}
\begin{figure}[H]
    \centering
    \setlength{\abovecaptionskip}{0pt}
	\includegraphics[scale = 0.7]{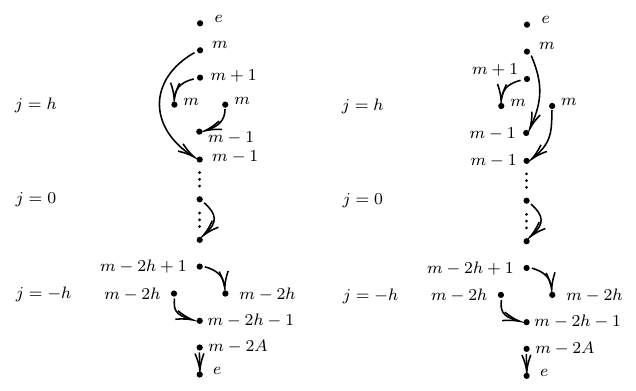}
    \caption{}
    \label{figure:(13131) (0,1,1,0)-4}
\end{figure}
\endgroup

The left one is similar to the previous case. We can determine the horizontal differentials and obtain an hv-basis, which is a staircase plus two boxes located at symmetric heights. For the right one, we determine the horizontal differentials by $H_*(Z_m)$ and Maslov grading constraints. Consider the generator $x_{(h,m,2)}$ in the obtained hv-basis, we can check that $\partial^2 x_{(h,m,2)}\neq 0$. Thus, we rule out this case. 

\begingroup
\setlength{\intextsep}{1pt}
\begin{figure}[H]
    \centering
    \setlength{\abovecaptionskip}{0pt}
	\includegraphics[scale = 0.7]{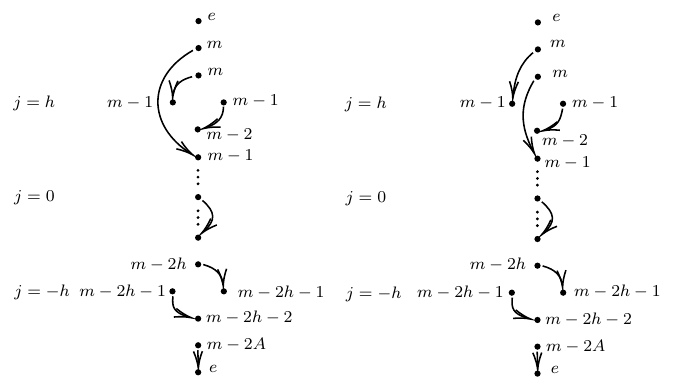}
    \caption{}
    \label{figure:(13131) (0,1,1,0)-5}
\end{figure}
\endgroup

\textit{If $b = a-1 = m-1$}, there are also two possible vertically simplified bases (see Figure \ref{figure:(13131) (0,1,1,0)-5}). We can turn the left one into an hv-basis by Maslov grading and $H_*(Z_m)$ constraints, which is a staircase plus two boxes at symmetric heights. For the right one, we determine the horizontal differentials and find that $\partial^2 x_{(h,m-1,2)}\neq 0$, thus we rule out this case.

\textit{If $b\neq a-1, b\neq m, a\neq m, a\neq m+1$}, we can obtain a vertically simplified basis as shown in Figure \ref{figure:(13131) (0,1,1,0)-6}. We then determine the horizontal differentials and find that $\partial^2 x_{(h+1,a)}\neq 0$, which rules out this case. 

\begingroup
\setlength{\intextsep}{1pt}
\begin{figure}[H]
    \centering
    \setlength{\abovecaptionskip}{0pt}
	\includegraphics[scale = 0.7]{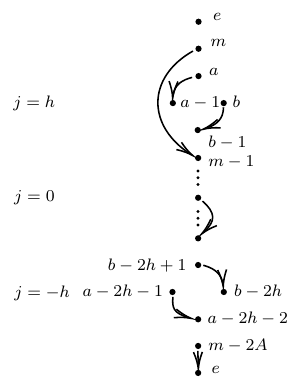}
    \caption{}
    \label{figure:(13131) (0,1,1,0)-6}
\end{figure}
\endgroup

\textit{If $b\neq a-1, b = m$}. Similarly, we first determine a vertically simplified basis, then determine the horizontal differentials and find $\partial^2 x_{(h+1,a)}\neq 0$. Thus we exclude this case.

\textit{If $b\neq a-1, b = m, a = m$}, we determine a vertically simplified basis first, during determining the horizontal differentials we find that $\partial^2 x_{(h+1,m)}\neq 0$. This case is excluded.

\textit{If $b\neq a-1, b\neq m, a = m$}, in the vertically simplified basis we obtain, we go to check that $\partial^2 x_{(h,b)}\neq 0$, which excludes this case.

\textit{If $b\neq a-1, b\neq m, a = m+1$}, again we check that $\partial^2 x_{(h,b)}\neq 0$, which rules out this case.

\settocdepth{section} % 临时只显示到 section
\subsubsection{(0,1,1,1) case}
\resettocdepth      % 恢复原 tocdepth

In the short exact sequence \eqref{equation: exact sequence of X_p}, taking $p = h+1,h,h-1$ shows that $\widehat{HFK}(K,h+1)$ is one-dimensional, $\widehat{HFK}(K,h)$ is either zero- or two-dimensional, and $\widehat{HFK}(K,h-1)$ is two-dimensional. 

\textbf{Subcase (i)}: Assume $\widehat{HFK}(K,h) = 0$.

We use the conjugation symmetry and analyze the short exact sequences \eqref{equation: exact sequence of X_p}, which gives $\widehat{HFK}(K,h+1) = \mathbb{F}_{(m+2)}$, $\widehat{HFK}(K,h-1)$ $ = $ $\mathbb{F}_{(m-1)}$ $\oplus$ $\mathbb{F}_{(m+1)}$. The first nonzero generator above height $-h+1$ has Maslov grading $m-2h+4$. In this case, Maslov grading and $H_*(Y_p), H_*(Z_m)$ determine the vertical and horizontal differentials. We obtain an hv-basis as illustrated in Figure \ref{figure:(13131) (0,1,1,1)-1}.

\begingroup
\setlength{\intextsep}{1pt}
\begin{figure}[H]
    \centering
    \setlength{\abovecaptionskip}{0pt}
	\includegraphics[scale = 0.8]{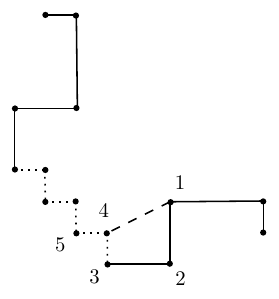}
    \caption{}
    \label{figure:(13131) (0,1,1,1)-1}
\end{figure}
\endgroup

We use numbers to indicate generators, as shown in the figure above. To satisfy $\partial^2 1=0$, there must be a diagonal differential from $1$ to $4$, creating a path from $1$ to $5$ under two successive differentiations. However, no second path from $1$ to $5$ could exist because all horizontal and vertical differentials are already determined. This is a contradiction, and we rule out this case.

\textbf{Subcase (ii)}: Assume $\widehat{HFK}(K,h+1) = \mathbb{F}_{(a)}$, $\widehat{HFK}(K,h) = $ $\mathbb{F}_{(a-1)}$ $\oplus$ $\mathbb{F}_{(b)}$, and $\widehat{HFK}(K,h-1) = \mathbb{F}_{(b-1)}$ $\oplus$ $\mathbb{F}_{(c)}$; see Figure \ref{figure:(13131) (0,1,1,1)-2}.

\begingroup
\setlength{\intextsep}{1pt}
\begin{figure}[H]
	\centering
    \setlength{\abovecaptionskip}{1pt}
    \begin{minipage}[c]{0.45\textwidth}
	   \includegraphics[scale = 0.8]{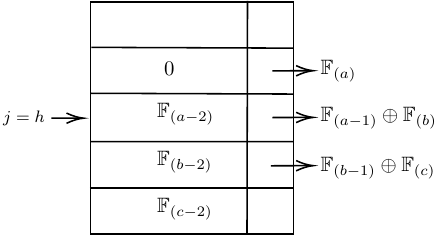}
    \end{minipage}
    \begin{minipage}[c]{0.52\textwidth}
	   \includegraphics[scale = 0.8]{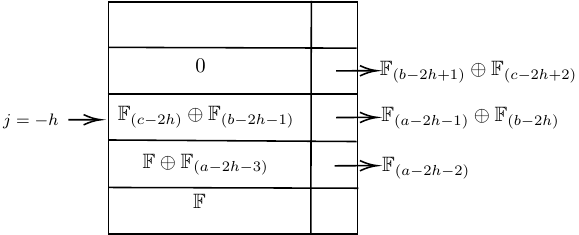}
    \end{minipage}
    \caption{}
    \label{figure:(13131) (0,1,1,1)-2}
\end{figure}
\endgroup

We use Maslov grading constraint and analyze the exact sequences \eqref{equation: exact sequence of X_p}. We list the five possible results in Table \ref{table:131-4}. The situations in the first four columns are included into the first column. We deal with the fifth column first.

\begingroup
\setlength{\intextsep}{1pt}
\begin{table}[H]
\centering
\setlength{\abovecaptionskip}{1pt}
\begin{tabular}{|c|c|c|c|c|c|}
\hline
$a$  &  $a$    &  $a$   &  $a$    &  $m+1$ &  $m-1$  \\
\hline
$b$  &  $b$    &  $m$   &  $a-1$  &  $m$   &  $m-1$  \\
\hline
$c$  &  $m-1$  &  $m-1$ &  $m-1$  &  $m-1$ &  $m-3$  \\
\hline
\end{tabular}
\caption{}
\label{table:131-4}
\end{table}
\endgroup

\textit{If $a = m-1, b = m-1, c = m-3$}, we use Maslov grading and $H_*(Y_p), H_*(Z_m)$ to constrain the horizontal and vertical differentials. We obtain an hv-basis, part of which is illustrated in Figure \ref{figure:(13131) (0,1,1,1)-3}.

\begingroup
\setlength{\intextsep}{0pt}
\begin{figure}[H]
    \centering
    \setlength{\abovecaptionskip}{0pt}
	\includegraphics[scale = 0.8]{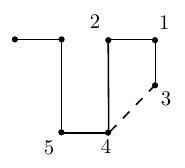}
    \caption{}
    \label{figure:(13131) (0,1,1,1)-3}
\end{figure}
\endgroup

As shown in the figure above, consider the generators labeled 2, 4, and 5. There could not exist a second path from 2 to 5, which gives a contradiction since $\partial^2 2=0$, and this rules out this case.

For $c = m-1$, we discuss separately according to the relationship between $a$ and $b$.

\textit{If $b = a-1, a\neq m, a\neq m+1$}, under the constraints of Maslov grading and $H_*(Y_p), H_*(Z_m)$ we obtain an hv-basis as shown in Figure \ref{figure:(13131) (0,1,1,1)-4}. The resulting hv-basis is a staircase plus two boxes, located at symmetric heights.

\textit{If $b = a-1 = m-1$ or $b = a-1 = m$}, we shall discuss different possibilities of vertical simplified basis, then determine the corresponding horizontal differentials. Either the corresponding possibility is excluded by the $\partial^2=0 $ constraint or we obtain an hv-basis identical to the one shown in Figure \ref{figure:(13131) (0,1,1,1)-4}.

\begingroup
\setlength{\intextsep}{0pt}
\begin{figure}[H]
	\centering
    \setlength{\abovecaptionskip}{0pt}
    \begin{minipage}[c]{0.5\textwidth}
        \centering
	   \includegraphics[scale = 0.7]{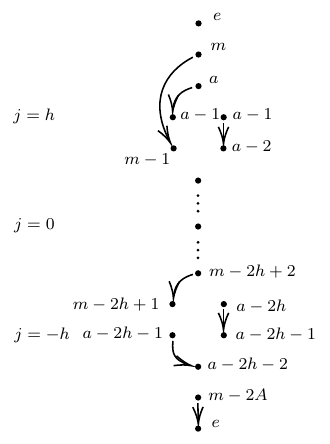}
    \end{minipage}
    \begin{minipage}[c]{0.48\textwidth}
        \centering
	   \includegraphics[scale = 0.8]{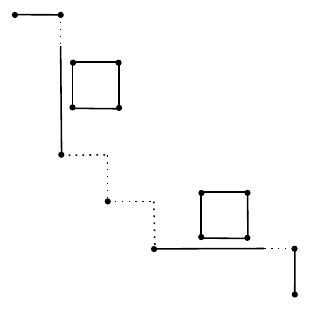}
    \end{minipage}
    \caption{}
    \label{figure:(13131) (0,1,1,1)-4}
\end{figure}
\endgroup

\textit{If $b \neq a-1, a \neq m, a \neq m+1$}, we can rule out this case by checking that $\partial^2 x_{(h+1,a)} \neq 0$.

\textit{If $b \neq a-1, a = m, b = m$}, we can rule out this case by checking that $\partial^2 x_{(h,m)} \neq 0$.

\textit{If $b \neq a-1, a = m+1, b \neq m$}, we can rule out this case since $\partial^2 x_{(h,b)} \neq 0$.

\textit{If $b \neq a-1, b \neq m, a = m$}, we can rule out this case since $\partial^2 x_{(h,b)} \neq 0$.

\settocdepth{section} % 临时只显示到 section
\subsubsection{(0,1,2,0) case}
\resettocdepth      % 恢复原 tocdepth

From the short exact sequence \eqref{equation: exact sequence of X_p}, we obtain that $\widehat{HFK}(K,h+1)$ is one-dimensional, $\widehat{HFK}(K,h)$ is one- or three- dimensional, and $\widehat{HFK}(K,h-1)$ is two-dimensional.

\textbf{Subcase (i)}: Assume $\widehat{HFK}(K,h+1) = \mathbb{F}_{(a)}$, $\widehat{HFK}(K,h) = \mathbb{F}_{(b)}$, and $\widehat{HFK}(K,h-1)$ $ = $ $\mathbb{F}_{(a-3)}$ $\oplus$ $\mathbb{F}_{(b-1)}$.

Denote the first non-zero generator below height $h-1$ by $\mathbb{F}_{(d)}$. Assume $H_*(UX_{-h+2})$ $\cong$ $\mathbb{F}_{(c)}$. We use Maslov grading and exact sequences \eqref{equation: exact sequence of X_p} to determine $a,b,c,d$. There are three possibilities, as shown in Table \ref{table:131-5}.

\begingroup
\setlength{\intextsep}{2pt}
\begin{table}[H]
\centering
\setlength{\abovecaptionskip}{1pt}
\begin{tabular}{|c|c|c|c|}
\hline
$a$  &  $m-1$    &  $m+1$    &  $m+2$       \\
\hline
$b$  &  $m-1$    &  $m$      &  $m+1$       \\
\hline
$c$  &  $m-2h-3$ &  $m-2h-1$ &  $m-2h+1$    \\
\hline
$d$  &  $m-5$    &  $m-3$    &  $m-1$       \\
\hline
\end{tabular}
\caption{}
\label{table:131-5}
\end{table}
\endgroup
\vspace{-8pt}

For the first column, we first determine a vertical simplified basis under constriant of Maslov grading and $H_*(Y_p)$. Then we determine the horizontal differentials under constraints of $H_*(Z_m)$ and Maslov grading. We then check that $\partial^2 x_{(h+1,m-1)} \neq 0$, which helps to rule out this possibility.

For the second column, we determine a vertically simplified basis under constraints of Maslov grading and $H_*(Y_p)$, as shown on the left in Figure \ref{figure:(13131) (0,1,2,0)-1}. The horizontal differentials are determined by $H_*(Z_m)$ and Maslov grading constraint. The hv-basis we obtain is a staircase with two twists, located at symmetric heights; see the right side of Figure \ref{figure:(13131) (0,1,2,0)-1}. 

\begingroup
\setlength{\intextsep}{1pt}
\begin{figure}[H]
	\centering
    \setlength{\abovecaptionskip}{1pt}
    \begin{minipage}[c]{0.4\textwidth}
        \centering
	   \includegraphics[scale = 0.7]{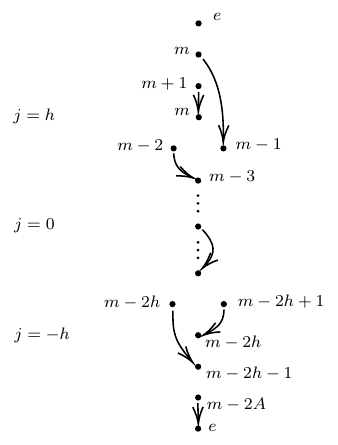}
    \end{minipage}
    \begin{minipage}[c]{0.5\textwidth}
        \centering
	   \includegraphics[scale = 0.8]{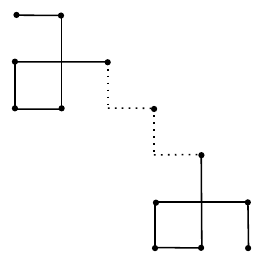}
    \end{minipage}
    \caption{}
    \label{figure:(13131) (0,1,2,0)-1}
\end{figure}
\endgroup
\vspace{-8pt}

For the third column, we determine an hv-basis under constraints of Maslov grading and $H_*(Y_p), H_*(Z_m)$. We then check that $\partial^2 x_{(h-1,m-1)}\neq 0$, which rules out this possibility. The left and right sides of Figure \ref{figure:(13131) (0,1,2,0)-2} illustrate the contradictions $\partial^2 1\neq 0$ for the first and third columns, respectively.

\begingroup
\setlength{\intextsep}{0pt}
\begin{figure}[H]
    \centering
    \setlength{\abovecaptionskip}{0pt}
	\includegraphics[scale = 0.8]{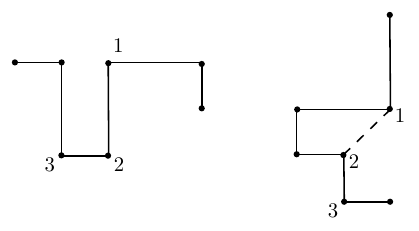}
    \caption{}
    \label{figure:(13131) (0,1,2,0)-2}
\end{figure}
\endgroup

\textbf{Subcase (ii)}: Assume $\widehat{HFK}(K,h+1) = \mathbb{F}_{(a)}$, $\widehat{HFK}(K,h) = \mathbb{F}_{(a-1)}\oplus\mathbb{F}_{(b)}\oplus\mathbb{F}_{(c)}$ and $\widehat{HFK}(K,h-1) = \mathbb{F}_{(b-1)}\oplus\mathbb{F}_{(c-1)}$.

\begingroup
\setlength{\intextsep}{1pt}
\begin{figure}[H]
	\centering
    \setlength{\abovecaptionskip}{0pt}
    \begin{minipage}[c]{0.4\textwidth}
        \centering
	   \includegraphics[scale = 0.7]{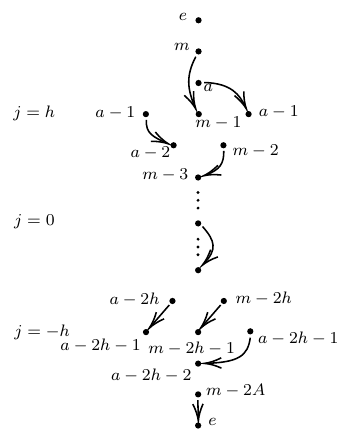}
    \end{minipage}
    \begin{minipage}[c]{0.5\textwidth}
        \centering
	   \includegraphics[scale = 0.8]{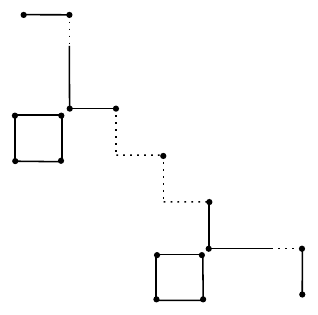}
    \end{minipage}
    \caption{}
    \label{figure:(13131) (0,1,2,0)-3}
\end{figure}
\endgroup

Maslov grading and $H_*(Y_p)$ constraints gives $c = m-1$, and the first non-zero generator below height $h-1$ has Maslov grading $m-3$. We discuss separately according to the relationships among $a, b, \text{and}\ m$. 

\textit{If $b = a-1,a\neq m$}, we determine an hv-basis under the constraints of Maslov grading and $H_*(Y_p), H_*(Z_m)$. The hv-basis we obtain is a staircase plus two boxes, located at symmetric heights, as shown in Figure \ref{figure:(13131) (0,1,2,0)-3}.

\textit{If $b = a-1 = m-1$}, up to a reordering of generators with identical Maslov gradings and Alexander filtration levels, we obtain an identical vertically simplified basis. The horizontal differentials are determined by the constraints of Maslov grading and $H_*(Z_m)$. The hv-basis we obtain is again a staircase plus two boxes, as shown in Figure \ref{figure:(13131) (0,1,2,0)-3}.

\textit{If $b\neq a-1, a\neq m, b\neq m-1$}, we can verify $\partial^2 x_{(h+1,a)}\neq 0$ in the obtained hv-basis, which rules out this case.

\textit{If $b\neq a-1, b = m-1$}, in the obtained hv-basis we find $\partial^2 x_{(h+1,a)}\neq 0$, which rules out this case.

\textit{If $b\neq a-1, a = m$}, in the obtained hv-basis we can check that $\partial^2 x_{(h,b)}\neq 0$, which rules out this case.

\settocdepth{section} % 临时只显示到 section
\subsubsection{(0,1,2,1) case}
\resettocdepth      % 恢复原 tocdepth

Taking $p = h+1,h,h-1$ in Equation \eqref{equation: exact sequence of X_p}, we obtain that $\widehat{HFK}(K,h+1)$ and $\widehat{HFK}(K,h-1)$ are one-dimensional, while $\widehat{HFK}(K,h)$ is either one- or three- dimensional.

\textbf{Subcase (i)}: Assume $\widehat{HFK}(K,h+1) = \mathbb{F}_{(a)}$, $\widehat{HFK}(K,h) = \mathbb{F}_{(b)}$, and $\widehat{HFK}(K,h-1) = \mathbb{F}_{(c)}$.

We list the computational results in Table \ref{table:131-6}. There are three possibilities.

\begingroup
\setlength{\intextsep}{2pt}
\begin{table}[H]
\centering
\setlength{\abovecaptionskip}{1pt}
\begin{tabular}{|c|c|c|c|}
\hline
$a$  &  $m+2$ &  $m-1$ &  $m+1$    \\
\hline
$b$  &  $m+1$ &  $m-1$ &  $m$      \\
\hline
$c$  &  $m-1$ &  $m-2$ &  $m-1$    \\
\hline
\end{tabular}
\caption{}
\label{table:131-6}
\end{table}
\endgroup

For the first column, we determine the vertical and horizontal differentials under constraints of Maslov grading and $H_*(Y_p), H_*(Z_m)$. In the obtained hv-basis, we can verify $\partial^2 x_{(h-1,m-1)} \neq 0$, which rules out this possibility.

For the second column, in the obtained hv-basis we check that $\partial^2 x_{(h+1,m-1)}\neq 0$, which rules out this possibility.

For the third column, the hv-basis we obtain is a staircase with two positive twists, as shown in Figure \ref{figure:(13131) (0,1,2,1)-1}.

\begingroup
\setlength{\intextsep}{1pt}
\begin{figure}[H]
	\centering
    \setlength{\abovecaptionskip}{0pt}
    \begin{minipage}[c]{0.4\textwidth}
        \centering
	   \includegraphics[scale = 0.7]{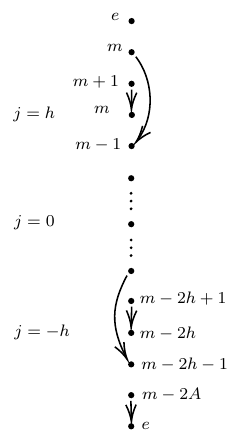}
    \end{minipage}
    \begin{minipage}[c]{0.5\textwidth}
        \centering
	   \includegraphics[scale = 0.8]{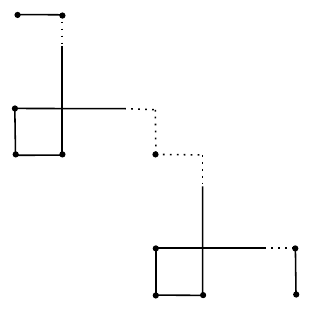}
    \end{minipage}
    \caption{}
    \label{figure:(13131) (0,1,2,1)-1}
\end{figure}
\endgroup

\textbf{Subcase (ii)}: Assume $\widehat{HFK}(K,h+1) = \mathbb{F}_{(a)}$, $\widehat{HFK}(K,h) = \mathbb{F}_{(a-1)}\oplus\mathbb{F}_{(b)}\oplus\mathbb{F}_{(c)}$, and $\widehat{HFK}(K,h-1) = \mathbb{F}_{(c-1)}$.

Maslov grading and $H_*(Y_p)$ constraints yield $b = m-1$. We discuss separately according to the relationships among $a, c, \text{and}\ m$.

\textit{If $c\neq a-1$}, there could be different possibilities of vertically simplified basis. In the obtained hv-basis we can verify either $\partial^2$ $x_{(h+1,a)}$ $\neq 0$ or $\partial^2$ $x_{(h,c)}$ $\neq 0$. We then rule out this possibility.

\textit{If $c = a-1$ and $a\neq m$}, we determine the vertical and horizontal differentials under the constraints of Maslov gradings and $H_*(Y_p), H_*(Z_m)$. The hv-basis we obtain is a staircase plus two boxes, as shown in Figure \ref{figure:(13131) (0,1,2,1)-2}.

\begingroup
\setlength{\intextsep}{1pt}
\begin{figure}[H]
	\centering
    \setlength{\abovecaptionskip}{0pt}
    \begin{minipage}[c]{0.4\textwidth}
        \centering
	   \includegraphics[scale = 0.7]{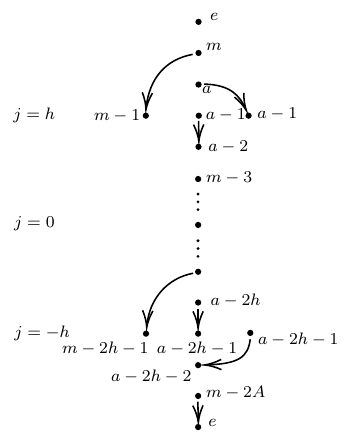}
    \end{minipage}
    \begin{minipage}[c]{0.5\textwidth}
        \centering
	   \includegraphics[scale = 0.8]{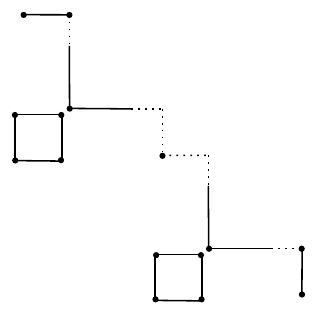}
    \end{minipage}
    \caption{}
    \label{figure:(13131) (0,1,2,1)-2}
\end{figure}
\endgroup

\textit{If $c = a-1 = m-1$}, We determine the vertical and horizontal differentials under the constraints of Maslov gradings and $H_*(Y_p), H_*(Z_m)$, and reorder the generators with identical Maslov gradings and Alexander filtration levels properly. The hv-basis we obtain is again a staircase plus two boxes, located at symmetric heights; see Figure \ref{figure:(13131) (0,1,2,1)-2}.

\settocdepth{section} % 临时只显示到 section
\subsubsection{(0,2,1,0) case}
\resettocdepth      % 恢复原 tocdepth

In this case $\widehat{HFK}(K,h+1)$ is two-dimensional, $\widehat{HFK}(K,h-1)$ is one-dimensional, and $\widehat{HFK}(K,h)$ is one- or three-dimensional.

\textbf{Subcase (i)}: Assume that $\widehat{HFK}(K,h+1) = \mathbb{F}_{(a)}\oplus\mathbb{F}_{(b)}$, $\widehat{HFK}(K,h) = \mathbb{F}_{(b-1)}$, and $\widehat{HFK}(K,h-1) = \mathbb{F}_{(a-3)}$.

Denote the first nonzero generator below height $h-1$ by $\mathbb{F}_{(c)}$. There are three possibilities on $a,b,c$, as shown in Table \ref{table:131-7}:

\begingroup
\setlength{\intextsep}{1pt}
\begin{table}[H]
\centering
\setlength{\abovecaptionskip}{0pt}
\begin{tabular}{|c|c|c|c|}
\hline
$a$  &  $m-1$ &  $m$   &  $m-1$    \\
\hline
$b$  &  $m-3$ &  $m-1$ &  $m-2$  \\
\hline
$c$  &  $m-5$ &  $m-1$ &  $m-3$  \\
\hline
\end{tabular}
\caption{}
\label{table:131-7}
\end{table}
\endgroup

For the first column, we determine an hv-basis under constraints of Maslov grading and $H_*(Y_p), H_*(Z_m)$. We then check that $\partial^2 x_{(h,m-4)}\neq 0$, which rules out this possibility.

For the second column, in the obtained hv-basis we find that $\partial^2 x_{(h,m-2)} \neq 0$, which rules out this possibility.

For the third column, we determine the vertical and horizontal differentials under constraints of Maslov grading and $H_*(Y_p). H_*(Z_m)$. We obtain an hv-basis whose configuration is a staircase with two negative twists, as shown in Figure \ref{figure:(13131) (0,2,1,0)-1}.

\begingroup
\setlength{\intextsep}{1pt}
\begin{figure}[H]
	\centering
    \setlength{\abovecaptionskip}{0pt}
    \begin{minipage}[c]{0.4\textwidth}
        \centering
	   \includegraphics[scale = 0.7]{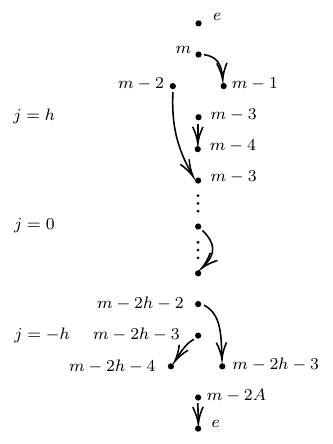}
    \end{minipage}
    \begin{minipage}[c]{0.5\textwidth}
        \centering
	   \includegraphics[scale = 0.8]{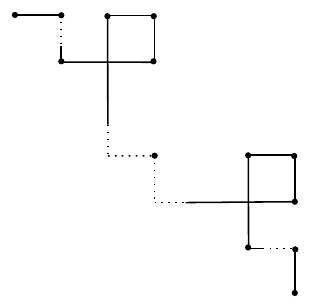}
    \end{minipage}
    \caption{}
    \label{figure:(13131) (0,2,1,0)-1}
\end{figure}
\endgroup

\textbf{Subcase (ii)}: Assume $\widehat{HFK}(K,h+1) = \mathbb{F}_{(a)}\oplus\mathbb{F}_{(b)}$, $\widehat{HFK}(K,h) = \mathbb{F}_{(a-1)}\oplus\mathbb{F}_{(b-1)}\oplus\mathbb{F}_{(c)}$, and $\widehat{HFK}(K,h-1) = \mathbb{F}_{(c-1)}$.

Denote the first nonzero generator below height $h-1$ by $\mathbb{F}_{(d)}$. We use conjugation symmetry and exact sequences \eqref{equation: exact sequence of X_p} to determine $a,b,c,d$. We could obtain $d = m-3$ and one of $a$ and $b$ must be $m-1$. We assume $a = m-1$ without loss of generality. We discuss separately according to the relationships among $b, c, \text{and}\ m$.

\textit{If $c = b-1, b\neq m-1$}, we determine a vertically simplified basis under constraints of Maslov grading and $H_*(Y_p)$. The horizontal differentials are determined under constraints of Maslov grading and $H_*(Z_m)$. The obtained hv-basis is a staircase plus two boxes, located at symmetric heights; see Figure \ref{figure:(13131) (0,2,1,0)-2}.

\begingroup
\setlength{\intextsep}{1pt}
\begin{figure}[H]
	\centering
    \setlength{\abovecaptionskip}{0pt}
    \begin{minipage}[c]{0.4\textwidth}
        \centering
	   \includegraphics[scale = 0.7]{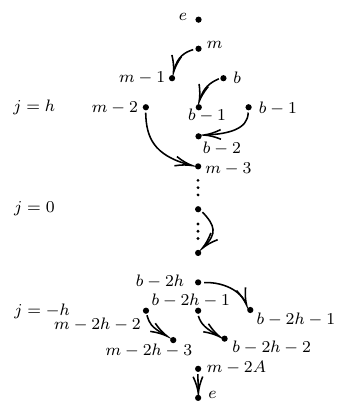}
    \end{minipage}
    \begin{minipage}[c]{0.5\textwidth}
        \centering
	   \includegraphics[scale = 0.8]{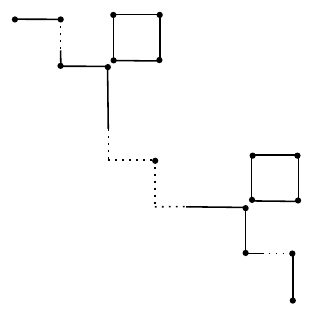}
    \end{minipage}
    \caption{}
    \label{figure:(13131) (0,2,1,0)-2}
\end{figure}
\endgroup
\vspace{-8pt}

\textit{If $c = b-1, b = m-1$}, under the constraints of Maslov grading and $H_*(Y_p), H_*(Z_m)$, we obtain an hv-basis identical to the previous case after reordering the generators with identical Maslov grading and Alexander filtration levels properly, which is again a staircase plus two boxes located at symmetric heights.

The case $c\neq b-1$ can be ruled out, as explained below:

\textit{If $c \neq b-1, b \neq m-1, c \neq m-2$}, we determine a vertically simplified basis first and then the horizontal differentials. In the obtained hv-basis we can verify $H_*(A_h) \neq \mathbb{F}^3$.

\textit{If $c\neq b-1, b = m-1, c\neq m-2$}, in the obtained hv-basis we can verify $\partial^2 x_{(h,c)}\neq 0$.

\textit{If $c\neq b-1, b\neq m-1, c = m-2$}, in the obtained hv-basis we can verify $\partial^2 x_{(h+1,b)}\neq 0$.

\settocdepth{section} % 临时只显示到 section
\subsubsection{(0,2,1,1) case}
\resettocdepth      % 恢复原 tocdepth

In this case $\widehat{HFK}(K,h+1)$ and $\widehat{HFK}(K,h-1)$ are two-dimensional, while $\widehat{HFK}(K,h)$ is either one- or three-dimensional.

\textbf{Subcase (i)}: $\widehat{HFK}(K,h+1) = \mathbb{F}_{(a)}\oplus\mathbb{F}_{(b)}$, $\widehat{HFK}(K,h) = \mathbb{F}_{(b-1)}$, and $\widehat{HFK}(K,h-1) = \mathbb{F}_{(a-3)}\oplus\mathbb{F}_{(c)}$.

Computation yields three possibilities, as shown in Table \ref{table:131-8}. 

\begingroup
\setlength{\intextsep}{2pt}
\begin{table}[H]
\centering
\setlength{\abovecaptionskip}{1pt}
\begin{tabular}{|c|c|c|c|}
\hline
$a$  &  $m$ &  $m-1$   &  $m-1$    \\
\hline
$b$  &  $m-1$ &  $m-2$ &  $m-3$  \\
\hline
$c$  &  $m-1$ &  $m-3$ &  $m-5$  \\
\hline
\end{tabular}
\caption{}
\label{table:131-8}
\end{table}
\endgroup
\vspace{-8pt}
For the first column, we construct an hv-basis under the constraints of Maslov grading and $H_*(Y_p), H_*(Z_m)$. We then verify $\partial^2 x_{(A,m)}\neq 0$, which rules out this case.

\begingroup
\setlength{\intextsep}{1pt}
\begin{figure}[H]
	\centering
    \setlength{\abovecaptionskip}{1pt}
    \begin{minipage}[c]{0.4\textwidth}
        \centering
	   \includegraphics[scale = 0.7]{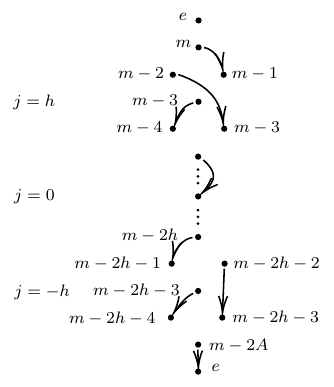}
    \end{minipage}
    \begin{minipage}[c]{0.5\textwidth}
        \centering
	   \includegraphics[scale = 0.8]{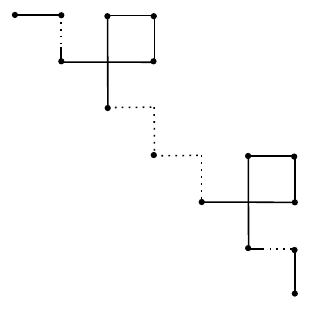}
    \end{minipage}
    \caption{}
    \label{figure:(13131) (0,2,1,1)-1}
\end{figure}
\endgroup

For the second column, Maslov grading and $H_*(Y_p)$ can determine a vertically simplified basis; see the left of Figure \ref{figure:(13131) (0,2,1,1)-1}. The horizontal differentials are determined by constraints of Maslov grading and $H_*(Z_m)$. The hv-basis we obtain is a staircase with two negative twists, located at symmetric heights; see the right of Figure \ref{figure:(13131) (0,2,1,1)-1}.

For the third column, in the obtained hv-basis, we check that $\partial^2 y_{(h+1,m-3)}\neq 0$, which rules out this possibility.

\textbf{Subcase (ii)}: $\widehat{HFK}(K,h+1) = \mathbb{F}_{(a)}\oplus\mathbb{F}_{(b)}$, $\widehat{HFK}(K,h) = \mathbb{F}_{(a-1)}\oplus\mathbb{F}_{(b-1)}\oplus\mathbb{F}_{(c)}$, and $\widehat{HFK}(K,h-1) = \mathbb{F}_{(c-1)}\oplus\mathbb{F}_{(d)}$.

Analyze the exact sequences \eqref{equation: exact sequence of X_p}, we obtain $d = m-3$ and $a=m-1$. We discuss separately according to the relationships between $b,c,m$.

\textit{If $c = b-1, b\neq m-1$}, we first determine a vertically simplified basis under constraints of Maslov grading and $H_*(Y_p)$, as shown in the left side of Figure \ref{figure:(13131) (0,2,1,1)-2}. The horizontal differentials are determined under constraints of Maslov grading and $H_*(Z_m)$. The obtained hv-basis is a staircase plus two boxes, located in symmetric heights; see Figure \ref{figure:(13131) (0,2,1,1)-2}.

\begingroup
\setlength{\intextsep}{1pt}
\begin{figure}[H]
	\centering
    \setlength{\abovecaptionskip}{1pt}
    \begin{minipage}[c]{0.4\textwidth}
        \centering
	   \includegraphics[scale = 0.7]{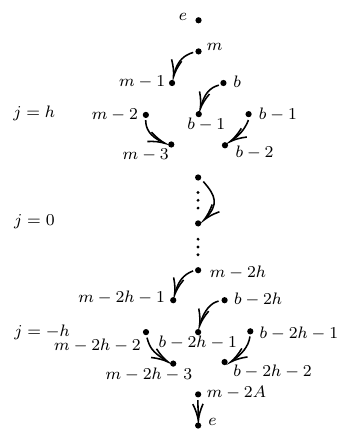}
    \end{minipage}
    \begin{minipage}[c]{0.5\textwidth}
        \centering
	   \includegraphics[scale = 0.8]{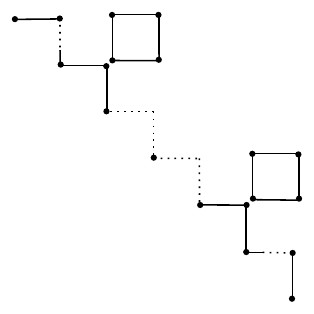}
    \end{minipage}
    \caption{}
    \label{figure:(13131) (0,2,1,1)-2}
\end{figure}
\endgroup

\textit{If $c = b-1, b = m-1$}, under the constraints of Maslov grading and $H_*(Y_p), H_*(Z_m)$, we obtain an identical hv-basis, as shown in the right of Figure \ref{figure:(13131) (0,2,1,1)-2}.

\textit{If $c\neq b-1, b\neq m-1, c\neq m-2$}, we determine an hv-basis and then verify $H_*(A_h)\neq \mathbb{F}^3$. This contradiction rules out this possibility.

\textit{If $c\neq b-1, b = m-1$}, we first determine an hv-basis and then verify $\partial^2 x_{(h,c)}\neq 0$, which rules out this case.

\textit{If $c\neq b-1, c = m-2$}, we first determine an hv-basis and then verify $\partial^2 x_{(h+1,b)}\neq 0$, which rules out this case.

\settocdepth{section} % 临时只显示到 section
\subsubsection{(0,2,2,0) case}
\resettocdepth      % 恢复原 tocdepth

In this case, $\operatorname{rank} \widehat{HFK}(K,j) = 
\begin{cases}
\begin{array}{cl}
  2    & j = h+1     \\  
  0\ \text{or}\ 2 & j = h       \\
  2    & j = h-1  \\
\end{array}
\end{cases}
$

\textbf{Subcase (i)}: Assume $\widehat{HFK}(K,h+1) = \mathbb{F}_{(a)}\oplus\mathbb{F}_{(b)}$, $\widehat{HFK}(K,h) = 0$, and $\widehat{HFK}(K,h-1) = \mathbb{F}_{(a-3)}\oplus\mathbb{F}_{(b-3)}$. 

Denote the first non-zero generator below height $h-1$ by $\mathbb{F}_{(c)}$. Analyzing exact sequences \eqref{equation: exact sequence of X_p} yields $a = m-1$, $b = m-3$, and $c = b-4 = m-7$. The hv-basis is determined by Maslov gradings and $H_*(Y_p), H_*(Z_m)$, as illustrated in Figure \ref{figure:(13131) (0,2,2,0)-1}. We can verify $\partial^2 x_{(-h-1,m-2h-5)}\neq 0$, which rules out this case.

\begingroup
\setlength{\intextsep}{1pt}
\begin{figure}[H]
	\centering
    \setlength{\abovecaptionskip}{1pt}
    \begin{minipage}[c]{0.4\textwidth}
        \centering
	   \includegraphics[scale = 0.7]{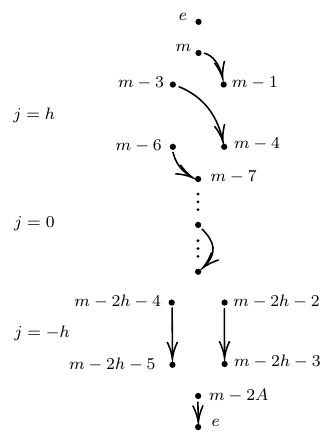}
    \end{minipage}
    \begin{minipage}[c]{0.5\textwidth}
        \centering
	   \includegraphics[scale = 0.8]{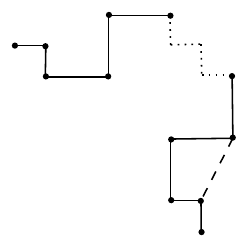}
    \end{minipage}
    \caption{}
    \label{figure:(13131) (0,2,2,0)-1}
\end{figure}
\endgroup

\textbf{Subcase (ii)}: Assume $\widehat{HFK}(K,h+1) = \mathbb{F}_{(a)}\oplus\mathbb{F}_{(b)}$, $\widehat{HFK}(K,h) = \mathbb{F}_{(b-1)}\oplus\mathbb{F}_{(c)}$, and $\widehat{HFK}(K,h-1) = \mathbb{F}_{(a-3)}\oplus\mathbb{F}_{(c-1)}$. 

Denote the first nonzero generator below height $h-1$ by $\mathbb{F}_{(d)}$. Different possibilities of $a,b,c,d$ are listed in Table \ref{table:131-9}. The rightmost three columns are included in the second column.

\begingroup
\setlength{\intextsep}{1pt}
\begin{table}[H]
\centering
\setlength{\abovecaptionskip}{1pt}
\begin{tabular}{|c|c|c|c|c|}
\hline
$a$  &  $m$ &  $m-1$ &  $m-1$  &  $m-1$  \\
\hline
$b$  &  $m-1$ &  $b$  &  $b$  &  $m-2$  \\
\hline
$c$  &  $m-1$ &  $c$  &  $m-3$  &  $m-3$  \\
\hline
$d$  &  $m-3$ &  $m-5$ & $m-5$  & $m-5$  \\
\hline
\end{tabular}
\caption{}
\label{table:131-9}
\end{table}
\endgroup

For the first column, we determine an hv-basis under the constraints of Maslov gradings and $H_*(Y_p), H_*(Z_m)$. We then verify $\partial^2 x_{(h+1,m-1)} \neq 0$, which rules out this case. 

\begingroup
\setlength{\intextsep}{1pt}
\begin{figure}[H]
	\centering
    \setlength{\abovecaptionskip}{1pt}
    \begin{minipage}[c]{0.4\textwidth}
        \centering
	   \includegraphics[scale = 0.7]{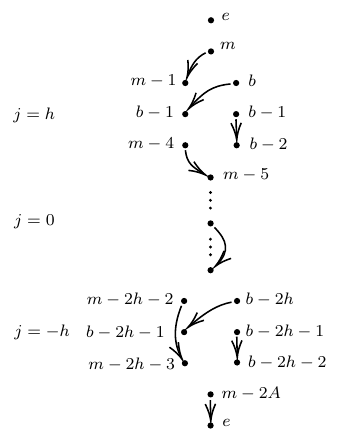}
    \end{minipage}
    \begin{minipage}[c]{0.5\textwidth}
        \centering
	   \includegraphics[scale = 0.8]{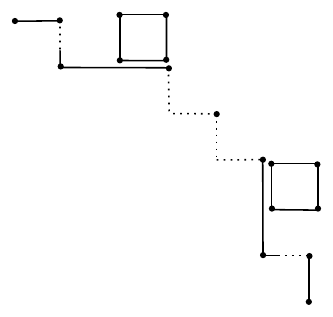}
    \end{minipage}
    \caption{}
    \label{figure:(13131) (0,2,2,0)-2}
\end{figure}
\endgroup

For the second column, \textit{if $c = b-1, b\neq m-1, b\neq m-2$}, we determine an hv-basis under the constraints of Maslov gradings and $H_*(Y_p), H_*(Z_m)$. The obtained vertically simplified basis and hv-basis are shown in Figure \ref{figure:(13131) (0,2,2,0)-2}, which is a staircase plus two boxes, located at symmetric heights.

\textit{If $c = b-1, b\in \{m-1, m-2\}$}, under the constraints of Maslov grading and $H_*(Y_p), H_*(Z_m)$, we can obtain an identical hv-basis; see Figure \ref{figure:(13131) (0,2,2,0)-2}.

\textit{If $c\neq b-1, b\neq m-1, c\neq m-2, m-3$}, in the obtained hv-basis we can verify $\partial^2 x_{(h,c)}\neq 0$.

\textit{If $c\neq b-1$ and $c\in \{m-2, m-3\}$}, in the obtained hv-basis we verify $\partial^2 x_{(h+1, b)}\neq 0$.

\textit{If $c\neq b-1, b = m-1$, and $c\neq m-3$}, in the obtained hv-basis we can check that $\partial^2 x_{(h,c)}\neq 0$.

\textit{If $c = m-3$ and $b = m-1$}, in the obtained hv-basis we find that $H_*(A_h)\neq \mathbb{F}^3$.

\settocdepth{section} % 临时只显示到 section
\subsubsection{(0,2,2,1) case}
\resettocdepth      % 恢复原 tocdepth

In this case, $\operatorname{rank} \widehat{HFK}(K, j) = 
\begin{cases}
\begin{array}{cl}
  2    & j = h+1     \\  
  0\  \text{or}\  2 & j = h       \\
  1    & j = h-1  \\
\end{array}
\end{cases}
$

\textbf{Subcase (i)}: Assume $\widehat{HFK}(K,h+1) = \mathbb{F}_{(a)}\oplus\mathbb{F}_{(b)}$, $\widehat{HFK}(K,h) = 0$, and $\widehat{HFK}(K,h-1) = \mathbb{F}_{(a-3)}$. 

Denote the first nonzero generator above height $-h+1$ by $\mathbb{F}_{(c)}$.
Computation yields $a = m-1, b = m-3$, and $c = m-2h-4$. Maslov grading and homologies of $Y_p,Z_m$ determine all the vertical and horizontal differentials. In the obtained hv-basis, we can verify $\partial^2 x_{(-h-1,m-2h-5)} \neq 0$, which rules out this case.

\textbf{Subcase (ii)}: Assume $\widehat{HFK}(K,h+1) = \mathbb{F}_{(a)}\oplus\mathbb{F}_{(b)}$, $\widehat{HFK}(K,h) = \mathbb{F}_{(a-1)}\oplus\mathbb{F}_{(c)}$, and $\widehat{HFK}(K,h-1) = \mathbb{F}_{(d)}$. Denote the first non-zero generator above height $-h+1$ by $\mathbb{F}_{(e)}$.

The four possibilities of $a,b,c,d,e$ are listed in Table \ref{table:131-10}.

\begingroup
\setlength{\intextsep}{1pt}
\begin{table}[H]
\centering
\setlength{\abovecaptionskip}{1pt}
\begin{tabular}{|c|c|c|c|c|}
\hline
$a$  &  $m-1$ &  $a$ &  $a$  &  $m-2$  \\
\hline
$b$  &  $m$ &  $m-1$  &  $m-1$  &  $m-1$  \\
\hline
$c$  &  $m-1$ &  $c$  &  $m-3$  &  $m-3$  \\
\hline
$d$  &  $m-3$ & $c-1$ & $m-4$  & $m-4$  \\
\hline
$e$  &  $m-2h$ & $m-2h-2$  &  $m-2h-2$  &  $m-2h-2$ \\  
\hline
\end{tabular}
\caption{}
\label{table:131-10}
\end{table}
\endgroup
\vspace{-8pt}

The third and fourth columns are included in the second column.

For the first column, we determine an hv-basis under the constraints of Maslov gradings and $H_*(Y_p), H_*(Z_m)$. We can verify $\partial^2x_{(-h-1,m-2h-3)}\neq 0$ in the obtained hv-basis, which rules out this case.

\begingroup
\setlength{\intextsep}{1pt}
\begin{figure}[H]
	\centering
    \setlength{\abovecaptionskip}{1pt}
    \begin{minipage}[c]{0.4\textwidth}
        \centering
	   \includegraphics[scale = 0.7]{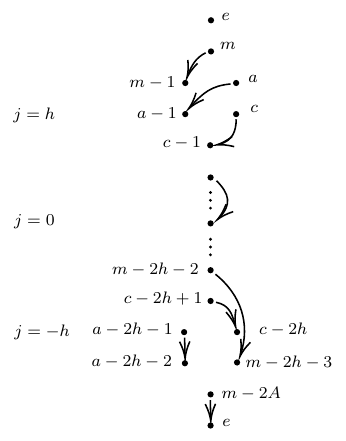}
    \end{minipage}
    \begin{minipage}[c]{0.5\textwidth}
        \centering
	   \includegraphics[scale = 0.8]{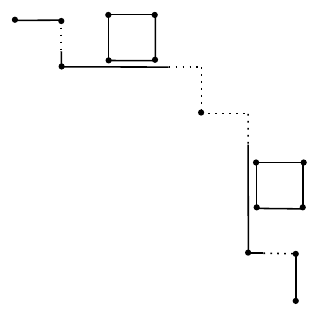}
    \end{minipage}
    \caption{}
    \label{figure:(13131) (0,2,2,1)-1}
\end{figure}
\endgroup
\vspace{-8pt}

For the second column, \textit{if $c = a-1, a \neq m-2$}, we can obtain an hv-basis whose configuration is a staircase plus two boxes, as shown in Figure 
\ref{figure:(13131) (0,2,2,1)-1}. If $c=a-1=m-3$, the different possibilities of vertically simplified basis can be either ruled out by the condition $\partial^2=0$ or maded into an identical hv-basis.  

The case where $c \neq a-1$ can be ruled out, as explained below:

\textit{If $c\neq a-1, a\neq m-1, c\notin\{m-2, m-3\}$}, in the obtained hv-basis we can verify $\partial^2 x_{(h,c)}\neq 0$.

\textit{If $c\neq a-1, a = m-1, c\neq m-3$}, in the obtained hv-basis we again find that $\partial^2 x_{(h,c)}\neq 0$.

\textit{If $c\neq a-1, a = m-1, c = m-3$}, in the obtained  hv-basis we can verify $\partial^2 x_{(h+1,m-1)}\neq 0$.

\textit{If $c\neq a-1, a\neq m-1, c\in\{m-2,m-3\}$}, in the obtained hv-basis we can check that $\partial^2 x_{(h+1, a)}\neq 0$. 

\vspace{-8pt}
\settocdepth{section} % 临时只显示到 section
\subsubsection{(1,1,1,0) case}
\resettocdepth      % 恢复原 tocdepth

In this case, we have $\operatorname{rank} \widehat{HFK}(K,j) = 
\begin{cases}
\begin{array}{cl}
  2    & j = h+1     \\  
  0\ \text{or}\ 2 & j = h       \\
  1    & j = h-1  \\
\end{array}
\end{cases}
$

For the remaining eight cases where $H_*(X_{h+2}) = \mathbb{F}$, set $\alpha = m-2A+2h+2$.

\textbf{Subcase (i)}: Assume $\widehat{HFK}(K,h+1) = \mathbb{F}_{(a)}\oplus\mathbb{F}_{(\alpha+1)}$, $\widehat{HFK}(K,h)=0$ and $\widehat{HFK}(K,h-1) = \mathbb{F}_{(a-3)}$. 

Denote the first nonzero generator below height $h-1$ by $\mathbb{F}_{(c)}$.
Analyzing the exact sequences \eqref{equation: exact sequence of X_p} gives $a = \alpha+3$ and $c = \alpha+2$. We determine an hv-basis under the constraints of Maslov grading and $H_*(Y_p), H_*(Z_m)$. In the obtained hv-basis, we can verify $\partial^2 x_{(-h+1, \alpha-2h+2)}\neq 0$. Thus, we rule out this case.

\textbf{Subcase (ii)}: Assume $\widehat{HFK}(K,h+1) = \mathbb{F}_{(a)}\oplus\mathbb{F}_{(\alpha+1)}$, $\widehat{HFK}(K,h) = \mathbb{F}_{(a-1)}\oplus\mathbb{F}_{(b)}$, and $\widehat{HFK}(K,h-1) = \mathbb{F}_{(b-1)}$.

Denote the first nonzero generator below height $h-1$ by $\mathbb{F}_{(c)}$. Computation yields five possibilities, as shown in Table \ref{table: 131-11}. The rightmost four columns can be included into the second column.

\begingroup
\setlength{\intextsep}{1pt}
\begin{table}[H]
\centering
\setlength{\abovecaptionskip}{1pt}
\begin{tabular}{|c|c|c|c|c|c|}
\hline
$a$  &  $\alpha$ &  $a$ &  $a$  &  $\alpha+2$ & $a$ \\
\hline
$b$  &  $\alpha$ &  $b$  &  $\alpha+1$  &  $\alpha+1$ & $a-1$ \\
\hline
$c$  &  $\alpha-2$ &  $\alpha$  &  $\alpha$  &  $\alpha$ & $\alpha$ \\
\hline
\end{tabular}
\caption{}
\label{table: 131-11}
\end{table}
\endgroup
\vspace{-8pt}

For the first column, we determine an hv-basis under constraints of Maslov grading and $H_*(Y_p), H_*(Z_m)$. We then verify $\partial^2 x_{(-h, \alpha-5)}\neq 0$, which rules out this case.

\begingroup
\setlength{\intextsep}{1pt}
\begin{figure}[H]
	\centering
    \setlength{\abovecaptionskip}{1pt}
    \begin{minipage}[c]{0.4\textwidth}
        \centering
	   \includegraphics[scale = 0.7]{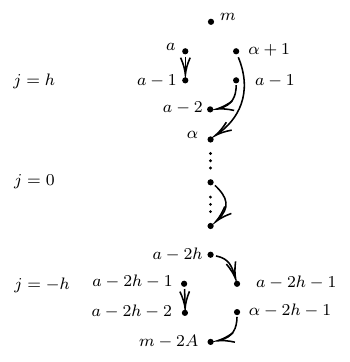}
    \end{minipage}
    \begin{minipage}[c]{0.5\textwidth}
        \centering
	   \includegraphics[scale = 0.8]{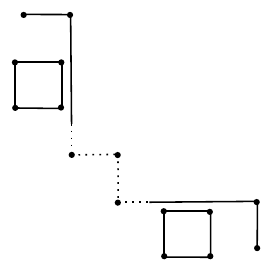}
    \end{minipage}
    \caption{}
    \label{figure:(13131) (1,1,1,0)-1}
\end{figure}
\endgroup

For the second column, we discuss separately according to the relationships among $a, b$, and $\alpha$.

\textit{If $b = a-1$}, except for the pathological case where $b = a-1 = \alpha+1$(ruled out by $\partial^2=0$ condition), we can obtain an hv-basis under the constraints of Maslov grading and homologies of $Y_p, Z_m$. The hv-basis we obtain is a staircase plus two boxes, as shown in Figure \ref{figure:(13131) (1,1,1,0)-1}.  

\textit{If} $b \neq a-1$, we first determine an hv-basis under constraints of Maslov grading and $H_*(Y_p), H_*(Z_m)$. We then verify at least one of $\partial^2 x_{(h+1,a)}$ and $\partial^2x_{(h,b)}$ is non-zero, which rules out this case.

\vspace{-8pt}
\settocdepth{section} % 临时只显示到 section
\subsubsection{(1,1,1,1) case}
\resettocdepth      % 恢复原 tocdepth

In this case, we have $\operatorname{rank} \widehat{HFK}(K,j) = 
\begin{cases}
\begin{array}{cl}
  2    & j = h+1     \\  
  0\ or\ 2 & j = h       \\
  2    & j = h-1  \\
\end{array}
\end{cases}
$

Recall that $\alpha = m-2A+2h+2$.

\textbf{Subcase (i)}: Assume $\widehat{HFK}(K,h+1) = \mathbb{F}_{(a)}\oplus\mathbb{F}_{(\alpha+1)}$, $\widehat{HFK}(K,h) = 0$, and $\widehat{HFK}(K,h-1) = \mathbb{F}_{(a-3)}\oplus\mathbb{F}_{(b)}$. 

Denote the first nonzero generator above height $-h+1$ by $\mathbb{F}_{(c)}$. Computation yields $a = \alpha+3, b = \alpha+2$ and $c = b-2h+3$. We determine an hv-basis under the constraints of Maslov grading and $H_*(Y_p), H_*(Z_m)$. Then we verify $\partial^2 x_{(-h+1, a-2h-1)}\neq 0$, which rules out this case.

\textbf{Subcase (ii)}: Assume $\widehat{HFK}(K,h+1) = \mathbb{F}_{(a)}\oplus\mathbb{F}_{(\alpha+1)}$, $\widehat{HFK}(K,h) = \mathbb{F}_{(a-1)}\oplus\mathbb{F}_{(b)}$, and $\widehat{HFK}(K,h-1) = \mathbb{F}_{(b-1)}\oplus\mathbb{F}_{(c)}$.

Denote the first nonzero generator above height $-h+1$ by $\mathbb{F}_{(d)}$. There are five possibilities in $a,b,c,d$, as shown in Table \ref{table:131-12}. The first column represents an individual case, while the remaining four columns can be included in the second column.

\begingroup
\setlength{\intextsep}{1pt}
\begin{table}[H]
\centering
\setlength{\abovecaptionskip}{1pt}
\begin{tabular}{|c|c|c|c|c|c|}
\hline
$a$  &  $\alpha$ &  $a$ &  $a$  &  $\alpha+2$ & $a$ \\
\hline
$b$  &  $\alpha$ &  $b$  &  $\alpha+1$  &  $\alpha+1$ & $a-1$ \\
\hline
$c$  &  $\alpha-2$ &  $\alpha$  &  $\alpha$  &  $\alpha$ & $\alpha$ \\
\hline
$d$  &  $c-2h+3$ &  $c-2h+3$  &  $c-2h+3$  &  $c-2h+3$ & $c-2h+3$  \\
\hline
\end{tabular}
\caption{}
\label{table:131-12}
\end{table}
\endgroup
\vspace{-8pt}

For the first column, we construct an hv-basis under the constraints of Maslov grading and $H_*(Y_p), H_*(Z_m)$. We then verify $\partial^2 x_{(-h-1, \alpha-2h-2)}\neq 0$, which rules out this case.

\begingroup
\setlength{\intextsep}{1pt}
\begin{figure}[H]
	\centering
    \setlength{\abovecaptionskip}{1pt}
    \begin{minipage}[c]{0.4\textwidth}
        \centering
	   \includegraphics[scale = 0.7]{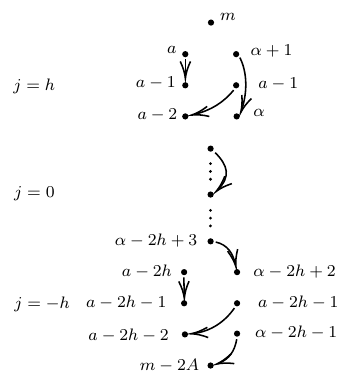}
    \end{minipage}
    \begin{minipage}[c]{0.5\textwidth}
        \centering
	   \includegraphics[scale = 0.8]{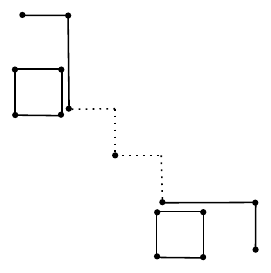}
    \end{minipage}
    \caption{}
    \label{figure:(13131) (1,1,1,1)-1}
\end{figure}
\endgroup

For the second column, \textit{if $b = a-1, a\in\{\alpha+1,\alpha+2\}$}, we construct an hv-basis under constraints of Maslov grading and $H_*(Y_p), H_*(Z_m)$. The hv-basis is a staircase plus two boxes located at symmetric heights, as shown in Figure \ref{figure:(13131) (1,1,1,1)-1}.

\textit{If $b\neq a-1$}, we discuss separately according to the relationships between $a$ and $b$. In each case, we determine an hv-basis and check that at least one of $\partial^2 x_{(h,b)}$ and $\partial^2 x_{(h+1,a)}$ is non-zero, this contradiction rules out this case.

 \settocdepth{section} % 临时只显示到 section
\subsubsection{(1,1,2,0) case}
\resettocdepth      % 恢复原 tocdepth

In this case, we have $\operatorname{rank} \widehat{HFK}(K,j) = 
\begin{cases}
\begin{array}{cl}
  2    & j = h+1     \\  
  1\ \text{or}\ 3 & j = h       \\
  2    & j = h-1  \\
\end{array}
\end{cases}
$

\textbf{Subcase (i)}: Assume $\widehat{HFK}(K,h+1)$ $ = $ $\mathbb{F}_{(a)}$ $\oplus$ $\mathbb{F}_{(\alpha+1)}$, $\widehat{HFK}(K,h-1)$ $ = $ $\mathbb{F}_{(a-3)}$ $\oplus$ $\mathbb{F}_{(b-1)}$, and $\widehat{HFK}(K,h)$ $ = $ $\mathbb{F}_{(b)}$, 

Denote the first nonzero generator below height $h-1$ by $\mathbb{F}_{(c)}$. Analyzing the exact sequences \eqref{equation: exact sequence of X_p} yields three possibilities, as shown in Table \ref{table:131-13}.

\begingroup
\setlength{\intextsep}{2pt}
\begin{table}[H]
\centering
\setlength{\abovecaptionskip}{1pt}
\begin{tabular}{|c|c|c|c|}
\hline
$a$  &  $\alpha$ &  $\alpha+2$ &  $\alpha+3$   \\
\hline
$b$  &  $\alpha$ &  $\alpha+1$  &  $\alpha+2$   \\
\hline
$c$  &  $\alpha-4$ &  $\alpha-2$  &  $\alpha$   \\
\hline
\end{tabular}
\caption{}
\label{table:131-13}
\end{table}
\endgroup

For the first column, we determine an hv-basis under the constraints of Maslov grading and homologies of $Y_p, Z_m$. In the obtained hv-basis, we check that $\partial^2 y_{(h+1,\alpha)}\neq 0$, which rules out this case.

For the second column, we construct a vertically simplified basis as shown in the left of Figure \ref{figure:(13131) (1,1,2,0)-1}. We then determine the horizontal differentials. The obtained hv-basis is a staircase with two positive twists; see the right side of Figure \ref{figure:(13131) (1,1,2,0)-1}.

\begingroup
\setlength{\intextsep}{2pt}
\begin{figure}[H]
	\centering
    \setlength{\abovecaptionskip}{1pt}
    \begin{minipage}[c]{0.4\textwidth}
        \centering
	   \includegraphics[scale = 0.7]{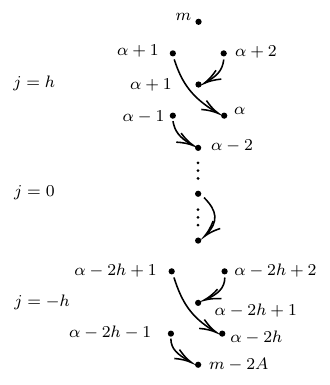}
    \end{minipage}
    \begin{minipage}[c]{0.5\textwidth}
        \centering
	   \includegraphics[scale = 0.8]{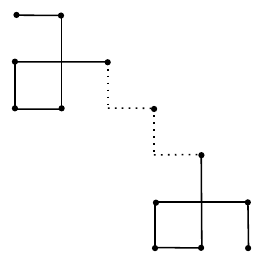}
    \end{minipage}
    \caption{}
    \label{figure:(13131) (1,1,2,0)-1}
\end{figure}
\endgroup

For the third column, in the obtained hv-basis we verify $\partial^2 x_{(-h+1, \alpha-2h+2)}\neq 0$, which rules out this case.

\textbf{Subcase (ii)}: Assume $\widehat{HFK}(K,h+1) = \mathbb{F}_{(a)}\oplus\mathbb{F}_{(\alpha+1)}$, $\widehat{HFK}(K,h) = \mathbb{F}_{(a-1)}\oplus\mathbb{F}_{(b)}\oplus\mathbb{F}_{(c)}$, and $\widehat{HFK}(K,h-1) = \mathbb{F}_{(b-1)}\oplus\mathbb{F}_{(c-1)}$. 

Denote the first non-zero generator below height $h-1$ by $\mathbb{F}_{(d)}$. Computation gives $b = \alpha, d = \alpha-2$. We discuss separately according to the relationships among $a,c$ and $\alpha$.

\begingroup
\setlength{\intextsep}{1pt}
\begin{figure}[H]
	\centering
    \setlength{\abovecaptionskip}{0pt}
    \begin{minipage}[c]{0.4\textwidth}
        \centering
	   \includegraphics[scale = 0.7]{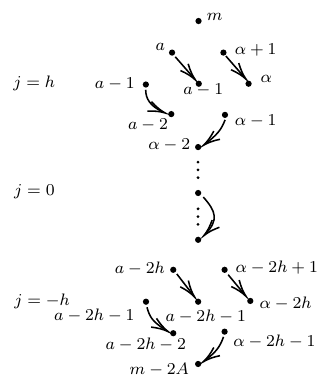}
    \end{minipage}
    \begin{minipage}[c]{0.5\textwidth}
        \centering
	   \includegraphics[scale = 0.8]{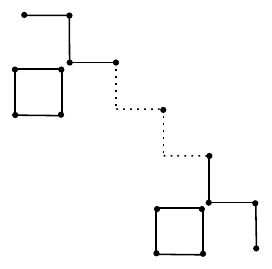}
    \end{minipage}
    \caption{}
    \label{figure:(13131) (1,1,2,0)-2}
\end{figure}
\endgroup
\vspace{-8pt}

\textit{If $c = a-1$}, we determine an hv-basis under the constraints of Maslov grading and homologies of $Y_p, Z_m$. The obtained hv-basis is a staircase plus two boxes, as shown in Figure \ref{figure:(13131) (1,1,2,0)-2}.

\textit{If $c\neq a-1$}, we determine an hv-basis depending on the relationships between undetermined variables. We check that either $\partial^2 x_{(h+1, a)} \neq 0$ or $\partial^2 x_{(h,c)}\neq 0$, which rule out this case.

\settocdepth{section} % 临时只显示到 section
\subsubsection{(1,1,2,1) case}
\resettocdepth       % 恢复原 tocdepth

Computation gives $\operatorname{rank} \widehat{HFK}(K,j) = 
\begin{cases}
\begin{array}{cl}
  2    & j = h+1     \\  
  1\ \text{or}\ 3 & j = h       \\
  1    & j = h-1  \\
\end{array}
\end{cases}
$

We divide our discussion into two subcases, depending on $\operatorname{rank} \widehat{HFK}(K,h)$.

\textbf{Subcase (i)}: Assume that $\widehat{HFK}(K,h+1) = \mathbb{F}_{(a)}\oplus\mathbb{F}_{(\alpha+1)}$, $\widehat{HFK}(K,h) = \mathbb{F}_{(b)}$, and $\widehat{HFK}(K,h-1) = \mathbb{F}_{(c)}$. 

Denote the first nonzero generator above height $-h+1$ by $\mathbb{F}_{(d)}$. Computation yields three possibilities, as shown in Table \ref{table:131-14}.

\begingroup
\setlength{\intextsep}{2pt}
\begin{table}[H]
\centering
\setlength{\abovecaptionskip}{1pt}
\begin{tabular}{|c|c|c|c|}
\hline
$a$  &  $\alpha+3$ &  $\alpha$ &  $\alpha+2$   \\
\hline
$b$  &  $\alpha+2$ &  $\alpha$  &  $\alpha+1$   \\
\hline
$c$  &  $\alpha$ &  $\alpha-1$  &  $\alpha$   \\
\hline
$d$  &  $\alpha-2h+3$ &  $\alpha-2h-1$  &  $\alpha-2h+1$   \\
\hline
\end{tabular}
\caption{}
\label{table:131-14}
\end{table}
\endgroup
\vspace{-8pt}

For the first column, we determine an hv-basis and check that $\partial^2 x_{(-h+1, \alpha-2h+2)} \neq 0$, which rules out this possibility.

For the second column, we determine an hv-basis and verify $\partial^2$ $x_{(-h-1, \alpha-2h-2)}$ $\neq 0$.

\begingroup
\setlength{\intextsep}{1pt}
\begin{figure}[H]
	\centering
    \setlength{\abovecaptionskip}{0pt}
    \begin{minipage}[c]{0.4\textwidth}
        \centering
	   \includegraphics[scale = 0.7]{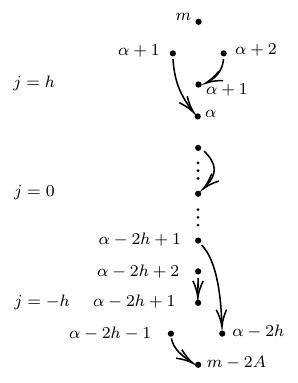}
    \end{minipage}
    \begin{minipage}[c]{0.5\textwidth}
        \centering
	   \includegraphics[scale = 0.8]{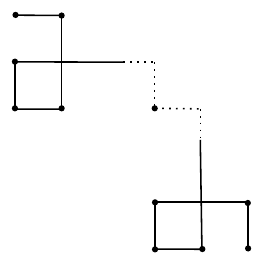}
    \end{minipage}
    \caption{}
    \label{figure:(13131) (1,1,2,1)-1}
\end{figure}
\endgroup

For the third column, we determine an hv-basis under the constraints of Maslov grading and homologies of $Y_p, Z_m$. The obtained hv-basis is a staircase with two positive twists, as shown in Figure \ref{figure:(13131) (1,1,2,1)-1}.

\textbf{Subcase (ii)}: Assume $\widehat{HFK}(K,h+1) = \mathbb{F}_{(a)}\oplus\mathbb{F}_{(\alpha+1)}$, $\widehat{HFK}(K,h) = \mathbb{F}_{(a-1)}\oplus\mathbb{F}_{(b)}\oplus\mathbb{F}_{(c)}$, and $\widehat{HFK}(K,h-1) = \mathbb{F}_{(c-1)}$. 

Denote the first nonzero generator above height $-h+1$ by $\mathbb{F}_{(d)}$. Computation gives $b = \alpha, d = \alpha-2h+1$. We discuss separately according to the relationships among $a,c$ and $\alpha$.

\textit{If $c = a-1$}, we determine an hv-basis under the constraints of Maslov grading and homologies of $Y_p, Z_m$. The obtained hv-basis is a staircase plus two boxes, as shown in Figure \ref{figure:(13131) (1,1,2,1)-2}.

\begingroup
\setlength{\intextsep}{1pt}
\begin{figure}[H]
	\centering
    \setlength{\abovecaptionskip}{1pt}
    \begin{minipage}[c]{0.4\textwidth}
        \centering
	   \includegraphics[scale = 0.7]{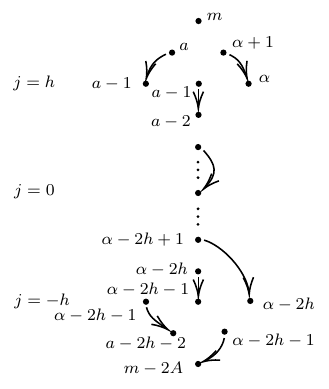}
    \end{minipage}
    \begin{minipage}[c]{0.5\textwidth}
        \centering
	   \includegraphics[scale = 0.8]{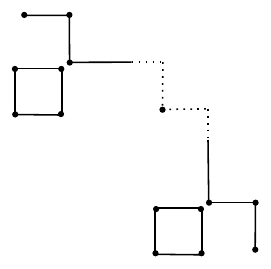}
    \end{minipage}
    \caption{}
    \label{figure:(13131) (1,1,2,1)-2}
\end{figure}
\endgroup

\textit{If $c\neq a-1$}, we discuss separately according to the  values of $a,c$. We then verify $\partial^2 x_{(h+1,a)}$ and $\partial^2 x_{(h,c)}$ do not vanish simultaneously, which rules out this possibility.

\settocdepth{section} % 临时只显示到 section
\subsubsection{(1,2,1,0) case}
\resettocdepth      % 恢复原 tocdepth

Computation gives $\operatorname{rank} \widehat{HFK}(K,j) = 
\begin{cases}
\begin{array}{cl}
  1    & j = h+1     \\  
  1\ \text{or} \ 3 & j = h       \\
  1    & j = h-1  \\
\end{array}
\end{cases}
$

We divide our discussion into two subcases, depending on $\operatorname{rank} \widehat{HFK}(K,h)$.

\textbf{Subcase (i)}: Assume $\widehat{HFK}(K,h+1) = \mathbb{F}_{(a)}$, $\widehat{HFK}(K,h) = \mathbb{F}_{(b)}$, and $\widehat{HFK}(K,h-1) = \mathbb{F}_{(c)}$. 

Let the first non-zero generator below height $h-1$ be denoted by $\mathbb{F}_{(d)}$. The 
computation yields three possibilities, as shown in Table \ref{table:131-15}.

\begingroup
\setlength{\intextsep}{1pt}
\begin{table}[H]
\centering
\setlength{\abovecaptionskip}{1pt}
\begin{tabular}{|c|c|c|c|}
\hline
$a$  &  $\alpha+1$ &  $\alpha-2$ &  $\alpha-1$   \\
\hline
$b$  &  $\alpha-1$ &  $\alpha-3$  &  $\alpha-2$   \\
\hline
$c$  &  $\alpha-2$ &  $\alpha-3$  &  $\alpha-3$   \\
\hline
$d$  &  $\alpha$ &  $\alpha-4$  &  $\alpha-2$   \\
\hline
\end{tabular}
\caption{}
\label{table:131-15}
\end{table}
\endgroup

For the first column, we determine an hv-basis under the constraints of Maslov grading and homologies of $Y_p, Z_m$. We then check that $\partial^2 x_{(-h, \alpha-2h-1)}\neq 0$, which rules out this possibility.

For the second column, we determine an hv-basis and verify $\partial^2 x_{(-h,\alpha-2h-3)}\neq 0$.

For the third column, the obtained hv-basis is a staircase with two negative twists, located at symmetric heights, as shown in Figure \ref{figure:(13131) (1,2,1,0)-1}.

\begingroup
\setlength{\intextsep}{1pt}
\begin{figure}[H]
	\centering
    \setlength{\abovecaptionskip}{0pt}
    \begin{minipage}[c]{0.4\textwidth}
        \centering
	   \includegraphics[scale = 0.7]{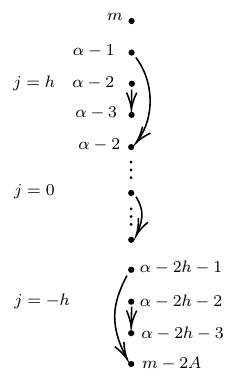}
    \end{minipage}
    \begin{minipage}[c]{0.5\textwidth}
        \centering
	   \includegraphics[scale = 0.8]{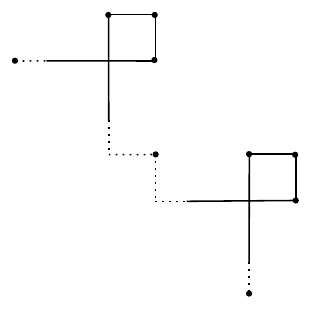}
    \end{minipage}
    \caption{}
    \label{figure:(13131) (1,2,1,0)-1}
\end{figure}
\endgroup
\vspace{-6pt}

\textbf{Subcase (ii)}: Assume $\widehat{HFK}(K,h+1) = \mathbb{F}_{(a)}$, $\widehat{HFK}(K,h) = \mathbb{F}_{(a-1)}\oplus\mathbb{F}_{(b)}\oplus\mathbb{F}_{(\alpha-1)}$, and $\widehat{HFK}(K,h-1) = \mathbb{F}_{(b-1)}$.

Denote the first non-zero generator below height $h-1$ by $\mathbb{F}_{(c)}$. Computation gives $c = \alpha-2$. We discuss separately according to the relationships among $a, b$, and $\alpha$.

\textit{If $b = a-1$}, we construct an hv-basis under the constraints of Maslov grading and $H_*(Y_p), H_*(Z_m)$, which is a staircase plus two boxes, as shown in Figure \ref{figure:(13131) (1,2,1,0)-2}.

\begingroup
\setlength{\intextsep}{1pt}
\begin{figure}[H]
	\centering
    \setlength{\abovecaptionskip}{0pt}
    \begin{minipage}[c]{0.4\textwidth}
        \centering
	   \includegraphics[scale = 0.7]{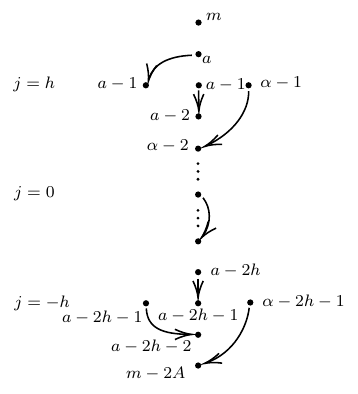}
    \end{minipage}
    \begin{minipage}[c]{0.5\textwidth}
        \centering
	   \includegraphics[scale = 0.8]{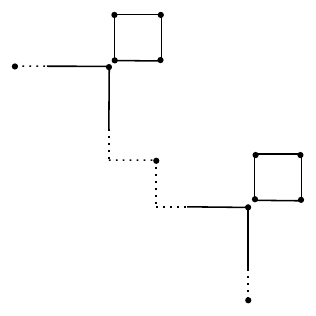}
    \end{minipage}
    \caption{}
    \label{figure:(13131) (1,2,1,0)-2}
\end{figure}
\endgroup
\vspace{-8pt}

\textit{If $b \neq a-1$}, there are different cases depending on the relationships among $a, b$ and $\alpha$. In each case we find either $\partial^2 x_{(h+1,a)}\neq 0$ or $\partial^2 x_{(h,b)}\neq 0$, which rules out this possibility.

\vspace{-10pt}
\settocdepth{section} % 临时只显示到 section
\subsubsection{(1,2,1,1) case}
\resettocdepth      % 恢复原 tocdepth

In this case, we have $\operatorname{rank} \widehat{HFK}(K,j) = 
\begin{cases}
\begin{array}{cl}
  1    & j = h+1     \\  
  1\ \text{or} \ 3 & j = h       \\
  2    & j = h-1  \\
\end{array}
\end{cases}
$

We divide our discussion into two subcases, depending on $\operatorname{rank} \widehat{HFK}(K,h)$.

\textbf{Subcase (i)}: Assume $\widehat{HFK}(K,h+1) = \mathbb{F}_{(a)}$, $\widehat{HFK}(K,h) = \mathbb{F}_{(b)}$, and $\widehat{HFK}(K,h-1) = \mathbb{F}_{(a-1)}\oplus\mathbb{F}_{(b-1)}$. 

Denote the first non-zero generator above height $-h+1$ by $\mathbb{F}_{(c)}$. Computation yields three possibilities, as shown in Table \ref{table:131-16}.

\begingroup
\setlength{\intextsep}{1pt}
\begin{table}[H]
\centering
\setlength{\abovecaptionskip}{1pt}
\begin{tabular}{|c|c|c|c|}
\hline
$a$  &  $\alpha+1$ &  $\alpha-1$  &  $\alpha-2$   \\
\hline
$b$  &  $\alpha-1$ &  $\alpha-2$  &  $\alpha-3$   \\
\hline
$c$  &  $\alpha-1$ &  $\alpha-3$  &  $\alpha-5$   \\
\hline
\end{tabular}
\caption{}
\label{table:131-16}
\end{table}
\endgroup
\vspace{-8pt}

For the first column, we construct an hv-basis under the constraints of Maslov grading and the homologies of $Y_p, Z_m$, we then find $\partial^2 x_{(-h, \alpha-2h-1)}\neq 0$.

For the second column, the hv-basis we obtain is a staircase with two negative twists located at symmetric heights, as shown in Figure \ref{figure:(13131) (1,2,1,1)-1}.

\begingroup
\setlength{\intextsep}{1pt}
\begin{figure}[H]
	\centering
    \setlength{\abovecaptionskip}{1pt}
    \begin{minipage}[c]{0.4\textwidth}
        \centering
	   \includegraphics[scale = 0.7]{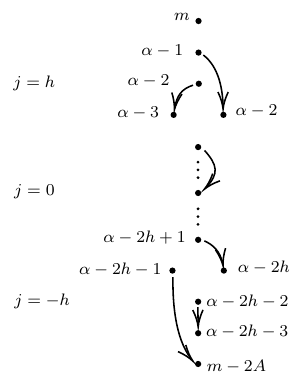}
    \end{minipage}
    \begin{minipage}[c]{0.5\textwidth}
        \centering
	   \includegraphics[scale = 0.8]{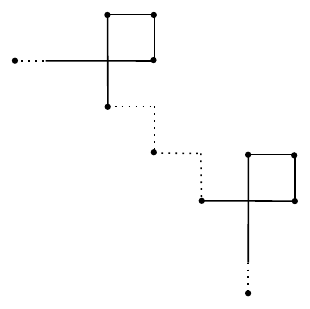}
    \end{minipage}
    \caption{}
    \label{figure:(13131) (1,2,1,1)-1}
\end{figure}
\endgroup

For the third column, in the obtained hv-basis we verify $\partial^2 x_{(-h+1,\alpha-2h-2)} \neq 0$, which rules out this possibility.

\textbf{Subcase (ii)}: Assume $\widehat{HFK}(K,h+1) = \mathbb{F}_{(a)}$, $\widehat{HFK}(K,h) = \mathbb{F}_{(a-1)}\oplus\mathbb{F}_{(b)}\oplus\mathbb{F}_{(\alpha-1)}$, and $\widehat{HFK}(K,h-1) = \mathbb{F}_{(b-1)}\oplus\mathbb{F}_{(c)}$. 

Denote the first nonzero generator above height $-h+1$ by $\mathbb{F}_{(d)}$. Computation gives $c = \alpha-2$ and $d = \alpha-2h+1$. We discuss separately according to the relationships among $a, b$, and $\alpha$.

\textit{If $b = a-1$}, we determine an hv-basis under the constraints of Maslov grading and homologies of $Y_p, Z_m$, which is a staircase plus two boxes, as shown in Figure \ref{figure:(13131) (1,2,1,1)-2}.

\begingroup
\setlength{\intextsep}{1pt}
\begin{figure}[H]
	\centering
    \setlength{\abovecaptionskip}{1pt}
    \begin{minipage}[c]{0.4\textwidth}
        \centering
	   \includegraphics[scale = 0.7]{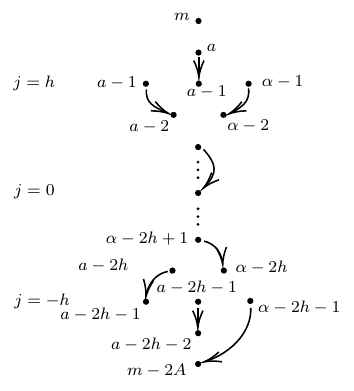}
    \end{minipage}
    \begin{minipage}[c]{0.5\textwidth}
        \centering
	   \includegraphics[scale = 0.8]{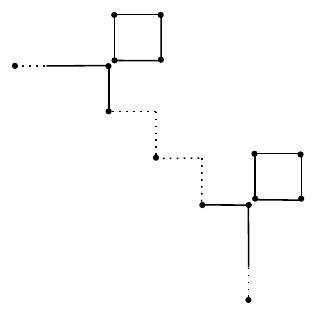}
    \end{minipage}
    \caption{}
    \label{figure:(13131) (1,2,1,1)-2}
\end{figure}
\endgroup

\textit{If $b \neq a-1$}, we divide our discussion into cases where $a$ and $b$ take different values. We find either $\partial^2 x_{(h+1,a)}\neq 0$ or $\partial^2 x_{(h,b)}\neq 0$, which rules out this possibility.

\settocdepth{section} % 临时只显示到 section
\subsubsection{(1,2,2,0) case}
\resettocdepth      % 恢复原 tocdepth

Computation gives $\operatorname{rank} \widehat{HFK}(K,j) = 
\begin{cases}
\begin{array}{cl}
  1    & j = h+1     \\  
  0\ \text{or}\ 2 & j = h       \\
  2    & j = h-1  \\
\end{array}
\end{cases}
$

We divide our discussion into two subcases, depending on $\operatorname{rank} \widehat{HFK}(K,h)$.

\textbf{Subcase (i)}: Assume $\widehat{HFK}(K,h+1) = \mathbb{F}_{(a)}$, $\widehat{HFK}(K,h) = 0$, and $\widehat{HFK}(K,h-1) = \mathbb{F}_{(a-3)}\oplus\mathbb{F}_{(\alpha-3)}$. 

Denote the first nonzero generator below height $h-1$ by $\mathbb{F}_{(b)}$. Computation gives $a = \alpha-2$ and $b = a-4 = \alpha-6$.

We use Maslov grading and $H_*(Y_p), H_*(Z_m)$ to constrain the differentials and construct an hv-basis. We then verify $\partial^2 x_{(-h-1,\alpha-2h-4)}\neq 0$, which rules out this possibility.

\textbf{Subcase (ii)}: Assume that $\widehat{HFK}(K,h+1) = \mathbb{F}_{(a)}$, $\widehat{HFK}(K,h) = \mathbb{F}_{(a-1)}\oplus\mathbb{F}_{(b)}$, and $\widehat{HFK}(K,h-1) = \mathbb{F}_{(b-1)}\oplus\mathbb{F}_{(c)}$. 

Denote the first nonzero generator below height $h-1$ by $\mathbb{F}_{(d)}$. Computation gives $d = c-1$. There are four possibilities, as shown in Table \ref{table:131-17}.

\begingroup
\setlength{\intextsep}{1pt}
\begin{table}[H]
\centering
\setlength{\abovecaptionskip}{1pt}
\begin{tabular}{|c|c|c|c|c|}
\hline
$a$   &  $\alpha+1$ &  $a$ &  $\alpha$  &  $a$  \\
\hline
$b$  &  $\alpha-1$ & $b$ &  $\alpha-1$  &  $a-1$  \\
\hline
$c$  &  $\alpha-1$ & $\alpha-3$ &  $\alpha-3$  &  $\alpha-3$  \\
\hline
\end{tabular}
\caption{}
\label{table:131-17}
\end{table}
\endgroup

The latter three columns can be absorbed into the second column.

For the first column, we construct an hv-basis and check that $\partial^2 x_{(-h,\alpha-2h-1)} \neq 0$, which rules out this possibility.

For the second column, \textit{if $b = a-1$}, the obtained hv-basis is a staircase plus two boxes, as shown in Figure \ref{figure:(13131) (1,2,2,0)-1}.

\begingroup
\setlength{\intextsep}{1pt}
\begin{figure}[H]
	\centering
    \setlength{\abovecaptionskip}{1pt}
    \begin{minipage}[c]{0.4\textwidth}
        \centering
	   \includegraphics[scale = 0.7]{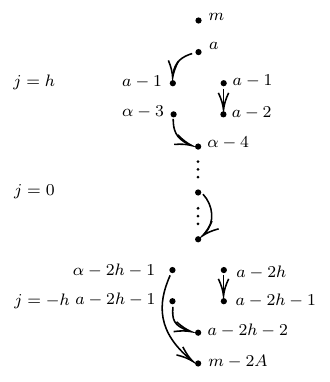}
    \end{minipage}
    \begin{minipage}[c]{0.5\textwidth}
        \centering
	   \includegraphics[scale = 0.8]{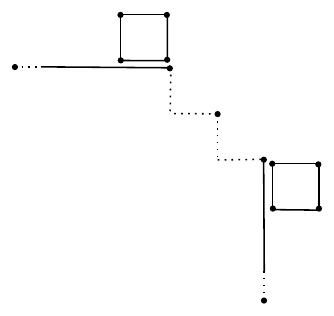}
    \end{minipage}
    \caption{}
    \label{figure:(13131) (1,2,2,0)-1}
\end{figure}
\endgroup

\textit{If $b\neq a-1$}, we construct an hv-basis and find that either $\partial^2 x_{(h+1,a)}\neq 0$ or $\partial^2 x_{(h,b)}\neq 0$ in the obtained hv-basis, which rules out this possibility.

\settocdepth{section} % 临时只显示到 section
\subsubsection{(1,2,2,1) case}
\resettocdepth      % 恢复原 tocdepth

We have $\operatorname{rank} \widehat{HFK}(K,j) = 
\begin{cases}
\begin{array}{cl}
  1    & j = h+1     \\  
  0\ \text{or}\ 2 & j = h       \\
  1    & j = h-1  \\
\end{array}
\end{cases}
$

\textbf{Subcase (i)}: Assume $\widehat{HFK}(K,h+1) = \mathbb{F}_{(a)}$, $\widehat{HFK}(K,h) = 0$, and $\widehat{HFK}(K,h-1) = \mathbb{F}_{(a-3)}$. 

Denote the first nonzero generator above height $-h+1$ by $\mathbb{F}_{(b)}$. Computation gives $a = \alpha-2, b = a-2h-1 = \alpha-2h-3$. We construct an hv-basis and check that $\partial^2 x_{(-h-1,\alpha-2h-2)}\neq 0$, which rules out this subcase.

\textbf{Subcase (ii)}: Assume that $\widehat{HFK}(K,h+1) = \mathbb{F}_{(a)}$, $\widehat{HFK}(K,h) = \mathbb{F}_{(a-1)}\oplus\mathbb{F}_{(b)}$, and $\widehat{HFK}(K,h-1) = \mathbb{F}_{(b-1)}$. 

Denote the first non-zero generator above height $-h+1$ by $\mathbb{F}_{(c)}$. Computation gives five possibilities, as shown in Table \ref{table:131-18}.

\begingroup
\setlength{\intextsep}{1pt}
\begin{table}[H]
\centering
\setlength{\abovecaptionskip}{1pt}
\begin{tabular}{|c|c|c|c|c|c|}
\hline
$a$  & $\alpha+1$ & $a$ & $\alpha$ &  $\alpha$  &  $a$  \\
\hline
$b$  & $\alpha-1$ & $b$ & $b$      &  $\alpha-1$  &  $a-1$  \\
\hline
$c$  & $\alpha-2h+1$ & $\alpha-2h-1$ & $\alpha-2h-1$  &  $\alpha-2h-1$  &  $\alpha-2h-1$  \\
\hline
\end{tabular}
\caption{}
\label{table:131-18}
\end{table}
\endgroup

The latter four columns are included in the second column.

For the first column, we construct an hv-basis and verify $\partial^2 x_{(-h,\alpha-2h-1)}\neq 0$, this contradiction rules out this possibility.

For the second column, \textit{if $b = a-1$},  we construct an hv-basis under the constraints of Maslov grading and $H_*(Y_p), H_*(Z_m)$. The resulting hv-basis is a staircase plus two boxes, as shown in Figure \ref{figure:(13131) (1,2,2,1)-1}.

\begingroup
\setlength{\intextsep}{1pt}
\begin{figure}[H]
	\centering
    \setlength{\abovecaptionskip}{1pt}
    \begin{minipage}[c]{0.4\textwidth}
        \centering
	   \includegraphics[scale = 0.7]{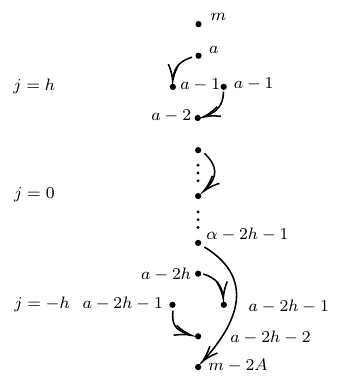}
    \end{minipage}
    \begin{minipage}[c]{0.5\textwidth}
        \centering
	   \includegraphics[scale = 0.8]{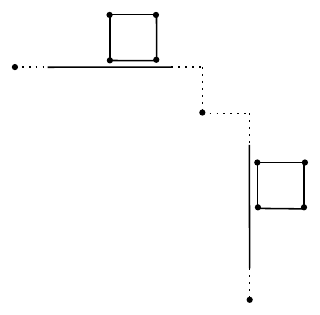}
    \end{minipage}
    \caption{}
    \label{figure:(13131) (1,2,2,1)-1}
\end{figure}
\endgroup

\textit{If $b\neq a-1$}, we construct an hv-basis under the constraints of Maslov grading and $H_*(Y_p), H_*(Z_m)$, then we check that either $\partial^2 x_{(h+1,a)}\neq 0$ or $\partial^2 x_{(h,b)}\neq 0$, which rules out this possibility.
\newpage

\subsection{Cases of special genera}\label{subsection:5.2 Cases of special genera}

In this section, we present the classification results for hv-bases of height 4 knots $K$ where $g(K) = 2$ and $g(K) = h+1$.

Table \ref{table:131-19} illustrates the case $g(K) = 2$. The hv-basis in this case can be viewed as a truncation of the general case. In contrast to the proof for the general case, we do not need to handle the generators at the middle height. Instead, we start from the height $3$ and trace downward to height $-2$. The cancellation lemma (cf.\ Lemma \ref{lemma: cancellation lemma}) and the conditions $H_*(A_{\pm 1})\cong \mathbb{F}^3$ are applied in a similar way.

\begin{table}[H]
\centering
\begin{tabular}{|c|c|c|}
\hline

\makecell{$rk H_*(X_p)$ \\ $p = 3,2,1$ } & $\widehat{HFK}(K,i), i = 2,1,0$ & hv-basis \\

\hline

(0,1,1) & \makecell{$\mathbb{F}_{(a)}$ \\ $\mathbb{F}_{(a-1)}\oplus\mathbb{F}_{(a-1)}$ \\ $\mathbb{F}_{(a-2)}\oplus \mathbb{F}_{(a-2)}\oplus\mathbb{F}_{(e)}$ }  &{\includegraphics[scale = 0.9, valign = c]{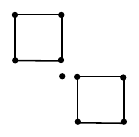}}\\

\hline

\multirow{2}{*}{(0,1,2)} & \makecell{ $\mathbb{F}_{(a)}$ \\ $\mathbb{F}_{(a-1)}\oplus\mathbb{F}_{(a-1)}\oplus\mathbb{F}_{(m-1)}$  \\ $\mathbb{F}_{(a-2)}\oplus\mathbb{F}_{(a-2)}\oplus\mathbb{F}_{(m-2)}$ } &{\includegraphics[scale = 0.9, valign = c]{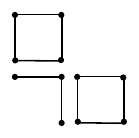}}\\

\cline{2-3}

& \makecell{ $\mathbb{F}_{(a)}$ \\ $\mathbb{F}_{(a-1)}$  \\ $\mathbb{F}_{(a-3)}\oplus\mathbb{F}_{(a-2)}\oplus\mathbb{F}_{(a-2)}$ } & \includegraphics[scale = 0.9, valign = c]{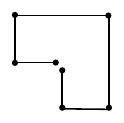} \\

\hline	
 
\multirow{2}{*}{(0,2,1)} &  \makecell{ $\mathbb{F}_{(a-1)}\oplus\mathbb{F}_{(a)}$ \\ $\mathbb{F}_{(a-2)}$  \\ $\mathbb{F}_{(a-3)}\oplus\mathbb{F}_{(a-3)}\oplus\mathbb{F}_{(a-2)}$ } &\includegraphics[scale = 0.90, valign = c]{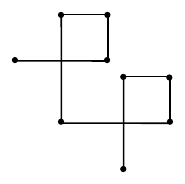}\\

\cline{2-3}

& \makecell{ $\mathbb{F}_{(a)}\oplus\mathbb{F}_{(m-1)}$ \\ $\mathbb{F}_{(a-1)}\oplus\mathbb{F}_{(a-1)}\oplus\mathbb{F}_{(m-2)}$  \\ $\mathbb{F}_{(a-2)}\oplus\mathbb{F}_{(a-2)}\oplus\mathbb{F}_{(m-3)}$ } & {\includegraphics[scale = 0.9, valign = c]{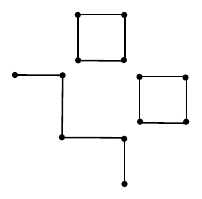}} \\

\hline

(0,2,2) & \makecell{ $\mathbb{F}_{(a)}\oplus\mathbb{F}_{(b)}$ \\ $\mathbb{F}_{(b-1)}\oplus\mathbb{F}_{(b-1)}$  \\ $\mathbb{F}_{(a-3)}\oplus\mathbb{F}_{(b-2)}\oplus\mathbb{F}_{(b-2)}$ } &{\includegraphics[scale = 0.9, valign = c]{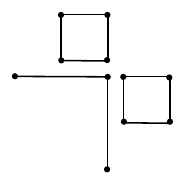}} \\
 
\hline
\end{tabular}
\caption{}
\label{table:131-19}
\end{table}

\newpage

Table \ref{table:131-20} and table \ref{table:131-21}
present classification results of hv-bases for a height 4 knot $K$ of the $(1,3,1,3,1)$ type with parameter $h$ such that $g(K) = h+1$.

The results under this condition can be viewed as a truncation of the general case at height $h+1$. The proof is essentially identical to that of the general case. 
 
\begin{table}[H]
\centering
\begin{tabular}{|c|c|c|}
\hline

\makecell{$rk H_*(X_p)$ \\ $p = h+2,h+1,h,h-1$ } & \makecell{$\widehat{HFK}(K,i)$\\ $i = h+1,h,h-1$} & hv-basis \\

\hline

(0,1,1,0)& \makecell{$\mathbb{F}_{(a)}$\\ $\mathbb{F}_{(a-1)}\oplus\mathbb{F}_{(a-1)}$ \\ $\mathbb{F}_{(a-2)}$ } & {\includegraphics[scale = 0.85, valign = c]{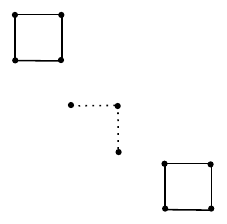}} \\

\hline
(0,1,1,1)& \makecell{$\mathbb{F}_{(a)}$\\ $\mathbb{F}_{(a-1)}\oplus\mathbb{F}_{(a-1)}$ \\ $\mathbb{F}_{(c)}\oplus\mathbb{F}_{(a-2)}$ } &{\includegraphics[scale = 0.85, valign = c]{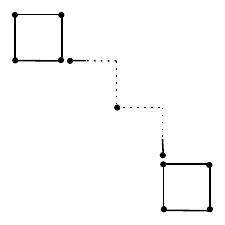}} \\

\hline

\multirow{2}{*}{(0,1,2,0)}& \makecell{$\mathbb{F}_{(a)}$ \\ $\mathbb{F}_{(a-1)}$ \\ $\mathbb{F}_{(a-3)}\oplus\mathbb{F}_{(a-2)}$ }&{\includegraphics[scale = 0.85, valign = c]{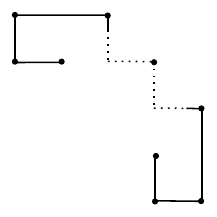}}  \\

\cline{2-3}

& \makecell{$\mathbb{F}_{(a)}$ \\ $\mathbb{F}_{(a-1)}\oplus\mathbb{F}_{(a-1)}\oplus\mathbb{F}_{(c)}$ \\ $\mathbb{F}_{(a-2)}\oplus\mathbb{F}_{(c-1)}$ } & {\includegraphics[scale = 0.85, valign = c]{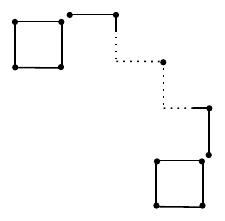}}  \\

\hline
\multirow{2}{*}{(0,1,2,1)}& \makecell{$\mathbb{F}_{(a)}$ \\ $\mathbb{F}_{(a-1)}$ \\ $\mathbb{F}_{(a-2)}$} & {\includegraphics[scale = 0.85, valign = c]{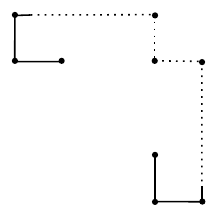}} \\

\cline{2-3}

& \makecell{$\mathbb{F}_{(a)}$ \\ $\mathbb{F}_{(a-1)}\oplus\mathbb{F}_{(a-1)}\oplus\mathbb{F}_{(b)}$ \\ $\mathbb{F}_{(a-2)}$ } & {\includegraphics[scale = 0.85, valign = c]{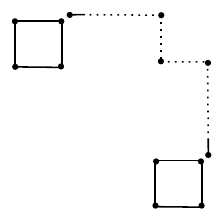}} \\

\hline
\end{tabular}
\caption{}
\label{table:131-20}
\end{table}

\newpage

\begin{table}[H]
\centering
\begin{tabular}{|c|c|c|}
\hline

\multirow{2}{*}{(0,2,1,0)}& \makecell{$\mathbb{F}_{(a-1)}\oplus\mathbb{F}_{(a)}$ \\ $\mathbb{F}_{(a-2)}$ \\ $\mathbb{F}_{(a-3)}$} & {\includegraphics[scale = 0.75, valign = c]{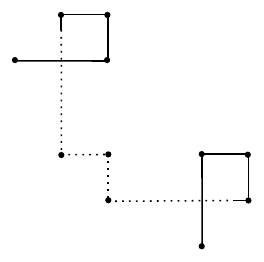}} \\

\cline{2-3}

& \makecell{ $\mathbb{F}_{(a)}\oplus\mathbb{F}_{(b)}$ \\ $\mathbb{F}_{(a-1)}\oplus\mathbb{F}_{(a-1)}\oplus\mathbb{F}_{(b-1)}$  \\ $\mathbb{F}_{(a-2)}$ } & {\includegraphics[scale = 0.75, valign = c]{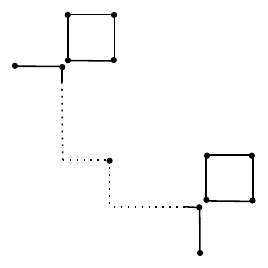}} \\

\hline

\multirow{2}{*}{(0,2,1,1)}& \makecell{$\mathbb{F}_{(a-1)}\oplus\mathbb{F}_{(a)}$ \\ $\mathbb{F}_{(a-2)}$ \\ $\mathbb{F}_{(a-3)}\oplus\mathbb{F}_{(a-2)}$} & {\includegraphics[scale = 0.75, valign = c]{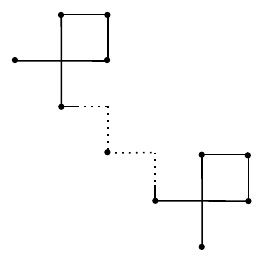}} 
\\

\cline{2-3}

& \makecell{ $\mathbb{F}_{(a)}\oplus\mathbb{F}_{(b)}$ \\ $\mathbb{F}_{(a-1)}\oplus\mathbb{F}_{(b-1)}\oplus\mathbb{F}_{(b-1)}$  \\ $\mathbb{F}_{(a-2)}\oplus\mathbb{F}_{(b-2)}$ } & {\includegraphics[scale = 0.75, valign = c]{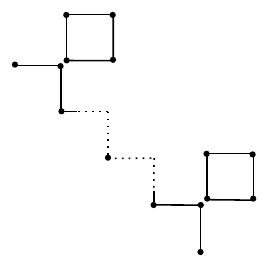}} 
\\

\hline

(0,2,2,0)&  \makecell{ $\mathbb{F}_{(a)}\oplus\mathbb{F}_{(b)}$ \\ $\mathbb{F}_{(b-1)}\oplus\mathbb{F}_{(b-1)}$  \\ $\mathbb{F}_{(a-3)}\oplus\mathbb{F}_{(b-2)}$ } & {\includegraphics[scale = 0.75, valign = c]{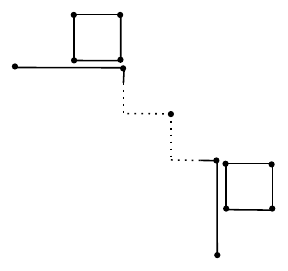}}  \\

\hline

(0,2,2,1)& \makecell{ $\mathbb{F}_{(a)}\oplus\mathbb{F}_{(b)}$ \\ $\mathbb{F}_{(a-1)}\oplus\mathbb{F}_{(a-1)}$  \\ $\mathbb{F}_{(a-2)}$ } & {\includegraphics[scale = 0.75, valign = c]{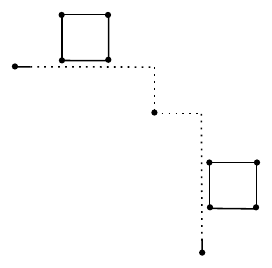}} \\

\hline
\end{tabular}
\caption{}
\label{table:131-21}
\end{table}

\bibliographystyle{alpha}
\bibliography{paper.bib}

\section*{Tool Use}
ChatGPT (OpenAI) was used interactively to assist with English-language editing, LaTeX typesetting, and the preparation and revision of TikZ figures. The author reviewed the resulting text and figures, checked the mathematical statements and proofs, and takes full responsibility for the final manuscript.

\end{document}